\theoremstyle{plain}
\newtheoremstyle{myexstyle}
{}{}{}{}{\bfseries}{.}{ }{}
\newtheorem{theorem}{Theorem}[section]
\newtheorem{corollary}[theorem]{Corollary}
\newtheorem{definition}[theorem]{Definition}
\newtheorem{lemma}[theorem]{Lemma}
\newtheorem{notations}[theorem]{Notations}
\newtheorem{proposition}[theorem]{Proposition}
\theoremstyle{myexstyle}
\newtheorem{remark}[theorem]{Remark}
\newtheorem{example}[theorem]{Example}
\newtheorem{examples}[theorem]{Examples}
\renewenvironment{proof}[1][Proof]{\noindent\textbf{#1.} }{\ \rule{0.5em}{0.5em}}
\numberwithin{equation}{section}
\newcommand{\N}{\mathbb{N}}
\newcommand{\Z}{\mathbb{Z}}
\newcommand{\R}{\mathbb{R}}
\newcommand{\C}{\mathbb{C}}
\newcommand{\ad}{\operatorname{ad}}
\newcommand{\ap}{\rightarrow}
\newcommand{\rel}{{\rightarrowtriangle}}
\newcommand{\tto}{\rightrightarrows}
\newcommand{\Ker}{\ker}
\newcommand{\Hc}{{\mathcal H}}
\def\dis{\displaystyle}
 \def\a{\alpha}
\def\g{\gamma}
\def\o{\omega}
\begin{document}

\title{Partial Poisson Lie groups and  groupoids.\\ Application to Von Neumann algebras}
\author{Fernand Pelletier \& Patrick Cabau}
\date{}

\maketitle

\begin{abstract}
The purpose of this paper is to propose a version of the notion of convenient Lie groupoid as a generalization of this concept in finite dimension. 
The authors point out which obstructions appear in the infinite dimensional context and how an adapted notion of "bi-algebroid " in finite dimension (cf. \cite{MaXu94}) can be, nevertheless,  recovered. 
This paper is self-contained and recalls some important properties of "partial Poisson manifolds"  (cf. \cite{CaPe23}, Chapter~7)  and "Banach Poisson Lie groups"  (cf. \cite{Tum20}) needed for their purpose. 
The paper also gives  an illustration of these concepts  from all the results of A.~Odzijewicz and his collaborators on Lie groupoids and Von Neumann algebras.
\end{abstract}

\noindent \textbf{Keywords.--} 
Partial Poisson  structure, convenient Lie algebroid, convenient Lie groupoid, direct limits, Von Neumann Algebra.

\medskip

\noindent\textbf{Classification MSC 2020.--}  
22A22, 
22E65, 
46T05, 
57R2, 
70G45  
.

\tableofcontents

\bigskip


\maketitle


\section{Introduction}

In finite dimension, the theory of Poisson groupoids was introduced by Weinstein in  \cite{Wei87} as a generalization of symplectic groupoids initially defined in \cite{CDW87} and in parallel in  \cite{Zak90}. Poisson Lie groups and symplectic groupoids arise naturally in the integration of arbitrary Poisson manifolds (cf. \cite{CDW87} and \cite{Kar87}). Weinstein  noted that the Lie algebroid dual $A^*(G)$ of a Poisson groupoid $G$ has  itself a Lie algebroid structure.\\
This infinitesimal structure was further developed   in \cite{MaXu94}. In this paper, the authors  have pointed out a parallel between  Lie theory  for Poisson Lie  groups and  Poisson groupoids.    There is a one to one correspondence between Poisson Lie groups and Lie bialgebras for connected and simply connected  Lie groups. They  introduced and studied a natural infinitesimal invariant for Poisson groupoids $G$ which is a structure of Lie bialgebroid  on its Lie   algebroid $A(G)$.\\

A generalization  of Poisson manifold in the Banach setting was firstly proposed in \cite{OdRa03} and developed by A. Odzijewicz and his collaborators. In parallel, the authors  of this work proposed a generalization of such  a structure to the convenient setting called  \textit{partial convenient Poisson manifold} in \cite{PeCa19}. On the other hand, based on the notion of \textit{generalized Banach Poisson manifold}, A. B. Tumpach defined in \cite{Tum20}, a notion of Banach Lie-Poisson Lie group.\\
A comparison of all these generalizations can be found in \cite{GRT23}. In this paper, the interested reader can find many references on the topic.\\

In \cite{OdSl16}, A. Odzijewicz and A. Sli\.{z}ewska associated a Banach Lie groupoid  to $W^{\prime}$-algebras. Such a notion of Banach Lie groupoid was developed in \cite{BGJP19} and produces a link with  earlier research on some infinite-dimensional manifolds associated with Banach algebras. Later, in \cite{OJS18},  A.  Odzijewicz, G. Jakimowicz and  A.  Sli\.zewska  defined Sub-Poisson Groupoid again in the context of $W^{\prime}$-algebras and gave many examples in this framework.\\
 
The first purpose of this paper is to propose a context in which one can define a notion of  Banach  (eventually convenient) partial Poisson and sub-Poisson groupoid as a a generalization of Weinstein's arguments in finite dimension, which recover the results in \cite{OJS18}. 
Secondly, we show how the results of \cite{MaXu94} can be adapted to the infinite dimension setting.\\
 
This paper is self-contained and is organized as follows:
\begin{enumerate}
\item[$\bullet$] 
Section 2 summarizes all the essential properties of a partial Poisson manifold which are well known  (cf. \cite{CaPe23}, Chap.~7)  and some complements which are needed in the other sections.
\item[$\bullet$] 
Section 3 can be considered as a survey of all principal properties of Banach Lie groups.
\item[$\bullet$]
Section 4 shows how Weinstein's arguments used in \cite{Wei87} can be adapted to the infinite dimensional setting and which results are still true.
\item[$\bullet$] 
Section 5  is devoted to the context in which the results of \cite{MaXu94} can be recovered.
 \item[$\bullet$] 
Section 6 contains essentially the construction of Sub-Poisson structures on the cotangent bundle of a Lie groupoid.
\item[$\bullet$] 
In section 7 the previous results are extended to an ascending sequence of  finite dimensional Poisson groupoids and is illustrated by some examples.
\item[$\bullet$] 
Finally, in an Appendix, we summarize all properties  needed to understand the  results on groupoids associated with $W^{\prime}$-algebras which are exposed in this paper.
\end{enumerate}

\section{Some Essential Properties of Partial Poisson Manifolds}

This section  summarizes all the properties of a partial Poisson manifold which will be used in the last  section. Some of them can be found in \cite{CaPe23}, or  are completed,  but the others are new. 

\subsection{Relations in Linear Poisson and Symplectic Convenient Spaces }
\label{_RelationsInLinearPoissonSymplecticconvenientSpaces}

\subsubsection{Partial Linear Poisson Convenient Spaces}\label{__PartialLinearPoissonConvenientSpaces}

Let $\mathbb{E}$ be a convenient space. Denote by $\mathbb{E}^\prime$ its kinematic dual  and by $<.\;,.>$ the duality bracket. \\
Let $\mathbb{E}^\flat$ be a vector subspace of $\mathbb{E}^\prime$ with its own convenient structure where the inclusion of $\mathbb{E}^\flat \to \mathbb{E}^\prime$ is bounded. The duality bracket $<.\;,.>$ between $\mathbb{E}^\prime$ and $\mathbb{E}$ induces a  bounded bilinear  linear form on $\mathbb{E}^\flat\times\mathbb{E}$  which is left non degenerated and which will be also denoted by  $<.\;,.>$ when no confusion is possible in the context.\\

A subspace $\mathbb{Y}$ of any convenient space $\mathbb{X}$ will be call \emph{closed}\index{closed} if it is $c^\infty$-closed according to the terminology used in  \cite{KrMi97}.\\

More generally, we systematically adopt notations and terminology used in \cite{KrMi97}.

\begin{definition}
\label{D_PoissonLinear}
A \emph{partial linear Poisson structure}\index{partial!linear Poisson structure} on $\mathbb{E}$ is the data of 
\begin{enumerate}
\item[$\bullet$] 
a vector subspace $\mathbb{E}^\flat$ of $\mathbb{E}^\prime$  with its own convenient structure and such that the  inclusion of $\mathbb{E}^\flat$ in $\mathbb{E}^\prime$ is bounded ;
\item[$\bullet$] 
a bounded linear map $P:\mathbb{E}^\flat\to \mathbb{E}$ such that
\[
<\omega_1,P\omega_2> = -<\omega_2,P\omega_1>
\]
for all $\o_1$ and $\omega_2$ in $\mathbb{E}^\flat$.\\
We then say that $ \left( \mathbb{E}^\flat,\mathbb{E},P \right) $ is a \emph{partial linear Poisson space}\index{partial!linear Poisson space}.
\end{enumerate}
\end{definition}

If $P$ is a  partial linear Poisson structure on $\mathbb{E}$, so is $-P$. When no confusion is possible, if $P$ is a fixed partial linear Poisson structure on $\mathbb{E}$, we simply denote by ${\mathbb{E}}^-$ the space $\mathbb{E}$ provided with  the partial linear Poisson structure  $-P$. \\

Moreover, if $P$ is a convenient  isomorphism, then the  $2$-form $\Omega _P \in \bigwedge^2 \mathbb{E}^\prime$ defined by
\[
\Omega_P(u,v)=<P^{-1}(u), v>
\]
is a weak symplectic form on $\mathbb{E}$ if it is closed.\\
When  $\mathbb{E}^\flat=\mathbb{E}^\prime$  then $\Omega _P$ is a linear strong symplectic form. 
In general, when   $\mathbb{E}^\flat\not=\mathbb{E}^\prime$, $\Omega_P$ is  a linear  symplectic form on $\mathbb{E}$. Conversely,  if $\Omega$ is a weak symplectic linear form on $\mathbb{E}$  and let $\mathbb{E}^\flat$ the range of associated the linear operator $\Omega^{\flat}: \mathbb{E} \to \mathbb{E}^\prime$. Then $P=(\Omega^\flat)^{-1}:\mathbb{E}^\flat \to \mathbb{E}$ is a partial Poisson structure which is a convenient isomorphism. This justifies the following terminology:   
\begin{definition}
\label{D_Symplectic}  
Let $P:\mathbb{E}^\flat\to \mathbb{E}$ be a partial Poisson structure.\\
If  $P$ is a convenient isomorphism,  we say that $P$ is a \emph{partial linear symplectic structure}\index{partial!linear symplectic structure} on $\mathbb{E}$. When $\mathbb{E}^\flat=\mathbb{E}^\prime$ we will simply say that $P$ is  \emph{linear symplectic}\index{linear!symplectic}.
\end{definition}
 
If $P$ is a partial linear symplectic structure, then $-P$ is also of the same type. Again, if no confusion is possible, we denote simply $\mathbb{E}^-$ this opposite structure. 

\begin{definition}
\label{D_AnnhilitorOfASubspace}
For any 
subspace $\mathbb{F}$ of a convenient space $\mathbb{E}$,  the \emph{annihilator}\index{annihilator} of $\mathbb{F}$ is the subspace  
\[
\mathbb{F}^a:=
\{\alpha \in \mathbb{E}^\prime: \; 
\forall u\in \mathbb{F}, <\alpha,u>=0 \}.
\]
\end{definition} 
 
\begin{definition}
\label{D_Coisotropic} 
Let  $(\mathbb{E}^\flat,\mathbb{E},P)$ be a partial linear Poisson  space.
\begin{enumerate}
\item[1.] 
Given a vector subspace $\mathbb{A} \subset \mathbb{E}^\flat$, the vector space   
\[
\mathbb{A}^{\perp_P}:=
\{\omega\in \mathbb{E}^\flat:\;
\forall \alpha \in \mathbb{A}\, \;
 <\omega, P\alpha>=0\}
\] 
is called the \emph{orthogonal of $\mathbb{A}$ relative to $P$}\index{AperpP@$\mathbb{A}^{\perp_P}$ (orthogonal of $\mathbb{A}$ relative to $P$)}.\\
The space  $\mathbb{A}$ is called \emph{isotropic}\index{isotropic} if $\mathbb{A} \subset \mathbb{A}^{\perp_P}$.
\item[2.]  
If $\mathbb{F}$ is a  convenient  subspace of $\mathbb{E}$ (that is $\mathbb{F}$ is  $c^\infty$ closed) we set  
\[
\mathbb{F}^0:=\{\alpha \in \mathbb{E}^\flat:\;
\forall\; u\in \mathbb{F}, \; <\alpha,u>=0  \}.
\]
The \emph{orthogonal}\index{Fperp@$\mathbb{F}^{\perp_P}$} $\mathbb{F}^{\perp_P}$ of a convenient subspace $\mathbb{F}$ of $\mathbb{E}$ is the vector  space
\[
\mathbb{F}^{\perp_P}=P(\mathbb{F}^0).
\]
The space  $\mathbb{F}$ is called  \emph{coisotropic}\index{coisotropic}  if $\mathbb{F}^0$ is isotropic in $\mathbb{E}^\flat$.\\
\end{enumerate}
\end{definition}
\begin{lemma}
\label{L_FirstProperties}${}$
\begin{enumerate}
\item[1.]  
The kernel  of $P$ is always contained in $A^{\perp_P}$ for any subspace $\mathbb{A}$  of $\mathbb{E}^\flat$.
\item[ 2.] 
If $\mathbb{A}$ is a vector subspace of $\mathbb{E}^{\flat}$, then $\mathbb{A}^{\perp_P}$  is always  $c^\infty$ closed in $\mathbb{E}^\flat$.
\item[3.] 
If $\overline{\mathbb{F}}$ is the $c^\infty$-closure  of  a subspace $\mathbb{F}$ of $\mathbb{E}$, we have $\mathbb{F}^0=(\overline{\mathbb{F}})^0$. 
\item[4.] 
If  $\mathbb{F}$ is a  subspace of $\mathbb{E}$, then both $(\mathbb{F}^0)^{\perp_P}$ and $\mathbb{F}^0$ contain $\ker P$.
\item[5.]  
$\mathbb{F}$ is coisotropic  if  and only if 
\[
\forall \left( \omega_1,\omega_2  \right) \in \left( \mathbb{F}^0 \right) ^2, \; <\omega_1, P\omega_2>=0.
\]
\item[6.] 
If $ \mathbb{F}_1\subset \mathbb{F}_2$ and $\mathbb{F}_1$ is coisotropic, so is $\mathbb{F}_2$.
\end{enumerate}
\end{lemma}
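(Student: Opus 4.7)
The plan is to prove each of the six items by directly unwinding the relevant definitions, relying only on the antisymmetry property of $P$ and the basic convenient-calculus fact that every bounded linear map between convenient spaces is smooth, hence $c^\infty$-continuous, so its kernel is $c^\infty$-closed. For (1), if $\omega\in\ker P$ and $\alpha\in\mathbb{A}$, antisymmetry yields $\langle\omega,P\alpha\rangle=-\langle\alpha,P\omega\rangle=0$, so $\omega\in\mathbb{A}^{\perp_P}$. For (2), each $\alpha\in\mathbb{A}$ produces the bounded functional $\omega\mapsto\langle\omega,P\alpha\rangle$ on $\mathbb{E}^\flat$ (composition of the bounded map $P$ with the bounded duality pairing), whose kernel is $c^\infty$-closed; and $\mathbb{A}^{\perp_P}$ is the intersection of these kernels, hence $c^\infty$-closed. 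For (3), the inclusion $(\overline{\mathbb{F}})^0\subset\mathbb{F}^0$ is immediate from $\mathbb{F}\subset\overline{\mathbb{F}}$, while any $\alpha\in\mathbb{F}^0\subset\mathbb{E}^\flat\subset\mathbb{E}'$ is bounded on $\mathbb{E}$ and thus $c^\infty$-continuous, so its zero set is $c^\infty$-closed, contains $\mathbb{F}$, and therefore contains $\overline{\mathbb{F}}$.

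For (4), the inclusion $\ker P\subset(\mathbb{F}^0)^{\perp_P}$ is the special case $\mathbb{A}=\mathbb{F}^0$ of (1); the accompanying claim $\ker P\subset\mathbb{F}^0$ I would read as pertaining to $\ker P\cap\mathbb{F}^0$, since a literal reading requires every $\omega\in\ker P$ to annihilate $\mathbb{F}$, which is not automatic without extra hypothesis; I would flag this ambiguity in the write-up. For (5), by definition $\mathbb{F}$ is coisotropic iff $\mathbb{F}^0$ is isotropic, i.e.\ $\mathbb{F}^0\subset(\mathbb{F}^0)^{\perp_P}$, and this unwinds verbatim to $\langle\omega_1,P\omega_2\rangle=0$ for all $\omega_1,\omega_2\in\mathbb{F}^0$. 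For (6), $\mathbb{F}_1\subset\mathbb{F}_2$ gives $\mathbb{F}_2^0\subset\mathbb{F}_1^0$, and any subspace of an isotropic subspace is again isotropic, so $\mathbb{F}_2^0$ is isotropic and $\mathbb{F}_2$ is coisotropic.

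There is essentially no single hard step: the lemma is a piece of definitional bookkeeping powered by antisymmetry. The only genuine care needed is in (2) and (3), where one must invoke $c^\infty$-continuity of bounded linear maps between convenient spaces rather than norm continuity, and in the apparent ambiguity of (4), where I would either reinterpret the statement as an intersection with $\ker P$ or note that it reduces to (1) in the part that is substantive.
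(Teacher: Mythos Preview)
The paper states this lemma without proof, so there is no argument to compare against. Your approach is correct and is exactly the sort of definitional unwinding one expects here; items 1, 2, 3, 5, 6 are handled cleanly, and your use of $c^\infty$-continuity of bounded linear maps is the right tool in the convenient setting.

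Your instinct on item 4 is also correct: the literal assertion $\ker P\subset\mathbb{F}^0$ is false in general (take $P=0$ and $\mathbb{F}=\mathbb{E}$, so $\ker P=\mathbb{E}^\flat$ while $\mathbb{F}^0=\{0\}$). This appears to be a misstatement in the paper rather than a gap in your reasoning. The only substantive content of item 4 is $\ker P\subset(\mathbb{F}^0)^{\perp_P}$, which, as you note, is the special case $\mathbb{A}=\mathbb{F}^0$ of item 1. Flagging the ambiguity, as you propose, is the right move.
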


\begin{notations}
\label{N_OmegaP} 
From now on, in this section, for any  $ \left( \mathbb{E}^\flat,\mathbb{E},P \right) $ be a partial linear Poisson space, we set
\[
\Omega_P(\alpha, \beta):=<\alpha, P\beta>.
\]
 Note that since  since the inclusion of $\mathbb{E}^\flat$ in $\mathbb{E}^\prime$  is bounded, the dual  bracket $<.\;,.>$ in restriction to $\mathbb{E}^\flat,\mathbb{E}$ also bounded. Thus, since $P$ is bounded, it follows that $\Omega_P$ is a skew-symmetric bilinear form on $\mathbb{E}^\flat$.
\end{notations}

\begin{proposition}
\label{P_PartialSymplectic}
Let $(\mathbb{E}^\flat,\mathbb{E},P)$ be a partial linear Poisson space\index{partial!linear Poisson space} such that $P$ is an isomorphism. Consider  the symplectic form $\Omega_P$ on $\mathbb{E}$ defined by  
\[
\Omega_P(u,v)=<P^{-1}(u), v>.
\]
If $\mathbb{F}$ is any $c^\infty$-closed subspace of $\mathbb{E}$, we have 
\[
\mathbb{F}^{\perp_P}=
\{u\in \mathbb{E},:\;
\forall v \in \mathbb{F}, \; \Omega_P(u,v)=0\}.
\]
Moreover, we have 
$\left( \mathbb{F}^{\perp_P} \right) ^{\perp_P}=\mathbb{F}$ if $\mathbb{F}$ is closed.
\end{proposition}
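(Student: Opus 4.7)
The plan is to handle the two parts separately, with the first being a direct unwinding of definitions and the second reducing to a Hahn-Banach-type separation inside $\mathbb{E}^\flat$.

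\medskip

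\noindent\textbf{First identity.} I use that $P:\mathbb{E}^\flat\to\mathbb{E}$ is a bijection, so every $u\in\mathbb{E}$ has a unique preimage $\alpha_u:=P^{-1}(u)\in\mathbb{E}^\flat$. Then
\[
u\in\mathbb{F}^{\perp_P}=P(\mathbb{F}^0)\ \Longleftrightarrow\ \alpha_u\in\mathbb{F}^0\ \Longleftrightarrow\ \langle\alpha_u,v\rangle=0\ \forall v\in\mathbb{F},
\]
and by definition $\langle\alpha_u,v\rangle=\langle P^{-1}u,v\rangle=\Omega_P(u,v)$, giving the claimed set-theoretic equality.

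\medskip

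\noindent\textbf{Forward inclusion of the second identity.} Applying the first identity to the subspace $\mathbb{F}^{\perp_P}$ itself, one has $(\mathbb{F}^{\perp_P})^{\perp_P}=\{u\in\mathbb{E}:\Omega_P(u,w)=0\ \forall w\in\mathbb{F}^{\perp_P}\}$. If $u\in\mathbb{F}$ and $w\in\mathbb{F}^{\perp_P}$, the first identity gives $\Omega_P(w,u)=0$, and skew-symmetry of $\Omega_P$ (inherited from the skew-symmetry of $P$) yields $\Omega_P(u,w)=0$, so $u\in(\mathbb{F}^{\perp_P})^{\perp_P}$.

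\medskip

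\noindent\textbf{Reverse inclusion, and where the work is.} I first rewrite $(\mathbb{F}^{\perp_P})^{\perp_P}$ as a bi-annihilator. Unwinding the definition and using $\langle\beta,P\gamma\rangle=-\langle\gamma,P\beta\rangle$, any $\beta\in(\mathbb{F}^{\perp_P})^0$ satisfies $\langle\gamma,P\beta\rangle=0$ for all $\gamma\in\mathbb{F}^0$, so that
\[
(\mathbb{F}^{\perp_P})^{\perp_P}=P\bigl((\mathbb{F}^{\perp_P})^0\bigr)=\{u\in\mathbb{E}:\langle\gamma,u\rangle=0\ \forall\gamma\in\mathbb{F}^0\}.
\]
Thus the reverse inclusion reduces to the separation statement: if $\mathbb{F}$ is ($c^\infty$-)closed and $u\notin\mathbb{F}$, there exists $\gamma\in\mathbb{E}^\flat$ with $\gamma|_\mathbb{F}=0$ and $\langle\gamma,u\rangle\neq 0$. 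This is the main obstacle: the convenient Hahn--Banach theorem of \cite{KrMi97} naturally produces separating functionals in the full kinematic dual $\mathbb{E}^\prime$, while here we need one in the possibly smaller subspace $\mathbb{E}^\flat$. The point that unlocks this step is that $P^{-1}=\Omega_P^\flat:\mathbb{E}\to\mathbb{E}^\flat$ is a convenient isomorphism, so $\mathbb{E}^\flat$ inherits from $\mathbb{E}$ enough kinematic functionals (via transport along $P^{-1}$) to separate any $c^\infty$-closed subspace from an outside point; applying convenient Hahn--Banach on the $\mathbb{E}$-side and transporting back via $P^{-1}$ supplies the required $\gamma\in\mathbb{F}^0$.
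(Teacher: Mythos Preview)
The paper states this proposition without proof, so there is no argument to compare against directly. Your handling of the first identity and of the inclusion $\mathbb{F}\subset(\mathbb{F}^{\perp_P})^{\perp_P}$ is clean and correct, and your reduction of the reverse inclusion to the bi-annihilator identity
\[
(\mathbb{F}^{\perp_P})^{\perp_P}=\{u\in\mathbb{E}:\langle\gamma,u\rangle=0\ \text{for all }\gamma\in\mathbb{F}^0\}
\]
is also correct.

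The genuine gap is in your final step. You want, for $u\notin\mathbb{F}$, a separating functional $\gamma\in\mathbb{E}^\flat$ with $\gamma|_{\mathbb{F}}=0$ and $\langle\gamma,u\rangle\neq 0$. Convenient Hahn--Banach (when it applies) produces such a $\gamma$ only in the full kinematic dual $\mathbb{E}'$, and your proposed fix---``transport back via $P^{-1}$''---does not do what you claim. The map $P^{-1}=\Omega_P^\flat$ is an isomorphism $\mathbb{E}\to\mathbb{E}^\flat$, not a map $\mathbb{E}'\to\mathbb{E}^\flat$; there is no way to apply it to the Hahn--Banach functional $\eta\in\mathbb{E}'$. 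Nor does passing to $\mathbb{E}^\flat$ help: Hahn--Banach there would give a functional in $(\mathbb{E}^\flat)'$, which again is the wrong space. The fact that $\mathbb{E}$ and $\mathbb{E}^\flat$ are abstractly isomorphic as convenient spaces says nothing about whether the specific subspace $\mathbb{E}^\flat\subset\mathbb{E}'$ is large enough to separate $u$ from $\mathbb{F}$.

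This is not a cosmetic issue: for a \emph{weak} symplectic form $\Omega$ on an infinite-dimensional space, the bipolar identity $(\mathbb{F}^{\perp_\Omega})^{\perp_\Omega}=\mathbb{F}$ is known to fail in general, precisely because the range $\Omega^\flat(\mathbb{E})\subset\mathbb{E}'$ may be too small to separate closed subspaces. So either an additional hypothesis is implicitly in play here (for instance, that $\mathbb{E}^\flat$ is weak-$*$ dense in $\mathbb{E}'$, or that the situation is effectively strong symplectic), or the second assertion needs a more careful justification than the one you sketch. At minimum, you should make explicit what separation property of $\mathbb{E}^\flat$ inside $\mathbb{E}'$ you are invoking and why it holds under the stated hypotheses.
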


\smallskip
Now, following the classical terminology (cf. \cite{Wei88}), we have:
\begin{definition}
\label{D_Lagrangian} 
Let $(\mathbb{E}^\flat,\mathbb{E},P)$ be a partial linear symplectic space. A convenient subspace $\mathbb{F}$ of  $\mathbb{E}$ is called \emph{Lagrangian}\index{Lagrangian subspace} if  $P(\mathbb{F}^0)=\mathbb{F}$.\\
We say that a Lagrangian space $\mathbb{F}$ is a \emph{split Lagrangian}\index{split Lagrangian subspace} if  $\mathbb{F}$ supplemented  in $\mathbb{E}$ by a Lagrangian space.
\end{definition}
Note that in a partial symplectic structure, any Lagrangian space is also coisotropic\index{coisotropic}. However, any Lagrangian space is not supplemented in general. Even more, the reader will find in \cite{KaSw98} an example of linear symplectic Banach space in which any Lagrangian space is not supplemented.  

\begin{proposition}
\label{P_PF0perp} 
Let $(\mathbb{E}^\flat,\mathbb{E},P)$ be a partial linear Poisson space\index{partial!linear Poisson space}.  If $\mathbb{F}$ a convenient  subspace of $\mathbb{E}$, we have the following properties:
\begin{enumerate}
\item[1.]   
$P \left( (\mathbb{F}^0)^{\perp_P} \right) 
=\mathbb{F}\cap P(\mathbb{E}^\flat)$ and so $\mathbb{F}$ is coisotropic if and only if $\mathbb{F}^{\perp_P}\subset \mathbb{F}$.
\item[2.] 
The following properties are equivalent:
\begin{enumerate}
\item[(i)] 
$\mathbb{F}$ is coisotropic ;
\item[(ii)] 
$\mathbb{F}\cap {P(\mathbb{E}^\flat)}$ is coisotropic in $\mathbb{E}$.\\
\end{enumerate}
\end{enumerate}
\end{proposition}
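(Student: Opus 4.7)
The plan is to unfold the definitions of $\mathbb{F}^0$, $(\mathbb{F}^0)^{\perp_P}$, and coisotropy, and then exploit the skew-symmetry of $\langle\cdot, P\cdot\rangle$ together with the equivalent characterization of coisotropy given in Lemma~\ref{L_FirstProperties}(5).

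For item~1, I would prove the set equality $P((\mathbb{F}^0)^{\perp_P}) = \mathbb{F}\cap P(\mathbb{E}^\flat)$ by two inclusions. For $\supset$, given $v = P\omega \in \mathbb{F}\cap P(\mathbb{E}^\flat)$, one checks directly that $\omega\in (\mathbb{F}^0)^{\perp_P}$: for every $\alpha \in \mathbb{F}^0$,
\[
\langle \omega, P\alpha\rangle = -\langle \alpha, P\omega\rangle = -\langle \alpha, v\rangle = 0,
\]
by the skew-symmetry built into Definition~\ref{D_PoissonLinear} and the fact that $\alpha$ annihilates $\mathbb{F}$. Conversely, for $\subset$, if $\omega \in (\mathbb{F}^0)^{\perp_P}$ then the same skew-symmetry yields $\langle \alpha, P\omega\rangle = 0$ for all $\alpha \in \mathbb{F}^0$, so $P\omega$ is annihilated by $\mathbb{F}^0 \subset \mathbb{E}^\flat$; since $\mathbb{F}$ is $c^\infty$-closed, a bipolar-type argument forces $P\omega \in \mathbb{F}$.

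The corollary "$\mathbb{F}$ coisotropic iff $\mathbb{F}^{\perp_P} \subset \mathbb{F}$" then follows at once: the forward implication uses $\mathbb{F}^0 \subset (\mathbb{F}^0)^{\perp_P}$ to get $\mathbb{F}^{\perp_P}=P(\mathbb{F}^0) \subset P((\mathbb{F}^0)^{\perp_P}) = \mathbb{F}\cap P(\mathbb{E}^\flat) \subset \mathbb{F}$, while the converse follows from Lemma~\ref{L_FirstProperties}(5): if $P(\mathbb{F}^0)\subset \mathbb{F}$, then for $\omega_1,\omega_2\in \mathbb{F}^0$, $\langle \omega_1, P\omega_2\rangle = 0$ since $P\omega_2 \in \mathbb{F}$ and $\omega_1 \in \mathbb{F}^0$, so $\mathbb{F}^0$ is isotropic.

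For item~2, set $\mathbb{G} := \mathbb{F} \cap P(\mathbb{E}^\flat)$ and apply part~1 to reduce both coisotropy conditions to $\mathbb{F}^{\perp_P} \subset \mathbb{F}$ and $\mathbb{G}^{\perp_P} \subset \mathbb{G}$, respectively. For $(ii)\Rightarrow(i)$: since $\mathbb{G}\subset \mathbb{F}$ gives $\mathbb{F}^0 \subset \mathbb{G}^0$, one has $\mathbb{F}^{\perp_P}\subset \mathbb{G}^{\perp_P}\subset \mathbb{G}\subset \mathbb{F}$. For $(i)\Rightarrow(ii)$: given $\omega\in \mathbb{G}^0$, the coisotropy of $\mathbb{F}$ (via part~1) gives $P(\mathbb{F}^0)\subset \mathbb{F}\cap P(\mathbb{E}^\flat) = \mathbb{G}$, so $\omega$ annihilates $P(\mathbb{F}^0)$, i.e., $\omega \in (\mathbb{F}^0)^{\perp_P}$; part~1 then yields $P\omega \in \mathbb{F}\cap P(\mathbb{E}^\flat) = \mathbb{G}$, proving $\mathbb{G}^{\perp_P}\subset \mathbb{G}$. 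The main obstacle is the nontrivial inclusion in part~1, which is a bipolar-type statement not automatic from the left non-degeneracy of $\langle\cdot,\cdot\rangle$ alone and must lean on the standing convenient Hahn--Banach properties ensuring that a $c^\infty$-closed subspace is determined by its annihilator in $\mathbb{E}^\flat$.
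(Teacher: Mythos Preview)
The paper does not supply a proof for this proposition; it appears in the preliminary survey section with a tacit reference to \cite{CaPe23}. So there is no paper argument to compare against, and your proposal must be assessed on its own merits.

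Your overall strategy is the natural one, and your diagnosis of the delicate point is exactly right. The inclusion $P\bigl((\mathbb{F}^0)^{\perp_P}\bigr)\subset\mathbb{F}$ reduces, via skew-symmetry, to the bipolar-type claim that any $v\in\mathbb{E}$ annihilated by every $\alpha\in\mathbb{F}^0=\mathbb{F}^a\cap\mathbb{E}^\flat$ already lies in $\mathbb{F}$. This is \emph{not} a consequence of the standing hypotheses: Definition~\ref{D_PoissonLinear} only asks that $\mathbb{E}^\flat\hookrightarrow\mathbb{E}'$ be bounded, with no separation requirement. Indeed, take $\mathbb{E}=\ell^2$, $\mathbb{E}^\flat=\mathbb{R}\,e_1^*$, $P(e_1^*)=e_2$ (skew-symmetry is automatic since $\langle e_1^*,e_2\rangle=0$), and $\mathbb{F}=\mathbb{R}\,e_1$. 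Then $\mathbb{F}^0=\{0\}$, so $(\mathbb{F}^0)^{\perp_P}=\mathbb{E}^\flat$ and $P\bigl((\mathbb{F}^0)^{\perp_P}\bigr)=\mathbb{R}\,e_2$, whereas $\mathbb{F}\cap P(\mathbb{E}^\flat)=\{0\}$. The equality in item~1 fails.

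So your caution in the final paragraph is well placed: the argument goes through only under an additional separation hypothesis on $\mathbb{E}^\flat$ --- for instance that $\mathbb{E}^\flat$ is weak-$*$ dense in $\mathbb{E}'$ (compare the hypothesis in Theorem~\ref{T_CharacterizationPoissonMorphism}(2)), or in the special cases $\mathbb{E}^\flat=\mathbb{E}'$ and $P$ partial symplectic, which are precisely the situations where the proposition is invoked later in the paper. Your proof of the $\supset$ inclusion, of the coisotropy corollary, and of item~2 (granting item~1) are all correct.
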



\subsubsection{Linear Poisson (symplectic) relations}
\label{___LinearPoissonRelations}

\begin{definition} 
\label{D_PoissonMorphism}
For $i \in \{1,2\}$, let $ \left( \mathbb{E}_i^{\flat}, \mathbb{E}_i, P_i \right)$ be a partial linear Poisson space\index{partial!linear Poisson space}.\\
A bounded linear map $\phi:\mathbb{E}_1\to \mathbb{E}_2$ is called a \emph{Poisson morphism}\index{morphism!Poisson}\index{Poisson!morphism}  (resp. an \emph{anti-Poisson morphism})\index{morphism!anti-Poisson} if the adjoint $\phi^{\ast}$ of $\phi$ satisfies
\[
\left\{
\begin{array}
[c]{rcll}
 \phi^{\ast}(\mathbb{E}_2^{\flat}) &\subset& \mathbb{E}_1^{\flat}&   \\
P_2 &=&	\phi\circ P_1\circ \phi^\ast  
& (\textrm{resp. } P_2=-\phi\circ P_1\circ \phi^\ast).
\end{array}
.\right.
\]
Moreover, if  each  $P_i$ is partial  symplectic and a Poisson morphism (resp. anti-Poisson morphism),  $\phi$ is then called a  \emph{partial symplectic morphism}\index{partial!symplectic morphism} (resp. partial anti-symplectic morphism\index{partial!anti-symplectic morphism}).
\end{definition}

The product $\mathbb{E}_1\times \mathbb{E}_2$ can be provided with two linear partial Poisson structures:
\begin{enumerate}
\item[1.]
$P_{\mathbb{E}_1\times\mathbb{E}_2^+}:\mathbb{E}_1^{\flat}\times \mathbb{E}_2^{\flat}\to \mathbb{E}_1\times \mathbb{E}_2$ given by $P_{\mathbb{E}_1\times\mathbb{E}_2}=(P_1, P_2)$
\item[2.]
$P_{\mathbb{E}_1\times{\mathbb{E}}_2^-}:\mathbb{E}_1^{\flat}\times \mathbb{E}_2^{\flat}\to \mathbb{E}_1\times \mathbb{E}_2$ given by $P_{\mathbb{E}_1\times\mathbb{E}_2}=(P_1, -P_2)$.
\end{enumerate}

Note that if each  $P_i$ is a partial (resp. strong) symplectic so are $P_{\mathbb{E}_1\times\mathbb{E}_2^+}$ and $P_{\mathbb{E}_1\times{\mathbb{E}}^-_2}$. For simplicity, we denote by $\mathbb{E}_1^+\times\mathbb{E}^+_2$ and $\mathbb{E}_1^+\times\mathbb{E}^-_2$ this first and second structure respectively.\\

Recalling that the graph
\footnote{This notation is classical. However,  usually, in finite dimension in the classical papers on symplectic groupoids or Poisson groupoids (cf. \cite{Gui75}, \cite{CDW87},  \cite{AlDa90}, \cite{Wei88} among many others) the graph of such a map is denoted $G_\phi=\{(\phi(u), u)\; u\in \mathbb{E}_1\}.$} 
of $\phi$ is 
\[
G_\phi=\{ \left(  u, \phi(u) \right) ,\; u\in \mathbb{E}_1\}.
\]
we then have the following characterization of a Poisson morphism:
\begin{proposition}
\label{P_CharacterizationPoissonMorphism}
A bounded linear map  $\phi:\mathbb{E}_1\to\mathbb{E}_2$  such that  $\phi^\ast \left( \mathbb{E}_2^{\flat} \right) \subset \mathbb{E}_1^{\flat}$ is a partial Poisson morphism (resp. anti-Poisson morphism) if and only if its graph is coisotropic\index{coisotropic} in $\mathbb{E}_1\times\mathbb{E}_1^-$ (resp. $\mathbb{E}_1^-\times\mathbb{E}_2$).\\
Moreover, if $P_1$ and $P_2$ are  partial symplectic, $\phi$ is a partial symplectic  or partial anti-symplectic map, respectively,  if and only if its graph is Lagrangian in  $\mathbb{E}_1\times\mathbb{E}_1^-$ or in $\mathbb{E}^-\times\mathbb{E}_2$ respectively.
\end{proposition}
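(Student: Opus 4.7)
The plan is to reduce both equivalences to explicit computations with the annihilator $G_\phi^0$ of the graph, and then apply the coisotropy criterion in part~5 of Lemma~\ref{L_FirstProperties} together with the definition of Lagrangian.

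First I would compute $G_\phi^0\subset \mathbb{E}_1^\flat\times \mathbb{E}_2^\flat$. A pair $(\alpha,\beta)$ annihilates every $(u,\phi(u))$ exactly when $\langle\alpha,u\rangle+\langle\beta,\phi(u)\rangle=0$ for all $u\in \mathbb{E}_1$, and the hypothesis $\phi^\ast(\mathbb{E}_2^\flat)\subset \mathbb{E}_1^\flat$ lets one rewrite this as $\langle\alpha+\phi^\ast\beta,u\rangle=0$. Left non-degeneracy of the duality pairing between $\mathbb{E}_1^\flat$ and $\mathbb{E}_1$ then yields
\[
G_\phi^0=\{(-\phi^\ast\beta,\beta):\beta\in \mathbb{E}_2^\flat\}.
\]

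Next, by Lemma~\ref{L_FirstProperties}(5), $G_\phi$ is coisotropic in the product $\mathbb{E}_1\times \mathbb{E}_2^-$ (carrying the Poisson structure $(P_1,-P_2)$) iff, for every $\beta_1,\beta_2\in \mathbb{E}_2^\flat$,
\[
\langle -\phi^\ast\beta_1,P_1(-\phi^\ast\beta_2)\rangle-\langle\beta_1,P_2\beta_2\rangle=0.
\]
By the defining relation of the adjoint, the first term equals $\langle\beta_1,\phi\circ P_1\circ\phi^\ast\beta_2\rangle$, so the condition collapses to $\langle\beta_1,(\phi\circ P_1\circ\phi^\ast-P_2)\beta_2\rangle=0$; left non-degeneracy on $\mathbb{E}_2^\flat\times \mathbb{E}_2$ then forces the Poisson identity $P_2=\phi\circ P_1\circ\phi^\ast$. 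The anti-Poisson case is identical after flipping the sign of $P_1$ on the first factor.

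For the Lagrangian part, coisotropy already gives the inclusion $P(G_\phi^0)\subset G_\phi$, so only the reverse inclusion requires work. Given $(u,\phi(u))\in G_\phi$, the only candidate preimage under the symplectic isomorphism $P=(P_1,-P_2)$ is $\beta=-P_2^{-1}\phi(u)$, and one must establish the identity $u=P_1\phi^\ast P_2^{-1}\phi(u)$. I expect this to be the main obstacle: in finite dimension it follows from a dimension count, but here one has to argue, from $P_2=\phi\circ P_1\circ\phi^\ast$ and the bijectivity of $P_1,P_2$, that $\phi$ is itself a convenient isomorphism (surjectivity from that of $P_2$; injectivity by exploiting that $\phi^\ast$ admits $P_2^{-1}\phi P_1$ as a left inverse), after which $P_1\phi^\ast P_2^{-1}\phi=\Id_{\mathbb{E}_1}$ follows by composition.
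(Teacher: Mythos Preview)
Your treatment of the coisotropic equivalence is correct and parallels the paper's: both compute $(G_\phi)^0=\{(\phi^\ast\alpha,-\alpha):\alpha\in\mathbb{E}_2^\flat\}$ and then test coisotropy. You invoke the bilinear criterion of Lemma~\ref{L_FirstProperties}(5), while the paper checks the set inclusion $P\bigl((G_\phi)^0\bigr)\subset G_\phi$ directly; these are equivalent via Proposition~\ref{P_PF0perp}, so there is no substantive difference.

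For the Lagrangian part you correctly isolate the crucial identity $P_1\phi^\ast P_2^{-1}\phi=\Id_{\mathbb{E}_1}$, but your proposed route to it does not work. The relation $P_2=\phi P_1\phi^\ast$ together with the bijectivity of $P_2$ gives $\phi$ a right inverse (so $\phi$ is surjective) and gives $\phi^\ast$ a left inverse (so $\phi^\ast$ is injective); neither fact yields injectivity of $\phi$. Indeed nothing in the hypotheses forces it: take $\mathbb{E}_1=\mathbb{R}^4$, $\mathbb{E}_2=\mathbb{R}^2$ with the standard symplectic structures and let $\phi$ be the projection onto the first symplectic pair. Then $P_2=\phi P_1\phi^\ast$ holds, so $\phi$ is a partial symplectic morphism in the sense of Definition~\ref{D_PoissonMorphism}, yet $G_\phi$ has dimension $4$ inside a $6$-dimensional symplectic space while $P\bigl((G_\phi)^0\bigr)$ has dimension $2$; the graph is coisotropic but not Lagrangian. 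The paper's own argument runs into the very step you flagged: it rewrites $P^{-1}(G_\phi)$ by asserting $P_1^{-1}(u)=\phi^\ast P_2^{-1}\phi(u)$ without justification, which is precisely the identity that fails in the example above. The implication ``partial symplectic morphism $\Rightarrow$ graph Lagrangian'' thus needs an extra hypothesis (e.g.\ that $\phi$ be a convenient isomorphism) that is not part of the stated definition.
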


\begin{proof} 
Note that since $\phi$ is bounded $G_\phi$ is closed (cf. \cite{Bam16}, 1.1)
\begin{eqnarray}
 \label{eq_Gphi0}
 \{ \left( \phi^{\ast}(\alpha),-\alpha \right) ,\;\alpha\in \mathbb{E}_2^{\flat}\}.
\end{eqnarray}
is $c^\infty$ closed (cf. \cite{Bam16}, 1.1).\\ 
For the partial Poisson structure $P_{\mathbb{E}_1\times{\mathbb{E}}^-_2}$, we have
\[
P_{\mathbb{E}_1\times{\mathbb{E}}_2^-}\left((G_\phi)^0\right)=\{\left(P_1(\phi^{\ast}(\alpha), P_2(\alpha)\right), \; \alpha\in E_2^{\flat}\}.
\]
Now, if $\phi$ is a Poisson morphism, we have $P_2=\phi\circ P_1\circ \phi^{\ast}$ and so
$$P_{\mathbb{E}_1\times\overline{\mathbb{E}}_2}\left((G_\phi)^0\right)=\{\left(P_1\circ\phi^{\ast}(\alpha), \phi(P_1\circ\phi^{\ast}(\alpha))\right), \; \alpha\in E_2^{\flat}\}.$$
which is contained in $G_\phi$.\\
Conversely, if $P_{\mathbb{E}_1\times{\mathbb{E}}^-_2}\left((G_\phi)^0\right)$ is contained in $G_\phi$, this implies that $P_2=\phi\circ P_1\circ \phi^{\ast}$.\\
Assume that  $P_1$ and $P_2$ are  partial symplectic 
if $G_\phi$ is Lagrangian, then this implies that $\phi$ is a Poisson morphism.\\
Conversely, assume that $\phi$ is partial  symplectic, this means that $P_1$ and $P_2$ are isomorphisms, so $P_{\mathbb{E}_1\times{\mathbb{E}}^-_2}$ is an isomorphism from $\mathbb{E}^{\flat}_1\times \mathbb{E}^{\flat}_2$  to $\mathbb{E}_1\times \mathbb{E}_2$. Thus it remains to show that 
\[
P_{\mathbb{E}_1\times{\mathbb{E}}^-_2}^{-1}(G_\phi)\subset(G_\phi)^0.
\]
But we have
\[
\begin{array}{rcl}
P_{\mathbb{E}_1\times{\mathbb{E}}^-_2}^{-1}(G_\phi)
&=&\{(P_1^{-1}(u), -P_2^{-1}\circ \phi(u)),\; u\in \mathbb{E}_1\}  \\
 &=& \{(\phi^{\ast}\left(P_2^{-1}\circ\phi(u)\right), -P_2^{-1}\circ \phi(u)),\; u\in \mathbb{E}_1\}.
\end{array}
\]
which is included in $(G_\phi)^0$ from (\ref{eq_Gphi0}).
\end{proof}

More generally, following \cite{Gui75}, \cite{CDW87}, \cite{AlDa90} and \cite{Wei88}, we introduce the following notions: 

\begin{definition}
\label{D_LinearRelation}${}$
\begin{enumerate}
\item 
Any closed  subspace  $R \subset \mathbb{E}_1\times \mathbb{E}_2$ will be  called a \emph{linear  relation}\index{linear relation}.\\
Given a  linear relation $R \subset \mathbb{E}_1\times \mathbb{E}_2$, we will say that $R$ is a linear  relation from $\mathbb{E}_1$ to $\mathbb{E}_2$ and we will write  
 $R :\mathbb{E}_1\rightarrowtriangle \mathbb{E}_2$\index{RE1E2@$R :\mathbb{E}_1\rightarrowtriangle \mathbb{E}_2$ (linear relation from $\mathbb{E}_1$ to $\mathbb{E}_2$}.\\
If $\mathbb{E}_1=\mathbb{E}_2$, we simply say that $R$ is a \emph{relation} in $\mathbb{E}$.
\item 
Let $R:\mathbb{E}_1 \rel \mathbb{E}_2$ and $S:\mathbb{E}_2\rel\mathbb{E}_3$ be two relations. The composition $S\circ R$ is the relation $S\circ R:\mathbb{E}_1\rel \mathbb{E}_3$ given by
\[
S\circ R=\{(x,z)\in \mathbb{E}_1\times\mathbb{E}_2\;:\; (x,y)\in R \textrm{ and } (y,z)\in S\;\textrm{ for some } y\in \mathbb{E}_2 \}.
\]
\end{enumerate}
\end{definition}

In particular, the graph $G_\phi$ of a bounded linear map  $\phi :\mathbb{E}_1\to \mathbb{E}_2$ (which is always a convenient subspace of  $\mathbb{E}_1\times \mathbb{E}_2$) is a linear relation.\\
We will reserve the symbol $\to$ for a graph $G_\phi$ and the symbol $\rel$ will be used for a general relation. Of course, if $\phi:\mathbb{E}_1\to\mathbb{E}_2$ and $\psi :\mathbb{E}_2\to \mathbb{E}_3$ are two bounded linear map. The composition $G_\psi \circ G_\phi$ is $G_{\psi\circ \phi}$.\\

Now following Weinstein's terminology (\cite{Wei88}), according to Proposition \ref{P_CharacterizationPoissonMorphism}, we introduce:

\begin{definition}
\label{D_PoissonRelation} ${}$
\begin{enumerate}
\item 
A relation $R:\mathbb{E}_1\rel \mathbb{E}_2$ is called a \emph{Poisson relation}\index{Poisson!relation}, if $R$ is coisotropic according to the partial Poisson structure $P_{\mathbb{E}_1\times{\mathbb{E}}^-_2}$.
\item  
If for $i \in \{1,2\}$, $(\mathbb{E}^\flat_i, \mathbb{E}_i, P_i)$ is partial symplectic,  a relation $R:\mathbb{E}_1\rel \mathbb{E}_2$ is called a \emph{symplectic relation} if $R$ is lagrangian in $P_{\mathbb{E}_1\times{\mathbb{E}}^-_2}$.
\end{enumerate}
\end{definition}

As in \cite{Wei88}, we begin by the following result which is the version of Proposition~1.3.2 in our context and whose proof used the same type of arguments
 
\begin{theorem}
\label{T_R(C)} 
 For $i \in \{1,2\}$, let $(\mathbb{E}^\flat_i, \mathbb{E}_i,P_i)$ be a partial linear Poisson space and $R$ (resp. $C$) a coisotropic space of $\mathbb{E}_1\times\mathbb{E}_2$ (resp. $\mathbb{E}_2$). Then 
\[
R(C)=\{y\in \mathbb{E}_2\;:\; (x,y)\in R \textrm{ for some } x\in C\}.
\]
is a coisotropic space in $\mathbb{E}_2$.\\
Moreover, if each $P_1$ and $P_2$ are partial symplectic and $R$ and $C$ are Lagrangian, so is $R(C)$.
\end{theorem}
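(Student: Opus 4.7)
The plan is to mimic Weinstein's classical argument (Proposition~1.3.2 of \cite{Wei88}) by writing $R(C)=\pi_2\bigl(R\cap(C\times\mathbb{E}_2)\bigr)$ and reasoning at the level of annihilators. First I would remark that $\omega\in(R(C))^0\subset\mathbb{E}_2^\flat$ if and only if the element $(0,\omega)\in\mathbb{E}_1^\flat\times\mathbb{E}_2^\flat$ annihilates $R\cap(C\times\mathbb{E}_2)$. The crucial algebraic input is then to decompose $(0,\omega)=(\alpha,\omega)+(-\alpha,0)$ with $(\alpha,\omega)\in R^0$ and $\alpha\in C^0$, i.e.\ the dual-of-intersection identity $(R\cap(C\times\mathbb{E}_2))^0=R^0+(C^0\times\{0\})$.

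Granting this decomposition, coisotropy of $R(C)$ reduces, via Lemma~\ref{L_FirstProperties}(5), to a short calculation. Equipping $\mathbb{E}_1\times\mathbb{E}_2$ with the product structure $P_1\times P_2$, for $\omega_1,\omega_2\in(R(C))^0$ with lifts $(\alpha_j,\omega_j)\in R^0$ and $\alpha_j\in C^0$, the coisotropy of $R$ yields
\[
\langle\alpha_1,P_1\alpha_2\rangle+\langle\omega_1,P_2\omega_2\rangle=0,
\]
while the coisotropy of $C$ gives $\langle\alpha_1,P_1\alpha_2\rangle=0$, whence $\langle\omega_1,P_2\omega_2\rangle=0$. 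As a prerequisite, I would also verify that $R(C)$ is $c^\infty$-closed in $\mathbb{E}_2$ so that Definition~\ref{D_Coisotropic} indeed applies.

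For the Lagrangian addendum, assuming now that $P_1,P_2$ are convenient isomorphisms and $R,C$ are Lagrangian, the coisotropy already gives $P_2\bigl((R(C))^0\bigr)\subset R(C)$, so it remains to establish the reverse inclusion. Given $y\in R(C)$, pick $x\in C$ with $(x,y)\in R$; using $R=(P_1\times P_2)(R^0)$ and $C=P_1(C^0)$, write $x=P_1\alpha$ and $y=P_2\beta$ with $(\alpha,\beta)\in R^0$, and exploit the injectivity of $P_1$ together with $x\in C=P_1(C^0)$ to conclude $\alpha\in C^0$. Then $\omega:=\beta$ satisfies $P_2\omega=y$, and a calculation mirroring the coisotropic case, now using isotropy of both $R^0$ and $C^0$, yields $\omega\in(R(C))^0$, hence $y\in P_2\bigl((R(C))^0\bigr)$.

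The main obstacle will be the dual-of-intersection step used in the first paragraph: in finite dimension $(A\cap B)^0=A^0+B^0$ holds automatically, whereas in the convenient setting with $\mathbb{E}^\flat\subsetneq\mathbb{E}'$ the production of a lift $(\alpha,\omega)\in R^0$ with $\alpha\in C^0$ from a given $\omega\in(R(C))^0$ is the delicate point and is likely to require a Hahn-Banach-type extension in tandem with the $c^\infty$-closedness of $R$ and $C$; establishing the $c^\infty$-closedness of the image $R(C)$ itself is a closely related subtlety.
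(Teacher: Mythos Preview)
Your proposal follows precisely the route the paper takes: the paper does not spell out a proof but simply asserts that the argument is ``the same type of arguments'' as in Weinstein's Proposition~1.3.2 of \cite{Wei88}, which is exactly the annihilator-decomposition scheme you describe. Your identification of the two genuine infinite-dimensional subtleties---the $c^\infty$-closedness of $R(C)$ and, more seriously, the decomposition $(R\cap(C\times\mathbb{E}_2))^0=R^0+(C^0\times\{0\})$ inside $\mathbb{E}_1^\flat\times\mathbb{E}_2^\flat$---is accurate and in fact sharper than what the paper makes explicit; the paper glosses over both points by deferring to the finite-dimensional reference, and Lemma~\ref{L_FirstProperties}(3) ($\mathbb{F}^0=(\overline{\mathbb{F}})^0$) partly mitigates the first issue for the coisotropic claim but not for the Lagrangian one.

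One small slip to fix: in the theorem statement (and hence in your write-up) $C$ must sit in $\mathbb{E}_1$, not $\mathbb{E}_2$, since $R(C)$ is defined via $x\in C$ with $(x,y)\in R\subset\mathbb{E}_1\times\mathbb{E}_2$; your computations already use this correctly (you apply $P_1$ to the $\alpha_j\in C^0$), so just make the indexing consistent. Beyond that, your plan is the paper's plan.
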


\begin{corollary}
\label{C_ImageOfCoisotropic}
For $i \in \{1,2,3\} $, let $P_i: \mathbb{E}^\flat_i\to \mathbb{E}_i$ be a partial linear Poisson structure on $\mathbb{E}_i$. Given  two Poisson relations $S:\mathbb{E}_1\rel\mathbb{E}_2$ and $T:\mathbb{E}_2\rel\mathbb{E}_3$,  
$T\circ S$ is a Poisson relation from $\mathbb{E}_1\rel \mathbb{E}_3$. Moreover,  if each $P_i$ is  partial symplectic and $S$ and $T$ are Lagrangian, so is $T\circ S$.
\end{corollary}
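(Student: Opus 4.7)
The plan is to realize $T\circ S$ as the image $R(C)$ of a coisotropic subspace under a coisotropic relation, and then appeal to Theorem~\ref{T_R(C)}. I would introduce the auxiliary relation
\[
\widetilde{T}=\bigl\{\bigl((x,y),(x,z)\bigr)\,:\,x\in\mathbb{E}_1,\ (y,z)\in T\bigr\}\subset (\mathbb{E}_1\times\mathbb{E}_2)\times(\mathbb{E}_1\times\mathbb{E}_3)
\]
together with $C=S\subset\mathbb{E}_1\times\mathbb{E}_2$. Unwinding the definition of the image of a relation yields $\widetilde{T}(C)=T\circ S$, so once both $\widetilde{T}$ and $C$ are shown to satisfy the hypotheses of Theorem~\ref{T_R(C)} for the appropriate partial Poisson structures, the conclusion follows at once.

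For $C=S$ this is precisely the hypothesis that $S$ is a Poisson (resp.\ symplectic) relation, namely that $S$ is coisotropic (resp.\ Lagrangian) in $\mathbb{E}_1\times\mathbb{E}_2^-$. For $\widetilde{T}$, a permutation of factors identifies it with the product $\Delta_{\mathbb{E}_1}\times T$, where $\Delta_{\mathbb{E}_1}=\{(x,x):x\in\mathbb{E}_1\}$ is the diagonal of $\mathbb{E}_1$. Applying Proposition~\ref{P_CharacterizationPoissonMorphism} to the identity map of $\mathbb{E}_1$ shows that $\Delta_{\mathbb{E}_1}$ is coisotropic (resp.\ Lagrangian when $P_1$ is partial symplectic) in $\mathbb{E}_1\times\mathbb{E}_1^-$. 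Combined with the assumption that $T$ is coisotropic (resp.\ Lagrangian) in $\mathbb{E}_2\times\mathbb{E}_3^-$, the claim that $\Delta_{\mathbb{E}_1}\times T$ is coisotropic (resp.\ Lagrangian) in the product partial Poisson space reduces to the elementary observation that for subspaces $\mathbb{F}_i\subset\mathbb{E}_i$ one has $(\mathbb{F}_1\times\mathbb{F}_2)^0=\mathbb{F}_1^0\times\mathbb{F}_2^0$ and the product Poisson bracket splits diagonally.

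Theorem~\ref{T_R(C)} then yields that $T\circ S=\widetilde{T}(C)$ is coisotropic (resp.\ Lagrangian) in $\mathbb{E}_1\times\mathbb{E}_3^-$, which is exactly the statement that $T\circ S$ is a Poisson (resp.\ symplectic) relation from $\mathbb{E}_1$ to $\mathbb{E}_3$. The principal subtlety in executing this plan is purely bookkeeping: one must verify that the sign conventions $\pm P_i$ on the two ambient product Poisson spaces $(\mathbb{E}_1\times\mathbb{E}_2)\times(\mathbb{E}_1\times\mathbb{E}_3)^-$ and $(\mathbb{E}_1\times\mathbb{E}_1^-)\times(\mathbb{E}_2\times\mathbb{E}_3^-)$ match under the permutation identifying $\widetilde{T}$ with $\Delta_{\mathbb{E}_1}\times T$, up to the overall involution $P\mapsto -P$ on one factor, which is harmless since both coisotropy and the Lagrangian condition are invariant under a global sign change of the Poisson tensor.
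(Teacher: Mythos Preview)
Your proposal is correct and is essentially the argument the paper has in mind: the paper's own proof consists solely of the reference ``See proof of Theorem~1.3.4 in \cite{Wei88}'', and Weinstein's argument there is precisely the reduction to Theorem~\ref{T_R(C)} via the auxiliary relation $\widetilde{T}\cong\Delta_{\mathbb{E}_1}\times T$ that you describe. Your treatment of the sign bookkeeping is also right: flipping the global sign on the first factor $\mathbb{E}_1\times\mathbb{E}_2^-$ (which preserves coisotropy of $S$) makes the permuted structure match $(\mathbb{E}_1^-\times\mathbb{E}_1)\times(\mathbb{E}_2\times\mathbb{E}_3^-)$, where both $\Delta_{\mathbb{E}_1}$ and $T$ are coisotropic as required.
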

\begin{proof}
See  proof of Theorem~1.3.4 in \cite{Wei88}.
\end{proof}

\subsection{Basic Properties of Partial Poisson Manifolds} 
\label{__EssentialPropertiesPartialPoissonManifolds}
\emph{All theses properties can be also found in \cite{CaPe23}.}
\subsubsection{Partial Poisson Manifolds}
\label{___PartialPoissonManifolds}

Let $M$ be a convenient manifold modelled on the convenient space $\mathbb{M}$.
We denote by $p_{M}:TM\rightarrow M$ its kinematic  tangent bundle and by $p_{M}^{\prime}:T^{\prime}M\rightarrow M$ its
 kinematic cotangent bundle.\\
Consider a vector subbundle $p^\flat:T^{\flat}M\rightarrow M$ of $p_{M}^{\prime}:T^{\prime}M \to M$ such that $p^\flat:T^{\flat}M\rightarrow M$ is a convenient bundle and the canonical injection $\iota:T^{\flat}M\to T^{\prime}M$ is a convenient  bundle morphism. 
Such a bundle $p^{\flat}:T^{\flat}M\rightarrow M$ will be called a \emph{weak subbundle}\index{weak!subbundle} of $p_{M}^{\prime}:T^{\prime}M\rightarrow M$ and its typical fibre is denoted $\mathbb{M}^\flat$.\\
We denote by $<.\;,.>$ the canonical bilinear crossing between $T^{\prime}M$ and $TM$.  Since the inclusion $\iota$ of $T^\flat M$ in $T^{\prime}M$ is an injective bundle morphism, we have an induced bilinear pairing  $<.\;,.>^{\flat}$ on  $T^\flat M\times TM$  defined by 
\[
(\alpha, u)\mapsto<\alpha,u>^{\flat}=<\iota\circ\alpha,u>.
\] 
which is continuous and left non degenerate. In the following, since  no confusion is possible, this pairing will be also denoted $<.\;,.>$.\\

According to \cite {KrMi97}, Definition 48.5, we introduce, for any open set $U$ in $M$, the set $\mathfrak{A}(U)$\index{AmathcalU@$\mathfrak{A}(U)$}  of smooth functions $f\in C^\infty(U)$ such that, for any $k \in \mathbb{N}^\star$ and any $x$ in $U$, the derivative\\
$d^{k}f(x)\in L_{\operatorname{sym}}^{k}(T_{x}M,\mathbb{R})$ satisfies:
\begin{equation}
\label{eq_dkf0}
\forall (u_2,\dots,u_k) \in (T_xM)^{k-1},\;
d^{k}_xf(.,u_{2},\dots,u_{k}) \in T_{x}^{\flat}M.
\end{equation}

\begin{remark}
\label{R_LinearFunctionalOnTflatMBounded}
In particular, the previous condition implies  that, for any bounded linear functional $\lambda$ on $T_{x}^{\flat}M$, then $u\mapsto \lambda(d^{k}_xf(u,u_{2},\dots,u_{k}))$ is bounded on bounded sets of $T_xM$.
\end{remark}

The sets $\{\mathfrak{A}(U)\subset C^\infty(U),\; U\textrm{ open in } M \}$ define a sub-sheaf $\mathfrak{A}_M$ of the sheaf\\
$C^\infty_M=\{C^\infty(U),\; U\textrm{open set in }M\}$  (cf. \cite{CaPe23}). Of course, if $T^\flat M=T^\prime M$, then $\mathfrak{A}_M=C^\infty_M$.\\

We then have the following properties of $\mathfrak{A}(U))$ proved in \cite{CaPe23}:
\begin{proposition}
\label{P_AUalgebra}
Fix any open set  $U$ in $M$.
\begin{enumerate}
\item
The set $\mathfrak{A}(U)$\index{E_U@$\mathfrak{A}(U)$} is a subalgebra of $C^\infty(U)$. 
\item 
For each $k\in \mathbb{N}$, any local vector fields $X_1,\dots ,X_k$ on $U$ and any $f\in\mathfrak{A}(U)$,  the map\\
$x\mapsto  d^kf(X_1,\dots,X_k)(x)$ belongs to $\mathfrak{A}(U)$.
\end{enumerate}
\end{proposition}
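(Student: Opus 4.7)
The plan is to deduce both statements from the Leibniz rule (for 1) and the chain rule (for 2), exploiting two ingredients: the fact that $T_x^\flat M$ is a vector subspace of $T_x' M$, and the symmetry of the multilinear forms $d^k f_x$, which (together with condition (\ref{eq_dkf0})) means that the $T_x^\flat M$-property holds with respect to \emph{any} designated slot, not just the first.

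For Part 1, stability under linear combinations is immediate: $d^k(\alpha f + \beta g)_x = \alpha\, d^k f_x + \beta\, d^k g_x$, and $T_x^\flat M$ is a vector subspace of $T_x' M$, so condition (\ref{eq_dkf0}) passes to sums. For products, I would expand
\[
d^k(fg)_x(u_1,\dots,u_k) = \sum_{I\subset\{1,\dots,k\}} d^{|I|}f_x(u_I)\cdot d^{k-|I|}g_x(u_{I^c})
\]
via the Leibniz formula. Fixing $u_2,\dots,u_k$ and varying $u_1$, in each summand the index $1$ lies either in $I$ or in $I^c$; in either case the varying factor, viewed as a linear functional of $u_1$, is in $T_x^\flat M$ (by the hypothesis on $f$ or $g$ respectively), while the other factor is a scalar. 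Because $T_x^\flat M$ is a linear subspace, the whole sum lies in $T_x^\flat M$.

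For Part 2, I would induct on $k$ (and, for a fixed $k$, on the order $\ell$ of derivative of $g$). The base case $k=0$ is trivial. For the inductive step, work in a chart and apply the chain rule to $g(x) = d^k f_x(X_1(x),\dots,X_k(x))$, obtaining
\[
dg_x(v) = d^{k+1}f_x(v,X_1(x),\dots,X_k(x)) + \sum_{i=1}^{k} d^{k}f_x\bigl(X_1(x),\dots,dX_i(x)(v),\dots,X_k(x)\bigr).
\]
The first term, viewed as a linear form in $v$, lies in $T_x^\flat M$ directly from the hypothesis on $f$. Iterating this expansion, $d^{\ell}g_x$ becomes a finite sum of products of expressions of two types: values of $d^{k+m}f_x$ on mixtures of the $v_j$'s and iterated derivatives $d^{p}X_i(x)$, and scalar evaluations of the $X_i(x)$'s. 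Each such expression, after freezing $v_2,\dots,v_\ell$ and using the symmetry of $d^{k+m}f_x$ to move the $v_1$-dependent slot to the first position, becomes a functional in $T_x^\flat M$; Part 1 then absorbs the products into $\mathfrak{A}(U)$.

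The main obstacle is precisely the correction terms of the form $d^k f_x(\dots, dX_i(x)(v), \dots)$, where an element of $T_x^\flat M$ is pre-composed with the differential of a vector field. These terms are benign when $v$ fills a slot already designated for $T_x^\flat M$-values (via symmetry), but the combinatorial bookkeeping in the higher-order chain rule requires a careful argument that, at every stage, the varying slot can be isolated and identified as living in $T_x^\flat M$. This is the only nontrivial step, and it is handled exactly as in \cite{CaPe23}, Chap.~7, where the compatibility of the weak subbundle $T^\flat M$ with the differentiation of smooth vector fields is built into the definition of the sub-sheaf $\mathfrak{A}_M$.
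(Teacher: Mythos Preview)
The paper does not give its own proof of this proposition: it cites \cite{CaPe23}. The only internal trace of the argument is the phrase ``As in the proof of Proposition~\ref{P_AUalgebra}, 2., by induction'' inside the proof of Theorem~\ref{T_CharacterizationPoissonMorphism}, followed by a Fa\`a di Bruno-type decomposition. Your approach---Leibniz rule for Part~1, chain rule plus induction for Part~2---is precisely that technique, so in form it matches what the paper indicates.

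Part~1 is fine as you wrote it.

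For Part~2 there is a genuine gap, and you have correctly located it. The obstacle lies in the terms $d^{k}f_x\bigl(X_1,\dots, dX_i(x)(v),\dots, X_k\bigr)$: viewed as linear forms in $v$ these are precompositions $\alpha\circ dX_i(x)$ with $\alpha\in T_x^\flat M$. Your assertion that ``the varying slot can be isolated and identified as living in $T_x^\flat M$'' does not settle them, because nothing in the standing hypotheses forces the subspace $T_x^\flat M\subset T_x' M$ to be stable under pull-back by arbitrary bounded endomorphisms of $T_xM$. Concretely: with $\mathbb{M}=\ell^2$, $\mathbb{M}^\flat=\ell^1$, $f(x)=\langle a,x\rangle$ for some $a\in\ell^1$, and $X_1(x)=Ax$, one finds $d\bigl(df(X_1)\bigr)_x=A^{\ast}a$, which in general does not lie in $\ell^1$. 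The paper's own handling of the analogous precomposition difficulty (Lemma~\ref{L_diepsilonast}, invoked in Theorem~\ref{T_CharacterizationPoissonMorphism}) explicitly requires closedness of $\mathbb{F}_1$ in $\mathbb{M}_1'$, and even then assumes the first-order stability as a hypothesis. So either the full argument in \cite{CaPe23} supplies additional structure not reproduced here, or a hypothesis is missing from the statement as recorded. Your deferral to \cite{CaPe23} at exactly this point is no less complete than the paper's own treatment.
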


Then $\{\mathfrak{A}(U)\subset C^\infty(U),\; U\textrm{ open set in } M \}$ defines a sub-sheaf $\mathfrak{A}_M$  of the sheaf\\
$C^\infty_M=\{C^\infty(U),\; U\textrm{ open in }M\}$  (cf. \cite{CaPe23}). Of course, if $T^\flat M=T^\prime M$, then $\mathfrak{A}_M=C^\infty_M$.\\


A morphism $P:T^{\flat}M\to TM$ is called \textit{skew-symmetric}\index{skew-symmetric!morphism} if it satisfies the relation
\[
<\xi,P(\eta)>=-<\eta,P(\xi)>
\]
for all sections $\xi$ and $\eta$ of $T^{\flat}M$. \\
Given such a morphism $P$,  for each open set $U$ in $M$, on $\mathfrak{A}(U)$ we define:
\begin{equation}
\label{eq_DefinitionLocalBracket}
\{f,g\}_{U}=-<df,P(dg)>=<dg,P(df)>.
\end{equation}

In these conditions, the relation (\ref{eq_DefinitionLocalBracket}) defines a skew-symmetric bilinear map
\[
 \{\;,\;\}_U:\mathfrak{A}(U)\times\mathfrak{A}(U)\to \mathfrak{A}(U)
 \]  
which satisfies the Leibniz identity\index{Leibniz identity}\index{identity!Leibniz}
\[
\{f,gh\}_{U}=g\{f,h\}_{P}+h\{f,g\}_{U}
\]
(cf. \cite{PeCa19}, 2.1).\\
In fact, the family of the bilinear maps $\{\;,\;\}_U$ defines  a  sheaf of brackets which will be denoted $\{\;,\;\}_P$ 
\begin{definition}
\label{D_PartialPoisson}
Let $p^\flat:T^{\flat}M\rightarrow M$ be a weak subbundle of
$p_{M}^{\prime}:T^{\prime}M\rightarrow M$.
\begin{enumerate}
\item 
Any skew-symmetric morphism  $P:T^{\flat}M\rightarrow TM$ a
is called an \emph{almost Poisson anchor}\index{almost!Poisson anchor} and $\{\;,\;\}_P$ an \emph{almost Poisson bracket}\index{almost!Poisson bracket}.
\item 
Given an almost Poisson anchor $P:T^\flat M\to TM$, we say that $P$ is a \emph{Poisson anchor} if the associated bracket $\{\;,\;\}_{P}$ satisfies the Jacobi identity\index{Jacobi identity}
\index{identity!Jacobi}
\[
\{f,\{g,h\}_{P}\}_{P}+\{g,\{h,f\}_{P}\}_{P}+\{h,\{f,g\}_{P}\}_{P}=0.\\
\]
In this case, $ \left( T^\flat M,TM, P,\{\;,\;\}_{P} \right) $ is called a \emph{partial Poisson structure}\index{partial!Poisson structure} on $M$.
\end{enumerate}
\end{definition}

As classically, given a partial Poisson structure $ \left( T^\flat M,TM,P,\{\;,\;\}_P \right) $, any function $f\in\mathfrak{A}(U)$ is called a \emph{Hamiltonian} and the associated vector field $X_{f}=P(df)$ is called a \emph{Hamiltonian vector field}.

We then have $\{f,g\}=X_{f}(g)$ and also $[X_{f},X_{g}]=X_{\{f,g\}}$ (see \cite{NST14}), which is equivalent to
\begin{equation}
\label{eq_Pdfdg}
P(d\{f,g\})=[P(df),P(dg)].\\
\end{equation}

If $ \left( T^\flat M,TM, P,\{\;,\;\}_{P} \right) $ is a partial Poisson structure on $M$, then, for any open set $U$, the space $\mathfrak{A}(U)$ provided with the Poisson bracket $\{\;,\;\}_P$ has a structure of Lie algebra, called classically  \emph{a Lie-Poisson  algebra}\index{Lie-Poisson algebra} and, according to (\ref{eq_Pdfdg}), the Poisson anchor $P$ gives rise to a Lie algebra morphism from $\mathfrak{A}(U)$ to the Lie algebra  $\mathfrak{X}(U)$ of smooth vector fields on $U$.

\subsubsection{Poisson bracket associated to a convenient Lie algebroid}${}$\\
\label{___PoissonBracketAndLieAlgebroid}
We recall some results of \cite{CaPe23}, Chap.~7 and,  as in $\S$~\ref{___PartialPoissonManifolds}, we consider the set $\mathfrak{A}(U)$.

Let $\pi:\mathcal{A}\to M$ be a convenient vector bundle and $\pi^{\prime}:\mathcal{A}^\prime\to M$ its associated dual convenient bundle.  We consider  closed  subbundle  $q^{\flat}:T^{\flat}\mathcal{A}^\prime\to \mathcal{A}^\prime$  of  $q_{\mathcal{A}^{\prime}}:T^\prime\mathcal{A}^\prime\to \mathcal{A}^\prime$.


\begin{definition}
\label{D_LinearFunctions}
Let $U$ be an open set in $M$ and denote by  $\mathcal{A}_U$ (resp. $\mathcal{A}^\prime_U$) the restriction of $\mathcal{A}$ (resp. $\mathcal{A}^\prime$ ) to $U$.
A function $\Phi\in C^\infty(\mathcal{A}^\prime_U)$ is called linear\index{linear!function}, if  the restriction of $\Phi $ to each fibre $\mathcal{A}^\prime_x$ is a linear map into $\mathbb{R}$
\footnote{Since $\Phi$ is smooth, its restriction to  $\mathcal{A}^\prime_x$  is linear and smooth, so  must be bounded (cf. \cite{KrMi97}).}.
\end{definition}

Let $\mathfrak{a}$ be a section of $\mathcal{A}_U$ and denote by $\Phi_\mathfrak{a}:\mathcal{A}^\prime\to \mathbb{R}$ the smooth map defined by
\begin{equation}
\label{eq_FrfPhia}
 \Phi_\mathfrak{a}(\sigma)=<\sigma,\mathfrak{a}\circ\pi_*(\sigma)>.
\end{equation}
It is clear that $\Phi_\mathfrak{a}$ is linear.\\

We denote by $\mathfrak{A}(\mathcal{A}^\prime_U)$ the set of smooth functions on  $\mathcal{A}^\prime_U $ which satisfy the properties of (\ref{eq_dkf0}). 

According to \cite{CaPe23}, 6.3.3, we have:

\begin{equation}
\label{eq_TflatA*}
T_\sigma^{\flat}\mathcal{A}^\prime
=
\{(\sigma,\eta, A)\in T_\sigma^\prime\mathcal{A}^\prime,\; 
\eta=d(f\circ\pi^\prime),\; 
A=\mathfrak{a}\circ\pi^\prime\; \mathfrak{a}\in \Gamma(\mathcal{A}_U),\;
f\in C^\infty(U)\}
\end{equation}
for any small enough open set $U$ around $x$.

Each function of type $\Phi_\mathfrak{a}$ and $f\circ \pi^\prime$ are such that  $d\Phi_\mathfrak{a}$ and $df\circ \pi^\prime$ belong to the set $\Gamma(T^\flat \mathcal{A}^\prime_U)$ of section of $(T^\flat \mathcal{A}^\prime_U)$,  for any section $\mathfrak{a}$ of $\Gamma(\mathcal{A}^\prime_U)$, and any function $f$ of $C^\infty(U)$.

Let $  C^\flat(\mathcal{A}^\prime_U)$ the set of smooth maps on  $\mathcal{A}^\prime_U$  whose restriction to each fibre $\mathcal{A}^\prime _x$ is constant. Such a function $\varphi$ naturally defines a function $\bar{\varphi}$ on $U$ such that $\varphi = \bar{\varphi}\circ \pi^\prime$.\\
Therefore, we can identify the subalgebra $\{f\circ \pi^\prime,\; f\in C^\infty(U)\}$ of $C^\flat(\mathcal{A}^\prime_U)$ with $C^\infty(U)$.

Now, it is clear that  the set $C^\flat(\mathcal{A}^\prime_U)$ of functions $F\in C^\infty(\mathcal{A}^\prime_U)$, whose differential belongs to $\Gamma(T^\flat \mathcal{A}^\prime_U)$, has a structure of algebra. Note that from (\ref{eq_dkf0}),   
if $F$ belongs to $\mathfrak{A}(\mathcal{A}^\prime_U)$, this clearly implies that $dF$ must be a
section of $T^\flat \mathcal{A}^\prime$. Since $dF$ is a smooth section of $T^\prime\mathcal{A}^\prime$ and that $T^\flat \mathcal{A}^\prime$ is closed convenient  subbundle of $T^\prime \mathcal{A}^\prime$, it follows that  $dF$ is a smooth section of $T^\flat\mathcal{A}^\prime_U$ and so $\mathfrak{A}(\mathcal{A}^\prime_U)$ is contained in
$C^\flat(\mathcal{A}^\prime_U)$. In fact, we have (cf. \cite{CaPe23}, Proposition~7.2):

\begin{proposition}
\label{P_A(Tflat A)}
Let $U$ be an open set which is a chart domain.
The algebra $\mathfrak{A}(\mathcal{A}^\prime_U)$  is equal to $C^\flat(\mathcal{A}^\prime_U)$. This  algebra is generated by functions of type $f\circ \pi_{*}$ and $\Phi_\mathfrak{a}$, for any $f\in C^\infty(U)$ and any $ \mathfrak{a}\in \Gamma(\mathcal{A}_U) $.
\end{proposition}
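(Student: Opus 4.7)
The statement contains two parts: the equality $\mathfrak{A}(\mathcal{A}^\prime_U)=C^\flat(\mathcal{A}^\prime_U)$, and the description of a generating set. One inclusion, $\mathfrak{A}(\mathcal{A}^\prime_U)\subseteq C^\flat(\mathcal{A}^\prime_U)$, is already settled in the paragraph preceding the proposition: the $k=1$ case of (\ref{eq_dkf0}) is exactly the defining condition for $C^\flat(\mathcal{A}^\prime_U)$. The plan is to establish the reverse inclusion by induction on the order of mixed partials, then check directly that $f\circ\pi^\prime$ and $\Phi_\mathfrak{a}$ belong to $\mathfrak{A}(\mathcal{A}^\prime_U)$ and that their differentials span $T^\flat\mathcal{A}^\prime$ fibrewise over $U$.

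For $C^\flat(\mathcal{A}^\prime_U)\subseteq\mathfrak{A}(\mathcal{A}^\prime_U)$, I would work in a bundle chart realising $\mathcal{A}^\prime_U\cong V\times\mathbb{A}^\prime$ with $V$ open in $\mathbb{M}$ and $\mathbb{A}$ the typical fibre of $\mathcal{A}$. For $F\in C^\flat(\mathcal{A}^\prime_U)$ viewed as a smooth function $F(x,\sigma)$, the hypothesis $dF\in\Gamma(T^\flat\mathcal{A}^\prime)$ combined with (\ref{eq_TflatA*}) forces the vertical partial $\partial_\sigma F(x,\sigma)$, a priori only in $\mathbb{A}^{\prime\prime}$, to lie in the image of the canonical inclusion $\mathbb{A}\hookrightarrow\mathbb{A}^{\prime\prime}$. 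Since $T^\flat\mathcal{A}^\prime$ is by assumption a closed convenient subbundle of $T^\prime\mathcal{A}^\prime$, a smooth section of the ambient bundle whose values lie pointwise in the subbundle is smooth into the subbundle; hence $H:=\partial_\sigma F:V\times\mathbb{A}^\prime\to\mathbb{A}$ is smooth. By symmetry of iterated convenient derivatives (Schwarz), condition (\ref{eq_dkf0}) for $d^{k}F$ reduces to the statement that each $\sigma$-slot of every mixed partial $\partial_x^{m}\partial_\sigma^{n}F$ yields a functional coming from $\mathbb{A}$. I would prove this by induction on $n$: iterated $\sigma$-differentiation of $H$ produces smooth maps $H_{n}:V\times\mathbb{A}^\prime\to L^{n}_{\operatorname{sym}}(\mathbb{A}^\prime,\mathbb{A})$, and base-direction differentiation preserves the $\mathbb{A}$-valuedness of the vertical slots.

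For the generators, a direct computation shows that $f\circ\pi^\prime\in\mathfrak{A}(\mathcal{A}^\prime_U)$, since its differential has zero vertical part and all higher vertical partials vanish. For $\Phi_\mathfrak{a}$ defined by (\ref{eq_FrfPhia}), the differential $d\Phi_\mathfrak{a}$ has vertical component $\mathfrak{a}(x)\in\mathbb{A}$, the second $\sigma$-partial vanishes by linearity in $\sigma$, and the mixed partials reduce to base-derivatives of the section $\mathfrak{a}$, which remain in $\mathbb{A}$; hence $\Phi_\mathfrak{a}\in\mathfrak{A}(\mathcal{A}^\prime_U)$. By the explicit description (\ref{eq_TflatA*}), the differentials of these two families of functions already span $T^\flat_\sigma\mathcal{A}^\prime$ fibrewise over the chart domain, which is the sense in which they generate the algebra $\mathfrak{A}(\mathcal{A}^\prime_U)=C^\flat(\mathcal{A}^\prime_U)$.

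The main delicate step is the smoothness upgrade used above: showing that pointwise $\mathbb{A}$-valuedness of $\partial_\sigma F$ promotes to smoothness of $H$ into $\mathbb{A}$, and then iteratively that $H_{n}$ is smooth into $L^{n}_{\operatorname{sym}}(\mathbb{A}^\prime,\mathbb{A})$. This is where the closedness of $T^\flat\mathcal{A}^\prime$ as a convenient subbundle of $T^\prime\mathcal{A}^\prime$ is indispensable; without it, the higher-order control needed for condition (\ref{eq_dkf0}) would be unavailable.
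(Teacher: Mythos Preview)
The paper does not supply its own proof of this proposition: it is stated with the attribution ``cf.\ \cite{CaPe23}, Proposition~7.2'' and nothing further. So there is no in-paper argument to compare against; I can only assess your proposal on its own merits.

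Your argument for the inclusion $C^\flat(\mathcal{A}^\prime_U)\subseteq\mathfrak{A}(\mathcal{A}^\prime_U)$ is correct, and in fact slightly cleaner than you make it out to be. By the definition of $C^\flat(\mathcal{A}^\prime_U)$ used here, $dF$ is already a \emph{smooth} section of $T^\flat\mathcal{A}^\prime$ (the paragraph preceding the proposition makes exactly this point, invoking closedness of the subbundle). Hence $H:=\partial_\sigma F:V\times\mathbb{A}^\prime\to\mathbb{A}$ is smooth from the outset, and the ``upgrade'' you flag as delicate is in fact built into the hypothesis. Once $H$ is smooth into $\mathbb{A}$, \emph{all} of its iterated total derivatives take values in $\mathbb{A}$ (or multilinear maps into $\mathbb{A}$) automatically, so the induction on $n$ is unnecessary: by Schwarz symmetry,
\[
d^kF_{(x,\sigma)}\bigl((0,\alpha),u_2,\dots,u_k\bigr)
=\bigl\langle d^{k-1}H_{(x,\sigma)}(u_2,\dots,u_k),\,\alpha\bigr\rangle,
\]
and the right-hand side is evaluation at the element $d^{k-1}H(u_2,\dots,u_k)\in\mathbb{A}$. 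This is precisely condition~(\ref{eq_dkf0}) for $F$, so $F\in\mathfrak{A}(\mathcal{A}^\prime_U)$.

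Your treatment of the generators is fine. One remark on interpretation: ``generated'' cannot literally mean ``generated as an abstract algebra'' (a general $F\in C^\flat(\mathcal{A}^\prime_U)$ need not be polynomial along fibres), and the paper does not make the sense precise here. Your reading --- that the differentials of $f\circ\pi^\prime$ and $\Phi_\mathfrak{a}$ span $T^\flat_\sigma\mathcal{A}^\prime$ fibrewise, which is exactly the content of~(\ref{eq_TflatA*}) --- is the operationally relevant one, since this is what is used downstream (Theorem~\ref{T_LinkPoissonAlgebroid}) to pin down the Lie algebroid structure from the bracket on these functions alone.
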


We introduce:
\begin{definition}
\label{D_LinearPoissonAnchor}
Consider a partial Poisson structure $ \left( T^\flat \mathcal{A}^\prime, \mathcal{A}^\prime,P,\{.,.\}_{P} \right)$  on $\mathcal{A}^\prime$. We say that $P$ is a linear Poisson anchor, or that the Poisson bracket $\{.,.\}_P$ is linear if, for any open set $U$ and any linear function $\Phi_1$ and $\Phi_2$ in $\mathfrak{A}(\mathcal{A}^\prime_U)$, the Poisson bracket $\{\Phi_1,\Phi_2\}_P$ is linear.
 \end{definition}

The following result is an adaptation of the classical result of equivalence between algebroid structures on a finite dimensional anchored bundle  and linear  Poisson structures on its dual (cf.  \cite{CaPe23}, Theorem~7.1).

\begin{theorem}
\label{T_LinkPoissonAlgebroid}
Let  $P: T^{\flat}\mathcal{A}^\prime\to T\mathcal{A}^\prime$ be a linear  Poisson anchor on $\mathcal{A}^\prime$. Then there exists a unique structure of convenient Lie algebroid  $ \left( \mathcal{A},\pi,M,\rho,[.,.]_P \right) $ characterized in the following way, for any open set $U$  in $M$:
\begin{eqnarray}
\label{eq_linAP1}
\forall \left( \mathfrak{a}_1,\mathfrak{a}_2 \right) \in \Gamma \left( \mathcal{A}_U \right) ^2,\; \Phi_{[\mathfrak{a}_1,\mathfrak{a}_2]_P}=\{\Phi_{\mathfrak{a}_1},\Phi_{\mathfrak{a}_2}\}_P \\
\label{eq_linAP2}
\forall f\in C^\infty (U), \forall \mathfrak{a}\in \Gamma \left( \mathcal{A}_U \right),\;
\{\Phi_\mathfrak{a},f\circ \pi_*\}_P=df(\rho(\mathfrak{a}))\circ \pi_*.
\end{eqnarray}
where $\rho(\mathfrak{a})= T\pi^\prime(P(\Phi_\mathfrak{a}))$.\\
Conversely,  a  structure of convenient Lie algebroid  $ \left( \mathcal{A},\pi,M,\rho,[.,.]_\rho \right) $ defines a unique linear Poisson anchor  $P:T^\prime\mathcal{A}^\prime\to T\mathcal{A}^\prime$ which gives rise to a  linear  Poisson bracket $\{.,.\}_\rho$ on $\mathfrak{A}(\mathcal{A}^\prime_U)$, for any open set $U$ in $M$, which is characterized by relations  (\ref{eq_linAP1}) and (\ref{eq_linAP2}). Moreover, the unique structure of convenient Lie algebroid  $ \left( \mathcal{A},\pi,M,\rho,[.,.]_P \right) $ associated to $P$  is the original one.
\end{theorem}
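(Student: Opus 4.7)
The overall strategy is to lean on Proposition~\ref{P_A(Tflat A)}, which states that $\mathfrak{A}(\mathcal{A}^\prime_U)$ is generated as an algebra by the two families $\{f\circ\pi^\prime : f\in C^\infty(U)\}$ and $\{\Phi_\mathfrak{a} : \mathfrak{a}\in\Gamma(\mathcal{A}_U)\}$. Since a Poisson bracket is determined by its values on an algebra of generators (via the Leibniz rule), both the bracket $\{.,.\}_P$ and the bracket $\{.,.\}_\rho$ are entirely prescribed by their values on pairs drawn from these two families. This observation gives the uniqueness parts of both directions automatically and reduces the work to verifying consistency on generators.

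\emph{Forward direction.} Given a linear Poisson anchor $P$, the hypothesis of linearity (Definition~\ref{D_LinearPoissonAnchor}) applied to $\Phi_{\mathfrak{a}_1}$ and $\Phi_{\mathfrak{a}_2}$ forces $\{\Phi_{\mathfrak{a}_1},\Phi_{\mathfrak{a}_2}\}_P$ to be linear, hence of the form $\Phi_{c(\mathfrak{a}_1,\mathfrak{a}_2)}$ for a unique section $c(\mathfrak{a}_1,\mathfrak{a}_2)\in\Gamma(\mathcal{A}_U)$; I set $[\mathfrak{a}_1,\mathfrak{a}_2]_P:=c(\mathfrak{a}_1,\mathfrak{a}_2)$. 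For the anchor, I argue that $\{\Phi_\mathfrak{a},f\circ\pi^\prime\}_P$ is basic, i.e.\ of the form $g^{f}_\mathfrak{a}\circ\pi^\prime$; then $\rho(\mathfrak{a})(f):=g^f_\mathfrak{a}$ defines a derivation of $C^\infty(U)$ which, using Proposition~\ref{P_AUalgebra}, corresponds to the required vector field $\rho(\mathfrak{a})=T\pi^\prime\circ P(d\Phi_\mathfrak{a})$.

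\emph{Verification of the Lie algebroid axioms.} Skew-symmetry of $[.,.]_P$ comes from skew-symmetry of $\{.,.\}_P$. The Leibniz rule
\[
[\mathfrak{a}_1,f\mathfrak{a}_2]_P=f[\mathfrak{a}_1,\mathfrak{a}_2]_P+\rho(\mathfrak{a}_1)(f)\,\mathfrak{a}_2
\]
follows by expanding $\{\Phi_{\mathfrak{a}_1},\Phi_{f\mathfrak{a}_2}\}_P=\{\Phi_{\mathfrak{a}_1},(f\circ\pi^\prime)\Phi_{\mathfrak{a}_2}\}_P$ with the Leibniz rule of $\{.,.\}_P$ and reading off the linear component. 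Jacobi for $[.,.]_P$ follows from Jacobi of $\{.,.\}_P$ applied to three functions $\Phi_{\mathfrak{a}_i}$; the morphism property $\rho([\mathfrak{a}_1,\mathfrak{a}_2]_P)=[\rho(\mathfrak{a}_1),\rho(\mathfrak{a}_2)]$ is obtained by applying Jacobi to the triple $(\Phi_{\mathfrak{a}_1},\Phi_{\mathfrak{a}_2},f\circ\pi^\prime)$.

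\emph{Converse direction.} Given a convenient Lie algebroid $(\mathcal{A},\rho,[.,.]_\rho)$, I define $\{.,.\}_\rho$ on the generators of $\mathfrak{A}(\mathcal{A}^\prime_U)$ by
\[
\{\Phi_{\mathfrak{a}_1},\Phi_{\mathfrak{a}_2}\}_\rho:=\Phi_{[\mathfrak{a}_1,\mathfrak{a}_2]_\rho},\qquad \{\Phi_\mathfrak{a},f\circ\pi^\prime\}_\rho:=\rho(\mathfrak{a})(f)\circ\pi^\prime,\qquad \{f\circ\pi^\prime,g\circ\pi^\prime\}_\rho:=0,
\]
and extend by $\mathbb{R}$-bilinearity, skew-symmetry and the Leibniz rule. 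Well-definedness, i.e.\ consistency when $\Phi_{f\mathfrak{a}}=(f\circ\pi^\prime)\Phi_\mathfrak{a}$ is written in two different ways, uses precisely the Leibniz rule of the Lie algebroid bracket and the derivation property of $\rho$. Jacobi reduces, by polarisation in generators, to four cases (three linear functions, two linear and one pullback, etc.); each matches either Jacobi for $[.,.]_\rho$ or the morphism identity $\rho([\mathfrak{a}_1,\mathfrak{a}_2]_\rho)=[\rho(\mathfrak{a}_1),\rho(\mathfrak{a}_2)]$. The resulting bracket comes from a bounded skew-symmetric morphism $P:T^\flat\mathcal{A}^\prime_U\to T\mathcal{A}^\prime_U$ thanks to the description (\ref{eq_TflatA*}) of $T^\flat\mathcal{A}^\prime$. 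Finally, that the two constructions are mutually inverse is immediate: both brackets agree on the generating set by (\ref{eq_linAP1})--(\ref{eq_linAP2}), hence everywhere.

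\emph{Main obstacle.} The delicate step is showing, in the forward direction, that $\{\Phi_\mathfrak{a},f\circ\pi^\prime\}_P$ is automatically fibrewise constant, so that $\rho$ can be defined. In the finite-dimensional setting this is a grading argument (bracket of a degree-$1$ and a degree-$0$ polynomial is degree $0$); here it must be re-established in the convenient framework using the linearity hypothesis together with the Leibniz rule of $\{.,.\}_P$ applied to products $\Phi_\mathfrak{a}\cdot\Phi_\mathfrak{b}$ and $(f\circ\pi^\prime)(g\circ\pi^\prime)$. The second subtlety, well-definedness of the Leibniz extension in the converse direction, is routine but must be checked carefully because the generating families are not freely generating.
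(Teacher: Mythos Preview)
The paper does not contain a proof of this theorem: it is stated with the attribution ``(cf.\ \cite{CaPe23}, Theorem~7.1)'' and no argument is given in the present text. There is therefore nothing in the paper to compare your proof against directly.

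That said, your outline is the standard and correct strategy for this duality, and it is almost certainly what the cited reference does as well. The reduction to generators via Proposition~\ref{P_A(Tflat A)} is the right starting point, and your identification of the ``main obstacle'' is accurate: Definition~\ref{D_LinearPoissonAnchor} only postulates that brackets of \emph{linear} functions are linear, so one must deduce that $\{\Phi_\mathfrak{a},f\circ\pi^\prime\}_P$ is basic and that $\{f\circ\pi^\prime,g\circ\pi^\prime\}_P=0$. Your proposed mechanism works, but the cleanest way to phrase it is to exploit the identity $\Phi_{f\mathfrak{a}}=(f\circ\pi^\prime)\Phi_\mathfrak{a}$ rather than arbitrary products $\Phi_\mathfrak{a}\Phi_\mathfrak{b}$: expanding the linear function $\{\Phi_{f\mathfrak{a}_1},\Phi_{\mathfrak{a}_2}\}_P$ by Leibniz gives
\[
\Phi_{[f\mathfrak{a}_1,\mathfrak{a}_2]_P}=(f\circ\pi^\prime)\,\Phi_{[\mathfrak{a}_1,\mathfrak{a}_2]_P}+\Phi_{\mathfrak{a}_1}\,\{f\circ\pi^\prime,\Phi_{\mathfrak{a}_2}\}_P,
\]
so $\Phi_{\mathfrak{a}_1}\{f\circ\pi^\prime,\Phi_{\mathfrak{a}_2}\}_P$ is linear, forcing $\{f\circ\pi^\prime,\Phi_{\mathfrak{a}_2}\}_P$ to be fibrewise constant. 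Iterating the same trick (replace $\mathfrak{a}_2$ by $g\mathfrak{a}_2$) yields $\{f\circ\pi^\prime,g\circ\pi^\prime\}_P=0$. Once these two facts are in hand, the rest of your argument goes through without difficulty.
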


From Proposition~\ref{P_A(Tflat A)} and Theorem~\ref{T_LinkPoissonAlgebroid}, we obtain:

\begin{corollary}
\label{C_AlgebroidPoissonManifold}
Let $ \left( \mathcal{A},\pi,M,\rho,[.,.]_P \right) $  be a convenient Lie algebroid and $\{.,.\}_P$ the Poisson Lie bracket associated to this structure. Then $ \left( M,C^\flat \left( \mathcal{A}^\prime \right) , \{.,.\}_P \right) $ is a partial Poisson manifold.
\end{corollary}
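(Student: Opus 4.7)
The plan is to assemble the statement directly from the two preceding results, since the corollary is essentially a packaging of Theorem~\ref{T_LinkPoissonAlgebroid} and Proposition~\ref{P_A(Tflat A)}. (Note that the underlying manifold for the partial Poisson structure is really $\mathcal{A}^\prime$, not $M$ — I read the ``$M$'' in the statement as indicating that the structure lives on the dual bundle over $M$.)

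First, I would apply Theorem~\ref{T_LinkPoissonAlgebroid} to the given convenient Lie algebroid $(\mathcal{A},\pi,M,\rho,[.,.]_P)$. This produces a unique linear Poisson anchor
\[
P: T^\flat \mathcal{A}^\prime \longrightarrow T\mathcal{A}^\prime
\]
and a linear Poisson bracket $\{.,.\}_P$ on each $\mathfrak{A}(\mathcal{A}^\prime_U)$ satisfying the relations (\ref{eq_linAP1}) and (\ref{eq_linAP2}). By the very definition of a Poisson anchor (Definition~\ref{D_PartialPoisson}), this data already provides a partial Poisson structure $(T^\flat\mathcal{A}^\prime, T\mathcal{A}^\prime, P, \{.,.\}_P)$ on the total space $\mathcal{A}^\prime$: the anchor is skew-symmetric, and the bracket satisfies the Leibniz rule (from Proposition~\ref{P_AUalgebra} together with (\ref{eq_DefinitionLocalBracket})) and the Jacobi identity (built into Theorem~\ref{T_LinkPoissonAlgebroid}).

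Second, I would use Proposition~\ref{P_A(Tflat A)} to identify the algebra of functions on which the bracket acts. For every chart domain $U$ of $M$ one has $\mathfrak{A}(\mathcal{A}^\prime_U) = C^\flat(\mathcal{A}^\prime_U)$, so the bracket is in fact a bracket on $C^\flat(\mathcal{A}^\prime_U)$. Since chart domains form a basis for the topology of $M$ and both $\mathfrak{A}_{\mathcal{A}^\prime}$ and $C^\flat_{\mathcal{A}^\prime}$ are sheaves, this local equality promotes to a sheaf identification $\mathfrak{A}_{\mathcal{A}^\prime} = C^\flat_{\mathcal{A}^\prime}$, and the Poisson bracket $\{.,.\}_P$ glues to a sheaf bracket on $C^\flat_{\mathcal{A}^\prime}$.

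Combining the two steps yields the statement: $(\mathcal{A}^\prime, C^\flat(\mathcal{A}^\prime), \{.,.\}_P)$ is a partial Poisson manifold. There is no real obstacle here — the substantive content (Jacobi, linearity, the computation of $\mathfrak{A}$) is already contained in the previous two results. The only mildly technical point is passing from the chart-domain statement of Proposition~\ref{P_A(Tflat A)} to a global sheaf statement, but this is just the sheaf axioms applied to the equality of two subsheaves of $C^\infty_{\mathcal{A}^\prime}$ on a basis of open sets.
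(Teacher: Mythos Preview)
Your proposal is correct and matches the paper's approach exactly: the paper gives no separate proof but simply states the corollary follows from Proposition~\ref{P_A(Tflat A)} and Theorem~\ref{T_LinkPoissonAlgebroid}, which is precisely the combination you describe. Your observation that the partial Poisson manifold is really $\mathcal{A}^\prime$ rather than $M$ is also apt; the ``$M$'' in the statement is indeed a slight abuse of notation.
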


\subsubsection{Characteristic distribution and foliation}
\label{__CharacteristicDistributionFoliation}

Given a partial Poisson structure $ \left( T^\flat M, TM, P \right) $, the family of vector spaces 
\[
D:=\{D_x=P(T_x^\flat M)\subset T_xM,\; x\in M\}
\]
is a distribution on $M$ called the \emph{characteristic distribution}\index{characteristic distribution}.\\
In finite dimension, when $T^{\flat} M=T^{\prime}M$, it is well known that the characteristic distribution gives rise to a Stefan-Sussman foliation\index{Stefan-Sussman foliation} and each \emph{leaf}\index{leaf} is a symplectic immersed manifold.\\ 
In the convenient  setting, it is not true in general  and even in the Banach setting (see Remark \ref {R_Leaf}).  However, we have the following sufficient conditions under which such an analogous result is true (cf. \cite{PeCa19}).
\begin{theorem}\label{T_FoliationPartialBanachPoissonManifold} 
Let $ \left( T^\flat M, TM, P\right) $ be a partial Banach Poisson manifold such that the kernel of $P$ is supplemented in each fibre $T^{\flat}_{x}M$ of $T^{\flat}M$ and $P(T^{\flat}M)$ is a closed
distribution. Then we have the following:
\begin{enumerate}
\item 
$D=P(T^{\flat}M)$ is integrable and the foliation defined by
$D$ is an almost symplectic foliation, i.e. on each leaf $N$ of this foliation, we have  a unique non degenerate smooth skew symmetric bilinear form $\omega_N$ on $TN$ such that
 for any $u$ and $v$ in $T_xN$, then $\omega_N(u, v)= < \alpha, P(\beta)>$ for any $\alpha, \beta\in T_x^\flat M$ with $P(\alpha)=u$ and $P(\beta)=v$.
\item 
On each maximal leaf $N$, let $\omega_N$ be  the natural weak symplectic form  on $N$. We denote by $\mathcal{E}_{\omega_N} (V)$ the bubset of $f\in C^\infty(V)$ such that $df$ belongs to the range $\omega_N^\flat(TV)$.  Then the restriction $f_N$ to $U\cap N$  of any  $f\in \mathcal{E}(U)$ belongs to $\mathcal{E}_{\omega_N}(U\cap N)$ and we have, for any $f$ and $g$ in $\mathcal{E}(U)$,
\[
{\{f_{|N},g_{|N}\}_{P}}_{|N}=\{f_{N},g_{N}\}_{\omega_{N}}.
\]
\end{enumerate}
\end{theorem}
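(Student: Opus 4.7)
The plan is to first establish integrability of $D = P(T^\flat M)$ via a Banach Frobenius argument, then construct $\omega_N$ on each leaf by transporting the pairing through $P$, and finally verify it is a weak symplectic form whose bracket recovers the ambient Poisson bracket.

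For part (1), the supplementation hypothesis provides fibrewise a closed complement $T^\flat M = \ker P \oplus K$, and closedness of $D = P(T^\flat M)$ combined with the open mapping theorem makes $P|_K : K \to D$ a Banach bundle isomorphism; in particular $D$ becomes a smooth closed subbundle of $TM$. To show involutivity, I would note that $D$ is locally spanned by Hamiltonian vector fields: given $u = P(\alpha) \in D_{x_0}$, one extends $\alpha$ to an $f \in \mathfrak{A}(U)$ with $d_{x_0}f = \alpha$ by taking an affine representative in a chart, so that $X_f = P(df)$ realizes $u$ at $x_0$. The identity $[X_f, X_g] = X_{\{f,g\}_P}$ from (\ref{eq_Pdfdg}) then yields involutivity, and a Banach Frobenius theorem applied to the closed involutive distribution $D$ produces maximal integral submanifolds.

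For the form, I would set $\omega_N(u,v) := \langle \alpha, P(\beta)\rangle$ on $T_xN = D_x$, with $u = P(\alpha)$ and $v = P(\beta)$. Well-definedness follows from skew-symmetry: replacing $\alpha$ by $\alpha + \gamma$ with $\gamma \in \ker P$ changes the value by $\langle \gamma, P(\beta)\rangle = -\langle \beta, P(\gamma)\rangle = 0$, and similarly in $\beta$. Smoothness is inherited from the local splitting $K \cong D$, and skew-symmetry is immediate from that of $P$. For non-degeneracy, if $\omega_N(u,\cdot) \equiv 0$ on $D_x$ with $u = P(\alpha)$, then $\langle \alpha, P(\beta)\rangle = 0$ for all $\beta \in T_x^\flat M$; skew-symmetry rephrases this as $\langle \beta, u\rangle = 0$ for all such $\beta$, and the identification $D^0 = \ker P$ (which follows from skew-symmetry combined with the left non-degeneracy of the ambient pairing) forces $\alpha \in \ker P$, so $u = 0$.

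For part (2), closedness of $\omega_N$ follows from the Jacobi identity for $\{\cdot,\cdot\}_P$: evaluating the three-term coboundary formula for $d\omega_N$ on spanning Hamiltonian fields $X_f, X_g, X_h$ reduces via $X_f g = \{f,g\}_P$ to the Jacobi sum, which vanishes; hence $\omega_N$ is weak symplectic. For $f \in \mathfrak{A}(U)$, the restriction $f_N$ satisfies $df_N = \omega_N^\flat(X_f|_N)$, so $f_N \in \mathcal{E}_{\omega_N}(U\cap N)$, and the bracket identity is obtained by direct unpacking since both sides evaluate to $\omega_N(X_f, X_g)|_N = \langle df, P(dg)\rangle|_N$. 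The main obstacle is the Banach-Frobenius step: one must promote the pointwise supplementation of $\ker P$ to a locally smooth splitting so that $D$ becomes a genuine smooth subbundle, and then either invoke or establish by hand (via joint flows of Hamiltonian fields spanning $D$) an integrability theorem for closed involutive Banach distributions; the remaining verifications are essentially linear algebra together with careful bookkeeping inside the convenient-calculus framework.
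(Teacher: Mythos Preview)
The paper does not prove this theorem in-text; it is quoted from \cite{PeCa19}, with only the closedness of $\omega_N$ argued separately in Remark~\ref{R_Leaf}(2). Your closedness argument (evaluate $d\omega_N$ on Hamiltonian triples and reduce to the Jacobi identity) matches that remark exactly, and the rest of your outline follows the natural route one would expect the cited reference to take.

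There is, however, a genuine gap in your non-degeneracy step. You claim $D^0 = \ker P$ ``follows from skew-symmetry combined with the left non-degeneracy of the ambient pairing.'' Skew-symmetry does give $\ker P \subset D^0$, but the reverse inclusion amounts to: $\langle \beta, w\rangle = 0$ for all $\beta \in T^\flat_x M$ forces $w = 0$. That is precisely the statement that $T^\flat_x M$ \emph{separates points} of $T_xM$, which is not what left non-degeneracy says (left non-degeneracy is the trivial fact that a nonzero $\alpha \in T^\flat M \subset T'M$ is a nonzero functional on $TM$). The paper itself treats separation as an \emph{additional} hypothesis elsewhere (see the proof of Theorem~\ref{T_CharacterizationPoissonMorphism}(ii) and the bipolar discussion there), so either this is among the standing assumptions in \cite{PeCa19} or a different argument is needed for non-degeneracy. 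A second, smaller point: the ``Banach Frobenius'' you invoke needs $D$ to be a split subbundle of $TM$, which is not among the stated hypotheses (only $\ker P$ is assumed split in $T^\flat M$, and $D$ is merely closed in $TM$); the integrability tool actually in play is a Stefan--Sussmann type theorem for closed Banach distributions, as used e.g.\ in the proof of Theorem~\ref{T_OrbitBanachGroupoid}. You correctly flag this as the main obstacle, but naming it precisely would strengthen the write-up.
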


\begin{remark}
\label{R_Leaf}${}$
\begin{enumerate}
\item[1.] 
In general, in the convenient setting, the characteristic distribution need not be integrable. Even in the Banach setting,  if the distribution $D$ is not closed,  we think that the $D$ is not integrable, but, unfortunately, we do not have any example. 
\item[2.]
The Theorem \ref{T_FoliationPartialBanachPoissonManifold} gives  sufficient conditions  for the integrability of the characteristic distribution and,  on each leaf
$N$ of  this foliation, the  $2$-form  $\omega_N$ is not  closed  \textit{a priori} (as asserted in \cite{PeCa19}). But, in fact, $\omega_N$ is  effectively closed. Indeed, according to Lemma~\ref{L_P(U)generates}, for  any $x\in M$, there exists an open set $U$ around $x$ such that,  any  $\sigma \in T_{x}^{\flat}M$ can be written $\sigma=d_{x}f$ for some local smooth function $f:U\to\mathbb{R}$. Thus, for $i \in \{1,2,3\}$, each $X_{i}\in T_{x}N$ can be written $X_{i}=P(d_{x}f_{i})$ for some $f_{i}$ locally defined around $x$. Each Hamiltonian field is tangent to the characteristic foliation and the tangent space of a leaf $N$ at $x$ is generated by the value at $x$ of the set of Hamiltonian field $\{X_f,\; f\in \mathcal{E}(U)\}$. According to the Cartan's formulae of the exterior derivative in the expression of $d\omega_N(X_{f_0}, X_{f_1},X_{f_2})$, the Jacobi identity appears twice for the triple $(f_0,f_1,f_2)$. Since the Poisson bracket $\{\;,\;\}_P$ satisfies the Jacobi identity,  it  follows that $\omega_N$ is closed.
\item[3.] 
When $(T^\flat M, TM, P)$ is a partial symplectic structure, even in the convenient setting, the characteristic distribution is $TM$ and then the characteristic foliation is reduced to one leaf that is $M$. So, the  argument  in Point 2. is then valid  and so $M$ is a weak symplectic manifold.
\end{enumerate}
\end{remark}

\subsubsection{Partial Lie Algebroids and Partial Poisson Manifolds}
\label{____PartialLieAlgebroidsAndPartialPoissonManifolds}

We refer to\cite{CaPe23}, 3.18.4  for the notion  of partial convenient Lie algebroid and we will use the same  notations. At first we have the fundamental result: (cf. \cite{CaPe23}, Lemma~7.3).

\begin{lemma}
\label{L_P(U)generates}
Let $(T^\flat M,M,P,\{.,.\}_P)$ be a partial Poisson structure.\\
For each $x\in M$, there exists an open neighbourhood $U$ of $x$ such that the vector space $\mathfrak{P}_x(U)=\{d_xf,\; f\in \mathfrak{A}(U)\}$ is equal to the fibre $T^\flat_xM$.
\end{lemma}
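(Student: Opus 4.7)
The plan is to verify the two inclusions separately. One direction, $\mathfrak{P}_x(U) \subseteq T^\flat_x M$, is immediate: the defining condition (\ref{eq_dkf0}) of $\mathfrak{A}(U)$ in the case $k=1$ reads $d_y f \in T^\flat_y M$ for every $y \in U$, so in particular $d_x f \in T^\flat_x M$ for every $f \in \mathfrak{A}(U)$ and every choice of $U$. All of the content lies in the opposite inclusion, for which I intend to exhibit, given $\sigma \in T^\flat_x M$, an explicit $f \in \mathfrak{A}(U)$ with $d_x f = \sigma$.

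To construct such an $f$, I would first choose a chart $\varphi : U \to V \subset \mathbb{M}$ around $x$ with $\varphi(x) = 0$, shrinking $U$ so that the weak subbundle $T^\flat M$ is trivialized compatibly with the chart: under the canonical identification $T^\prime M|_U \cong U \times \mathbb{M}^\prime$ induced by $\varphi$, the inclusion $T^\flat M|_U \hookrightarrow T^\prime M|_U$ should correspond to the constant embedding $U \times \mathbb{M}^\flat \hookrightarrow U \times \mathbb{M}^\prime$ of the typical fibre. Let $\tilde\sigma \in \mathbb{M}^\flat$ denote the chart representative of $\sigma \in T^\flat_x M$, and define
\[
f(y) := \tilde\sigma\bigl(\varphi(y)\bigr), \qquad y \in U.
\]
Since $\tilde\sigma : \mathbb{M} \to \mathbb{R}$ is a bounded linear form and $\varphi$ is smooth, $f$ is smooth on $U$.

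The verification that $f$ has the required properties is then direct in the chart. The pullback $\tilde f := f \circ \varphi^{-1}$ is the linear functional $v \mapsto \tilde\sigma(v)$, so $d\tilde f \equiv \tilde\sigma \in \mathbb{M}^\flat$ and $d^k \tilde f \equiv 0$ for every $k \geq 2$. By the compatibility of the trivializations, $d_y f$ corresponds under the chart to $\tilde\sigma \in \mathbb{M}^\flat$ for every $y \in U$, and therefore lies in $T^\flat_y M$; the vanishing of all higher derivatives in the chart ensures that condition (\ref{eq_dkf0}) holds trivially for $k \geq 2$. Hence $f \in \mathfrak{A}(U)$, and at $x$ the form $d_x f$ corresponds to $\tilde\sigma$, that is $d_x f = \sigma$, which completes the argument.

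The only delicate point is the initial choice of chart in which the embedding $T^\flat M \hookrightarrow T^\prime M$ becomes a constant inclusion $\mathbb{M}^\flat \hookrightarrow \mathbb{M}^\prime$; this is exactly what distinguishes a genuine vector subbundle (as used in Section~2.2.1) from a mere bundle image, and once it is granted, the remainder of the proof is an algebraic verification in the chart.
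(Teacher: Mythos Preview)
The paper does not prove this lemma; it merely cites \cite{CaPe23}, Lemma~7.3. Your argument is the standard one --- produce $f$ as a linear functional in a chart --- and the verification that such an $f$ lies in $\mathfrak{A}(U)$ is correct once your compatibility assumption is in place.

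That assumption, however, deserves more scrutiny than you give it. The paper's definition of a weak subbundle (Section~\ref{___PartialPoissonManifolds}) only asks that $T^\flat M$ be a convenient bundle in its own right and that $\iota:T^\flat M\to T^\prime M$ be a bundle morphism. In a chart $\varphi$ for $M$, the induced trivialization of $T^\prime M$ is canonical, but the trivialization of $T^\flat M$ is chosen independently; the inclusion in these trivializations is then $(y,\alpha)\mapsto(y,A_y\alpha)$ for some smooth family $A_y:\mathbb{M}^\flat\to\mathbb{M}^\prime$, and nothing in the stated definition forces $A_y$ to be constant. If it is not, your function $f(y)=\tilde\sigma(\varphi(y))$ has $d_yf$ equal to the \emph{constant} $\tilde\sigma$ in the chart, which need not lie in the image $A_y(\mathbb{M}^\flat)=\iota(T^\flat_yM)$ for $y\neq x$, and then $f\notin\mathfrak{A}(U)$. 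You flag this as ``the only delicate point'' and assert it is part of the notion of subbundle, but that is exactly what needs to be extracted from the definition in \cite{CaPe23} (where the convention presumably does include simultaneous local trivializations). Your proof is complete once that is established; as written it rests on an assumption you have named but not verified from the hypotheses actually stated in this paper.
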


Then we have (cf. \cite{CaPe23}, Proposition~7.3 and Corollary~7.2).
\begin{proposition}
\label{P_PropertiesPartialPoissonManifold}
Let $(T^\flat M,P,\{.,.\}_{P})$ be a partial Poisson structure. We denote by $\mathfrak{P}_M$ the  sheaf of $\mathfrak{A}(U)$-modules generated by the set $\{df,f\in\mathfrak{A}(U)\}$.
 Then we have the following properties:
\begin{enumerate}
\item
We can define a sheaf of almost brackets\index{sheaf!of almost brackets} $[.,.]_{P}$ on the sheaf  $\mathfrak{P}_M$  by:
\begin{equation}
\label{eq_AlmostPoissonBracket}
[\alpha,\beta]_{P}=L_{P(\alpha)}\beta-L_{P(\beta)}\alpha-d<\alpha,P(\beta)>
\end{equation}
for any open set $U$ in $M$ and any sections $\alpha$ and $\beta$ in $\mathfrak{P}(U)$ where
$L_{X}$ is the Lie derivative. \\
Moreover,  $[.,.]_{P}$ satisfies:
\begin{equation}
\label{eq_DiffAlmostPoissonBracket}
\forall \left( f,g \right) \in \left( \mathfrak{A}(U) \right) ^2,\; [df,dg]_{P}=d\{f,g\}_{P}.
\end{equation}
\item $ \left( \mathfrak{P}_M,[.,.]_{P} \right) $ is a sheaf of Poisson-Lie algebras.\\
In particular,  $ \left( T^\flat M,p^\flat_M, M, P,\mathfrak{P}_M \right) $ is a strong  partial convenient  Lie algebroid and  $ \left( T^\flat M, M, P, [.,.]_P \right) $ is an pre-Lie algebroid that is $P$ satisfies $P([\alpha,\beta]_P]=[P(\alpha),P(\beta)]$ for all local sections  $\alpha, \beta$ de $T^\flat M$. Moreover, if $P$ is an injective morphism, then 
$ \left( T^\flat M,p^\flat_M, M, P,\mathfrak{P}_M \right) $ is a Lie algebroid.\\
\end{enumerate}
\end{proposition}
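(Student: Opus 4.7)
The plan is to verify all statements first on exact generators $df$ with $f\in\mathfrak{A}(U)$, then extend to all of $\mathfrak{P}(U)$ by the $\mathfrak{A}(U)$-module structure using the standard Leibniz rules for Lie derivatives on $1$-forms. The key tool that makes such an extension meaningful is Lemma~\ref{L_P(U)generates}, which guarantees that locally the exact differentials $\{d_x f : f\in\mathfrak{A}(U)\}$ already fill the fibre $T^\flat_x M$, so that identities verified on generators propagate to the whole module.

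For item~1, I would first check that formula (\ref{eq_AlmostPoissonBracket}) produces a section of $T^\flat M$ when $\alpha=df$, $\beta=dg$. Using $L_X(dh)=d(Xh)$ together with Proposition~\ref{P_AUalgebra}, one sees that $L_{P(df)}(dg)=d\{f,g\}_P$ and $d\langle df,P(dg)\rangle$ are differentials of $\mathfrak{A}(U)$-functions. Substituting and using (\ref{eq_DefinitionLocalBracket}) together with the skew-symmetry of $P$ then yields (\ref{eq_DiffAlmostPoissonBracket}). For general $\alpha=h_1\, df_1$, $\beta=h_2\, df_2$, I would invoke the classical derivations
\begin{align*}
L_{fX}\omega &= fL_X\omega + \omega(X)\, df, \\
L_X(f\omega) &= X(f)\,\omega + fL_X\omega,
\end{align*}
to derive an algebroid-type Leibniz rule for $[\cdot,\cdot]_P$; this simultaneously shows that the bracket lands in $\mathfrak{P}(U)$ and depends only on $\alpha$ and $\beta$, not on the chosen decomposition. $\mathbb{R}$-bilinearity and skew-symmetry then follow formally.

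For item~2, the Jacobi identity is checked first on exact generators: from (\ref{eq_DiffAlmostPoissonBracket}) one has $[[df,dg]_P,dh]_P=d\{\{f,g\}_P,h\}_P$, so the cyclic sum equals $d$ applied to the Jacobi expression for $\{.,.\}_P$, which vanishes by hypothesis. The Leibniz rule obtained above promotes the identity to every section in $\mathfrak{P}(U)$, which suffices because $\{d_xf:f\in\mathfrak{A}(U)\}=T^\flat_xM$ pointwise by Lemma~\ref{L_P(U)generates}. The anchor identity $P([\alpha,\beta]_P)=[P(\alpha),P(\beta)]$ reduces on generators to (\ref{eq_Pdfdg}) and spreads by the same Leibniz mechanism. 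The strong partial convenient Lie algebroid and the pre-Lie algebroid structures thus appear simultaneously; when $P$ is injective, the anchor identity uniquely determines the bracket on $T^\flat M$, upgrading the pre-Lie algebroid to a genuine convenient Lie algebroid.

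The substantive obstacle is analytic rather than algebraic: one has to check that $L_{P(\alpha)}\beta$ actually defines a section of $T^\flat M$ and not merely of $T^\prime M$. The defining condition (\ref{eq_dkf0}) of $\mathfrak{A}(U)$, together with Proposition~\ref{P_AUalgebra}, is precisely what guarantees that iterated differentiations of $\mathfrak{A}(U)$-functions stay inside $\mathfrak{A}(U)$ and that their exterior derivatives stay inside $\mathfrak{P}(U)$. Once this closure property is secured, the remaining arguments are purely algebraic transpositions of the classical Koszul--Vaisman derivation of the cotangent Lie algebroid of a Poisson manifold into the partial convenient setting.
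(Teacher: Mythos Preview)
The paper does not give its own proof of this proposition; it simply records the statement with a citation to \cite{CaPe23}, Proposition~7.3 and Corollary~7.2, and Remark~\ref{R_About1} confirms that the actual argument lives in that reference. So there is no in-paper proof to compare against.

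Your sketch is the standard Koszul--Vaisman construction of the cotangent Lie algebroid, transported to the partial convenient setting, and the outline is correct. One small ordering point: to propagate the Jacobi identity from exact generators $df$ to all of $\mathfrak{P}(U)$ you need the Jacobiator to be $\mathfrak{A}(U)$-multilinear, and for a skew bracket satisfying the Leibniz rule this holds precisely when the anchor is bracket-preserving, since
\[
J(\alpha,\beta,f\gamma)-fJ(\alpha,\beta,\gamma)=\bigl(P[\alpha,\beta]_P-[P\alpha,P\beta]\bigr)(f)\,\gamma.
\]
Hence you should first establish $P[\alpha,\beta]_P=[P\alpha,P\beta]$ (on generators via (\ref{eq_Pdfdg}), then extend by the Leibniz rule on both sides), and only afterwards extend Jacobi. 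With that adjustment the argument goes through, and your identification of the genuine analytic point---that $L_{P(\alpha)}\beta$ stays in $\Gamma(T^\flat M)$, secured by the closure property (\ref{eq_dkf0}) of $\mathfrak{A}(U)$ and Proposition~\ref{P_AUalgebra}---is exactly right.
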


\begin{remark}
\label{R_About1}
In  fact, Assertion (1) of Proposition~\ref{P_PropertiesPartialPoissonManifold}, is also valid if we only assume that $P$ is an almost partial Poisson anchor. The proof of this assertion only uses the fact that $P$ is, in particular, an almost partial Poisson anchor (cf. Proof of Proposition~7.3 in \cite{CaPe23}, p.~317). Note that, in this case, we  only have a well defined almost Poisson bracket $\{f,g\}_{P}=<dg,Pdf>$ on each algebra $\mathfrak{A}(U)$ for any open set $U$ in $M$.
\end{remark}

\subsubsection{Jacobi Identity and Schouten Condition}
\label{__SchoutenCondition}

It is well known that, in finite dimension, if $P:T^*M\to TM$ is an almost Poisson anchor, then we have a skew symmetric contravariant  tensor  $\Lambda$ of type $(2,0)$ on $T^* M$ defined by
\begin{eqnarray}
\Lambda(\alpha,\beta)=<\beta,P\alpha>.\label{eq_Lambda}
\end{eqnarray}

Then $P$ defines a Poisson structure on $M$ if and only if the Schouten bracket $[\Lambda,\Lambda]$ vanishes identically on $M$ (cf. \cite{Lic77}). In fact, the property $[\Lambda,\Lambda]=0$ is equivalent to the Jacobi identity of the bracket $\{.,.\}_P$ naturally associated to $P$.\\

Now consider an almost Poisson anchor $P:T^\flat M\to TM$. \\
On the one hand,
as in finite dimension, one can associate to $P$ a skew symmetric contravariant  tensor  $\Lambda$ of type $(2,0)$ on $T^\flat M$ defined by
\begin{eqnarray}\label{eq_Lambda}
\Lambda(\alpha,\beta)=<\beta,P\alpha>.\label{eq_Lambda}
\end{eqnarray}
On the other hand, on any open set $U$ in $M$,
and for any $\sigma_1$ and $\sigma_2$ in $\mathfrak{P}(U)$, the map\\
$[P,P]:\mathfrak{P}(U)\times \mathfrak{P}(U)\to \mathfrak{X}(U)$
given  by
\begin{equation}
\label{eq_PP}
[P,P](\sigma_1,\sigma_2)=P([\sigma_1,\sigma_2]_P)-[P\sigma_1,P\sigma_2]
\end{equation}
Then $[P,P]$  is a tensor of type $(2,1)$  with takes values in $TM$. On the other hand,  as in finite dimension,  to $P$ is associated a skew symmetric   tensor $\Lambda$ on $T^\flat M$ of type $(2,0)$ also defined  by (\ref{eq_Lambda}). Thus we can consider the  tensor  $[\Lambda,\Lambda]$ of type $(3,0)$ on $T^\flat M$ defined by
\begin{eqnarray}
[\Lambda,\Lambda](\sigma_1,\sigma_2,\sigma_3)=<\sigma_3,[P,P](\sigma_2,\sigma_1),\label{eq_LambdLambda}
\end{eqnarray}
for any $\sigma_i\in \mathfrak{P}(U)$ for $i \in \{1,2,3\}$.

\begin{definition}
\label{D_SchoutenBracketLambda}
Let $P:T^\flat M\to TM$ be an almost Poisson anchor and $\Lambda$ the associated  contravariant tensor of type $(3,0)$. Then $2[\Lambda,\Lambda]$ is called the Schouten bracket of $\Lambda$.
\end{definition}
 Note that this Definition is justified in \cite{CaPe23}, 7.2.2. Now, as in finite dimension, we have (cf. \cite{CaPe23}, Theorem~7.2):
\begin{theorem}
\label{T_CharacterizationPoisson}
Let $P:T^\flat M\to TM$ be an almost Poisson anchor and $\Lambda$ the associated  contravariant tensor of type $(3,0)$.  Then $P$ defines a partial Poisson structure on $M$ if and only if $[\Lambda,\Lambda]=0$.
\end{theorem}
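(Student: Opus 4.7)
The plan is to reduce the vanishing of $[\Lambda,\Lambda]$ to the Jacobi identity for $\{\cdot,\cdot\}_P$ on $\mathfrak{A}(U)$, by testing on triples of exact differentials. Two ingredients make this possible: Lemma~\ref{L_P(U)generates}, which says that locally any element of $T^\flat_x M$ is of the form $d_x f$ for some $f \in \mathfrak{A}(U)$; and the tensorial character of $[\Lambda,\Lambda]$ (Definition~\ref{D_SchoutenBracketLambda}, justified in \cite{CaPe23}, 7.2.2), which lets one recover the full tensor from its values on such sections.

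The core computation is then a direct expansion. Using the definition (\ref{eq_LambdLambda}) of $[\Lambda,\Lambda]$, the definition (\ref{eq_PP}) of $[P,P]$, and the key identity $[df,dg]_P = d\{f,g\}_P$ from Proposition~\ref{P_PropertiesPartialPoissonManifold}, for any $f,g,h \in \mathfrak{A}(U)$ I would write
\begin{align*}
[\Lambda,\Lambda](df,dg,dh)
&= \langle dh,\, P\bigl([dg,df]_P\bigr) - [P(dg),P(df)]\rangle \\
&= \langle dh,\, P(d\{g,f\}_P)\rangle - \langle dh,\, [X_g,X_f]\rangle.
\end{align*}
The first term equals $\{\{g,f\}_P,h\}_P$ by the definition $\{u,v\}_P = \langle dv, P(du)\rangle$, while the second term, via $X_u(v) = \{u,v\}_P$, equals $\{g,\{f,h\}_P\}_P - \{f,\{g,h\}_P\}_P$. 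Using antisymmetry of $\{\cdot,\cdot\}_P$ to regroup, this gives
\[
[\Lambda,\Lambda](df,dg,dh)
= \{f,\{g,h\}_P\}_P + \{g,\{h,f\}_P\}_P + \{h,\{f,g\}_P\}_P,
\]
which is precisely the Jacobiator of $\{\cdot,\cdot\}_P$ on $\mathfrak{A}(U)$.

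From here both implications are immediate. If $P$ is a Poisson anchor, the right-hand side vanishes for every triple, hence $[\Lambda,\Lambda]$ vanishes on all triples of exact differentials, and by tensoriality together with Lemma~\ref{L_P(U)generates} it vanishes on $T^\flat M \times T^\flat M \times T^\flat M$. Conversely, if $[\Lambda,\Lambda]=0$, the same formula shows that $\{\cdot,\cdot\}_P$ satisfies Jacobi on $\mathfrak{A}(U)$ for every open $U$, which is the condition of Definition~\ref{D_PartialPoisson} making $P$ a Poisson anchor.

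The main obstacle I anticipate is the tensoriality claim for $[\Lambda,\Lambda]$. The defining expression (\ref{eq_PP}) involves $[\sigma_1,\sigma_2]_P$, which by (\ref{eq_AlmostPoissonBracket}) is a genuine first-order differential operator and is not obviously $\mathfrak{A}(U)$-multilinear; the cancellations that produce $\mathfrak{A}(U)$-linearity rely on the Leibniz rule for $\{\cdot,\cdot\}_P$ and on $P$ being a bundle morphism. In the convenient partial setting this must be checked with some care, and that is the content imported from \cite{CaPe23}, 7.2.2. Once tensoriality is granted, the rest of the argument is the direct calculation above.
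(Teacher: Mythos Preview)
The paper does not supply an in-text proof of this theorem; it simply cites \cite{CaPe23}, Theorem~7.2. Your argument is correct and is exactly the standard computation one expects that reference to contain: test $[\Lambda,\Lambda]$ on triples of exact differentials, identify the result with the Jacobiator of $\{\cdot,\cdot\}_P$, and use tensoriality plus Lemma~\ref{L_P(U)generates} to conclude.

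One small presentational point: for the identity $[df,dg]_P=d\{f,g\}_P$ you cite Proposition~\ref{P_PropertiesPartialPoissonManifold}, but that proposition is stated under the hypothesis that $P$ is already a Poisson anchor. In the converse direction you only know $P$ is an \emph{almost} Poisson anchor, so the correct citation is Remark~\ref{R_About1}, which records that Assertion~(1) of that proposition (and in particular (\ref{eq_DiffAlmostPoissonBracket})) holds in the almost-Poisson case. Likewise, Lemma~\ref{L_P(U)generates} is phrased for a partial Poisson structure, but its content depends only on the weak subbundle $T^\flat M$ and the algebra $\mathfrak{A}(U)$, not on the Jacobi identity, so its use in both directions is legitimate.
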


\subsection{Partial Poisson Bialgebroid and  its Associated Cohomology}

In this section, we  consider a partial Poisson manifold $ \left( M,\mathfrak{A}_M,\{.,.\}_{P} \right) $. We have seen that  $(T^\prime M,p^\flat_M, M, P,\mathfrak{P}_M)$ is a strong  partial Banach  Lie algebroid and  
$ \left( T^\flat,p^\flat_ M, M,P,[\cdot,\cdot]_P \right) $ is an almost Lie algebroid (cf. Proposition~\ref{P_PropertiesPartialPoissonManifold}, (2)).

\subsubsection{Partial Bialgebroid}
\label{____PartialBialgebroid}

According to \cite{CaPe23}, 3.18.6.4,
let $d_P$  be the associated  exterior differential associated to the strong partial Lie algebroid  $ \left( T^\prime M,p^\flat_M, M, P,\mathfrak{P}_M \right) $. Then,  if $X$ is a vector field defined on an open set $U$ in $M$,                                                                                                                                                                                                                                                                                                                                                                                                                                                                                                                                                                                                                                                            according to \cite{CaPe23}, Example~7.4, $X$ can be considered as an element of  $\bigwedge^1\Gamma^*(T^\flat M)$, so we have:
\begin{eqnarray}
\label{eq_dPX}
d_PX(\alpha, \beta)=L^P_\alpha<\beta,X>-L^P_\beta<\alpha, X>-<[\alpha,\beta]_P,X>.
\end{eqnarray}

Therefore, as for finite dimensional Poisson manifolds (\cite{Lic77}), we have (cf. \cite{CaPe23}, Proposition~7.8):
\begin{lemma}
\label{L_dPLambdaLambda}
Let $\Lambda $ be the Poisson tensor  associated to $P$. Then  with the previous notations we have:
\begin{enumerate}
\item
$d_PX=-L_X \Lambda$;
\item
$d_P[X,Y]=L_Xd_PY-L_Yd_PX$.
\end{enumerate}
\end{lemma}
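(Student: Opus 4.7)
The overall strategy is to prove part (1) by a direct unpacking of the definitions on sections of $T^\flat M$, and then derive part (2) formally from part (1) together with the standard commutator identity $[L_X,L_Y]=L_{[X,Y]}$ for Lie derivatives of tensor fields applied to $\Lambda$.

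For part (1), I fix local sections $\alpha,\beta$ of $T^\flat M$ and start from the algebroid de Rham formula (\ref{eq_dPX}), writing
\[
d_P X(\alpha,\beta) = (P\alpha)\langle\beta,X\rangle - (P\beta)\langle\alpha,X\rangle - \langle [\alpha,\beta]_P, X\rangle.
\]
I then substitute the definition (\ref{eq_AlmostPoissonBracket}) of $[\alpha,\beta]_P$ and apply the basic duality identity
\[
\langle L_Y\omega, Z\rangle = Y\langle\omega,Z\rangle - \langle\omega,[Y,Z]\rangle
\]
with $Y=P\alpha,\,P\beta$ and $Z=X$ to rewrite $\langle L_{P\alpha}\beta,X\rangle$ and $\langle L_{P\beta}\alpha,X\rangle$; after cancellation, $d_PX(\alpha,\beta)$ reduces to a combination of $\langle\beta,[P\alpha,X]\rangle$, $\langle\alpha,[P\beta,X]\rangle$ and a single derivative of $\langle\beta,P\alpha\rangle$. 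In parallel I expand
\[
(L_X\Lambda)(\alpha,\beta) = X\langle\beta,P\alpha\rangle - \langle\beta,P(L_X\alpha)\rangle - \langle L_X\beta,P\alpha\rangle
\]
using the Leibniz rule for the Lie derivative of the $(2,0)$-tensor $\Lambda(\alpha,\beta)=\langle\beta,P\alpha\rangle$ defined in (\ref{eq_Lambda}). Applying the same duality identity with $Y=X$ in the opposite direction converts the commutators $[P\alpha,X]$ and $[P\beta,X]$ into $L_X$-derivatives, and the skew-symmetry of $P$, used in the form $\langle L_X\alpha, P\beta\rangle=-\langle\beta, P(L_X\alpha)\rangle$, matches the two expressions term by term and yields $d_PX=-L_X\Lambda$.

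Part (2) then follows immediately: by part (1) applied to the vector fields $[X,Y]$, $Y$ and $X$,
\[
d_P[X,Y] = -L_{[X,Y]}\Lambda = -\bigl(L_XL_Y-L_YL_X\bigr)\Lambda = L_X(-L_Y\Lambda) - L_Y(-L_X\Lambda) = L_X d_PY - L_Y d_PX,
\]
where the middle equality is the universal identity $[L_X,L_Y]=L_{[X,Y]}$ applied to the tensor $\Lambda$.

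The hard part is the sign bookkeeping in part (1): one must carefully track the interplay between the skew-symmetry of $P$, the convention of the pairing $\langle\cdot,\cdot\rangle \colon T^\flat M\times TM\to\mathbb{R}$, and the direction in which the duality identity is used on each side of the equality, since each cancellation depends on a precise sign. A secondary, purely analytic point is to make sure that $L_X\alpha$ and $L_X\beta$ represent well-defined elements of $T^\flat M$ (not merely $T'M$); by Lemma~\ref{L_P(U)generates} one can reduce to checking the identity on exact forms $\alpha=df$, $\beta=dg$ with $f,g\in\mathfrak{A}(U)$, for which $L_X df=d(Xf)$ remains in the subsheaf $\mathfrak{P}_M$, and then extend by $\mathfrak{A}(U)$-linearity to arbitrary sections.
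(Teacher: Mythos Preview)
Your proof of part~(1) is essentially the paper's: both sides are expanded from the defining formulas (\ref{eq_dPX}), (\ref{eq_AlmostPoissonBracket}) and (\ref{eq_Lambda}) and matched term by term via the duality identity for $L_Y$ on $1$-forms.

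Your proof of part~(2), however, takes a genuinely different route. The paper expands $L_Xd_PY(\alpha,\beta)$ explicitly via the Leibniz rule for the Lie derivative of a $(2,0)$-tensor (using part~(1) to supply the value of $d_PY$), does the same for $L_Yd_PX$, and then checks by hand that the difference coincides with $d_P[X,Y](\alpha,\beta)$; this is the content of their equation~(\ref{dPXY}). You instead observe that, once part~(1) is established, part~(2) is the single line $d_P[X,Y]=-L_{[X,Y]}\Lambda=-(L_XL_Y-L_YL_X)\Lambda=L_Xd_PY-L_Yd_PX$ using the commutator identity $[L_X,L_Y]=L_{[X,Y]}$ on the tensor $\Lambda$. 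Your argument is shorter and more conceptual; the paper's direct computation has the minor advantage of not appealing to the commutator identity on sections of $\bigwedge^2 T^{\flat\prime}M$ (which in the convenient setting one should in principle check, though it follows from the same Leibniz formula and the Jacobi identity for vector fields). The analytic caveat you flag about $L_X\alpha$ staying in $T^\flat M$ applies equally to both proofs, since the paper also writes $d_PY(L_X\alpha,\beta)$; your reduction to exact forms via Lemma~\ref{L_P(U)generates} handles it.
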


\begin{remark}
\label{R_OtherCharacterizationdP}${}$
In finite dimension, recall that a Lie bi-algebroid is a pair of Lie algebroids $(\mathcal{A}, \pi,M,\rho,[.,.]_\mathcal{A})$ and $(\mathcal{A}^\prime, \pi^\prime,M,\rho^\prime,[.,.]_{\mathcal{A}^\prime})$ where $\pi^\prime:\mathcal{A}^\prime\to M$ is the dual bundle of $\pi:\mathcal{A}\to M$ with the following compatibility condition
\[
d_{\rho^\prime}[X,Y]_\mathcal{A}=L_Xd_{\rho^\prime}Y-L_Yd_{\rho^*}X,
\]
for all sections $X$ and $Y$ of $\mathcal{A}$  and $d_\rho$ (resp. $d_{\rho^\prime}$) is the exterior differential associated to the Lie algebroid $(\mathcal{A}, \pi,M,\rho,[.,.]_\mathcal{A})$ (resp. $(\mathcal{A}^\prime, \pi^\prime,M,\rho^\prime,[.,.]_{\mathcal{A}^\prime})$).
(cf. \cite{MaXu94} ). \\
In particular, if $M$ is provided with a Poisson structure defined by a bivector $\Lambda$ and a Poisson anchor $P:T^*M\to M$, then the pair  $(TM,p_{TM}, M,\operatorname{Id},[.,.])$ and $ (T^*M, p_{T^*M}, P,[.,.]_P)$ is a Lie bi-algebroid  
(cf. \cite{MaXu94}. According  to our notations, in this finite dimensional  context,  the compatibility  relation is exactly Proposition \ref{L_dPLambdaLambda} (2).\\
Unfortunately,  in our infinite dimensional context, even if $T^\flat M=T^\prime M$, the Poisson structure, in general, does not provide $T^\prime M$ with a Lie algebroid structure.
\end{remark}

Taking into account Proposition ~\ref{L_dPLambdaLambda}, we propose the following generalization of the notion of Lie bialgebroid in finite dimension:
\begin{definition}
Let $ \left( E,\pi, M,\rho,[.,.]_E \right) $ be a Banach Lie algebroid. Let $\pi^\prime:E^\prime\to M$ be the dual bundle of $E$. Assume that there exists a weak Banach subbundle $\pi^\flat: E^\flat\to M$ with the following properties:
\begin{itemize}
\item[$\bullet$] 
$E^\flat\subset E^\prime$, 
$\pi^\flat=\pi^\prime_{| E^\flat}$ and the inclusion $\iota: E^\flat\to E^\prime$ is a bundle morphism;
\item[$\bullet$] 
there exists a pre-Lie algebroid structure $(E^\flat, \pi^\flat, M,\rho^\flat,[.,;]_{E^\flat})$\footnote{cf. Proposition~\ref{P_PropertiesPartialPoissonManifold}.} on $E^\flat$ and also a strong Lie algebroid structure on $E^\flat$.
\end{itemize}
We denote by $d_\rho$ (resp. $d_{\rho^\flat}$)  the exterior differential on $E$ (resp. $E^\flat$) associated to these structures. We say that the pair $(E,E^\flat)$ is  a partial bialgebroid\index{partial!bialgebroid} if,  for any open set $U$ in $M$,   we have:
\[
d_{\rho^\flat}[X,Y]_E=L^{\rho^\flat}_X d_{\rho^\flat}Y- L^{\rho^\flat}_Y d_{\rho^\flat}X,
\]
for all sections $X$ and $Y$ of  $E_U$, considered as $1$-forms on $E^\flat$.
\end{definition}
\medskip
\begin{examples}
\label{exBialgebroid}${}$
\begin{enumerate}
\item[1.] 
Any finite dimensional  bialgebroid $ \left(  E,E^\prime \right) $ is a partial bialgebroid. 
\item[2.] From Proposition~\ref{L_dPLambdaLambda}, to any partial Poisson manifold 
$ \left( M,\mathfrak{A}_M,\{.,.\}_{P} \right) $ is associated a partial bialgebroid $ \left( TM, T^\flat M \right) $ (see also Theorem~\ref{T_CharacPartialBialgebra}).
\end{enumerate}
\end{examples}

\begin{proof}[Proof of Lemma \ref{L_dPLambdaLambda}] Fix some open set $U$ in $M$. All sections $\alpha$ and $ \beta$ of $T^\prime M$ and vector fields $X$ and $Y$ considered in this proof are assumed to be defined on $U$.\\

1. Using the expression of the Lie derivative of a $2$-form, we get:
\begin{equation}
\label{LXL}
L_X\Lambda(\alpha,\beta)=-<\beta,[P(\alpha), X]>+<\alpha,[P(\beta),X]>-X(<\beta,P(\alpha)>).
\end{equation}
On the other hand, using the exterior derivative of a $2$-form on a partial strong convenient Lie algebroid (cf. \cite{CaPe23}, 3.18.6.3), we have
\begin{equation}
\label{eq_dPX}
d_PX(\alpha,\beta)=P(\alpha)(<\beta,X>)-P(\beta)(<\alpha,X>)-<[\alpha,\beta]_P,X>.
\end{equation}
Using the value of $[\alpha,\beta]_P$, we get:
\[
d_PX(\alpha,\beta)=<\beta,[P(\alpha), X]>-<\alpha,[P(\beta),X]>+X(<\beta,P(\alpha)>)
\]
which ends the proof of (1).\\

2. From the value of a Lie derivative and (1), we have:
\[
\begin{array}
[c]{cl}
L_Xd_PY(\alpha,\beta)   & =X\{d_PY(\alpha,\beta)\}-d_PY(L_X\alpha,\beta)-d_PY(\alpha, L_X\beta)				\\
			             & =X\{Y\{<\beta,P(\alpha)>\}\}-X\{<L_Y\alpha,P(\beta)>\}+X\{<L_Y\beta,P(\alpha)>\} \\
                        & \; -Y\{<L_X\beta,P(\alpha)>\}+<L_XL_Y\alpha,P(\beta)>-<L_Y\beta,P(L_X\alpha)> \\
                        & \; - Y\{<\beta, P(L_X\alpha)>\}+<L_Y\alpha,P(L_X\beta)>-<L_XL_Y\beta, P(\alpha)>.
\end{array}
\]
By permutation of $X$ and $Y$, we obtain the corresponding expression of $L_Yd_PX$. By a direct calculation, we obtain easily that
\begin{equation}
\label{dPXY}
L_Xd_PY-L_Yd_PX=[X,Y]\{<\alpha,P(\beta)>-<L_{[X,Y]}\alpha, P\beta>+<L_{[X,Y]}\beta,P(\alpha),
\end{equation}
which is exactly $d_P[X,Y](\alpha, \beta)$.
\end{proof}

\subsubsection{Convenient Linear Partial Poisson  Spaces}
\label{___ConvenientPartialieLiePoissonSpaces} 
The concept of Banach-Lie Poisson space was introduced in  \cite{OdRa03} and generalized in \cite{Tum20} or in \cite{NST14}, Example~2.10.\\
In this section, we propose a generalization of this concept to the convenient setting. 
 
\begin{definition}\label{D_PartialLiePoissonConvenientSpace} 
Let $\mathfrak{g}$ be a convenient space.  Consider a convenient subspace $\mathfrak{g}^\flat$ of  $\mathfrak{g}^\prime$ so that the inclusion is bounded and   $P:T^\flat\mathfrak{g}:=\mathfrak{g}\times\mathfrak{g}^\flat\to T\mathfrak{g}=\mathfrak{g}\times\mathfrak{g}$ is an injective almost Poisson anchor.\\  
We say that $ \left( \mathfrak{g},\mathfrak{g}^\flat, P \right) $ is a  \emph{convenient partial Lie-Poisson space}\index{partial!Lie-Poisson space} if $\mathfrak{g}^\flat$ is provided with a Lie bracket $[.,.]^\flat$ such that, for all $x\in \mathfrak{g}$, 
 \begin{equation}
 \label{eq_Bracketgflat}
P([\alpha,\beta]^\flat)( x)=[P\alpha,P\beta](x)
\end{equation}
where $\alpha$ and $\beta$ are considered as functions in 
$\mathfrak{A}(\mathfrak{g})
:=\left\lbrace f\in C^\infty(\mathfrak{g}),: d_xf\in \{x\}\times \mathfrak{g}^\flat \right\rbrace$.
\end{definition} 
 
We have the following characterization:
\begin{proposition}
\label{P_CharacPartialBialgebra}
Consider a convenient subspace $\mathfrak{g}^\flat$ of  $\mathfrak{g}^\prime$ such that the inclusion is bounded and let $P:T^\flat\mathfrak{g}:=\mathfrak{g}\times\mathfrak{g}^\flat\to T\mathfrak{g}=\mathfrak{g}\times\mathfrak{g}$ be an almost Poisson anchor. Then  $ \left( \mathfrak{g},\mathfrak{g}^\flat, P \right) $ is a  partial Lie-Poisson space if and only if $P$ is a Lie Poisson anchor.\\
Moreover, in this case, 
$ \left( T\mathfrak{g}, T^\flat\mathfrak{g} \right) $ is a partial Lie bialgebroid.
\end{proposition}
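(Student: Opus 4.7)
The strategy is to reduce both directions to a pointwise algebraic identity on constant sections of $T^\flat\mathfrak{g}$, and then apply Theorem~\ref{T_CharacterizationPoisson}. Observe that each $\alpha\in\mathfrak{g}^\flat$ produces a constant section $\tilde\alpha:x\mapsto(x,\alpha)$ of $T^\flat\mathfrak{g}$, that such sections span every fibre $T^\flat_x\mathfrak{g}\cong\mathfrak{g}^\flat$, and that $d\tilde\alpha=0$ when $\tilde\alpha$ is regarded as a $1$-form on $\mathfrak{g}$. Since the Schouten tensor $[\Lambda,\Lambda]$ of (\ref{eq_LambdLambda}) is tensorial in its three arguments, it is enough to evaluate it on triples of constant sections.

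For the implication ``partial Lie-Poisson space $\Longrightarrow$ $P$ is a Lie Poisson anchor'', I would first compute the Koszul-type bracket $[\tilde\alpha,\tilde\beta]_P$ defined by (\ref{eq_AlmostPoissonBracket}). The vanishing of $d\tilde\alpha$ and $d\tilde\beta$ degenerates each Lie derivative into the exterior derivative of a contraction of the form $d\langle\tilde\beta,P\tilde\alpha\rangle$, and after using the skew-symmetry of $P$ recorded in (\ref{eq_Lambda}), this computation identifies $[\tilde\alpha,\tilde\beta]_P$ with the constant section $\widetilde{[\alpha,\beta]^\flat}$. Plugging this identification into the expression (\ref{eq_PP}) for the tensor $[P,P]$ and invoking the compatibility condition $P(\widetilde{[\alpha,\beta]^\flat})(x)=[P\tilde\alpha,P\tilde\beta](x)$ that defines a partial Lie-Poisson space yields $[P,P]=0$ on constant sections, hence $[\Lambda,\Lambda]=0$ everywhere, and Theorem~\ref{T_CharacterizationPoisson} concludes.

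For the converse, the Lie Poisson anchor hypothesis provides both the Jacobi identity for $\{\cdot,\cdot\}_P$ and, through Definition~\ref{D_LinearPoissonAnchor}, the linearity of the bracket of linear functions. For $\alpha,\beta\in\mathfrak{g}^\flat$ regarded as linear functions in $\mathfrak{A}(\mathfrak{g})$, the function $\{\alpha,\beta\}_P$ is then linear and determines a unique element of $\mathfrak{g}^\flat$, which I would declare to be $[\alpha,\beta]^\flat$; Jacobi for $\{\cdot,\cdot\}_P$ transports to Jacobi for $[\cdot,\cdot]^\flat$, while the compatibility condition follows directly from (\ref{eq_Pdfdg}) applied to the pair $(\alpha,\beta)$, using the identifications $d\alpha=\tilde\alpha$ and $d\{\alpha,\beta\}_P=\widetilde{[\alpha,\beta]^\flat}$. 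The ``moreover'' clause is then immediate: once $P$ has been upgraded to a Poisson anchor, $(\mathfrak{g},\mathfrak{A}(\mathfrak{g}),\{\cdot,\cdot\}_P)$ is a partial Poisson manifold and the partial bialgebroid structure on $(T\mathfrak{g},T^\flat\mathfrak{g})$ is supplied by Example~\ref{exBialgebroid}(2). The main technical obstacle is the sign bookkeeping in the first implication: one has to verify that on constant sections the Koszul-type bracket returns exactly $\widetilde{[\alpha,\beta]^\flat}$, rather than a nonzero scalar multiple of it, under the skew-symmetric conventions of (\ref{eq_Lambda}) and (\ref{eq_AlmostPoissonBracket}).
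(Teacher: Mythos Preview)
Your overall strategy—reducing both directions to the Schouten criterion of Theorem~\ref{T_CharacterizationPoisson}, working with linear functions $f_\alpha$ and constant sections $\tilde\alpha$, and invoking injectivity of $P$—matches the paper's approach. The ``moreover'' clause is handled identically.

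There is, however, a genuine gap in your first implication. You claim that computing $[\tilde\alpha,\tilde\beta]_P$ via (\ref{eq_AlmostPoissonBracket}), using $d\tilde\alpha=d\tilde\beta=0$ and the skew-symmetry of $P$, \emph{identifies} the result with $\widetilde{[\alpha,\beta]^\flat}$. But the Koszul formula (\ref{eq_AlmostPoissonBracket}) is built purely from $P$: on constant sections it collapses to $d\{f_\alpha,f_\beta\}_P$ (this is just (\ref{eq_DiffAlmostPoissonBracket}), valid for almost anchors by Remark~\ref{R_About1}), and nothing in that computation sees the \emph{given} bracket $[\cdot,\cdot]^\flat$. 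So the output is neither a priori constant nor a priori equal to $\widetilde{[\alpha,\beta]^\flat}$; this is not merely sign bookkeeping. The paper argues differently here: it observes that the compatibility $P(\widetilde{[\alpha,\beta]^\flat})=[P\tilde\alpha,P\tilde\beta]$ together with injectivity of $P$ forces the almost bracket determined by $P$ through (\ref{eq_PPequvLambdaLambdagprime}) to coincide with the given Lie bracket $[\cdot,\cdot]^\flat$, and from this coincidence deduces (\ref{eq_PPequvLambadaLambda}) and hence $[\Lambda,\Lambda]=0$.

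A smaller issue in your converse: you invoke Definition~\ref{D_LinearPoissonAnchor} to obtain linearity of $\{f_\alpha,f_\beta\}_P$, but that is a \emph{definition} of a linear anchor, not a consequence of $P$ being a Poisson anchor. The paper sidesteps this by directly setting $[\alpha,\beta]^\flat:=d\{f_\alpha,f_\beta\}$ and reading the Lie-algebra axioms and compatibility off from (\ref{eq_PPequvLambadaLambda}) and (\ref{eq_PPequvLambdaLambdagprime}), without appealing to a separate linearity hypothesis.
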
 

\begin{proof} 
Consider a convenient subspace $\mathfrak{g}^\flat$ of  $\mathfrak{g}^\prime$ such that the inclusion is bounded and let 
$P:T^\flat\mathfrak{g} \to T\mathfrak{g}$ 
be an almost Lie-Poisson anchor.  
We have an almost Lie bracket on the set $\mathfrak{A}(\mathfrak{g})
:=\{f\in C^\infty(\mathfrak{g}),\;: d_xf\in \{x\}\times \mathfrak{g}^\flat$ 
defined by $\{f,g\}_P=<dg, Pdf>$. Then if $\Lambda$ is  the skew symmetric contravariant  tensor   of 
type $(2,0)$ on $T^\flat\mathfrak{g}$ associated to $P$ as defined in (\ref{eq_Lambda}),
 the  bracket  $\{.,.\}_P$  satisfies the Jacobi identity if and only if the Schouten bracket vanishes: $[\Lambda,\Lambda]=0$ (cf. Theorem~\ref{T_CharacterizationPoisson}). But, according to (\ref{eq_PP}) and (\ref{eq_LambdLambda}), this condition is equivalent to 
\begin{equation}
\label{eq_PPequvLambadaLambda}
P(d\{f,g\})=[Pdf, Pdg]
\end{equation}
for all $f, g$ in $\mathfrak{A}(\mathfrak{g})$.  
This condition implies that,in particular, we have
\begin{equation}
\label{eq_PPequvLambdaLambdagprime}
P([\alpha,\beta] )=[P\alpha, P\beta]
\end{equation}
for all sections $\alpha, \beta$ of $T^\flat \mathfrak{g}$.
 Since $\mathfrak{g}^\flat$ can be identified with the set
\[
\left\lbrace f_\alpha\in \mathfrak{A}(\mathfrak{g}): \forall \alpha\in \mathfrak{g}^\flat ,\; \forall x\in \mathfrak{g},\;
f_\alpha(x)=<\alpha, x>
\right\rbrace
\]
then we have a Lie bracket on $\mathfrak{g}^\flat$ defined by
\begin{equation}
\label{eq_Bracketgflat}
[\alpha,\beta]^\flat=d\{f_\alpha,f_\beta\}.
\end{equation}
Thus we have a structure on Lie algebra on $\mathfrak{g}^\flat$  and the condition  (\ref{eq_Bracketgflat}) is satisfied.\\
 
Conversely,  assume that the assumptions of Definition~\ref{D_PartialLiePoissonConvenientSpace} are satisfied.  Since $P$ is injective, it follows that the almost Lie bracket  on $\mathfrak{g}^\flat$  defined by (\ref{eq_PPequvLambdaLambdagprime}) coincides with the given Lie bracket on $\mathfrak{g}^\flat$. Thus the relation (\ref{eq_PPequvLambadaLambda}) is satisfied and so the Schouten bracket of $P$ is zero, which ends the proof of the first part.\\
 
 Now under the assumption that   $ \left( \mathfrak{g},\mathfrak{g}^\flat, P \right) $ is a  partial Lie-Poisson space the last part is a direct consequence of  Proposition \ref{L_dPLambdaLambda}.
\end{proof}

\begin{example}
\label{ex-Anatol} 
Assume that $\mathfrak{g}^\flat=\mathfrak{g}^\prime$. According to \cite{OdRa03}, a Lie algebra $\mathfrak{g}$  is  a Lie  Poisson algebra if the set $C^\infty(\mathfrak{g})$ is provided with a Poisson Lie algebra $\{.,.\}$ such that considering its dual $\mathfrak{g}^\prime$ as a subset of  $C^\infty((\mathfrak{g})$, then $\{.,.\}$ induces a Lie bracket on $\mathfrak{g}^\prime$.  In fact, we have a natural  bounded map $P:\mathfrak{g}\times\mathfrak{g}^\prime \to \mathfrak{g} \times \mathfrak{g}$ such that $\{f,g\}=<dg,P(df)>$ which is given by $P\alpha(x)=-\operatorname{ad}^*_\alpha x$ where, as classically,  $\operatorname{ad}^*_\alpha x=-\alpha\circ \operatorname{ad}_x$ (cf. next Example) which satisfies the Jacobi identity (cf.  \cite{OdRa03}).
\end{example}

\begin{example}
\label{R_ComparisonBarabra}
Assume that  $ \left( \mathfrak{g},[.,.] \right) $ is a convenient Lie algebra. Then, to each $x\in \mathfrak{g}$, we have a bounded  linear map $\operatorname{ad}_x:\mathfrak{g}\to \mathfrak{g}$ given by $\operatorname{ad}_x(y)=[x,y]$. This gives rise to a bounded \footnote{Note that this action is continuous for the associated  bornological topology of $\mathbb{E}$.}
  coadjoint  action of $\mathfrak{g}$ onto $\mathfrak{g}\prime$ in an evident way. We also have a bounded  coadjoint  action 
\[
\operatorname{ad}^*:
\mathfrak{g}\times \mathfrak{g}^\prime
\to \mathfrak{g}^\prime
\]
given by 
$\operatorname{ad}^*_x\alpha=-\alpha\circ \operatorname{ad}_x$ for all $x\in \mathfrak{g}$ and $\alpha\in \mathfrak{g}^\prime$.\\
Consider a convenient subspace $\mathfrak{g}^\flat$ of  $\mathfrak{g}^\prime$ whose inclusion is bounded.  If $\mathfrak{g}^{\flat\prime}$ denote the dual of $\mathfrak{g}^\flat$, we have the inclusions:  
$ \mathfrak{g}\subset \mathfrak{g}^{\prime\prime}\subset \mathfrak{g}^{\flat\prime}.$  Assume that the restriction of the previous   coadjoint action 
 to $\mathfrak{g}\times \mathfrak{g}^\flat$  takes values in $\mathfrak{g}^\flat$ and is bounded\footnote{This condition is automatically satisfied if $\mathfrak{g}^\flat$ is closed in $\mathfrak{g}^\prime$.}. When there is no ambiguity, this action will still be denoted $\operatorname{ad}^*$.\\
When $\mathfrak{g}^\flat$ is a separating Lie algebra  for $\mathfrak{g}$, for any $x\in \mathfrak{g}$, then    the map $P_x(\alpha(x))=-\operatorname{ad}
 _{\alpha(x) }(x)$ is a well defined vector field on $\mathfrak{g}$  which satisfies the assumption of Definition~\ref{D_PartialLiePoissonConvenientSpace}. Then,  in the Banach setting, from \cite{Tum20}, Theorem~3.14, $(\mathfrak{g},\mathfrak{g}^\flat, P)$ is a partial Lie-Poisson space in the sense of  Definition~\ref{D_PartialLiePoissonConvenientSpace}. Thus this definition can be considered as an 
adaptation  of Definition~3.12 of \cite{Tum20}.
\end{example}


In  \cite{Tum20}, the reader will find other more particular and very interesting examples in the Banach setting.

\subsection{Poisson morphisms and Poisson maps}
\label{__PoissonMorphismsPoissonMaps}

\subsubsection{Poisson (resp.  symplectic) relations}

\label{___PoissonSymplecticRelations}

By analogy with the linear setting, in the context of convenient manifolds, a \emph{relation}\index{relation} $R:M_1\rel M_2$ is a closed weak submanifold  of $M_1\times M_2$. Any map $\phi:M_1\times M_2$ will be also considered as the relation given by its graph 
 $G_\phi=\{ \left( x,\phi(x) \right) , x\in M_1\}$.  If $S:M_2\rel M_3$ is also a relation, then the relation $S\circ R:M_1\rel M_3$ is again defined by
\begin{eqnarray}
\label{eq_CompositionRelations}
{}\;\;\;\;\;\;\;\;\;\;\;\;S\circ R=\{(x,z)\in M_1\times M_3,\;:\;   (x,y)\in R,\; (y,z)\in S,\; \textrm{with } y\in M_2\}.
\end{eqnarray}
The inverse of $R$ is the relation $R^{-1}:M_2\rel M_1$ given by
 $$R^{-1}=\{(y,x)\in M_2\times M_1 \textrm{ such that } (x,y)\in R\}.$$
 
As in \cite{Wei88}, we introduce:
\begin{definition}
\label{D_PoissonRelation} 
For $i \in \{1,2\}$, let $ \left( T^{\flat}M_i,TM_i,P_i,\{\;,\;\}_{P_i} \right) $  be a Poisson structure (resp.  symplectic structure). A relation $R:M_1\rel M_2$ is a Poisson relation, if $R$ is a closed weak submanifold of $M_1\times M_2$ which is coisotropic  for the partial  Poisson structure (resp. Lagrangian for the partial symplectic structure) $M_1\times M_2^-$. 
\end{definition}

Unfortunately, the composition of two Poisson relations is not in general a Poisson relation and even not a closed  submanifold without some adequate  assumptions.  In fact, if  $R:M_1\rel M_2$  and $S:M_2\rel M_3$ are Poisson relations, then the product $R\times S$ is a closed  submanifold of $M_1\times M_2\times M_2\times M_3$ and if $p_{1,3}$ is the projection of $M_1\times M_2\times M_2\times M_3$ onto $M_1\times M_3$ then according to (\ref{eq_CompositionRelations}), we have:
\[
S\circ R=p_{1,3}(R\times S\cap (M_1\times \Delta_2\times M_3))
\]
where $\Delta_2$ is the diagonal of $M_2\times M_2$.\\

\medskip
This is why is introduced the following conditions under which $S\circ R$ will be a weak closed submanifold in $M_1\times M_3$ (cf. \cite{Wei88}, Definition~1.3.7):

\begin{definition}
\label{D_AssumptionClean} ${}$
\begin{enumerate}
\item
A pair $ \left( N_1,N_2 \right) $ of  closed submanifolds of a convenient  manifold $M$ is called a \emph{clean pair}\index{clean pair} if $N_1\cap N_2$ is a  closed submanifold of $M$ such that $T(N_1\cap N_2)=TN_1\cap TN_2$\footnote{This notion was firstly introduced by R. Bott in his book \cite{Bot94}.}.
\item
Let $R:M_1\rel M_2$  and $S:M_2\rel M_3$ be  relations. 
 The pair of relations  $(R,S)$ is called a clean pair if:
\begin{enumerate}
\item[(i)] 
If $\Delta_2$ denotes the diagonal in $M_2\times M_2$, the intersection of the  closed  submanifolds $R\times S$ and $ D=M_1\times \Delta_2\times M_3$ is a clean pair in $M_1\times M_2\times M_2\times M_3$; 
\item[(ii)] 
$S\circ R$ is a closed submanifold of $M_1\times M_3$ and the  restriction to $(R\times S)\cap D$ of the projection of $p_{1,3}:M_1\times M_2\times M_2\times M_3\to M_1\times M_3$ is a submersion on $S\circ R$.
\end{enumerate}
\end{enumerate}
\end{definition}

\begin{remark} 
In \cite{Wei88}, A. Weinstein introduced the notion of clean pair and very clean pair for the finite dimensional context. Definition~\ref{D_AssumptionClean} corresponds to Weinstein's notion of very clean pair. But since we will only need  the assumptions of this Definition we simply use the terminology "clean pair" instead of "very clean pair".
\end{remark}
Note that from \cite{CaPe23}, Remark~1.26, if a pair $(R,S)$ of relations is such that  $R\times S$ and $D$ are transverse, then the condition (2)(i) of Definition~\ref{D_AssumptionClean} is satisfied,  but, in general, the condition (2) (ii) is not satisfied. The following Lemma gives examples of clean pair of relations:

\begin{lemma}
\label{L_Sudmertion}   
Consider a smooth  $\phi:M_1\to M_2$. If $p_i: M_2\times M_3\to M_i$ for $i \in \{2,3\}$ is the canonical projection,  let  $R:M_2\rel M_3$ be a relation  such that the  restriction of $p_3$ to $R$  is a diffeomorphism onto $M_3$ and $\phi_1(M_1)\subset p_2(R)$. If we identify $\phi$ with the relation from $M_1$ to $M_2$ given by its graph, then $(\phi ,R)$ is a clean pair. In particular, if $\psi$ is a submersion from $M_3$ to $M_2$ then $(\psi^{-1},\phi)$ is a clean pair.
\end{lemma}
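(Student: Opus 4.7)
The plan is to exploit the hypothesis that $p_3|_R:R\to M_3$ is a diffeomorphism in order to parametrize $R$ as the graph of the smooth map
\[
\alpha:=p_2\circ(p_3|_R)^{-1}:M_3\to M_2,
\]
so that $R=\{(\alpha(z),z):z\in M_3\}$. With this identification both geometric objects appearing in Definition~\ref{D_AssumptionClean} can be compared with the single fiber product
\[
X:=\{(x,z)\in M_1\times M_3\;:\;\phi(x)=\alpha(z)\}=(\phi,\alpha)^{-1}(\Delta_2),
\]
which becomes the central object of the proof.

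First I would unpack the intersection $(G_\phi\times R)\cap D$ in $M_1\times M_2\times M_2\times M_3$. An element of $G_\phi\times R$ has the form $(x,\phi(x),\alpha(z),z)$ and lies in $D=M_1\times\Delta_2\times M_3$ exactly when $\phi(x)=\alpha(z)$, so
\[
(G_\phi\times R)\cap D=\bigl\{(x,\phi(x),\phi(x),z)\;:\;(x,z)\in X\bigr\}.
\]
The map $(x,z)\mapsto(x,\phi(x),\phi(x),z)$ is a closed embedding of $X$ into $M_1\times M_2\times M_2\times M_3$ whose image is precisely this intersection. A direct check from Definition~\ref{D_LinearRelation} shows moreover that $R\circ G_\phi=X$ as subsets of $M_1\times M_3$, since $(x,z)\in R\circ G_\phi$ iff $(\phi(x),z)\in R$ iff $\phi(x)=\alpha(z)$. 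Thus everything reduces to the geometry of the single set $X$.

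The heart of the argument, and the step I expect to be the main obstacle, is to show that $X$ is a closed weak submanifold of $M_1\times M_3$ with tangent space
\[
T_{(x,z)}X=\{(u,v)\in T_xM_1\times T_zM_3\;:\;d_x\phi(u)=d_z\alpha(v)\}.
\]
This amounts to transversality of the pair $(\phi,\alpha):M_1\times M_3\to M_2\times M_2$ to the diagonal $\Delta_2$; the hypothesis $\phi(M_1)\subset p_2(R)=\alpha(M_3)$ guarantees non-emptiness and the correct image behaviour, and the fact that $p_3|_R$ is a diffeomorphism provides the splitting needed to identify the tangent spaces of $R$ along its image. In the particular case $R=\psi^{-1}$ with $\psi:M_3\to M_2$ a submersion, one has $\alpha=\psi$, so $d_z\alpha$ is a split surjection for every $z$ and transversality with $\phi$ is automatic; this gives the final assertion on $(\psi^{-1},\phi)$ for free.

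Once $X$ has been identified as a closed weak submanifold with the expected tangent space, the remaining verifications are formal. Condition~(i) of Definition~\ref{D_AssumptionClean} follows from the parametrization: the closed embedding $X\hookrightarrow M_1\times M_2\times M_2\times M_3$ carries $T_{(x,z)}X$ exactly onto $T_{(x,\phi(x),\phi(x),z)}(G_\phi\times R)\cap T_{(x,\phi(x),\phi(x),z)}D$, so the intersection of tangent spaces coincides with the tangent space of the intersection. Condition~(ii) is then immediate: the restriction of $p_{1,3}$ to $(G_\phi\times R)\cap D$ is by construction the inverse of the parametrization, hence a diffeomorphism onto $R\circ G_\phi=X$, and in particular a surjective submersion. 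This establishes that $(\phi,R)$ is a clean pair and specializes, as indicated above, to the submersion case stated at the end.
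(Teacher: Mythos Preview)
Your reduction to the fibre product $X=(\phi,\alpha)^{-1}(\Delta_2)$ is cleaner than the paper's treatment, and once $X$ is known to be a closed submanifold with the expected tangent space your verification of conditions (i) and (ii) of Definition~\ref{D_AssumptionClean} is correct. The gap is precisely the step you flag as ``the main obstacle'': you do not actually show that $X$ is a submanifold. The reasons you offer---that $\phi(M_1)\subset\alpha(M_3)$ gives ``non-emptiness and the correct image behaviour'' and that the diffeomorphism $p_3|_R$ ``provides the splitting needed''---do not produce transversality of $(\phi,\alpha)$ to $\Delta_2$. In fact the hypotheses as stated are not sufficient: with $M_1=M_2=M_3=\mathbb{R}$, $\phi(x)=x^2$ and $R=\{(z^3,z):z\in\mathbb{R}\}$ (so $\alpha(z)=z^3$), all assumptions of the Lemma hold, yet $X=\{(x,z):x^2=z^3\}$ has a cusp at the origin, so $R\circ\phi$ is not a submanifold and $(\phi,R)$ is not clean.

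Your handling of the final assertion is, however, complete and correct: when $\alpha=\psi$ is a submersion, $d_z\alpha$ is a split surjection and transversality to $\Delta_2$ holds for any $\phi$, so $X$ is indeed a closed submanifold with tangent space $\{(u,v):d_x\phi(u)=d_z\psi(v)\}$. This is the only case invoked later (in the proof of Theorem~\ref{T_ExistencePartialPoissonStructure}). The paper's own proof takes a different shortcut: it asserts that $(\phi\times R)\cap D$ is parametrized by all of $M_1\times M_3$ and that $R\circ\phi=M_1\times M_3$, which does not follow from the stated hypotheses (it would force both $\phi$ and $\alpha$ to be the same constant). So neither argument establishes the general case, but your submersion argument is sound and suffices for the application.
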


\begin{proof} 
We consider the relation product
\[
\phi\times R 
=\left\lbrace
(x, \phi(x), y_2, y_3),\; x\in M_1, (y_2,y_3)\in R
\right\rbrace .
\]
From the assumptions, $\phi\times S$ is a closed submanifold of $M_1\times M_2\times M_2\times M_3$ modelled on $\mathbb{M}_1\times \mathbb{M}_3$. 
If  $\Delta_2$ is  the diagonal of $M_2\times M_2$ and $D= M_1\times \Delta_2\times M_3$, then we have 
\[
\begin{matrix}
  (\phi \times R)\cap D&=\{(x,\phi(x), \phi(x), y), \textrm{with } (\phi(x),y )\in R\}\hfill{}\\
  					&=\{(x,\phi(x),\phi(x), y),\; (x, y)\in M_1\times M_3\}\hfill{}.\\
\end{matrix}
\]
From the assumptions, it follows that   ($\phi \times R)\cap D$ is a closed submanifold in $M_1\times M_2\times M_2\times M_3$ modelled on $\mathbb{M}_1\times\mathbb{M}_3$.

Now, for $m=(x,\phi(x),\phi(x), y)\in (\phi \times R)\cap D$, we have 
\begin{equation}
\label{TphiRcapD}
\begin{matrix}
&T_m((\phi \times R)\cap D)\hfill{}\\
&{}\;\;\;\;\;\;\;\;\;\;\;\;=\{(u, v, -v, w)\in T_xM_1\times T_{\phi(x)}M_2\times T_{\phi(x)}M_2\times T_yM_2,  \textrm{ with } v=T_x\phi(u)\}\hfill{}\\
&{}\;\;\;\;\;\;\;\;\;\;\;= T_m(\phi \times R)\cap T_m D.\hfill{}
\end{matrix}
\end{equation}
Thus condition  (2) (i) of Definition \ref{D_AssumptionClean} is satisfied.\\

On the other hand the relation $(R\circ \phi)$ is 
\[
\{(x  ,y)\in M_1\times M_3 \textrm{ with } (\phi(x),y)\in R\}
\]
which, from assumption, is equal to $M_1\times M_3$.\\ 
Moreover, for any $m=(x,\phi(x),\phi(x), y)\in (\phi \times R)\cap D$, according to (\ref{TphiRcapD}), we obtain:
\[
T_m p_{1,3}(T_m((\phi \times R)\cap D))=T_x M_1\times T_y M_3=T_{(x,y)}(R\circ \phi).
\]
Now, as $p_{1,3}^{-1}(x,y)=\{(x,\phi(x),\phi(x),y)\}$  which is reduced to a point in $(\phi \times R)\cap D$, this trivially implies  that the   condition  (2) (ii) of Definition~\ref{D_AssumptionClean} is satisfied. 
Finally, if $\psi:M_3\to M_2$ is a submersion,  $R=\psi^{-1}$ satisfies clearly the assumptions required for $R$ in Lemma~\ref{L_Sudmertion}. This implies the last Assertion.
\end{proof}

Now, for a clean pair of  Poisson relations, we have:

\begin{theorem} \label{T_CompositionCleanPoissonRelations} 
Let $R:M_1\rel M_2$  and $S:M_2\rel M_3$ be  Poisson  (resp. symplectic) relations  such that the pair $(R,S)$ is clean. Then $S\circ R:M_1\rel M_3$ is a Poisson (resp. symplectic) relation.
\end{theorem}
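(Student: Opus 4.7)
The plan is to reduce the claim pointwise to the linear composition result of Corollary~\ref{C_ImageOfCoisotropic}, using cleanness to identify the tangent space of $S\circ R$ with the composition of the linear relations tangent to $R$ and $S$. The closedness and submanifold structure of $S\circ R\subset M_1\times M_3$ are already furnished by condition (2)(ii) of Definition~\ref{D_AssumptionClean}, so only the coisotropic (resp. Lagrangian) condition at each point needs to be verified.

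First I would fix $(x,z)\in S\circ R$, pick some $y\in M_2$ with $(x,y)\in R$ and $(y,z)\in S$, and set $m=(x,y,y,z)\in(R\times S)\cap D$ with $D=M_1\times\Delta_2\times M_3$. Using cleanness (2)(i) together with the diagonal description $T_m D=T_xM_1\times\Delta_{T_yM_2}\times T_zM_3$, one obtains
\[
T_m\bigl((R\times S)\cap D\bigr)=\bigl(T_{(x,y)}R\times T_{(y,z)}S\bigr)\cap T_mD.
\]
Since (2)(ii) makes $p_{1,3}$ restricted to $(R\times S)\cap D$ a submersion onto $S\circ R$, applying $T_m p_{1,3}$ yields the identification
\[
T_{(x,z)}(S\circ R)=T_{(y,z)}S\;\circ\;T_{(x,y)}R,
\]
where the right-hand side is the linear composition in the sense of Definition~\ref{D_LinearRelation}.

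Next I would invoke the linear characterization: as $R$ is a Poisson relation, $T_{(x,y)}R$ is coisotropic in $T_xM_1\times(T_yM_2)^{-}$ for the induced product partial Poisson structure, and similarly $T_{(y,z)}S$ is coisotropic in $T_yM_2\times(T_zM_3)^{-}$. Corollary~\ref{C_ImageOfCoisotropic} then asserts that their linear composition is coisotropic in $T_xM_1\times(T_zM_3)^{-}$, and the identification of the previous paragraph transfers this to $T_{(x,z)}(S\circ R)$. The symplectic case is handled identically, replacing every occurrence of ``coisotropic'' by ``Lagrangian'' in the appeal to Corollary~\ref{C_ImageOfCoisotropic}.

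The main obstacle is the tangent identification in the middle paragraph: one has to be certain that cleanness (2)(i) really delivers the equality $T_m((R\times S)\cap D)=T_m(R\times S)\cap T_mD$ (rather than a mere inclusion) and that the submersion in (2)(ii) transports this exactly onto $T_{(x,z)}(S\circ R)$, producing a $c^\infty$-closed subspace to which the linear notion of (co)isotropy of Section~\ref{___LinearPoissonRelations} applies. These are precisely the two properties built into Definition~\ref{D_AssumptionClean}, so once the identification is written out the linear result of Corollary~\ref{C_ImageOfCoisotropic} finishes the argument with no further analytic input.
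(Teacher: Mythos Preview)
Your proof is correct and follows essentially the same approach as the paper: both use cleanness to identify $T_{(x,z)}(S\circ R)$ with the linear composition $T_{(y,z)}S\circ T_{(x,y)}R$ via the submersion $p_{1,3}$, and then invoke Corollary~\ref{C_ImageOfCoisotropic} to conclude. Your version is in fact slightly more careful about spelling out why the tangent identification holds and why the resulting subspace is $c^\infty$-closed, points the paper leaves implicit.
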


 
\begin{proof} 
We only consider the case of Poisson relation. The symplectic case uses point by point the same type of argument. \\ 
Since the pair  $(R,S)$ is clean, it follows that for  $(x,y)\in R$ and $(y,z)\in S$, the tangent space of $R\times S\cap D$ at $m=(x,y,y,z)$ is the intersection of 
$T_m(R\times S)$ and $T_m D$. From Definition~\ref{D_AssumptionClean}, assumption (ii),  $T_m p_{1,3}$ is a surjective map from $T_m(R\times S)\cap T_m D$ onto $T_{(x,z)} S\circ R$. But $T_{(x,z)} (S\circ R)$ is exactly the composition  $(T_{(x,y)}S)\circ (T_{(y,z)}R)$ of the linear Poisson relations $T_{(x,y)}R: T_x M_1\rel T_y M_2$ and $T_{(x,y)}S: 
T_y M_2\rel T_z M_3$.  Thus, from  Definition \ref{D_PoissonRelation} and  Corollary \ref{C_ImageOfCoisotropic}, it follows that $S\circ R$ is a Poisson relation.
\end{proof} 

Let $R:M_1\rel M_2$ be a Poisson (resp. symplectic) relation between  two partial Poisson (resp. partial symplectic) manifolds. Given any coisotropic (resp. Lagrangian) submanifold $C$  in $M_1$,  following \cite{Wei88} (2.1), if $\{*\}$ is the trivial Poisson manifold reduced to a point with the zero Poisson structure, then  $C$ can be considered as a Poisson (resp. symplectic) relation $C:\{*\}\rel M_1$; since $C$ is coisotropic (resp. Lagrangian), $C$ can be identified with the graph of the relation  $C:\{*\}\rel M_1$.  Thus we introduce:
 
\begin{definition}
\label{D_RCclean} 
Let $R:M_2\rel M_1$ be a relation and $C$ a closed submanifold of $M_2$. 
We will say that $(R,C)$ is a clean pair if the pair of relations $(R,C)$ is a  clean pair.
\end{definition}

  
{\bf From now on, if $(R,C)$ is a clean pair, we identify  $R(C)$ with $ R\circ C$}. In this way,   Theorem~\ref{T_CompositionCleanPoissonRelations} has the following Corollary:
 
\begin{corollary}\label{C_CompositionRelationPoissonCoisotropic} 
Let $R:M_1\rel M_2$ be a Poisson (resp. symplectic) relation between  two partial Poisson (resp. partial symplectic) manifolds and a coisotropic (resp. Lagrangian) weak closed submanifold $C$  in $M_1$. If $(R,C)$ is  clean, then $R(C)$ is a coisotropic (resp. Lagrangian)  submanifold of $M_2$.
 \end{corollary}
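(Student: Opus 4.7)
The plan is to view the coisotropic submanifold $C$ itself as a Poisson relation out of a trivial base, and then invoke the composition theorem for clean pairs of Poisson relations, Theorem~\ref{T_CompositionCleanPoissonRelations}.

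Concretely, endow the singleton $\{\ast\}$ with its unique (trivial, zero) partial Poisson structure, so that the product $\{\ast\}\times M_1^-$ is canonically isomorphic, as a partial Poisson manifold, to $M_1^-$. Under this identification, the submanifold $C\subset M_1$ corresponds to $\{\ast\}\times C\subset \{\ast\}\times M_1^-$. First I would check that coisotropicity (resp. being Lagrangian) is insensitive both to multiplying by a trivial Poisson factor and to replacing $P$ by $-P$: the former because the annihilator and $\perp_P$ split as products and the $\{\ast\}$-factor is absorbed; the latter because the defining condition $\langle \omega_1,P\omega_2\rangle=0$ for all $\omega_1,\omega_2\in\mathbb{F}^0$ in Lemma~\ref{L_FirstProperties}(5) is preserved by $P\mapsto -P$. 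So $C$, seen as a relation $C:\{\ast\}\rel M_1$, is a Poisson relation (resp. a symplectic relation, when we are in the symplectic case and $C$ is Lagrangian).

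Next, I would unwind the cleanness hypothesis. By Definition~\ref{D_RCclean}, saying that $(R,C)$ is a clean pair is by definition saying that the pair of relations $\bigl(C:\{\ast\}\rel M_1,\; R:M_1\rel M_2\bigr)$ satisfies the two conditions of Definition~\ref{D_AssumptionClean}(2). Therefore Theorem~\ref{T_CompositionCleanPoissonRelations} applies and produces a Poisson (resp. symplectic) relation $R\circ C:\{\ast\}\rel M_2$.

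Finally, using the convention stated just before the corollary that $R(C)$ is identified with $R\circ C$, and repeating the first step in reverse to identify $\{\ast\}\times M_2$ with $M_2$, we conclude that $R(C)\subset M_2$ is coisotropic (resp. Lagrangian). The only step that requires a small, explicit verification is the compatibility of coisotropicity/Lagrangianness with the trivial product and with the sign change $P\mapsto -P$; I expect this to be routine from Definitions~\ref{D_Coisotropic}, \ref{D_Lagrangian} and Lemma~\ref{L_FirstProperties}, and to be the only non-bookkeeping piece of the argument, since the serious content—preservation of the Poisson/Lagrangian property under composition of clean relations—has already been packaged into Theorem~\ref{T_CompositionCleanPoissonRelations}.
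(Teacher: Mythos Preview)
Your proposal is correct and follows exactly the approach the paper intends: the text immediately preceding the corollary already identifies $C$ with a Poisson (resp.\ symplectic) relation $C:\{\ast\}\rel M_1$ and $R(C)$ with $R\circ C$, so the corollary is presented as a direct consequence of Theorem~\ref{T_CompositionCleanPoissonRelations}. Your extra care in checking that coisotropicity/Lagrangianness survives the trivial product and the sign flip $P\mapsto -P$ only makes explicit what the paper leaves implicit.
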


\subsubsection{Coisotropic Submanifolds }
\label{____CoisotropicSubmanifolds}

Fix a partial convenient Poisson manifold 
$ \left( T^\flat M,TM, P,\{\;,\;\}_{P} \right) $. In this way, for each $x\in M$, $P_x:T^\flat_xM\to T_xM$ is a Poisson anchor.

\begin{definition} ${}$
\begin{enumerate}
\item[1.] 
An ideal $\mathcal{I}$ in a Poisson Lie algebra $\mathcal{E}$ whose bracket is $\{\;,\;\}$ is called \emph{coisotropic}\index{coisotropic} if $\mathcal{I}$ is closed under the bracket $\{\;,\;\}$.
\item[2.] 
A  closed submanifold $N$ of $M$\footnote{$N$ is a \emph{closed submanifold} of $M$ if it is a convenient  manifold,  the inclusion $i$ of $N$ into $M$ is smooth and $T_x i$ is injective with closed range for any $x\in N$.} is called \emph{a coisotropic submanifold} of $M$, relative to $P$ if $T_xN\subset T_xM$ is a coisotropic subspace of $T_xM$ relatively to $P_x$.
\end{enumerate}
\end{definition}
We have the following characterization of a coisotropic weak closed submanifold.

\begin{proposition}
\label{P_CharacterizationCoisotropicSub} 
Let $N$ be a  closed convenient  submanifold of $M$. Then the following properties are equivalent:
\begin{enumerate}
\item[(a)] 
$N$ is coisotropic.
\item[(b)] 
For any open set $U$ in $M$, the set $\mathcal{I}_N(U)$ of smooth functions of $\mathcal{E}(U)$ which vanish on $U\cap N$ is coisotropic in $\mathcal{E}(U)$.
\item[(c)] 
For any open set $U$ and any  $h\in \mathcal{I}_N(U)$, the Hamiltonian field $X_h$ is tangent to $N$.
\end{enumerate}
\end{proposition}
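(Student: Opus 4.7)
The proof proceeds by reducing each equivalence to a pointwise linear-algebra statement on the partial linear Poisson space $(T_x^\flat M, T_xM, P_x)$ and bridging smooth data to linear data by the following generation lemma: for every $x\in N\cap U$, after possibly shrinking $U$, one has
\[
\{d_xh\,:\,h\in\mathcal{I}_N(U)\}=T_xN^0,
\]
where $T_xN^0\subset T_x^\flat M$ denotes the annihilator of $T_xN$ as in Definition \ref{D_AnnhilitorOfASubspace}. This identity is obtained by combining Lemma \ref{L_P(U)generates} (which produces a local function $f\in\mathcal{E}(U)$ with $d_xf=\alpha$ for any $\alpha\in T_x^\flat M$) with a chart adapted to the closed submanifold $N$: if $\alpha\in T_xN^0$, then subtracting from such an $f$ its composition with a local retraction onto $N$ yields an element of $\mathcal{I}_N(U)$ with unchanged differential at $x$. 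Since $N$ is a closed convenient submanifold, the same adapted chart also provides the bi-annihilator description $T_xN=\{v\in T_xM\,:\,\langle\alpha,v\rangle=0 \textrm{ for all }\alpha\in T_xN^0\}$, which will be needed in the reverse direction.

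\textbf{(a)}$\Leftrightarrow$\textbf{(c).} By Proposition \ref{P_PF0perp}(1), coisotropy of $T_xN$ is equivalent to $P_x(T_xN^0)\subset T_xN$. For $h\in\mathcal{I}_N(U)$ one has $d_xh\in T_xN^0$ and $X_h(x)=P_x(d_xh)$, so (a) immediately yields $X_h(x)\in T_xN$, which is (c). Conversely, the generation lemma writes any $\alpha\in T_xN^0$ as $d_xh$ with $h\in\mathcal{I}_N(U)$, and the tangency assumption in (c) forces $P_x\alpha=X_h(x)\in T_xN$, giving (a).

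\textbf{(c)}$\Leftrightarrow$\textbf{(b).} Using $\{h,g\}=X_h(g)$ and $X_h(g)(x)=\langle d_xg, X_h(x)\rangle$, if (c) holds then for $g\in\mathcal{I}_N(U)$ and $x\in N\cap U$ both factors pair to zero: $d_xg\in T_xN^0$ and $X_h(x)\in T_xN$, so $\{h,g\}(x)=0$, proving $\{h,g\}\in\mathcal{I}_N(U)$ and hence (b). Conversely, (b) gives $\langle d_xg,X_h(x)\rangle=0$ for every $g\in\mathcal{I}_N(U)$; by the generation lemma this extends to every $\alpha\in T_xN^0$, and the bi-annihilator identity recorded above then forces $X_h(x)\in T_xN$, which is (c).

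\textbf{Main obstacle.} The whole argument rests on the generation lemma. In the convenient setting its proof is more delicate than in the Banach or finite-dimensional case, because one has to ensure that both the function produced by Lemma \ref{L_P(U)generates} and the correction term subtracted through the adapted chart lie in $\mathcal{E}(U)$; this is where the subalgebra properties of $\mathfrak{A}(U)$ recalled in Proposition \ref{P_AUalgebra} intervene. Once the lemma is secured, the three equivalences are essentially the pointwise translation of Proposition \ref{P_PF0perp}(1) into the manifold setting.
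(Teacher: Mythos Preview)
Your argument for (a)$\Leftrightarrow$(c) and for (c)$\Rightarrow$(b) is correct and coincides with the paper's: both invoke Proposition~\ref{P_PF0perp} together with the generation statement $\{d_xh:h\in\mathcal I_N(U)\}=(T_xN)^0$, and the paper likewise records this generation fact as a consequence of Lemma~\ref{L_P(U)generates} (without spelling out the retraction construction you sketch).

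The gap is in your direct proof of (b)$\Rightarrow$(c). You invoke the bi-annihilator identity
\[
T_xN=\{v\in T_xM:\langle\alpha,v\rangle=0\ \text{for all }\alpha\in (T_xN)^0\},
\]
but in the partial setting $(T_xN)^0$ is the annihilator taken \emph{inside} $T_x^\flat M$, which may be a strict subspace of $T_x'M$. Nothing in the hypotheses forces $T_x^\flat M\cap (T_xN)^a$ to separate points of $T_xM/T_xN$; an adapted chart only tells you where the functionals of $T_x'M$ sit, not which of them belong to $T_x^\flat M$. So the step ``$\langle\alpha,X_h(x)\rangle=0$ for all $\alpha\in(T_xN)^0$ implies $X_h(x)\in T_xN$'' is not justified in general.

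The paper avoids this entirely by proving (a)$\Leftrightarrow$(b) directly from Lemma~\ref{L_FirstProperties}(5): the linear criterion ``$T_xN$ coisotropic $\Leftrightarrow$ $\langle\omega_1,P_x\omega_2\rangle=0$ for all $\omega_1,\omega_2\in(T_xN)^0$'' combined with the generation lemma and the formula $\{f,g\}_P(x)=-\langle d_xf,P_x(d_xg)\rangle$ yields the equivalence at once, with both arguments $\omega_1,\omega_2$ ranging over $(T_xN)^0$ and no need to recover a tangent vector from its pairings. You can repair your proof by replacing the problematic (b)$\Rightarrow$(c) step with this (b)$\Rightarrow$(a) argument and then closing the cycle via your already established (a)$\Rightarrow$(c).
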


\begin{proof} 
At first, it is clear that the set $\mathcal{I}_N(U)$ is an ideal in  $\mathcal{E}(U)$. For $f\in \mathcal{I}_N(U)$ $f_{| U\cap N}\equiv 0$   then for each $x\in U$, $d_xf$  belongs to $(T_xN)^0\in T_x'M$.  Now from Lemma  \ref{L_P(U)generates}  it follows that $(T_xN)^0$ is generated by all the forms $\{d_xf, f\in \mathcal{I}_N(U)\}$ for all $x\in U$. According to Lemma~\ref{L_FirstProperties}, 5, the definition of $\{\;,\;\}_p$ and (\ref{eq_DefinitionLocalBracket}), $N\cap U$ is coisotropic   if and only if $\{f,g\}_P=0$ on $N\cap U$, which gives the equivalence  between (a) and (b).\\
The equivalence between (a) and (c) is a consequence of Proposition \ref{P_PF0perp} applied at each point $x\in U$ according to the definition of $X_h$ and Lemma  \ref{L_P(U)generates}.
\end{proof}

\begin{definition}
\label{D_PoissonMorphism} 
Consider two partial Poisson structures $(T^\flat M_i,TM_i, P_i,\{\;,\;\}_{P_i})$  for $i \in \{1,2\}$ 
A smooth map $\phi:M\rightarrow M^{\prime}$  is called a \emph{Poisson morphism} (resp. \emph{anti-Poisson morphism}) if
\begin{enumerate}
\item
\hfil{}
$\forall x\in M_1,\; \left( T_{\phi(x)}\phi \right) ^{\ast} (T^\flat _{\phi(x)}M_1)\subset T^\flat_xM_1$\\ where  $\left( T_{\phi(x)}\phi \right) ^{\ast}$  is the adjoint of the tangent map $T_x\phi$;
\item   
$T_x\phi: T_xM_1\to T_{\phi(x)}M_2$ is a (linear) Poisson morphism (resp anti-Poisson morphism) from $ \left( T^\flat_xM_1, T_xM_1, (P_1)_x \right) $ to $\left( T'_{\phi(x)}M_2, T_{\phi(x)}M_2,(P_2)_{\phi(x)} \right) $.
\end{enumerate}   
If, moreover, each $P_i$ is partial symplectic, a Poisson morphism (resp. anti-Poisson morphism) is called a  \emph{symplectic map}\index{symplectic map} (resp. \emph{anti-symplectic map}\index{anti-symplectic map}).
\end{definition}

Note that  the tangent map $T_x\phi:T_xM_1\to T_{\phi(x)}M_2$ is a linear Poisson morphism from $(T^\flat_xM_1, T_xM_1, (P_1)_x)$ to $(T^\flat_{\phi(x)}M_2, T_{\phi(x)}M_2,(P_2)_{\phi(x)} )$  means precisely that, for any $x\in M_1$, we have: 
\begin{eqnarray}
\label{eq_PoissonMap}
(P_2)_{\phi(x)}=T_x\phi\circ (P_1)_x\circ (T_x\phi )^{\ast}.
\end{eqnarray}
\medskip
As  in the context of partial linear Poisson structure, for $i \in \{1,2\}$, let $P_i:T^\flat M_i\to TM_i$ be a Poisson anchor on a Banach manifold $M_i$. Then we can provide the product $M_1\times M_2$ with two Poisson anchors:
\begin{enumerate}
\item
$P_{M_1\times M_2}:T^\flat M_1\times T^\flat M_2\to TM_1\times TM_2$ given by $P_{M_1\times M_2}=(P_1, P_2)$; 
\item
$P_{M_1\times M_2^-}:T^{\flat}M_1\times T^{\flat}M_2\to TM_1\times TM_2$ given by $P_{M_1\times{M}_2^-}=(P_1, -P_2)$ which will be  simply  denoted $M_1\times M_2^-$.
\end{enumerate}

Moreover, if each  $(T^\flat M_i,TM_i,P_i,\{\;,\;\}_{P_i})$ is a partial Poisson manifold on $M_i$, we obtain in this way two partial Poisson structures on $M_1\times M_2$ associated to these previous Poisson anchors. Moreover, if each  $ \left( T^\flat M_i,TM_i,P_i,\{\;,\;\}_{P_i} \right) $ is partial symplectic, so are the associated partial Poisson structures on $M_1\times M_2$.


By application of Proposition~\ref{P_CharacterizationPoissonMorphism}, the graph of a smooth map $\phi:M_1\times M_2$ is a closed submanifold of $M_1\times M_2$  and  from the definition of a coisotropic submanifold, 
 we obtain:
\begin{proposition}
\label{P_CharacterizationPoissonMap}
Consider two partial Poisson structures 
$ \left( T^\flat M_i,TM_i, P_i,\{\;,\;\}_{P_i} \right) $  for $i \in \{1,2\}$ 
and a smooth map $\phi:M_1\rightarrow M_2$  such that $(T_{x} \phi)^{\ast}(T^{\flat}_{\phi(x)}M_2)\subset T_x^{\flat}M_1$  for all $x\in M_1$.\\
Then $\phi$ is a Poisson morphism  (resp. anti-Poisson morphism) if and only if the graph of $\phi$ is a coisotropic  submanifold of $M_1\times {M}_2^-$ (resp.  $M_1\times {M}_2$). Moreover, if each partial Poisson structure is partial symplectic or partial anti-symplectic, then such a map $\phi$ is partial symplectic  (resp. partial anti symplectic)  if and only if the graph of $\phi$ is Lagrangian in $M_1\times {M}_2^-$ (resp.  $M_1\times {M}_2$).
\end{proposition}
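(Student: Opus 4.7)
The plan is to reduce the statement to its pointwise linear counterpart, Proposition~\ref{P_CharacterizationPoissonMorphism}, using the fact that the graph of $\phi$ is a closed weak submanifold whose tangent space at each point is the linear graph of $T_x\phi$, and that the partial Poisson structure on $M_1\times M_2^{\pm}$ is fibrewise the product linear partial Poisson structure used in that proposition.

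First, I would check that $G_\phi$ is a closed weak submanifold of $M_1\times M_2$: the map $x\mapsto (x,\phi(x))$ is a smooth embedding with closed image (by continuity of $\phi$), and its tangent space at $(x,\phi(x))$ is exactly the linear graph $G_{T_x\phi}\subset T_xM_1\times T_{\phi(x)}M_2$. The fibre of $T^\flat(M_1\times M_2)$ at $(x,\phi(x))$ is $T^\flat_xM_1\times T^\flat_{\phi(x)}M_2$, so the standing hypothesis $(T_x\phi)^*(T^\flat_{\phi(x)}M_2)\subset T^\flat_xM_1$ provides precisely the condition on $T_x\phi$ required by Proposition~\ref{P_CharacterizationPoissonMorphism}.

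Second, by construction, the Poisson anchor $P_{M_1\times M_2^-}$ evaluated at $(x,\phi(x))$ is the product linear anchor $((P_1)_x,-(P_2)_{\phi(x)})$, which is exactly the product partial linear Poisson structure $T_xM_1\times (T_{\phi(x)}M_2)^-$ considered in Proposition~\ref{P_CharacterizationPoissonMorphism}. By the very definition of a coisotropic submanifold, $G_\phi$ is coisotropic in $M_1\times M_2^-$ if and only if $G_{T_x\phi}$ is coisotropic in $T_xM_1\times (T_{\phi(x)}M_2)^-$ for every $x\in M_1$. Applying Proposition~\ref{P_CharacterizationPoissonMorphism} at each point, this is equivalent to $T_x\phi$ being a linear Poisson morphism for every $x$, which is in turn equivalent to relation (\ref{eq_PoissonMap}), i.e.\ to $\phi$ being a Poisson morphism in the sense of Definition~\ref{D_PoissonMorphism}.

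The anti-Poisson case is handled identically by replacing $P_2$ with $-P_2$ (equivalently, using the product $M_1\times M_2$ in place of $M_1\times M_2^-$), and the partial symplectic/Lagrangian statement follows directly from the second assertion of Proposition~\ref{P_CharacterizationPoissonMorphism}, since when each $P_i$ is an isomorphism the product anchor on $M_1\times M_2^{\pm}$ is again an isomorphism. The only real obstacle is bookkeeping: one must confirm that the pointwise definition of coisotropy for submanifolds, combined with the fibrewise product structure of the anchor at points $(x,\phi(x))$, really transfers the linear result to the global one. Since this is just unwinding definitions, no essential new difficulty arises.
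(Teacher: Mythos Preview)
Your proposal is correct and follows exactly the approach the paper takes: the paper's proof is the single sentence ``By application of Proposition~\ref{P_CharacterizationPoissonMorphism}, the graph of a smooth map $\phi$ is a closed submanifold of $M_1\times M_2$ and, from the definition of a coisotropic submanifold, we obtain \ldots'', and your writeup simply unpacks the bookkeeping behind that sentence (pointwise identification of the tangent space of $G_\phi$ with $G_{T_x\phi}$ and of the product anchor with the linear product structure).
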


\subsubsection{Links between Poisson Morphisms and Poisson Maps}
\label{____LinksBetweenPoissonMorphismsAndPoissonMaps}

The notion of Poisson map is defined as follows:
\begin{definition}
\label{D_PoissonMap} 
For $i \in \{1,2\}$, let 
\[
\{(\mathcal{E}_i (U_i), \{\;,\;\}_{U_i}),\;  \textrm{ for any open subset } U_i  \textrm{ in } M_i\}
\]
be a  Lie-Poisson algebras sheaf on $M_i$.\\
A map $\phi: M_1\to M_2$ is a \emph{Poisson map}\index{Poisson map} if,  for any open set $U_i$ in $M_i$ such that $\phi(U_1)\subset U_2$, the induced map $\phi^{\ast}:\mathcal{C}^{\infty}(U_2)
\to \mathcal{C}^{\infty}(U_1) $,  defined by $\phi^{\ast}(f):=f\circ \phi$, is such that
\[
\left\{
\begin{array}{rcl}
\phi^{\ast} \left( \mathcal{E}(U_2) \right) &\subset& \mathcal{E}(U_1)\\
\forall (f,g) \in  \mathcal{E}_2(U_2)^2,\;\{\phi^{\ast}(f),\phi^{\ast}(g)\}_{P_1}
&=&
\phi^{\ast}(\{f,g\}_{P_2}) .
\end{array}
\right.
\]
\end{definition}

\begin{proposition}
\label{P_PropertiesPoissonMap}  
Consider two partial Poisson structures $ \left( T^\flat M_i,TM_i, P_i,\{\;,\;\}_{P_i} \right)$  for $i \in \{1,2\}$  and a convenient Poisson map $\phi$.\\
Then, for any $x\in M_1$, we have  $(T_{x}^\ast  \phi)(T^{\flat}_{\phi(x)}M_2)\subset T_x^{\flat}M_1$ and the restriction of $T_{x}^\ast \phi$ to $T^{\flat}_{\phi(x)}M_2$ is a bounded map from $T^{\flat}_{\phi(x)}M_2$  to $T_x^{\flat}M_1$ respectively endowed with their own topology of convenient spaces.
\end{proposition}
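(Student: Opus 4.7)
The plan is to combine Lemma~\ref{L_P(U)generates}, which writes every element of $T^{\flat}_{\phi(x)}M_2$ as the differential of some function $g \in \mathfrak{A}(V)$ on a suitable open neighbourhood $V$ of $\phi(x)$, with the Poisson-map condition $\phi^{\ast}(\mathcal{E}(V)) \subset \mathcal{E}(\phi^{-1}(V))$ (identifying $\mathcal{E}$ with $\mathfrak{A}$). The inclusion assertion will then follow directly from the chain rule and the defining condition~(\ref{eq_dkf0}) of $\mathfrak{A}$. The harder part is the boundedness with respect to the convenient structures on the fibres $T^{\flat}_{\phi(x)}M_2$ and $T^{\flat}_x M_1$, which are a priori strictly finer than the topologies inherited from the kinematic cotangent bundles.

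For the inclusion, I would fix $x \in M_1$, set $y = \phi(x)$, and let $\alpha \in T^{\flat}_y M_2$. By Lemma~\ref{L_P(U)generates} applied to $M_2$, there exist an open neighbourhood $V$ of $y$ and $g \in \mathfrak{A}(V)$ with $\alpha = d_y g$. Put $U = \phi^{-1}(V)$. By Definition~\ref{D_PoissonMap}, $\phi^{\ast}(g) = g \circ \phi$ lies in $\mathfrak{A}(U)$, so by the chain rule
\[
T_x^{\ast}\phi(\alpha) \;=\; T_x^{\ast}\phi(d_y g) \;=\; d_x(g \circ \phi) \;\in\; T^{\flat}_x M_1,
\]
the last membership being the defining condition~(\ref{eq_dkf0}) of $\mathfrak{A}(U)$ applied in degree $k=1$.

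For boundedness, one tests against smooth curves, via the convenient-calculus characterisation of bounded linear maps. Let $c : \mathbb{R} \to T^{\flat}_y M_2$ be a smooth curve. Using local triviality of the convenient bundle $T^{\flat}M_2$ around $y$ together with Lemma~\ref{L_P(U)generates}, one represents $c$ (on a possibly smaller neighbourhood $V$ of $y$) in the form $c(t) = d_y g_t$ with $t \mapsto g_t$ a smooth curve in $\mathfrak{A}(V)$. Since $\phi^{\ast} : C^{\infty}(V) \to C^{\infty}(U)$ is continuous linear and restricts to $\mathfrak{A}(V) \to \mathfrak{A}(U)$ by the Poisson-map property, the curve $t \mapsto g_t \circ \phi$ is smooth into $\mathfrak{A}(U)$, whence $t \mapsto d_x(g_t \circ \phi)$ is a smooth curve in $T^{\flat}_x M_1$. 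This yields boundedness of the restricted adjoint between the convenient fibres.

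The principal obstacle is producing the smooth lift $c(t) = d_y g_t$ with $t \mapsto g_t$ smooth into $\mathfrak{A}(V)$: it requires combining local triviality of $T^{\flat}M_2$ near $y$ with a parametric version of Lemma~\ref{L_P(U)generates}. Should this step prove delicate in the stated generality, an alternative is to invoke a closed-graph principle in the convenient category under mild hypotheses on the fibres (for instance, Fréchet or webbed), using that the restriction of $T_x^{\ast}\phi$ is a priori bounded into $T'_x M_1$ and, by the first step, factors through the subspace $T^{\flat}_x M_1$.
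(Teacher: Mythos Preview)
Your inclusion argument is exactly the paper's: write $\alpha=d_yg$ via Lemma~\ref{L_P(U)generates}, use the Poisson-map condition $\phi^{\ast}(\mathfrak{A}(V))\subset\mathfrak{A}(\phi^{-1}(V))$, and read off $T_x^{\ast}\phi(\alpha)=d_x(g\circ\phi)\in T^{\flat}_xM_1$ from condition~(\ref{eq_dkf0}) at order~$1$.

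For boundedness the paper does \emph{not} go through smooth curves and a parametric lift $c(t)=d_yg_t$; it argues scalarly instead. Having established the inclusion, it tests the restricted map against bounded linear functionals $\lambda$ on $T^{\flat}_xM_1$: for $d_{\phi(x)}f_2$ ranging over a bounded subset of $T^{\flat}_{\phi(x)}M_2$, the values $\lambda\bigl(T_x^{\ast}\phi(d_{\phi(x)}f_2)\bigr)$ stay bounded, and then one invokes that in the convenient setting a linear map is bounded as soon as all its scalar composites are (weak boundedness of image sets suffices). Lemma~\ref{L_P(U)generates} is thus used only pointwise, never in a parametric form. This sidesteps precisely the obstacle you isolate: the paper never needs a smooth family $t\mapsto g_t$ in $\mathfrak{A}(V)$ lifting a given curve in $T^{\flat}_yM_2$.

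Your smooth-curve route would work once such a lift is produced (for instance through a local trivialisation of $T^{\flat}M_2$ and the explicit description of $\mathfrak{A}$ in a chart), and it has the advantage of staying entirely within the curve-based formalism; but the paper's functional-testing route is shorter and requires no parametric construction. Your fallback via a closed-graph principle is not what the paper does either, and would impose extra hypotheses on the fibres that the statement does not carry.
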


\begin{proof} 
Fix some $x\in M_1$ and consider on open set $U_2$ around $\phi(x)$ in $M_2$ such that $\{df_2, \; f_2\in \mathcal{E}_2\}$ generates $T_{\phi(x)}^\flat M_2$.  Let $U_1$ be an open set around $x$ in $M_1$ such that $\phi^{\ast}(\mathcal{E}(U_2)) \subset \mathcal{E}(U_1)$.   Thus from the definition and properties of $\mathcal{E}(U_1)$, this implies that, for any $f_2\in \mathcal{E}(U_2)$, the map $T_{\phi(x)}^*\phi(df_2)$ belongs to $T_{x}^\flat M_1$ provided with its own convenient structure. This implies  that, for any  bounded linear functional $\lambda$ on  $T_{x}^\flat M_1$, $\lambda( T_{\phi(x)}^*\phi(df_2))$ is bounded for any $f_2$ such that $d_{\phi(x)}f_2$ belongs to a bounded set of $T_{\phi(x)}^\flat M_2$. Since $T_{\phi(x)}^\flat M_2$ is generated by $\{d_{\phi(x)}f_2, \; f_2\in \mathcal{E}_2\}$, it follows that $T_{\phi(x)}^*\phi$ is a bounded linear map from $T^{\flat}_{\phi(x)}M_2$  to $T_x^{\flat}M_1$ endowed with their own topologies of convenient spaces.
\end{proof}

\begin{remark}
\label{R_DifferenceWithTumpach} 
Definition~\ref{D_PoissonMap} of a Poisson map is quite  different from the definition given in \cite{Tum20}.  From Proposition~\ref{P_PropertiesPoissonMap}, the condition in the first assumption is in  Tumpach's Definition  but the second one (that is (\ref {eq_PoissonMap})) is not imposed and cannot be satisfied. On the other hand, the definition of a Poisson morphism (cf. 
Definition~\ref{D_PoissonMorphism}) does not require that the restriction of $T_{x} ^\ast \phi$ to $T^{\flat}_{\phi(x)}M_2$ is a bounded map from $T^{\flat}_{\phi(x)}M_2$ to $T_x^{\flat}M_1$ endowed with their own topology of convenient spaces. Note that both definitions of Poisson morphism in \cite{CaPe23} and Definition~\ref{D_PoissonMorphism} are coherent.
\end{remark}

The following result  gives relations between convenient Poisson morphisms and convenient  Poisson maps (cf. Theorem 7.53  and Remark 7.54 in \cite{CaPe23}):
\begin{theorem}
\label{T_CharacterizationPoissonMorphism} 
Consider two partial Poisson structures $ \left( T^\flat M_i,TM_i, P_i,\{\;,\;\}_{P_i} \right) $,  for $i \in \{1,2\}$, such that  $T^\flat M_1$ is closed in $T^\prime M_1$ and let $\phi:M_1\to M_2$ be a smooth map.
\begin{enumerate}
\item 
If $\phi$ is a  convenient Poisson morphism, then $\phi$ is also a convenient Poisson map.
\item  Assume that the typical fibre $\mathbb{M}^\flat_2$ of $T^\flat M_2$  is dense in $\mathbb{M}^\prime$  (typical fiber of $T^\prime M_2$), for the weak$^*$ topology.\\  
 Then   if $\phi$ is a convenient Poisson map  then $\phi$ is  also a convenient Poisson morphism.\\
 In particular, if  $P_2$ is partial symplectic or if $T^\flat M_2=T^\prime M_2$ the previous converse is true.
\end{enumerate}
\end{theorem}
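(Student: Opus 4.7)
The strategy is to reduce both implications to a pointwise linear identity on $T^\flat_{\phi(x)}M_2$, using Lemma~\ref{L_P(U)generates} to populate this fibre with differentials at $\phi(x)$ of local functions in $\mathfrak{A}(U_2)$.

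For assertion~(1), I assume $\phi$ is a Poisson morphism. The first step is to verify $\phi^*(\mathfrak{A}(U_2))\subset\mathfrak{A}(U_1)$. The first-order condition
\[
d(\phi^*f)_x=T_x^*\phi(df_{\phi(x)})\in T_x^\flat M_1
\]
is immediate from condition~(1) of Definition~\ref{D_PoissonMorphism}. For higher orders I expand $d^k(\phi^*f)_x(\,\cdot\,,v_2,\ldots,v_k)$ via the Faà di Bruno formula: each summand is either $T_x^*\phi$ applied to a form in $T^\flat_{\phi(x)}M_2$, lying in $T_x^\flat M_1$ again by condition~(1), or factors through a higher differential $d^m\phi_x$ composed with a form in $T^\flat_{\phi(x)}M_2$ obtained by symmetrising the slots of $d^jf_{\phi(x)}$ (cf.~(\ref{eq_dkf0}) and Remark~\ref{R_LinearFunctionalOnTflatMBounded}). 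The $c^\infty$-closedness of $T^\flat M_1$ inside $T'M_1$ ensures the full sum still belongs to $T^\flat_xM_1$. The bracket identity $\{\phi^*f,\phi^*g\}_{P_1}=\phi^*\{f,g\}_{P_2}$ then reduces to the one-line chain-rule computation
\[
\langle T_x^*\phi(dg_{\phi(x)}),(P_1)_x(T_x^*\phi(df_{\phi(x)}))\rangle=\langle dg_{\phi(x)},(P_2)_{\phi(x)}(df_{\phi(x)})\rangle,
\]
which holds by condition~(2), $(P_2)_{\phi(x)}=T_x\phi\circ(P_1)_x\circ(T_x\phi)^*$.

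For assertion~(2), I assume $\phi$ is a Poisson map and that $\mathbb{M}^\flat_2$ is weak-$^*$ dense in $\mathbb{M}'$. Condition~(1) of Poisson morphism is already delivered by Proposition~\ref{P_PropertiesPoissonMap}. Running the previous computation in reverse and invoking Lemma~\ref{L_P(U)generates} to let $df_{\phi(x)}$ and $dg_{\phi(x)}$ range over $T^\flat_{\phi(x)}M_2$, I obtain
\[
\langle\beta,\,w(\alpha)\rangle=0\qquad\text{for all }\alpha,\beta\in T^\flat_{\phi(x)}M_2,
\]
where $w(\alpha):=T_x\phi\circ(P_1)_x\circ T_x^*\phi(\alpha)-(P_2)_{\phi(x)}(\alpha)\in T_{\phi(x)}M_2$. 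Evaluation at $w(\alpha)$ is weak-$^*$ continuous on $T'_{\phi(x)}M_2$, so the density hypothesis propagates the annihilation identity from $T^\flat_{\phi(x)}M_2$ to all of $T'_{\phi(x)}M_2$; since the kinematic dual separates points, $w(\alpha)=0$, and condition~(2) is verified. The two special cases are immediate: if $T^\flat M_2=T'M_2$ the density is tautological, and if $P_2$ is partial symplectic then substituting $\beta=(P_2)_{\phi(x)}^{-1}(u)$ in the annihilation identity turns it into $\Omega_{P_2}(u,w(\alpha))=0$ for every $u\in T_{\phi(x)}M_2$, whence nondegeneracy of $\Omega_{P_2}$ forces $w(\alpha)=0$.

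The main obstacle I expect lies in Part~(1): checking stability of $\mathfrak{A}(U)$ under pullback requires combining the Faà di Bruno expansion with the slot-symmetry reformulation of (\ref{eq_dkf0}) and the $c^\infty$-closedness of $T^\flat M_1$ in order to keep the auxiliary terms involving higher derivatives of $\phi$ inside the subbundle. Once this stability is secured, everything else is a straightforward chain-rule-plus-duality computation, and the density hypothesis in Part~(2) is used only at the very last step to pass from annihilation against $T^\flat$ to annihilation against the full kinematic dual.
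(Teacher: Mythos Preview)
Your plan is correct and matches the paper's approach in both parts. The only point to sharpen in Part~(1) is the precise role of closedness: it is not used to close the (finite) Fa\`a di Bruno sum, but rather in a separate inductive lemma (isolated as such in the paper) showing that each higher adjoint $(T^m_x\phi(\cdot,v_2,\ldots,v_m))^*$ already maps $T^\flat_{\phi(x)}M_2$ into $T^\flat_xM_1$, by differentiating the curve $t\mapsto\alpha\circ T^{m-1}_{x+tv}\phi(\cdot,v_3,\ldots,v_m)$ and using that derivatives of smooth curves in a $c^\infty$-closed subspace remain in that subspace.
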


\begin{proof}[Sketch of the proof]${}$\\
\emph{Point (i)}
Suppose that property ({\bf CPS}) is satisfied.  Since the problem is local, and according to compatible charts as previously,  we may assume that, for $i \in \{1,2\}$,  $U_i$ is a $c^\infty$ open set in $\mathbb{M}_i$ such that $TM_i=U_i\times \mathbb{M}_i$,  $T^\flat M_i=U_i\times \mathbb{F}_i$ and $\phi$ is a smooth map from $U_1\to U_2$ where
\begin{description}
\item[$\bullet$]
$T_x\phi \left( \{x\}\times \mathbb{M}_1 \right) \subset \{\phi(x)\}\times \mathbb{M}_2$;
\item[$\bullet$]
$T^*_x\phi \left( \{\phi(x)\}\times\mathbb{F}_2 \right) \subset \{x\}\times\mathbb{F}_1$.
\end{description}
Moreover, by assumption, $\mathbb{F}_1$ is closed in $\mathbb{M}_1^\prime$.\\

Since $\phi ^*(f)=f\circ \phi$,  we have
\[
d_x(f\circ \phi)=d_{\phi(x)} f\circ T_x\phi=T^*_x\phi \left( d_{\phi(x)}f \right).
\]
Since $f$ belongs to $\mathfrak{A}(U_2)$, the map $ d_xf$ belongs to $T^\flat_{y} M_2$. Thus,  
the previous relation implies that $d_x (f\circ \phi)$ belongs to $T_x^\flat M_1$ and is bounded since  $\mathbb{F}_1$ is closed in $\mathbb{M}_1^\prime$.

As in the proof of Proposition \ref{P_AUalgebra}, 2., by induction, we can show that
\[
d^k_x(f\circ\phi) \left( u_1,\dots, u_k \right)
\]
is a summation of terms of type:
\begin{equation}
\label {eq_dfTerms}
d^l_{\phi(x)}f
\left( 
T_x\phi \left( u^{\sigma^1} \right) ,T^2_x\phi \left( u^{\sigma^2} \right) ,\dots,T^i_x\phi \left( u^{\sigma^i} \right) ,\dots, T^{h}_x\phi \left( u^{\sigma^{h}} \right),
\right)
\end{equation}
where
\begin{description}
\item[$\bullet$]
the set  $\{\sigma^1,\dots,\sigma^h\}$ is a partition of the set $\{1,\dots,k\}$;
\item[$\bullet$]
for $\sigma^{i}$, one has:
\begin{description}
\item[--]
each $\sigma^{i}$ is a disjoint union of $\nu_i$ strictly increasing sequence of length $i$, that is
 $\sigma^i_j:= \Big(  \left( s^i_j \right) _1,\dots, \left( s^i_j \right) _i \Big)$  if $\nu_i\geq 1$;
\item[--]
otherwise $\sigma^i$ is an empty set
\end{description}
\item[$\bullet$]
for all $1\leq l\leq k$ and $h+l=k$, we have
\[
\left\lbrace
\begin{array}{rcl}
\nu_1+2\nu_2+\cdots+i\nu_i+\cdots+h\nu_h    &=& k  \\
\nu_1+\cdots+\nu_i+\cdots+\nu_h             &=& l
\end{array}
\right.;
\]
\item[$\bullet$]
If $\nu_i\geq 1$ and  $\sigma^i_j
=
\Big( \left( s^i_j \right) _1,\dots, \left( s^i_j \right)_i \Big) $, for $j \in \{1,\dots,\nu_i\}$, we set
\[
\begin{array}{rcl}
u_{\sigma^i_j}  &=&     \left( u_{(s^i_j)_1},\dots,u_{(s^i_j)_i} \right)  \\
T^i_x\phi \left( u^{\sigma^i} \right)
                &=&    \Big(  T^i_x\phi \left( u_{\sigma^i_1} \right) \dots, T^i_x\phi \left( u_{\sigma^i_{\nu_i}} \right)  \Big);
\end{array}
\]
\end{description}
Note  that the term (\ref{eq_dfTerms}) is completely defined  by such a  partition  $\{\sigma^1,\dots,\sigma^h\}$ and the summation in the expression of $d^k_x(f\circ\phi) \left( u_1,\dots, u_k \right)$ is  for all such partitions.\\
For the end of the proof we need the following Lemma:
\begin{lemma}
\label{L_diepsilonast}
Let $\phi$ be a smooth map from a $c^\infty$ open set $U$ around $0$ in convenient space $E_1$ into a convenient space $E_2$.\\
Assume that there exists a convenient space $F_2$ contained in $E_2^\prime$ with   bounded inclusion  and a closed convenient subspace $F_1$  of  $E^{\prime}_1$.  If the restriction of   $(T_x\phi)^*$   to $F_2$ is a
 bounded linear map from $F_2$ to $F_1$ for any $x \in U$ any $u_2,\dots, u_n\in E_1$, then  $ \left((T_x^n\phi)(.,u_2,\dots, u_n)\right)^*$ is a bounded linear  map from
  $F_2$ to $F_1$, for  integer $n$ and $x\in U$.
\end{lemma}
According to this Lemma,  we fix such a partition $\{\sigma^1,\dots,\sigma^h\}$. Then  $1$ belongs to  one and only one $\sigma^{i_0}_j:=((s^{i_0}_j)_1,\dots,(s^{i_0}_j)_{i_0})$  and  since this sequence is strictly increasing, we  $(s^{i_0}_j)_1=1$. The corresponding   term is
\[
T_x^{i_0}\phi \left( u^{\sigma^{i_0}_j} \right)
=
T^l_x\phi \left( u_{(s^{i_0}_j)_1},\dots,u_{(s^{i_0}_j)_{i_0}} \right).
\]
After  deleting $u_{(s^l_j)_1}$ in this term   we obtain  the bounded linear map
\[
\widehat{\phi_1}:u_1\mapsto T^{i_0}_x\phi(u_1,u_{(s^{i_0}_j)_2},\dots,u_{(s^{i_0}_j)_{i_0}}).
\]
By application of Lemma~\ref{L_diepsilonast}, the restriction of $\left( \widehat{\phi_1}\right)^*$ to  $\mathbb{F}_2$ is a bounded linear map into $\mathbb{F}_1$.\\
Associated to this partition, we consider again the corresponding term (\ref{eq_dfTerms}):

\begin{equation}
\label{eq_termpertition}
d^l_{\phi(x)}f \Big( T_x\phi \left( u^{\sigma^1} \right) ,T^2_x\phi \left( u^{\sigma^2} \right) ,\dots,T^i_x\phi \left( u^{\sigma^i} \right) ,\dots, T^{h}_x\phi \left( u^{\sigma^{h}} \right) \Big).
 \end{equation}
In this expression, all vectors  $T_x\phi \left( u^{\sigma^1} \right) ,T^2_x\phi \left( u^{\sigma^2} \right) ,\dots,T^i_x\phi \left( u^{\sigma^i} \right) ,\dots, T^{h}_x\phi \left( u^{\sigma^{h}} \right)$ belong to\\
 $T_{\phi(x)}M_1\equiv \mathbb{F}_2$, which we denote by $v_1,\dots, v_h$.
Among these vectors, we have one and only one which is equal to $\widehat{\phi_1}(u_1)$ and we may assume that $v_1=\widehat{\phi_1}(u_1)$.\\
Finally, after having deleted $u_1$ in (\ref{eq_termpertition}), we obtain a term of type
\[
\widehat{\phi_1}^* \Big( d^l_{\phi(x)}f \left( .,{v_2},\dots,\dots v_h \right) \Big) .
\]
which belongs to $\mathbb{F}_1$ from the construction of $(\phi_1)^*$. But such a  property is true for  any term of the decomposition of $d_x^k(f\circ\phi)(., u_2,\dots, u_k)$ for any $k\in \mathbb{N}$ and any $u_2,\dots, u_k$ in $T_xM_1$.
This implies that  $d_x^k(f\circ\phi)(., u_2,\dots, u_k)$ belongs to $T_xM_1$ for any $k\in \mathbb{N}$ and any $u_2,\dots, u_k$ in $T_xM_1$ and so  the proof is completed, according to Proposition~\ref{P_AUalgebra}.

\emph{The converse under assumption of Point (ii)}.\\
Note that the assumption of Point (ii) is equivalent to the assumption  $(\mathbb{M}^\flat)^{\operatorname{ann}}\cap \mathbb{M}=\{0\}$ according to the Bipolar Theorem  (cf. \cite{Rud91},  Theorems~3.7 and 3.12) which is  equivalent to the property that   $\mathbb{M}^\flat$ separates points in $\mathbb{M}$.\\
 
Fix some point $x\in M_1$ and let $U$ be an open set around $\phi(x)$ in $M_2$ such that set $\{d_{\phi(x)}f, f\in \mathcal{E}(U)\}$ is equal to
$T_{\phi(x)}^{\flat}M_2$ (cf. Lemma~\ref{L_P(U)generates}).  Since  $d_x(\phi_* f)=(T_x^{\ast}\phi)(df)$, and  $(T_{x}^{\ast} \phi)(T^{\flat}_{\phi(x)}M_2)\subset T_x^{\flat}M_1$,  the set 
$\{d_x(f\circ\phi)),  f\in \mathcal{E}(U)\}$  
generates $T^{\flat}_xM_1$.  From the definition of the Poisson bracket from a Poisson anchor, we have 
\begin{equation}
\label{eq_Poissonmap}
<d_x(g\circ \phi), T_x\phi\circ P_1\circ(T_{\phi(x)}^{\ast}\phi)(df_x(f\circ \phi))>=<d_{\phi(x)}g, P_2(d_{\phi(x)}f>
\end{equation}
for all $f, g\in \mathcal{E}(U)$.\\ 
Then, for any fixed $f\in  \mathcal{E}(U)$ in relation (\ref{eq_Poissonmap}, we obtain:
$$<d_{\phi(x)}g,T_x\phi\circ P_1\circ(T_{\phi(x)}^{\ast}\phi)(d_{\phi(x)}f- P_2d_{\phi(x)}f>=0$$ 
for any $ g \in \mathcal{E}(U)$.  Since the set $\{d_{\phi(x)}g, g\in  \mathcal{E}(U)\}$ generates $T_{\phi(x)}x^\flat M_2$, it follows that $T_x\phi\circ P_1\circ(T_{\phi(x)}^{\ast}\phi)(df_x(f\circ \phi))- P_2d_{\phi(x)}f$ belongs to $(T_{\phi(x)}^\flat M_2)^a  \cap T_yM_2$
but by assumption, as we have seen previously,   we have $(T_y^\flat M_2)^a  \cap T_yM_2=\{0\}$ for all $y\in  \phi(M_1)$,
so the proof is completed.\\
Now, if $P_2$ $P_2$ is partial symplectic  then $P_2$ is an isomorphism $T^\flat M_2$ to $TM$  this necessary implies that $(T_y^\flat M_2)^a  \cap T_yM_2=\{0\}$ for all $y\in M_2$. The same is also clear if  $T^\flat M_2= T^\prime M_2$.
\end{proof}

\begin{proof}[proof of Lemma \ref{L_diepsilonast}]
Fix some $x\in U$. By assumption the result is true for $n=1$.\\
Assume that the result is true for all $1\leq i< n$. Then according to the definition of $d_x^n\phi$ (cf. \cite{KrMi97}, 5.11), we have
 \[
 T^n_x\phi(u,u_2,\dots,u_n)
 =
 \lim_{t\rightarrow 0}
 \left(
 \frac{1}{t} \left( T_{x+tu}^{n-1}\phi \left( u_2 ,\dots, u_n \right)
 - T_x^{n-1}\phi \left( u_2,\dots, u_n \right)
 \right)
 \right).
\]
On the other hand, for any $\alpha\in F_2$, we obtain
 \[
 (T^n_x\phi(.,u_2,\dots,u_n))^*(\alpha)=\alpha \circ (T^n_x\phi(.,u_2,\dots,u_n)).
 \]
Now the map
\begin{equation}
\label{eq_alphaTxn}
 t\mapsto  \alpha\circ T_{x+tu}^{n-1}\phi(.,u_2,\dots,u_n)
\end{equation}
is a smooth map from  $E^{\prime}_2$ to $E^{\prime}_1 $.\\
Thus we have
\[
\displaystyle\frac{d}{dt}_{| t=0}\left( \alpha\circ\left( T_{x+tu}^{n-1}\phi(.,u_2,\dots,u_n)\right) \right)
=\alpha \circ \left( T_x^n\phi(.,u,u_2\dots,u_n) \right).
\]
But the inclusion of $F_2 $ in $E_2$  is bounded, $F_1$ is closed in $E_1^\prime$ and the map (\ref{eq_alphaTxn}) takes values in $F_1 $ from the inductive assumption. This implies that $ \alpha \circ \left(T_x^n\phi(.,u,u_2\dots,u_n)\right)$ belongs  to $F_1$  for any $\alpha\in F_2$. But the map $\alpha\mapsto
\alpha \circ \left(T_x^n\phi(.,u,u_2\dots,u_n)\right)$ is a bounded map from $F_2$ to $E^\prime_1$ which takes values in $F_1$. Since $F_1$ is closed it follows that this map is a linear bounded map from $F_2$ to $F_1$.
\end{proof}

\begin{remark}
\label{R_TheConverse}${}$
\begin{enumerate}
\item[1.]
In finite dimension, we have equivalence between Poisson map and  Poisson morphism. However, in the context of Theorem~\ref{T_CharacterizationPoissonMorphism}, (2), in general, $\mathbb{M}_2^\flat \not=\mathbb{M}_2^\prime $  and not dense in  $\mathbb{M}^\prime _2$ for its weak$^*$ topology and so the assertions  (1) and (2) are not  equivalent. \\
For instance,  it  is always  the case if  $T^{\flat } M_1=T^{\prime} M_1$, $\mathbb{M}_2$  is reflexive,  $\mathbb{M}_2^\flat \not=\mathbb{M}_2^\prime $   and not dense in  $\mathbb{M}^\prime _2$ for its weak$^*$ topology, and  $\phi$ is a surjective submersion.
\item[2.] 
If  a smooth map $\phi:M_1\to M_2$ is a  convenient  Poisson morphism from $ \left( T^\flat M_1,TM_1, P_1,\{\;,\;\}_{P_1} \right) $  to $ \left( T^\flat M_2,TM_2, P_2,\{\;,\;\}_{P_2} \right) $, $\phi$ can be also a Poisson map even if $T^\flat M_1$ is not a closed subbundle of $T^\prime M_1$ (cf. Theorem~\ref{T_ExistencePartialPoissonStructure}).
\end{enumerate}
\end{remark}

\subsubsection{Existence of a Partial Poisson Structure on the Source of a Submersion onto a Partial Poisson Manifold}
\label{___PullBackBySubmersion}

Given a surjective submersion on a partial Poisson manifold, we look for sufficient conditions on the structure of the source for which this submersion is a Poisson map.
More precisely, we have:

\begin{theorem}
\label{T_ExistencePartialPoissonStructure} 
Consider  a  partial Poisson structure $ \left( T^{\flat}M_1,TM_1,P_1,\{\;,\;\}_{P_1} \right) $   and $\phi:M_1\to M_2$ a surjective submersion.  Then we have
\begin{enumerate}
\item[(i)] 
$M_2$ can be provided with a (unique)   Lie-Poisson algebras sheaf
\[
 \{(\mathfrak{A}(U_2),\{\;,\;\}_{U_2}),\;\; U_2\textrm{ open set in } M_2\} 
\] 
such that  $\phi$ is a  Poisson map 
if and only if the relation $\phi^{-1}\circ \phi$ is a Poisson relation from $M_1$ to $M_1$.  
 \item[(ii)]  
Assume that  $M_1$ is connected, and  assume that  $(\ker T\phi)^0=(\ker T\phi)^a\cap T^\flat M_1$ is a closed convenient subbundle of $T^\flat M_1$ ($(\ker T\phi)^a$ is the  the annihilator of $\ker T\phi$). Then there exists a unique  partial Poisson structure $(T^\flat M_2, TM_2,P_2, \{\;,\;\}_{P_2})$ such that  $\phi$ is  Poisson morphism if and only if the relation $\phi^{-1}\circ \phi$ is a Poisson relation from $M_1$ to $M_1$. In this case $\phi$ is also a Poisson map.
  \item[(iii)] 
If $(T^{\flat}M_1,TM_1,P_1,\{\;,\;\}_{P_1})$   is partial symplectic or if  we have  $T^\flat M_1= T^{\prime}M_1$, then there exists a unique  partial Poisson  structure $(T^{\flat}M_2, TM_2,P_2, \{\;,\;\}_{P_2})$ such that  $\phi$ is  Poisson morphism if and only if the relation $\phi^{-1}\circ \phi$ is a   Poisson  relation from $M_1$ to $M_1$ and so, in this case , $\phi$ is also a Poisson map.
\end{enumerate}
\end{theorem}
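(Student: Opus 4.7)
My plan is to translate the coisotropy of the fiber-product relation
$R:=\phi^{-1}\circ\phi=\{(x_1,x_2)\in M_1\times M_1 : \phi(x_1)=\phi(x_2)\}$
into the possibility of pushing forward Poisson operations along $\phi$. I will use two preliminary facts throughout. First, since $\phi$ is a surjective submersion, $R=(\phi\times\phi)^{-1}(\Delta_{M_2})$ is a closed weak submanifold of $M_1\times M_1$ with tangent space $T_{(x_1,x_2)}R=\{(u_1,u_2) : T_{x_1}\phi(u_1)=T_{x_2}\phi(u_2)\}$. Second, by Hadamard's lemma applied in local submersion charts, the vanishing ideal $\mathcal{I}_R$ is locally generated over $C^\infty$ by functions of the form $(x_1,x_2)\mapsto (f_2\circ\phi)(x_1)-(f_2\circ\phi)(x_2)$ with $f_2\in C^\infty(M_2)$.

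The ``only if'' direction in all three points will be handled uniformly. Assuming a (partial) Poisson structure on $M_2$ exists and $\phi$ is a Poisson map (respectively Poisson morphism), Proposition~\ref{P_CharacterizationPoissonMap} will place $G_\phi$ among the coisotropic weak submanifolds of $M_1\times M_2^-$, Lemma~\ref{L_Sudmertion} will show that $(G_\phi^{-1},G_\phi)$ is a clean pair of relations, and Theorem~\ref{T_CompositionCleanPoissonRelations} together with Corollary~\ref{C_CompositionRelationPoissonCoisotropic} will identify $R=G_\phi^{-1}\circ G_\phi$ as a Poisson relation.

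For the converse of Point (i), I would set $\mathfrak{A}(U_2):=\{f_2\in C^\infty(U_2) : f_2\circ\phi\in\mathfrak{A}(\phi^{-1}(U_2))\}$, which forms a subsheaf of algebras, and attempt to define the bracket implicitly by $\{f_2,g_2\}_{U_2}\circ\phi:=\{f_2\circ\phi,g_2\circ\phi\}_{P_1}$. Well-definedness is equivalent to fiber-constancy of the right-hand side, which I would establish by setting $H(x_1,x_2):=(f_2\circ\phi)(x_1)-(f_2\circ\phi)(x_2)$ and similarly $K$, computing directly
\[
\{H,K\}_{M_1\times M_1^-}(x_1,x_2)=\{f_2\circ\phi,g_2\circ\phi\}_{P_1}(x_1)-\{f_2\circ\phi,g_2\circ\phi\}_{P_1}(x_2),
\]
and then invoking Proposition~\ref{P_CharacterizationCoisotropicSub} on $R$, together with the Hadamard generation of $\mathcal{I}_R$, to conclude that this expression vanishes on $R$. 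The Leibniz rule and Jacobi identity will then descend from $M_1$ by pullback, and uniqueness is automatic since $\phi^\ast$ is injective.

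For Points (ii) and (iii), I would build $T^\flat M_2\subset T^\prime M_2$ and the anchor $P_2$ fiberwise. Since $(T_x\phi)^\ast$ is injective with image $(\ker T_x\phi)^0\subset T_x^\prime M_1$, the natural candidate for $x\in\phi^{-1}(y)$ is
\[
T_y^\flat M_2:=\bigl((T_x\phi)^\ast\bigr)^{-1}\bigl((\ker T_x\phi)^0\cap T_x^\flat M_1\bigr),\quad P_2(\alpha):=T_x\phi\bigl(P_1((T_x\phi)^\ast\alpha)\bigr).
\]
The hard part will be independence from the choice of $x$ in $\phi^{-1}(y)$: here I plan to exploit coisotropy of $R$, which via Proposition~\ref{P_PropertiesPartialPoissonManifold} makes Hamiltonian vector fields of pullback Hamiltonians $f_2\circ\phi$ tangent to the fibers of $\phi$, so that their flows transport $(\ker T\phi)^0\cap T^\flat M_1$ consistently along each fiber, and connectedness of $M_1$ then propagates the construction globally. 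Closedness of $(\ker T\phi)^0$ in $T^\flat M_1$ will supply the convenient subbundle structure on $T^\flat M_2$. Once the construction is in place, Point (i) will give that $\phi$ is a Poisson map, and the defining formula for $P_2$ will upgrade this to a Poisson morphism via Proposition~\ref{P_CharacterizationPoissonMap}. For Point (iii), partial symplecticity of $P_1$ makes $P_1$ an isomorphism so the well-definedness issue collapses automatically, while the case $T^\flat M_1=T^\prime M_1$ lets one take $T^\flat M_2=T^\prime M_2$; in both subcases the construction reduces to Point (i).
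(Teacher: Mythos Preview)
Your treatment of Point~(i) and of the ``only if'' direction in all three points is correct and essentially identical to the paper's: define $\mathfrak{A}(U_2)$ by pullback, use the test functions $H(x_1,x_2)=(f_2\circ\phi)(x_1)-(f_2\circ\phi)(x_2)$ together with Proposition~\ref{P_CharacterizationCoisotropicSub} to get fibre-constancy of the bracket, and for the converse combine Proposition~\ref{P_CharacterizationPoissonMap}, Lemma~\ref{L_Sudmertion} and Theorem~\ref{T_CompositionCleanPoissonRelations}.

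For Point~(ii), however, there is a genuine error in the independence argument. You claim that coisotropy of $R$ makes the Hamiltonian vector fields $X_{f_2\circ\phi}=P_1\bigl((T\phi)^\ast df_2\bigr)$ tangent to the fibres of $\phi$. This is false: if it held, then $T\phi\bigl(P_1((T\phi)^\ast df_2)\bigr)=0$ for every $f_2$, forcing $P_2\equiv 0$. What coisotropy of $R$ actually yields, via Proposition~\ref{P_CharacterizationCoisotropicSub} applied to $H$, is that the pair $\bigl(X_{f_2\circ\phi}(x_1),\,X_{f_2\circ\phi}(x_2)\bigr)$ is tangent to $R$, i.e.\ that $T_{x_1}\phi\bigl(X_{f_2\circ\phi}(x_1)\bigr)=T_{x_2}\phi\bigl(X_{f_2\circ\phi}(x_2)\bigr)$ whenever $\phi(x_1)=\phi(x_2)$. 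This \emph{is} precisely the consistency statement needed to make $P_2(\alpha):=T_x\phi\bigl(P_1((T_x\phi)^\ast\alpha)\bigr)$ independent of $x\in\phi^{-1}(y)$, so the correct argument is a direct one and does not require any flow. Your flow-transport idea is doubly problematic: the vector fields in question are not vertical, and in the convenient setting local flows need not exist at all.

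A second gap concerns the bundle structure on $T^\flat M_2$. You assert that the closed-subbundle hypothesis on $(\ker T\phi)^0$ ``will supply the convenient subbundle structure on $T^\flat M_2$'', but this is the substantial technical content of the proof. The paper devotes an entire lemma (Lemma~\ref{L_TflatM2}) to it: one identifies $(\ker T\phi)^a$ with the pullback $\phi^\ast(T^\prime M_2)$ via the fibrewise isomorphism $T^\ast\phi$, and then pushes the closed subbundle $(\ker T\phi)^0$ forward through the bundle map $\widehat{\phi^\ast}$ over $\phi$, verifying by means of adapted submersion charts and several commutative diagrams that the resulting trivializations are compatible and independent of choices. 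None of this follows automatically from the hypothesis. Your sketch for Point~(iii) is also imprecise: partial symplecticity is used in the paper not to ``collapse well-definedness'' but to exhibit $(\ker T\phi)^0=P_1^{-1}\bigl((\ker T\phi)^{\perp}\bigr)$ as a convenient subbundle, thereby reducing to the hypotheses of Point~(ii).
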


\begin{remark}
\label{R_Caseanti-Poisson}
Under the same assumptions of Theorem~\ref{T_ExistencePartialPoissonStructure}, we obtain the same conclusion in (i), (ii) and (iii) when $\phi$ is an anti-Poisson morphism if and only if the graph of $\phi^{-1}\circ (-\phi)$ is a Poisson relation.
\end{remark}


 
\begin{proof} ${}$\\
(i) Fix some open set  $U_2$ in $M_2$. If  $U_1=\phi^{-1}(U_2)$, we consider  the set
\[
\mathcal{E}(U_2)=\{ g\in C^\infty(M_2): \phi^*(g)\in \mathcal{E}(U_1)\}.
\]
Clearly $\mathcal{E}(U_2)$ has a structure of algebra.  At first, we show that $ \mathcal{E}(U_2)$ can be provided with a Lie-Poisson structure. \\
The Poisson bracket on $M_1\times M_1^-$  is given by 
\[
\{ h_1,h_2\}_{(P_1,P_1^-)}= - < d_1 h_1, P_1(d_1 h_2)>+<d_2 h_1, P_1(d_2 h_2)>
\]
where $ d_1 h$ and  $ d_2 h$ are  the partial differential of $h$ relative to the first factor  and the second factor respectively.\\
Now,  in $M_1\times M_1^-$,  the graph of the relation $\phi^{-1}\circ \phi: M_1\rel M_1$ is the set  
\[
G=\{(x,y)\in M_1\times M_1^-: \phi(x)=\phi(y)\}.
\]   
Assume that $G$ is a Poisson relation, which means that $G$ is coisotropic in $M_1\times M_1^-$.  
By Proposition \ref{P_CharacterizationCoisotropicSub}, the ideal $\mathcal{I}_g(U_1\times U_1)$ is stable under the Poisson bracket on $M_1\times M_1^-$ in restriction to $U_1\times U_1$. \\
For $i \in \{1,2\}$,  to the  function $f_i\in \mathcal{E} (U_2)$ we associate $g_i(x,y)=f_i(\phi(x))-f_i(\phi(y))$ on $U_1\times U_1^-$. Thus $g_i$ vanishes on  $(U_1\times U_1)\cap G$ and the bracket the $\{g_1,g_2\}_{(P_1,P_1^-)}$ also vanishes 
  on  $(U_1\times U_1)\cap G$ and so
\[
\{ g_1,g_2\}_{(P_1,P_1^-)}(x,y)=\{ f_1\circ \phi,f_2\circ \phi\}_{(P_1,P_1^-)}(x)-\{ f_1\circ \phi,f_2\circ \phi\}_{(P_1,P_1^-)}(y)=0
\]
for all $x,y$ such that $\phi(x)=\phi(y)$. Thus $ \{ f_1\circ \phi,f_2\circ \phi\}_{(P_1,P_1^-)}$ is constant on each fibre of $\phi$. It follows that 
 $ \{ f_1\circ \phi,f_2\circ \phi\}_{(P_1,P_1^-)}$ belongs to  $ \mathfrak{A}(U_2)$ and only depends only on $f_1$ and $f_2$. Since $\phi$ is a submersion, in this way, we define a Poisson bracket on  $ \mathfrak{A}(U_2)$. \\
For the converse  assume that $\phi$ is a partial Poisson map. We first show that the graph of $\phi$ is coisotropic. Note hat at first, since  $\phi$ is smooth, for any $x\in M$, the linear map $T_x\phi:T_xM_1\to T_xM_2$  must be bounded. Thus, if $G_\phi$ denotes the graph of $\phi$,  its tangent space $T_{(x,\phi(x)} G_\phi=\{(u, T_x\phi(u)), u\in T_x M_1\}$ is a closed subspace of $T_xM_1\times T_{\phi(x)}M_2$ and so  $G_\phi$ is a closed submanifold of $M_1\times M_2$. We must show that $T_{(x,\phi(x))} G_\phi$ is a coisotropic  subspace of $T_xM_1\times T_{\phi(x)}M_2$ provided with the linear partial Poisson structure $(T_x^\flat M\times T_{\phi(x)}^\flat M_2, T_xM_1\times T_{\phi(x)}M_2, (P_1)_x\times (-P_2)_{\phi(x)}).$\\
Indeed, we have an open set $U$ around $x\in M_1$ such that $T_x^\flat M_1$ (resp. $T_{\phi(x)}^\flat  M_2$)  is generated by $\{d_xf_1,\; f_1\in \mathfrak{A}(U_1)\}$ (resp.  $\{d_{\phi(x)}f_2,\; f_2\in \mathfrak{A}(U_2)\}$) where $\phi(U_1)\subset U_2$. Therefore
 $(T_{(x,\phi(x))} G_\phi)^0=\{(d_x(f_2\circ \phi)),d_{\phi(x)}f_2),\; f_2\in \mathfrak{A}(U_2)\}$.
  Since $\phi$ is a Poisson map, it follows that 
\[
\begin{array}{cl}
	& <d_x(f_2\circ \phi), P_1 d_x(g_2\circ \phi)>- <d_{\phi(x)}f_2),P_2(d_{\phi(x)}g_2)>		\\
=	& \{f_2\circ\phi, g_2\circ \phi\}_{P_1}-\{f_2,g_2\}_{P_2}\circ \phi=0.
\end{array}
\]
Thus, from Lemma \ref{L_FirstProperties}, 5., $T_{(x,\phi(x))} G_\phi$ is coisotropic.\\
  
But, $(\phi^{-1},\phi)$ is clean pair from  Lemma \ref{L_Sudmertion}, and so from  Theorem \ref{T_CompositionCleanPoissonRelations}
the graph of  $\phi\circ \phi^{-1}$ is coisotropic.\\ 

(ii) 
 At the one hand, since $M_1$ is connected and $\phi$ is a submersion,  we have a convenient space $\mathbb{F}$ such that $\mathbb{M}_1$ is isomorphic to $\mathbb{M}_2\times \mathbb{F}$. In particular, $\ker T\phi$ is a subbundle of $TM$ with typical fibre $\mathbb{F}$. Let $(\ker T\phi)^a$ be the subbundle of $T^\prime M_1$ which is the  annihilator of $\ker T\phi$. Of course, $\ker T\phi$ is a the typical of a convenient bundle  isomorphic to $\mathbb{M}^\prime_2$. We consider the pull-back $\phi^\ast(T^\prime M_2)$ over $M_1$. Then $T^*\phi$ can be considered as a bundle morphism from $\phi^\ast(T^\prime M_2)$  to $T^\prime M_1$. Since $\phi$ is a submersion, for any $x\in M_1$, we must have $\ker T_x^\ast \phi=\{0\}$ and $T_x^\ast\phi(T_{\phi(x)}^\prime M_2)=(\ker T_x \phi) ^a$.  As $T^\ast \phi$ is a bundle morphism, this implies that $T^\ast \phi$ is an isomorphism from $\phi^\ast(T^\prime M_2)$  
to $(\ker T\phi)^a$ and so we can identify $(\ker T\phi)^a$  and $\phi^\ast(T^\prime M_2)$. Thus we have the following commutative diagram
\begin{equation}
\label{eq_diagramTphia}
\xymatrix{
{(\ker T\phi)^a}\ar[rd]  &
 \ar@{=}[l]_{T^\ast \phi} \phi^\ast(T^\prime  M_2)\ar[d]\ar[r]^{\widehat{\phi^\ast} }&T^\prime  M_2 \ar[d]\\
&M_1 \ar[r]^{\phi }         & M_2}
\end{equation}
where $\widehat{\phi^\ast}$ is a smooth convenient bundle morphism  such that the restriction $\widehat{\phi^\ast}_x$ to the fibre $\left(\phi^\ast(T^\prime M_2)\right)_x$ is an isomorphism onto $T_{\phi(x)}^\prime M_2$.

We denote $\iota_0$ the inclusion of $(\ker T\phi)^0$ into $(\ker T\phi)^a$.
\begin{lemma}
\label{L_TflatM2}
${}$
\begin{enumerate}
\item[(i)]  
For any $y\in M_2$, the vector space $T_y^\flat M_2=\widehat{\phi^\ast}\circ \iota_0 \left( (\ker T\phi)^0_x \right)$ if $\phi(x)=y$ does not depend on such a choice of $x$ and has a structure of convenient space isomorphic to $\mathbb{M}_2^\flat$.
\item [(ii)]
The range $T^\flat M_2=\widehat{\phi^\ast}\circ \iota_0 \left( (\ker T\phi)^0 \right) $ has a structure of  convenient weak subbundle of $T^\prime M_2$. In particular, $(\ker T\phi)^0$ is the pull-back of $ T^\flat M_2$ over $\phi$.
\end{enumerate}
\end{lemma}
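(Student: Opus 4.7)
The plan is to prove both parts by reducing to local submersion charts and then patching globally. The main technical input is the hypothesis that $(\ker T\phi)^0$ is a closed convenient subbundle of $T^\flat M_1$, which provides enough trivialization data to align the $\flat$-structure of $M_1$ with the submersion $\phi$.

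First I would unwind the composition $\widehat{\phi^\ast}\circ\iota_0$: under the identification $\phi^\ast(T^\prime M_2) \cong (\ker T\phi)^a$ via $T^\ast\phi$, an element $\alpha\in T'_{\phi(x)}M_2$ corresponds to $\alpha\circ T_x\phi\in T'_xM_1$, so the image $\widehat{\phi^\ast}\circ\iota_0((\ker T\phi)^0_x)$ equals
\[
S_x := \{\alpha\in T'_{\phi(x)}M_2 : \alpha\circ T_x\phi \in T^\flat_x M_1\}.
\]
Part (i) then reduces to showing that $S_x$ depends only on $y=\phi(x)$, and that it carries a convenient structure modelled on a fixed $\mathbb{M}_2^\flat$.

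Next I would work in local charts where $\phi$ takes the form $U\times F \to U$, $(y,f)\mapsto y$, simultaneously compatible with a local trivialization of the subbundles $T^\flat M_1$ and $(\ker T\phi)^0$. In such a chart, $T^\prime M_1$ trivializes as $(U\times F)\times(\mathbb{M}_2^\prime\times \mathbb{F}^\prime)$, the annihilator $(\ker T\phi)^a$ becomes the fixed subspace $\mathbb{M}_2^\prime\times\{0\}$, and $T^\flat M_1$ corresponds to $(U\times F)\times \mathbb{M}_1^\flat$ with a fiberwise identical inclusion $\mathbb{M}_1^\flat\hookrightarrow \mathbb{M}_2^\prime\times \mathbb{F}^\prime$. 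The condition $\alpha\circ T_{(y,f)}\phi\in T^\flat_{(y,f)}M_1$ then reads $(\alpha,0)\in \mathbb{M}_1^\flat$, which does not depend on $(y,f)$. Hence $S_x$ equals $\mathbb{M}_2^\flat := \{\alpha\in \mathbb{M}_2^\prime : (\alpha,0)\in \mathbb{M}_1^\flat\}$ throughout the chart, and this space inherits its convenient structure from $\mathbb{M}_1^\flat$ through the bounded projection onto the first factor—which is well defined with closed range by the subbundle hypothesis on $(\ker T\phi)^0$.

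To globalize, I would compare two submersion charts on $M_1$ that project to a common chart on $M_2$: the transition map has the block-upper-triangular form that preserves $\mathbb{M}_2^\prime\times\{0\}$ and acts as the identity on the projection onto $\mathbb{M}_2^\prime$, so the locally computed copies of $\mathbb{M}_2^\flat$ agree. Changing chart on $M_2$ produces the usual cocycle on $T^\prime M_2$, hence the subspaces $\{S_x\}_{x\in\phi^{-1}(y)}$ assemble into a well-defined weak convenient subbundle $T^\flat M_2\subset T^\prime M_2$. Connectedness of $M_1$ guarantees that the typical fiber is the same constant space across the cover. The equality $(\ker T\phi)^0 = \phi^\ast(T^\flat M_2)$ in (ii) is then immediate by tracing identifications fiberwise.

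I expect the main obstacle to be the construction of a local trivialization of $T^\prime M_1$ that is simultaneously a submersion chart and a subbundle chart for $T^\flat M_1$ with a fiberwise identical inclusion $\mathbb{M}_1^\flat\hookrightarrow \mathbb{M}_2^\prime\times \mathbb{F}^\prime$. In the Banach setting this follows from standard complemented-subspace arguments; in the convenient setting one must invoke precisely the hypothesis that $(\ker T\phi)^0$ is a closed convenient subbundle, combined with the subbundle machinery of \cite{KrMi97}. A secondary subtlety is checking that the $\mathbb{M}_2^\flat$ selected in one chart coincides (up to the transition cocycle) with that of another, which is exactly what the block-triangular structure of a submersion-preserving transition guarantees.
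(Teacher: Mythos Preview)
Your characterization $S_x = \{\alpha\in T'_{\phi(x)}M_2 : \alpha\circ T_x\phi \in T^\flat_x M_1\}$ is correct, and the overall chart-based strategy matches the paper's. The genuine gap is precisely the step you flag as ``the main obstacle'': arranging a local trivialization in which the inclusion $\mathbb{M}_1^\flat\hookrightarrow \mathbb{M}_2^\prime\times\mathbb{F}^\prime$ is fibrewise constant. For a \emph{weak} subbundle this is not generally possible---$T^\flat M_1$ and $T^\prime M_1$ trivialize independently, and the inclusion morphism is represented in any pair of charts by a smoothly varying family of bounded injections, not a fixed one. The hypothesis that $(\ker T\phi)^0$ is a closed subbundle of $T^\flat M_1$ controls how $(\ker T\phi)^0$ sits inside $T^\flat M_1$, not how $T^\flat M_1$ sits inside $T^\prime M_1$; it does not deliver the constant-inclusion chart you need. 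Without that chart, the assertion that ``$(\alpha,0)\in\mathbb{M}_1^\flat$ does not depend on $(y,f)$'' has no content, and the invocation of \cite{KrMi97} is not a substitute for an argument.

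The paper sidesteps this difficulty by a different mechanism. Rather than trying to make the inclusion constant, it identifies $(\ker T\phi)^0_x$ with the set of differentials $\{d_x(f_2\circ\phi) : f_2\in\mathfrak{A}(U_2)\}$ and uses that $\widehat{\phi^\ast}(d_x(f_2\circ\phi)) = d_{\phi(x)}f_2$, so the image in $T'_yM_2$ is $\{d_y f_2 : f_2\in\mathfrak{A}(U_2)\}$, manifestly independent of $x$. For part (ii), the paper builds the trivialization of $T^\flat M_2$ over a chart $W\subset M_2$ by restricting $(\ker T\phi)^0$ to a local section $W_i(x)=\Phi_i^{-1}(\overline{W}\times\{0\})$ of $\phi$, and then shows this is independent of the choice of section via an explicit isomorphism $\Theta^\flat$ of $(\ker T\phi)^0$ (over a fibre-transition diffeomorphism $\theta$) satisfying $\widehat{\phi^\ast}\circ\Theta = \widehat{\phi^\ast}$. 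The differentials-of-pullbacks description is the substantive input your sketch is missing.
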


\begin{proof}
${}$\\
(i) Note that by construction, for each $x\in M_1$,  there exists an open set $U_2$ in $M_2$ around $\phi(x)$ such that $(\ker T\phi)^0_x$ is generated by 
$\{ 
d_x(f_2\circ 
\phi),\; f_2\in \mathfrak{A}(U_2)
\}$. 
Indeed  the fibre  $(\ker T\phi)^a_x$,  is generated by the set $\{ d_x(f_2\circ \phi),\; f_2\in C^\infty(U_2)
\}$.

In particular, we can remark that 
\[
\forall f_2\in C^\infty(U_2),\;
\widehat{\phi^\ast} \left( d_x(f_2\circ \phi) \right)
= d_{\phi(x)}f_2.
\]
But, since  the fibre $T_x^\flat M_1$ is generated by 
$\{d_xf_1,\; f_1\in \mathfrak{A}(U_1)\}$, their intersection is generated by $\{ d_x(f_2\circ \phi),\; f_2\in \mathfrak{A}(U_2)\}$. \\
Fix some $y\in M_2$  and choose an open set $U_2$ in $M_2$ around $y$ such that $T_y^\prime M_2$ is generated by $\{ d_yf_2,\; f_2\in C^\infty(U_2)\}$ and consider $U_1=\phi^{-1}(U_2)$. According the the previous Remark, for each $x\in \phi^{-1}(y)$ the fibre  $(\ker T\phi)^0_x$ is generated by $\{ d_x(f_2\circ \phi),\; f_2\in \mathfrak{A}(U_2)\}$.  \\
Consider two points $x_1\not=x_2$ such that  $y=\phi(x_1)=\phi(x_2)$.  As $\phi$ is a submersion, the convenient space $\mathbb{M}_1$ is isomorphic to $\mathbb{M}_2\times \mathbb{F}$ and,  for $i \in \{1,2\}$, we  have charts $(V_i, \Phi_i)$ around $x_i$ in $M_1$  such that:\\

(1) $\; \Phi_i(x_i)=(0,0)\in \mathbb{M}_2\times \mathbb{F}$;\\

(2)	$\; \Phi_i(V_i)=\overline{W}\times \overline{V}\subset\mathbb{M}_2\times\mathbb{F} $  and $\Psi(W)=\overline{W}\subset \mathbb{M}_2$;\\

(3)	$\; \Psi\circ \phi\circ \Phi_i^{-1}=q$ where $q$ is the  first projection from $\mathbb{M}_2\times \mathbb{F}$ on $\mathbb{M}_2$.\\		

It follows that $T\Phi_i$ is a bundle isomorphism from ${TM_1}_{| V_i}$ to $\overline{W}\times \overline{V}\times \mathbb{M}_2\times \mathbb{F}$ and $T\Psi$ is an isomorphism from ${TM_2}_{|W}$ to $\overline{W}\times \mathbb{M}_2$ and so ${\ker T\phi}_{| V_i}$ is isomorphic to $\overline{W}\times \overline{V}\times \mathbb{F}$. Now from (2) and(3), according to
diagram (\ref{eq_diagramTphia}), this easily  implies that  we have an isomorphism $\Theta_i={T^\ast\Phi_i^{-1}}_{| V_i}$ from  $(\ker T\phi)^a_{| V_i}$   to  $\overline{W}\times \overline{V}\times \mathbb{M}^\prime_2$ over $\Phi_i$. Therefore, if we set $\theta=\Phi_2^{-1}\circ \Phi_1$, then $\Theta=\Theta_2^{-1}\circ \Theta_1$ is a convenient  isomorphism from  $
(\ker T\phi)^a_{| V_1}$ to  $(\ker T\phi)^a_{| V_2}$ 
such that,  in restriction to  $(\ker T\phi)^a_{| V_1}$, we have\\ 

(4) $\; \widehat{\phi^*}\circ \Theta={\widehat{\phi^*}}$.\\
Since the restriction of  $ \widehat{\phi^*}$ to each fibre is an isomorphism,  according to (1), we have\\
 
(5) $\; \Theta \left( d_{x_1}(f\circ \phi) \right) 
=d_{x_2}(f\circ \phi)$ which implies Point (i).\\

(ii) From Point (i), the  set  $T^\flat M_2=\displaystyle\bigcup_{y\in M_2}T_y^\flat M_2$ is well defined and  let $\iota_2:  T^\flat M_2 \to T^\prime M_2$ be the inclusion.\\
Now, since by assumption, $(\ker T\phi)^0$ is a closed convenient subbundle of $T^\flat M_1$ and the inclusion of $T^\flat M_1$ in $T^\prime M_1$ is a smooth convenient morphism, the inclusion of $(\ker T\phi)^0$ into $(\ker T\phi)^a$ is also a smooth convenient morphism. This gives rise to an injective bounded map from each fibre $(\ker T\phi)^0_x$ into $(\ker 
T\phi)^a_x$. Thus we may consider that the typical fibre $\mathbb{M}_2^\flat$ of  $(\ker T\phi)^0$ is contained in the typical fibre $\mathbb{M}_2^\prime$ of $(\ker T\phi)^a$ and the inclusion $\overline{\iota_0}$ is an injective bounded map. \\
Using the local context introduced in (2) and (3), without loss of generality, we can assume that $V_i$ is simply connected  and so $(\ker T\phi)^0{| V_i}$ is trivial. Thus we have a trivialization  $\Phi^\flat_i:(\ker T\phi)^0{| V_i} \to \overline{W}\times \overline{V}\times \mathbb{M}_2^\flat$;  
since $\widehat{\phi^*}\circ \iota_0$ is a smooth convenient  morphism from $(\ker T\phi)^0_{V_i}$ into $T^\prime M_2$ whose range is $T^\flat M_2$ we have the following commutative diagrams:
\begin{equation}
\label{eq_Diagram1}
\xymatrix {
(\ker T \phi)^0_{| V_i} \ar[rr]^{\Phi^\flat _i} \ar[dd] \ar[dr]^{\iota_0}  &	& 
\overline{W} \times \overline{V} \times \mathbb{M}_2^\flat \ar[dr]^{\overline{\iota_0}} 
|!{[dl];[dr]}\hole	
\ar[dd]
\\	
& (\ker T \phi)^a_{| V_i} 
\ar[rr]^{T^*\Phi_i^{-1}\hspace{40pt}} \ar[dd]      &	& \overline{W}\times\overline{V}\times\mathbb{M}_2^\prime \ar[dd] 												\\
V_i \ar[rr]^{\;\;\;\Phi_i} 
   \ar[dr]^{\operatorname{Id}}   
   && \overline{W}\times\overline{V}_i \ar[rd]^{\operatorname{Id}}		\\
   & V_i\ar[rr]^{\Phi_i} && \overline{W}\times\overline{V}_i 					\\
 }
\end{equation}
\begin{equation}
\label{eq_Diagram2}
\xymatrix {
(\ker T \phi)^a_{| V_i}
\ar[rr]^{T^\ast\Phi^{-1}} 
\ar[dd] \ar[dr]^{\widehat{\phi^*}}  &	& 
\overline{W}\times\overline{V}\times\mathbb{M}_2^\prime \ar[dr]^{q\times \operatorname{Id}} 
|!{[dl];[dr]}\hole	
\ar[dd]												\\	
& {T^\prime M_2}_{| W} 
\ar[rr]^{T^*\Phi_i^{-1}\hspace{40pt}} \ar[dd]   &	& \overline{W}\times\mathbb{M}_2^\prime \ar[dd] 		\\
V_i \ar[rr]^{\;\;\;\Phi_i} 
\ar[dr]^{\phi}   					&	& \overline{W}\times\overline{V}_i \ar[rd]^{q}					\\
   & W \ar[rr]^{\Psi} && \overline{W} 			\\
 }
\end{equation}    
Since $\widehat{\phi^*}\circ \iota_0$ is an injective  smooth convenient bundle morphism over $\phi_{| V_i}$ , it follows that $\widehat{\phi^*}\circ \iota_0(  (\ker T \phi)^0_{| 
 V_i})={T^\flat M_2}_{| W}$  and, according to (4), this range does not depend on the choice of such trivializations $\Phi_i^\flat$. Thus,  we can provide  ${T^\flat M_2}_{| W}$  with a structure of trivial convenient bundle over $W$. Indeed, let $W_i(x)=\Phi_i^{-1}({\overline{W}\times\{0\}})$ the closed supplemented submanifold  of $V_i$. Then the restriction of $\phi$ to $W_i(x)$ is a diffeomorphism  onto $W$ and so   
$\overline{\iota_0}\circ \Phi_i^\flat\circ(\widehat{\phi^*}\circ \iota_0)^{-1}:{T^\flat M_2}_{|W}\to \overline{W}\times \mathbb{M}_2^\flat\subset \overline{W}\times \mathbb{M}_2^\prime$ is then a trivialization over  $q\circ\Phi_i\circ 
(\phi_{|W_i(x)})^{-1}:W\to \overline{W}$.   \\

Therefore,  by merging   Diagrams (\ref{eq_Diagram1}) and (\ref{eq_Diagram2}),  and using composition  of $T^\ast \Psi^{-1}$ with $\iota_2$, we obtain the following commutative diagram:
\begin{equation}
\label{eq_Diagram3}
\xymatrix {
(\ker T \phi)^0_{| V_i}
\ar[rr]^{\Phi^\flat_i} 
\ar[dd]
\ar[dr]^{\widehat{\phi^*} \circ \iota_0}  &	& 
\overline{W}\times\overline{V}\times\mathbb{M}_2^\flat 
\ar[dr]^{q\times \overline{\iota_0}} 
|!{[dl];[dr]}\hole	
\ar[dd]												\\	
& {T^\flat M_2}_{| W} 
\ar[rr]^{T^*\Psi^{-1}\circ \iota_2\hspace{50pt}} \ar[dd]   &	& \overline{W}\times\mathbb{M}_2^\flat \ar[dd] 		\\
V_i \ar[rr]^{\hspace{20pt}\Phi_i} 
\ar[dr]^{\phi}   					&	& \overline{W}\times\overline{V}_i \ar[rd]^{q}					\\
   & W \ar[rr]^{\Psi} && \overline{W} 			\\
 }
\end{equation}
In particular, we have
\begin{equation}
\label{eq_TPsitrivialization}
\left\{
\begin{array}{rcll}
\Psi
&=&	q \circ\Phi_i\circ (\phi_{|W(x)})^{-1} 
&\textrm{ on  }W\\
T^\ast\Psi^{-1}\circ \iota_2
&=&	\left( q \times \overline{\iota_0} \right) \circ \Phi_i^\flat\circ(\widehat{\phi^*}\circ \iota_0)^{-1} 
&\textrm{ on } {T^\flat M_2}_{| W}
\end{array}
\right.
\end{equation}

By merging   Diagrams (\ref{eq_Diagram1})
 and (\ref{eq_Diagram2}), and using composition with $\iota_2$, 
we see that $\widehat{\phi^*}\circ \iota_0$ is a smooth morphism bundle  from $(\ker T\phi)^0_{| V_i}$ to $T^\prime M_2$ over $\phi$ such that 
\[
T^*\Psi^{-1}\circ (\widehat{\phi^*}\circ \iota_0)
=(q\times Id \circ \overline{\iota_0})\circ \Phi_i^\flat.
\]
  
According to this relation,   since ${T^\flat M_2}_{| W}=\widehat{\phi^*}\circ \iota_0((\ker T\phi)^0_{| V_i})$ in  $T^\prime M_2$ and $T^\ast \Psi^{-1}$ is a smooth isomorphism, it follows that the following diagram is commutative
  
\begin{equation}
\label{eq_Diagram4}
\xymatrix {
(\ker T \phi)^0_{| V_i}
\ar[rr]^{\Phi^\flat_i} 
\ar[dd]
\ar[dr]^{\widehat{\phi^*} \circ \iota_0}  &	& 
\overline{W}\times\overline{V}\times\mathbb{M}_2^\flat 
\ar[dr]^{q\times \overline{\iota_0}} 
|!{[dl];[dr]}\hole	
\ar[dd]												\\	
& {T^\prime M_2}_{| W} 
\ar[rr]^{T^*\Psi^{-1}\hspace{40pt}} \ar[dd]   &	& \overline{W}\times\mathbb{M}_2^\prime \ar[dd] 		\\
V_i \ar[rr]^{\hspace{20pt}\Phi_i} 
\ar[dr]^{\phi}   					&	& \overline{W}\times\overline{V}_i \ar[rd]^{q}					\\
   & W \ar[rr]^{\Psi} && \overline{W} 			\\
 }
\end{equation} 

and so  that  the restriction of $T^\ast \Psi^{-1}$ to ${T^\flat M_2}_{| W}$ is a trivialization.  
 
In fact,  by merging diagrams (\ref{eq_Diagram3}) for $i \in \{1,2\}$, we obtain
\begin{equation}
\label{eq_DiagramkerTphii}
\xymatrix{
	& 	(\ker T \phi)^0_{| V_1} 
	\ar[ld]_{\Theta^\flat} 
	\ar[rd]^{\widehat{\phi^*}\circ\iota_0} 
	\ar[dd]	&	&				\\
	(\ker T \phi)^0_{| V_2} 
	\ar[rr]^{\hspace{50pt}\widehat{\phi^*}\circ\iota_0}
	\ar[dd]		& 	 &	
	{T^\flat M_2}_{| W} \ar[rr]^{T^*\Psi^{-1}\circ \iota_2}\ar[dd] & & \overline{W}\times\mathbb{M}_2^\flat	\ar[dd]	\\
	&	V_1	\ar[ld]_{\theta} \ar[rd]^{\phi}			&	&				\\
	V_2 \ar[rr]^{\phi}		&			& 	W \ar[rr]^{\Psi}	& &	\overline{W}	
}
\end{equation}
where $\Theta^\flat=(\Phi_2^\flat)^{-1}\circ\Phi_1^\flat$ is a smooth isomorphism over $\theta=\Phi_2^{-1}\circ \Phi_1$. 
It follows that   such a trivialization of ${T^\flat M_2}_{| W}$ does not depend on the choice of  the trivialization of $(\ker T\phi)^0$ around any point $x$ in the fibre over  $y\in W$.\\
 
Note that we have $\Theta^\flat=\Theta\circ \iota_0$.\\
 
Consider two charts $ \left( W'_i,\Psi'_i \right) $ around some $y\in M_2$ for $i \in \{1,2\}$.  Fix some $x\in M_1$ such that $\phi(x)=y$. By same arguments as in (1), (2) and (3), we can find  a chart $(V'_i,\Phi'_i)$  around $x$ in $M_1$ such that, after shrinking $W_i$ if necessary, we have\\

(1') $\; \Psi'_i(y)=\Psi'(y)=0\in \mathbb{M}_2 \textrm{ and }\Phi'_i(x)=(0, 0)\in \mathbb{M}_2\times \mathbb{F}$;	\\

(2') $\;  \Psi_i(W'_i)=\overline{W}'_i\subset \mathbb{M}_2 \textrm{ and } \Phi'_i(V'_i)=\overline{W}'_i\times \overline{V}'_i$;\\

(3') $\; \Psi'_i\circ \phi\circ (\Phi_i')^{-1}=q$ where $q$ is the  first projection of $\mathbb{M}_2\times \mathbb{F}$ on $\mathbb{M}_2$.\\

We again assume that $V'_i$ and $W'_i$ are simply connected. Since $(\ker T\phi)^0$ is a convenient bundle, it follows that we have the trivialization 
\[
 \left( \Phi'_i \right) ^\flat: (\ker T\phi)^0_{| V'_i}\to \overline{W}'_i\times \overline{V}'_i\times\mathbb{M}_2^\flat.
 \]
Then the composition $(\Phi'_2)^\flat\circ[(\Phi'_1)^\flat]^{-1}$ in restriction to  $(\overline{W}'_1\cap  \overline{W}'_2)\times (\overline{V}'_1\cap \overline{V}'_2)\times \mathbb{M}_2^\flat$ 
 is a diffeomorphism of type 
$$(\bar{x},\bar{z},\bar{\alpha})\mapsto (\bar{x},\Pi_{\bar{x}}(\bar{z}), A_{\bar{x}}(\bar{\alpha})),$$
where  $\bar{x}\mapsto \Pi_{\bar{x}}$ (resp. $\bar{x}\mapsto A_{\bar{x}})$ is a field over $\overline{W}'_1\cap  \overline{W}'_2$ of local diffeomorphisms of $\mathbb{F}$ over 
$\overline{V}'_1 \cap\overline{V}'_2$ (resp. automorphisms of $\mathbb{M}_2^\flat$).  As in the upper situation,  we have an analog  commutative  diagrams as (\ref{eq_Diagram3} ) and by 
superposition  of these diagrams and using the transition map (5') we obtain a transition map for  ${T^\flat M_2}_{| W}$ and ${T^\flat M_2}_{| W'}$  which show that $T^\flat M_2$ is a convenient subbundle of $T^\prime M_2$.
 Now, by construction, we have $(\ker T\phi)^0_x:=\{(x,\widehat{\phi^*}^{-1}_x(\alpha)),\; \alpha\in T^\flat M_2$ which ends the proof of Point (ii).
\end{proof}

\noindent\emph{End of the proof of Theorem \ref{T_ExistencePartialPoissonStructure}, Point(ii)}


At first, from the proof of Point (i) of Lemma \ref{L_TflatM2}  the fiber $T^\flat M_2=\mathfrak{P}_y(U_2)$ for all $y\in M_2$. Now, since $ (\ker T\phi)^0$ is the pull-back of $T^\prime M_2$, this implies that $T_{x}^\ast\phi=\widehat{\phi^\ast}_x^{-1}$.   
According to notations in the proof of Lemma  \ref{L_TflatM2} we have $\theta=\Phi_2^{-1}\circ \Phi_1$, so $T\theta=T\Phi_2^{-1}\circ T\Phi_1$ and so $T^\ast \theta= T^\ast \Phi_1\circ T^\ast \Phi_2^{-1}=\Theta^{-1}$. 
From diagram (\ref {eq_DiagramkerTphii}), (4) 
and since $\Theta^\flat=\Theta\circ \iota_0$, it follows that for any $\alpha\in T^\flat_y M_2$,  if $y=\phi(x_1)=\phi(x_2)$ then $T^\ast _{x_1}\phi(\alpha)=\Theta^\flat \circ  T^\ast _{x_2}\phi(\alpha)= \Theta^{-1}$. Therefore,  for any $\alpha\in T^\flat_y M$,  
if $P^0_1$ denotes the restriction of $P_1$ to $(\ker T\phi)^0$ we have
\[
(P_1^0)_{x_2}\circ T_{x_1}^\ast\phi=(P^0_1)_{x_1}\circ \Theta^\flat\circ T_{x_1}^\ast \phi= (P^0_1)_{x_1}\circ T^*\theta^{-1}\circ T_{x_1}^\ast \phi.
\]
But $T_{x_2}\phi=T\theta\circ T_{x_1}\phi$. Thus, for any $\alpha\in T_yM_2$, we have 
\[
T_{x_2}\phi \circ (P_1^0)_{x_2}\circ T_{x_1}^\ast\phi(\alpha)
=(T_{x_1}\phi \circ T\theta )\circ (P^0_1)_{x_1}
\circ T_{x_1}^\ast\phi(\alpha).
\]
By setting
 $(P_2)_{y}= T\phi\;\circ P_1\circ\; \widehat{\phi^*}^{-1}_x\circ  \iota_0 $, we get a  partial Poisson structure  $(T^\flat M_2, TM_2, P_2,\{\;,\;\}_{P_2})$. 

(iii) If  $ \left( T^{\flat}M_1,TM_1,P_1,\{\;,\;\}_{P_1} \right) $ is partial symplectic, this means that $P$ is an isomorphism and so we have a smooth  non-degenerate $2$-form $\Omega_P$ on $T M_1$ defined by $\Omega_P(u,v):=<P^{-1}(u), v>$. 
Since $\Omega$ is  non degenerate, for all $x\in M_1$,  the orthogonal 
$ \left( \ker T_x\phi \right) ^\perp
=\{u\in T_xM_1,\; 
\forall v\in \ker T_x\phi\, \; \Omega_P(u,v)=0\}$ 
defines a convenient  subbundle $(\ker T\phi)^\perp$ of $TM_1$ and so $(\ker T\phi)^0=P^{-1} \left( (\ker T\phi)^\perp \right) $ is a convenient subbundle of $T^\flat M_1$. Then we can apply (ii).\\

If  $T^\flat M_1=T^{\prime}M_1$, then $(\ker T\phi)^0=(\ker T\phi)^a$ is a convenient subbundle of $T^\flat M_1$ and so we also can apply (ii). 
\end{proof}

\subsubsection{Dual Pairs}
\label{____DualPairs}
We end this section by a generalization of the notion of dual pairs introduced in \cite{CaWe99}. 

\begin{definition}
\label{D_DualPairs}  
Consider a  partial symplectic manifold $ \left( T^\flat M, TM, P \right)$ and,  for $i \in \{1,2\}$, a  Poisson map $\phi_i$ from $M$ to the partial  Poisson manifold\\ 
$(T^\flat M_i, TM_i, P_i)$. We say that $(\phi_1,\phi_2)$ is a \emph{dual pair}\index{dual pair} if, for any $x\in M$, we have
$\ker T_x\phi_1=(\ker T_x\phi_2)^{\perp_P}$ and $\ker T_x\phi_2=(\ker T_x\phi_1)^{\perp_P}$.
\end{definition}
 
\begin{proposition}
\label{P_DualPairs} 
Consider a  partial symplectic manifold $ \left( T^\flat M, TM, P,\{\;,\;\}_p \right) $.  
For $i \in \{1,2\}$,  let $\phi_i$ be a  Poisson map  from $ \left( T^\flat M, TM, P,\{\;,\;\}_P \right)$ to the partial  Poisson manifold $ \left( T^\flat M_i, TM_i, P_i ,\{\;,\;\}_{P_i} \right) $ such that  $ \left( \phi_1,\phi_2 \right) $ is a dual pair.  Then, for each open set $U_i$ in $M_i$ and any $f_i$ in  the Poisson-Lie algebra $(\mathfrak{A}(U_i),\{\;,\;\}_{U_i})$ we have
$\{\phi_1^{\ast}f_1,\phi_2^{\ast} f_2\}_P=0$ on $ \phi_1^{-1}(U_1)\cap \phi_2^{-1}(U_2)$.
\end{proposition}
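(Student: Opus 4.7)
My plan is to prove this by a direct pointwise computation of the Poisson bracket, unwrapping the definition of $\{.,.\}_P$ and chasing the vectors into the kernels of the tangent maps using the dual pair condition. The argument follows the classical finite-dimensional one, and the partial symplectic structure plus Proposition \ref{P_PropertiesPoissonMap} is exactly what is needed to keep everything well-defined in the convenient setting.

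\textbf{Step 1 (setting up the pullbacks).} Fix $x \in \phi_1^{-1}(U_1) \cap \phi_2^{-1}(U_2)$. Since each $\phi_i$ is a Poisson map, Proposition \ref{P_PropertiesPoissonMap} ensures that $T_x^{\ast}\phi_i$ sends $T_{\phi_i(x)}^\flat M_i$ into $T_x^\flat M$. Consequently, setting
\[
\alpha_i := d_x(\phi_i^{\ast} f_i) = T_x^{\ast}\phi_i\bigl(d_{\phi_i(x)} f_i\bigr),
\]
we have $\alpha_1, \alpha_2 \in T_x^\flat M$, and the value of the bracket reads
\[
\{\phi_1^{\ast} f_1,\phi_2^{\ast} f_2\}_P(x) \;=\; \langle \alpha_2, P(\alpha_1)\rangle.
\]

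\textbf{Step 2 (pullbacks land in the annihilators of the kernels).} For any $v \in \ker T_x\phi_1$ one computes $\langle \alpha_1, v\rangle = \langle d_{\phi_1(x)} f_1, T_x\phi_1(v)\rangle = 0$, so $\alpha_1 \in (\ker T_x\phi_1)^0$ in the sense of Definition \ref{D_Coisotropic}. Applying $P$ and invoking the definition $\mathbb{F}^{\perp_P} = P(\mathbb{F}^0)$, we find $P(\alpha_1) \in (\ker T_x\phi_1)^{\perp_P}$.

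\textbf{Step 3 (using the dual pair hypothesis).} The dual pair assumption gives $(\ker T_x\phi_1)^{\perp_P} = \ker T_x\phi_2$, hence $P(\alpha_1) \in \ker T_x\phi_2$. Therefore
\[
\langle \alpha_2, P(\alpha_1)\rangle \;=\; \langle T_x^{\ast}\phi_2(d_{\phi_2(x)} f_2), P(\alpha_1)\rangle \;=\; \langle d_{\phi_2(x)} f_2, T_x\phi_2(P(\alpha_1))\rangle \;=\; 0,
\]
which is exactly the vanishing of the bracket at $x$. Since $x$ was arbitrary, the proof is complete.

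I expect no serious obstacle: the only point requiring care is verifying that $\alpha_1 \in T_x^\flat M$ and that $(\ker T_x\phi_1)^0$ is genuinely a subspace of $T_x^\flat M$ on which $P$ can act — both facts are guaranteed by the Poisson map hypothesis combined with the conventions of Definition \ref{D_Coisotropic}. The partial symplectic structure on $M$ is not used in the computation itself, only to make the dual pair condition meaningful (so that $\perp_P$ on subspaces of $TM$ is well-defined through $P$).
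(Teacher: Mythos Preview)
Your proof is correct and follows essentially the same pointwise strategy as the paper: show $d_x(\phi_i^{\ast}f_i)\in(\ker T_x\phi_i)^0$, apply $P$ to land in $(\ker T_x\phi_i)^{\perp_P}=\ker T_x\phi_j$ via the dual pair hypothesis, and conclude that the pairing vanishes. Your version is in fact slightly more streamlined: the paper detours through the weak symplectic form $\omega$ associated to $P$ and invokes Proposition~\ref{P_PartialSymplectic} to reinterpret $(\ker T_x\phi_i)^{\perp_P}$, whereas you work directly with the definition $\mathbb{F}^{\perp_P}=P(\mathbb{F}^0)$ and never need the symplectic form explicitly.
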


\begin{proof} 
In the one hand, under our assumptions, from the proof of Theorem~\ref{T_ExistencePartialPoissonStructure},  (ii), we have $T^\flat M_i=\displaystyle\bigcup_{y\in M_i} \mathfrak{P}_y(U_i)$ where $\mathfrak{P}_y(U_i)=\{d_yf,\; y\in \mathfrak{A}_i(U_i)\}$.
On the other hand, the characteristic foliation is reduced to $M$, so by  Remark~\ref{R_Leaf}, (2) and (3), we have a weak symplectic form $\omega$ such that $\omega(u, v)=<P^{-1}(u), v>$ for all $u$ and $v$ in $T_xM$ and all $x\in M$. On the other hand, for any $\alpha\in T_x^\flat  M$, we have an open set $U$ around $x$ in $M$ and a smooth map $f:U\to \mathbb{R}$ such that $\alpha=d_xf$ and since $P:T^\flat M \to TM$ is an isomorphism, we have $\omega _x( u, v)=\omega_x(X_{f_u},X_{f_v})= d_x\{f_u,f_v\}_P$ if $P^{-1}(u)=d_xf_u$ and $P^{-1}(v)=d_x f_v$. \\ 
Thus, according to Proposition~\ref{P_PartialSymplectic}, we have 
\[
(\ker T_x\phi_i)^{\perp_P}=\{P(\{d_xf),\;  f\in \mathfrak{A}(U):\;
\forall v\in \ker T_x\phi_i,\;  < d_xf, v>=0 \}.
\]   
But since $\phi_i$ is a Poisson map, this implies that  if $f_i$ belongs to $\mathfrak{A}(U_i)$ then $\phi_i^*f_i$ belongs to $ \mathcal{E}_i \left( \phi_i^{-1}\left( U_i \right) \right) $ and then each $d_x \left( \phi_i^{\ast}f_i \right) $ belongs to $(\ker T_x \phi_j)^0$ for $i\not=j$  and $x\in \phi_1^{-1}(U_1)\cap \phi_2^{-1}(U_2)$ and then if $ \left( \phi_1,\phi_2 \right) $ is a dual pair, if follows that $\{\phi_1^{\ast}f_1,\phi_2^{\ast}f_2\}_P=0$ on  $\phi_1^{-1}(U_1)\cap \phi_2^{-1}(U_2)$.
\end{proof}

\begin{remark} 
\label{R_CasePhiantiPoisson} 
The conclusion of Proposition~\ref{P_DualPairs} is also true  when $(\phi_1,-\phi_2)$ or $(-\phi_1,\phi_2)$ is a dual pair.
\end{remark}

\section{Partial Poisson (resp. Symplectic) Convenient Lie Groups}
\label{__PartialPoissonSymplecticConvenientLieGroups}

This section contains a survey of definitions and properties of a convenient Lie group which are either contained in \cite{CaPe23} or are an adaptation of results on Banach Lie groups contained in \cite{Tum20} (see also \cite{GRT23}).

\subsection{Preliminaries and Notations} 

\begin{definition}
\label{D_ConvenientLiegroup} 
A \emph{convenient Lie group}\index{convenient!Lie group}\index{Lie group} $\mathsf{G}$ is a convenient manifold endowed with a group structure for which the multiplication ${\bf m}:\mathsf{G}\times \mathsf{G}\to \mathsf{G}$ and the inversion ${\bf i}:\mathsf{G}\to \mathsf{G}$ are smooth mappings.
\end{definition}

The unit element of the group $ \mathsf{G}$ will be denoted $e$ and the inverse of an element $g$ by $g^{-1}$.\\
For $g \in \mathsf{G}$, $L_{g}:x\mapsto g.x$ is the \emph{left translation}\index{left translation} and $R_{g}:x\mapsto x.g$ is the \emph{right translation}\index{right translation}.

 The tangent bundle to $ \mathsf{G}$ will be  the kinematic tangent bundle $T \mathsf{G}$ and the vector fields, smooth sections of this bundle, are the kinematic vector fields whose set is denoted $\mathfrak{X}(\mathsf{G})$.

\begin{definition}
\label{D_LeftInvariantVectorFields}
 A vector field $X$ on $ \mathsf{G}$ is said to be
left invariant%
\index{vector field!left invariant}
if
\[
\forall g\in \mathsf{G},
\left( L_{g} \right)  _{\ast}X=X
\]
where $\left(  L_{g}\right)  _{\ast}X=TL_{g}\circ X\circ L_{g^{-1}}$.
\end{definition}

Since $\left(  L_{g}\right)  _{\ast}\left[  X,Y\right]  =\left[  \left(
L_{g}\right)  _{\ast}X,\left(  L_{g}\right)  _{\ast}Y\right]  $, the set of
all left invariant vector fields on $ \mathsf{G}$ forms a Lie subalgebra of
$\mathfrak{X}(\mathsf{G})$ denoted $\mathfrak{g}$.

\begin{definition}
\label{D_LieAlgebraOfALiegroup}
$\mathfrak{g}  $ is called the Lie algebra\index{Lie algebra} of the Lie group $ \mathsf{G}$.
\end{definition}

$\mathfrak{g}  $ is in bijective correspondence with the tangent
space $T_{e}g$ \textit{via} the linear isomorphism%
\[
\begin{array}
[c]{ccc}
\mathfrak{g} & \to & T_{e}\mathsf{G}\\
X & \mapsto & X_{e}
\end{array}
\]
So $T_{e} \mathsf{G}$ becomes a Lie algebra for the bracket still denoted $\left[.,.\right]  $ and defined by
\[
\left[  X_{e},Y_{e}\right]  =\left[  X,Y\right]  _{e}
\]

\begin{proposition}
Let $\varphi: \mathsf{G}\to \mathsf{H}$ be a smooth homomorphism \index{homomorphism!Lie group} of Lie groups. \\
Then $T_{e}\varphi:\mathfrak{g}  \to \mathfrak{h}  $ is a Lie algebra homomorphism\index{homomorphism!Lie algebra}.
\end{proposition}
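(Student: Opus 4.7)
The plan is to follow the classical finite-dimensional argument, verifying that each step remains valid in the convenient setting by invoking only the structural properties of kinematic vector fields and Lie brackets recalled earlier in the paper.

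First I would exploit that $\varphi$ is a group homomorphism to obtain the intertwining identity
\[
\varphi \circ L_g = L_{\varphi(g)} \circ \varphi
\]
for every $g\in \mathsf{G}$. Differentiating this identity at an arbitrary point and using the chain rule gives $T\varphi \circ TL_g = TL_{\varphi(g)} \circ T\varphi$, which is the key compatibility linking the tangent map of $\varphi$ to the left translation actions on $\mathsf{G}$ and $\mathsf{H}$.

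Next, for $X \in \mathfrak{g}$, I would introduce the left-invariant vector field $\widetilde{X}$ on $\mathsf{H}$ determined by $\widetilde{X}_e := T_e\varphi(X_e)$, i.e.\ $\widetilde{X}_h = T_eL_h(T_e\varphi(X_e))$. Using the intertwining identity above, a direct computation shows $T\varphi \circ X = \widetilde{X}\circ \varphi$, so that $X$ and $\widetilde{X}$ are $\varphi$-related in the sense of kinematic vector fields on convenient manifolds. The same holds for any $Y\in\mathfrak{g}$ and its associated $\widetilde{Y}$ on $\mathsf{H}$.

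Finally, I would invoke the standard naturality of the Lie bracket under $\varphi$-relatedness: if $X \sim_\varphi \widetilde{X}$ and $Y \sim_\varphi \widetilde{Y}$, then $[X,Y] \sim_\varphi [\widetilde{X},\widetilde{Y}]$. Evaluating this relation at the unit $e \in \mathsf{G}$ gives
\[
T_e\varphi([X,Y]_e) = [\widetilde{X},\widetilde{Y}]_{\varphi(e)} = [\widetilde{X},\widetilde{Y}]_e = [T_e\varphi(X_e),\, T_e\varphi(Y_e)],
\]
which is precisely the required Lie algebra homomorphism property under the identification $\mathfrak{g}\cong T_e\mathsf{G}$. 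The only mildly nontrivial point in the convenient setting is the preservation of the Lie bracket under $\varphi$-relatedness for kinematic vector fields; this is a standard fact available in the Kriegl--Michor framework \cite{KrMi97}, so I would simply cite it rather than reproduce the verification. The rest of the proof is formal.
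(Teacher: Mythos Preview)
Your proof is correct and is the standard argument via $\varphi$-relatedness of left-invariant vector fields. The paper itself states this proposition without proof, so there is nothing to compare against; your approach is exactly the classical one and goes through in the convenient setting for the reasons you indicate (the bracket of kinematic vector fields respects $\varphi$-relatedness, cf.\ \cite{KrMi97}).
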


\begin{definition}
\label{D_OneParameterSubgroup}
Let $\left( \mathsf{G},.\right)$ be a Lie group with Lie algebra $\mathfrak{g}$. A \emph{$1$-parameter subgroup}\index{$1$-parameter subgroup} of $ \mathsf{G}$ is a smooth Lie group homomorphism 
$\alpha: \left(  \mathbb{R},+ \right) 
\to \left( \mathsf{G} ,.\right) .$
\end{definition}

So a $1$-parameter subgroup of $ \mathsf{G}$ is a smooth curve $\alpha$ in $\mathsf{G}$ such that
\[\left\{
\begin{array}
[c]{c}%
\forall\left(  s,t\right)  \in\mathbb{R}^{2},\ \alpha\left(  s+t\right)
=\alpha\left(  s\right)  .\alpha\left(  t\right)  \\
\alpha\left(  0\right)  =e
\end{array}
.\right.
\]

\begin{proposition}
\label{P_CharacterizationIntegralCurveInLieAlgebra}
Let $\alpha:\mathbb{R} \to \mathsf{G}$ be a smooth curve where $\alpha\left(  0\right)  =e$ and let $X\in\mathfrak{g}$. We denote $\hat{X}$ the left invariant vector field such that $\hat{X}_{e}=X$. Then the following assertions are equivalent:
\begin{enumerate}
\item
$\alpha$ is a $1$-parameter subgroup of $ \mathsf{G}$ and $X=\left.
\dfrac{\partial}{\partial t}\right\vert _{t=0}\alpha\left(  t\right)  $;
\item
$\alpha$ is an integral curve of the left invariant vector field
$\hat{X}$;
\item
$\operatorname{Fl}^{\hat{X}}(t,x)  =x.\alpha(t)  $ is the unique global flow of the vector field $\hat{X}$.
\end{enumerate}
\end{proposition}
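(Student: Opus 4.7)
The plan is to prove the cyclic chain $(1) \Rightarrow (2) \Rightarrow (3) \Rightarrow (1)$, exploiting at each step the left-invariance of $\hat{X}$ together with the group law, so that only the behaviour of $\alpha$ on a neighbourhood of $0$ has to be controlled.

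For $(1) \Rightarrow (2)$, I would start from the one-parameter subgroup identity $\alpha(t+s) = \alpha(t) \cdot \alpha(s) = L_{\alpha(t)}(\alpha(s))$, differentiate this in $s$ at $s = 0$, and use the chain rule to obtain
\[
\alpha'(t) = T_e L_{\alpha(t)} \cdot \alpha'(0) = T_e L_{\alpha(t)} \cdot X = \hat{X}_{\alpha(t)},
\]
which is exactly the integral-curve equation with initial condition $\alpha(0) = e$. For $(2) \Rightarrow (3)$, I would define $\Phi(t,x) := x \cdot \alpha(t) = L_x(\alpha(t))$, check that $\Phi(0,x) = x$, and compute
\[
\partial_t \Phi(t,x) = T_{\alpha(t)} L_x \cdot \alpha'(t) = T_{\alpha(t)} L_x \cdot \hat{X}_{\alpha(t)} = \hat{X}_{x \cdot \alpha(t)} = \hat{X}_{\Phi(t,x)},
\]
the last equality being left-invariance of $\hat{X}$. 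Thus $\Phi$ is an integral curve of $\hat{X}$ through $x$, smooth and globally defined on $\mathbb{R} \times \mathsf{G}$, so it coincides with the flow $\operatorname{Fl}^{\hat{X}}$ (see below for uniqueness).

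For $(3) \Rightarrow (1)$, I would apply the flow identity $\operatorname{Fl}^{\hat{X}}(t+s, e) = \operatorname{Fl}^{\hat{X}}(t, \operatorname{Fl}^{\hat{X}}(s, e))$ and substitute the given formula: the left-hand side is $\alpha(t+s)$, while the right-hand side becomes $\operatorname{Fl}^{\hat{X}}(t, \alpha(s)) = \alpha(s) \cdot \alpha(t)$, proving the homomorphism property. The relation $X = \alpha'(0)$ is then obtained by differentiating $\operatorname{Fl}^{\hat{X}}(t,e) = \alpha(t)$ at $t=0$, which yields $\hat{X}_e = \alpha'(0)$, i.e. $\alpha'(0) = X$.

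The only delicate point, specific to the convenient setting, is the uniqueness of the integral curve in step $(2) \Rightarrow (3)$: unlike the Banach case, smooth vector fields on convenient manifolds do not in general admit unique local flows. The way around this is to observe that one does not need a general uniqueness theorem here. Left-invariance forces any integral curve $\beta$ of $\hat{X}$ through $x$ to be of the form $\beta(t) = x \cdot \gamma(t)$ for some curve $\gamma$ starting at $e$ and satisfying $\gamma'(t) = \hat{X}_{\gamma(t)}$; the map $\Phi$ constructed from $\alpha$ is then the flow by the very definition of $\operatorname{Fl}^{\hat{X}}$ as the (assumed) smooth global flow, and the global smoothness of the group multiplication guarantees that $\Phi$ is smooth on $\mathbb{R}\times \mathsf{G}$. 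This reduces the question to the standard uniqueness of one-parameter subgroups with prescribed initial velocity, which is the main technical point to check carefully and is the principal obstacle in adapting the classical proof to the convenient framework.
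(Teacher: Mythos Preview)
The paper does not prove this proposition: it is stated in the survey section on convenient Lie groups (Section~3) as a recalled standard result, with no proof given, immediately followed by the definition of the exponential map. Your cyclic argument $(1)\Rightarrow(2)\Rightarrow(3)\Rightarrow(1)$ is the standard proof one finds in the literature (e.g.\ \cite{KrMi97}, \S36), and the computations you outline for each implication are correct.

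Your identification of the uniqueness of integral curves as the delicate point in the convenient setting is exactly right, and the paper itself flags this issue a few lines later (``for a kinematic vector field, integral curves need not exist locally''). Your proposed resolution, however, is not quite complete: you reduce uniqueness of integral curves through $x$ to uniqueness through $e$ via left translation, but you do not actually establish the latter. The phrase ``by the very definition of $\operatorname{Fl}^{\hat X}$ as the (assumed) smooth global flow'' is circular, since assertion~(3) is precisely the claim that a unique global flow exists and is given by the stated formula. The clean way to close this gap, once one is in the cycle starting from~(1), is to use the one-parameter subgroup property of $\alpha$: given any other integral curve $\gamma$ through $e$, for fixed $t_0$ the curve $s\mapsto \alpha(t_0)^{-1}\gamma(t_0+s)$ is again an integral curve through $\alpha(t_0)^{-1}\gamma(t_0)$, and one shows the set $\{t:\gamma(t)=\alpha(t)\}$ is open and closed by exploiting that $\alpha$ satisfies $\alpha(t_0+s)=\alpha(t_0)\alpha(s)$. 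This is the argument in \cite{KrMi97}, and it genuinely uses the group structure rather than any abstract ODE uniqueness theorem.
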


\begin{definition}
\label{D_ConvenientLieGroupExponentialMap} 
Let $\mathsf{G}$ be a Lie group with Lie algebra
$\mathfrak{g}$. We say that $\mathsf{G}$ admits an exponential map\index{exponential map} if there exists a smooth mapping $\exp:\mathfrak{g} \to  \mathsf{G}$ such that 
$t \mapsto \exp(tX) $ is the unique $1$-parameter subgroup with tangent vector $X$ at $0$.
\end{definition}

As in the Banach framework, we have the following properties:

\begin{description}
\item[\textbf{(ExpC1)}] $\exp\left(  0\right)  =e$;

\item[\textbf{(ExpC2)}] $T_{0}\exp=\operatorname{Id}_{\mathfrak{g}}$
\end{description}

where \textbf{(ExpC2)} is a consequence of $\left.  \dfrac{d}{dt}\right\vert _{t=0}%
\exp\left(  tX\right)  =X$.\\

Unfortunately, for a kinematic vector field, integral curves need not exist locally\footnote{cf. \cite{KrMi97}, 32.12.}, so the existence of such a map is not guaranteed.\\

Let $\mathsf{G}$ be a convenient Lie group with Lie algebra $\mathfrak{g}$.
\begin{definition}
\label{D_ConvenientLieGroupConjugation}
For any $g\in \mathsf{G}$, we define the \emph{conjugation}\index{conjugation} or \emph{inner automorphism}\index{inner automorphism}
\[
\begin{array}
[c]{cccc}
\operatorname{conj}_{g}: & \mathsf{G} & \to & \mathsf{G}\\
	& x & \longmapsto & g.x.g^{-1}.
\end{array}
\]
\end{definition}

\begin{definition}
\label{D_AdjointRepresentation}
The adjoint representation\index{representation!adjoint} is the mapping
\[
\begin{array}
[c]{cccc}%
\operatorname{Ad}: & \mathsf{G} & \to & \operatorname{GL}\left(
\mathfrak{g}\right) \\
& g & \longmapsto & T_{e}\left(  \operatorname{conj}_{g}\right).
\end{array}
\]
The adjoint representation of the Lie algebra $\mathfrak{g}$ is the map
$\operatorname{ad}=T_{e}\operatorname{Ad}$.\\
We then have:
\begin{equation}
\label{eq_adLieBracket}
\forall\left(  X,Y\right)  \in\mathfrak{g}^{2},\operatorname{ad}\left( X \right)  Y=\left[  X,Y\right] .\\
\end{equation}
\end{definition}

\subsubsection{Coadjoint Action on a Convenient Subspace of a Dual and $1$-cocycle}\label{___CoadjointActionOnSubspace}

For this section, we refer to \cite{Tum20}, 2.5. But for the sake of completeness, we recall this classical context.\\

Let $\mathsf{G}$ be a convenient Lie group and $\Phi: \mathsf{G}\to \operatorname{Aff}(\mathbb{E})$ a representation into the affine group of a vector space $\mathbb{E}$.  As 
$\operatorname{Aff}(\mathbb{E})$ is a semi-direct product of the linear group $\operatorname{GL}(V)$ and the translation group on $\mathbb{E}$ identified with $\mathbb{E}$,  $\Phi$ can be written as a pair $(\phi,\Theta)$ where $\phi:\mathsf{G}\to \operatorname{GL}(\mathbb{E})$ and $\Theta:\mathsf{G}\to \mathbb{E}$. Since $\Phi$ is a group morphism it follows that $\phi$ is a group morphism and $\Theta$ satisfies
\begin{equation}
\label{eq_1CocylceGroup}
\Theta(gh)=\Theta(g)+\phi(g)\left(\Theta(h)\right)
\end{equation}
for all $g$ and $h$ in $\mathsf{G}$. In this case,  $\Theta$ is called \emph{${\bf 1}$-cocyle on $\mathsf{G}$ relative to $\phi$}\index{OneCocycle@$1$-cocyle}. \\

The derivative $T_e\Phi $ is a Lie algebra morphism from $\mathfrak{g}$ to the Lie algebra $\mathfrak{aff}(\mathbb{E})$ of $ \operatorname{Aff}(\mathbb{E})$  which can be written as a pair $(T_e \phi, T_e\Theta)$ where $T_e\phi$ and $T_e\Theta$ gives rise to a Lie algebra morphism $d\phi$ from $\mathfrak{g}$ to $\mathfrak{gl}(\mathbb{E})$ and $d\Theta$ from $\mathfrak{g}$ to $\mathbb{E}$ respectively which satisfy the relation
\begin{equation}
\label{eq_1CocycleLieAlgebra}
d\Theta([x,y])
=d\phi(x) \left( d\Theta(y) \right)
-d\phi(y) \left( d\Theta(x) \right)
\end{equation}
for all $x$ and $y$ in $\mathfrak{g}$. In this case,  $d\Theta$ is called{ \bf ${\bf 1}$-cocyle on $\mathfrak{g}$ relative to $d\phi$}. \\


In particular, consider a Lie group  $\mathsf{G}$  and a bi-invariant  subbunbdle $T^\flat \mathsf{G}$  of $T\mathsf{G}$. If we denote by $\mathfrak{g}^\flat$ the fibre $T_e^\flat\mathsf{G}$, then $\mathfrak{g}^\flat$ is invariant by the coadjoint action $\mathsf{G}$ on $T^\prime \mathsf{G}$  (formally, same proof as in Proposition 5.6 in \cite{Tum20}).

\begin{proposition}
\label{P_1CocylcleRelAd} 
Under the previous assumptions,  a map $\lambda: \mathfrak{g}\to  \bigwedge^2\mathfrak{g}^\flat$   is called  a $1$-cocycle relative to the coadjoint action of $\mathfrak{g}$ onto $\mathfrak{g}^\flat$ if,  we have the following properties:
\begin{enumerate}
\item[(1)]
there exists $\mathsf{p}:\mathfrak{g}\times\mathfrak{g}^\flat \to \mathfrak{g}$ such that
$\lambda(x)(\alpha,\beta)=<\alpha, \mathsf{p}(x)(\beta)>$, for all $x\in \mathfrak{g}$ and $\alpha, \beta$ in $\mathfrak{g}^\flat$;
\item[(2)] 
for all $x, y$ in $\mathfrak{g}$ and $\alpha, \beta$ in $\mathfrak{g}^\flat$, we have:
\begin{equation}
\label{eq_1Cocycle}
\lambda([x,y])=\operatorname{ad}^{(2,0)}_x(\lambda(y))-\operatorname{ad}^{(2,0)}_y(\lambda(x)).
\end{equation}
\end{enumerate}
\end{proposition}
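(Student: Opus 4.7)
The statement is essentially definitional in form: it specifies what it means for a map $\lambda:\mathfrak{g}\to \bigwedge^2\mathfrak{g}^\flat$ to be a $1$-cocycle relative to the coadjoint action. The plan is therefore to verify that conditions (1) and (2) together are precisely the specialization of the abstract Lie algebra $1$-cocycle identity (\ref{eq_1CocycleLieAlgebra}) to the $\mathfrak{g}$-module $\bigwedge^2\mathfrak{g}^\flat$ equipped with the natural representation $\operatorname{ad}^{(2,0)}$ induced by the coadjoint action on $\mathfrak{g}^\flat$.

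First I would check well-definedness of the coadjoint action of $\mathfrak{g}$ on $\mathfrak{g}^\flat$. Since $T^\flat\mathsf{G}$ is a bi-invariant subbundle of $T^\prime\mathsf{G}$, the fibre $\mathfrak{g}^\flat=T^\flat_e\mathsf{G}$ is stable under $\operatorname{Ad}^\ast(\mathsf{G})$; differentiating at $e$ yields $\operatorname{ad}^\ast_x\alpha\in\mathfrak{g}^\flat$ for every $x\in\mathfrak{g}$ and $\alpha\in\mathfrak{g}^\flat$. A standard Jacobi-identity argument, transposed to $\mathfrak{g}^\flat$, then shows that $x\mapsto\operatorname{ad}^\ast_x\vert_{\mathfrak{g}^\flat}$ is a bounded Lie algebra representation, i.e., $\operatorname{ad}^\ast_{[x,y]}=[\operatorname{ad}^\ast_x,\operatorname{ad}^\ast_y]$ on $\mathfrak{g}^\flat$.

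Second I would define $\operatorname{ad}^{(2,0)}$ on $\bigwedge^2\mathfrak{g}^\flat$ by the Leibniz-type formula
\[
\bigl(\operatorname{ad}^{(2,0)}_x\omega\bigr)(\alpha,\beta)
= -\omega(\operatorname{ad}^\ast_x\alpha,\beta)-\omega(\alpha,\operatorname{ad}^\ast_x\beta),
\]
and verify by a direct calculation, using the previous step, that $\operatorname{ad}^{(2,0)}_{[x,y]}=[\operatorname{ad}^{(2,0)}_x,\operatorname{ad}^{(2,0)}_y]$, so that $\bigwedge^2\mathfrak{g}^\flat$ is a $\mathfrak{g}$-module.

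Third, I would interpret condition (1) as the statement that $\lambda(x)$, regarded through the duality pairing, is represented by a bounded operator $\mathsf{p}(x):\mathfrak{g}^\flat\to\mathfrak{g}$ taking values in the subspace $\mathfrak{g}\subset\mathfrak{g}^{\flat\prime}$ rather than merely in the full bidual; the antisymmetry $\langle\alpha,\mathsf{p}(x)\beta\rangle=-\langle\beta,\mathsf{p}(x)\alpha\rangle$ is equivalent to $\lambda(x)$ belonging to $\bigwedge^2\mathfrak{g}^\flat$ and not just to $\otimes^2\mathfrak{g}^\flat$. Condition (2) is then precisely the abstract cocycle identity (\ref{eq_1CocycleLieAlgebra}) applied with $d\phi=\operatorname{ad}^{(2,0)}$ and $d\Theta=\lambda$. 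The only delicate step is the first one, namely ensuring that $\operatorname{ad}^\ast$ preserves $\mathfrak{g}^\flat$ with bounded behaviour in the intrinsic convenient structure of $\mathfrak{g}^\flat$; this is exactly where the bi-invariance hypothesis on $T^\flat\mathsf{G}$ is indispensable, since bi-invariance transfers the smooth convenient bundle structure from $T^\flat\mathsf{G}$ to the group action on the fibre.
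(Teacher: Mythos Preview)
Your diagnosis is correct: this ``Proposition'' is purely a definition, and the paper supplies no proof for it---indeed, essentially the same content reappears later explicitly as Definition~\ref{D_1CocylcleRelAd}. Your verification that conditions (1) and (2) amount to the specialization of the abstract Lie-algebra cocycle identity~(\ref{eq_1CocycleLieAlgebra}) to the $\mathfrak{g}$-module $\bigwedge^2\mathfrak{g}^{\flat\prime}$ under the derivation action induced by $\operatorname{ad}^*$ is sound and goes beyond what the paper itself offers.

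One point of caution: your formula for $\operatorname{ad}^{(2,0)}$ carries an overall minus sign relative to the paper's convention in~(\ref{eq_Ad20}), where
\[
\operatorname{ad}^{(2,0)}_x\Lambda(\alpha,\beta)=\Lambda(\operatorname{ad}^*_x\alpha,\beta)+\Lambda(\alpha,\operatorname{ad}^*_x\beta)
\]
without the minus. Both conventions define a coherent action, but since the cocycle identity~(\ref{eq_1Cocycle}) in the statement is written with the paper's $\operatorname{ad}^{(2,0)}$, you should align with it; otherwise the unwound identity you obtain will differ in sign from the one the paper records in~(\ref{eq_AdCocycleAlgebra2}) and in Remark~\ref{R_CocycleCondition}.
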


Then, in the Banach setting, from \cite{Tum20}, Theorem~3.14, $ \left( \mathfrak{g},\mathfrak{g}^\flat, P \right) $ is a partial Lie-Poisson space in the sense of Definition~\ref{D_PartialLiePoissonConvenientSpace}. Thus this Definition can be considered as an 
adaptation of Definition~3.12 in \cite{Tum20}.

\subsubsection{Partial Bialgebras}

The notion of Lie bialgebra in finite dimension\footnote{cf. \cite{Kos04}.} appears as a particular case of  Lie bialgebroid (see \cite{MaXu94} for instance). A generalization to the Banach setting can be found in \cite{Tum20}. In the next paragraph,  we will  propose the following  natural adaptation which we call \emph{partial bialgebra}\index{partial!bialgebra}.
 
\begin{definition}
\label{D_PartialLieBialgebra} 
Let $(\mathfrak{g},[.,.])$ be a convenient Lie algebra and $\mathfrak{g}^\flat$ be a  convenient subspace of $\mathfrak{g}^\prime$ where the inclusion $\mathfrak{g}^\flat \to \mathfrak{g}^\prime$ is bounded. We will say that 
$ \left( \mathfrak{g},\mathfrak{g}^\flat \right) $ is a \emph{partial Lie bialgebra}\index{partial!Lie bialgebra} if:
\begin{enumerate}
\item[(1)]  
the restriction  of the  coadjoint action 
 to $\mathfrak{g}\times \mathfrak{g}^\flat$ takes values in $\mathfrak{g}^\flat$ and is bounded;
\item[(2)] 
there exists  a Lie algebra bracket $[.,.]^\flat$ on $\mathfrak{g}^\flat$ such that  the dual map of $[.,.]^\flat:\mathfrak{g}^\flat\times\mathfrak{g}^\flat\to \mathfrak{g}^\flat$
 restricts to a $1$-cocycle $\lambda:\mathfrak{g}\to \bigwedge^2\mathfrak{g}^{\flat\prime}$ relative to the coadjoint action of $\mathfrak{g}$ onto $\mathfrak{g}^\flat$.
\end{enumerate}
\end{definition} 
 
\begin{remark}
\label{R_CocycleCondition} 
According to \cite{Tum20}, Remark~7.3, by formal  same arguments adapted to the convenient setting, the $1$-cocycle condition can be written:
\begin{equation}
\label{eq_1Cocycle2}
\begin{matrix}
<[x,y],[\alpha,\beta]^\flat>=&<y,[\operatorname{ad}_x^*\alpha,\beta]^\flat>+<y,[\alpha,\operatorname{ad}_x^*\beta]^\flat>\hfill{}\\
					&-<x,[\operatorname{ad}_y^*\alpha,\beta]^\flat>-<x,[\alpha,\operatorname{ad}_y	a\beta]^\flat>.\hfill{}\\
\end{matrix}
\end{equation}
\end{remark}

\subsection{Bi-invariant Partial Poisson (resp. symplectic) Structures}
We begin by fixing some notations.\\
As in the Banach setting used in \cite{Tum20},  on a convenient Lie group $\mathsf{G}$ the left (resp. right) translation $L_g$ (resp. $R_g$) gives rise to a left  (resp. right) smooth  action of $\mathsf{G}$ on $T^\prime\mathsf{G}$ given by $(L_g)^*(\alpha)=\alpha\circ TL_g$ (resp. $R_g^*(\alpha)=\alpha\circ TR_g$) and also a smooth  action  denoted $L_g^{**}$ (resp. $R^{**}_g$) on the bidual  $T^{\prime\prime}\mathsf{G}$. \\
The smooth adjoint action  
$\operatorname{Ad}_g=L_g\circ R_{g^{-1}}$ 
of $\mathsf{G}$ on its  Lie algebra $\mathfrak{g}$ induces on the dual $\mathfrak{g}^{\prime}$  a smooth coadjoint action  $\operatorname{Ad}^*_g=L_g^*\circ R_{g^{-1}}^*$ and a smooth action $\operatorname{Ad}_g^{**}=L_g^{**}\circ R_{g^{-1}}^{**}$.\\
 
According to the context of Banach Poisson Lie groups exposed in  \cite{Tum20}, we introduce:

\begin{definition}
\label{D_PoissonLIeGroup} 
Given  a convenient Lie group $\mathsf{G}$,  a partial Poisson  (resp. symplectic) structure  
$ \left( T^\flat \mathsf{G}, T\mathsf{G}, P,\{\;,\;\}_P \right) $ is called a \emph{bi-invariant partial Poisson (resp. symplectic) structure}\index{bi_invariant partial Poisson} if  we have:
\begin{enumerate}
\item[{\bf (BiT)}] 
$T^\flat \mathsf{G}$ is bi-invariant that is $L_g^*(T_{gh}^\flat \mathsf{G})=T_h^\flat \mathsf{G}$ and $R_g^*(T_{hg}^\flat \mathsf{G})=T_h^\flat \mathsf{G}$ for all $g,h,\in \mathsf{G}$.
\item[{\bf (BiP)}] 
$P$ is bi-invariant that is $P_h\circ L_g^*=TL_g^{-1}\circ P_{gh}$ and $P_h\circ R_g^*=T R_g^{-1}\circ P_{hg}$ for all $g,h,\in \mathsf{G}$.
\end{enumerate}

Under theses conditions, we say that  
$ \left( T^\flat \mathsf{G}, T\mathsf{G}, P,\{\;,\;\}_P \right) $ is a convenient partial Poisson (resp. symplectic) Lie group.
\end{definition}

\begin{remark} 
When $T^\flat \mathsf{G}$ is bi-invariant, if we set $\mathfrak{g}^\flat:=T_e^\flat\mathsf{G}$, then the map  $(TL)^*: T^\flat \mathsf{G}\to G\times \mathfrak{g}^\flat$ (resp. $(R)^*: T^\flat \mathsf{G}\to G\times \mathfrak{g}^\flat$ ) given by 
$TL^*(g,\alpha)=(g, L_g^*(\alpha))$ (resp. $TR^*(g,\alpha)=(g, R_g^*(\alpha))$) is a convenient bundle isomorphism
\end{remark}

As we have already seen, to $P$ is associated a partial Poisson tensor $\Lambda $ defined by 
 $\Lambda(\alpha,\beta)=<\alpha, P(\beta)>$. Therefore, if $P$ is bi-invariant, we have: 
\begin{equation}
\label{eq_BiinvariantLambda}
\Lambda_{gh}(\alpha,\beta)=\Lambda_h(L_g^*\alpha, L_g^*\beta) +\Lambda_g(R_h^*\alpha ,R_h^*\beta).
\end{equation}

\begin{notations}
\label{Lg**}${}$\\
We will denote by $T^{\flat\prime}M$ the convenient dual bundle of $T^\flat M$.
Since the restriction to $T_h\mathsf{G}\subset T_h^{\prime\prime}\mathsf{G}$ of the adjoint $L^{**}g$ to $T_h\mathsf{G}$ is $TL_g$. However for the for the sake of clarity,   as in \cite{Tum20}, for any $\Lambda\in \bigwedge^kT^{\flat\prime}\mathsf{G}$ we set 

$(L_g^{**} \Lambda)_{gh}(\alpha_1,\dots,\alpha_k):= \Lambda_h(L_g^* \alpha_1,\dots ,L_g^* \alpha_k) $\\ 
and for analog reason we also set 

$(R_g^{**} \Lambda)_{hg}(\alpha_1,\dots,\alpha_k):= \Lambda_h(R_g^* \alpha_1,\dots ,R_g^* \alpha_k) $.\\
Since we have

$Ad_g=TR_{g^{-1}}\circ TL_g$ (resp. $Ad_{g^{-1}}=TL_{g^{-1}}\circ TR_g$)\\
we  set 

$Ad_g^{**}\Lambda_e= R_{g^{-1}}^{**}\circ L_g^{**} \Lambda_e$ (resp. $Ad_{g^{-1}}^{**}\Lambda_e=L_{g^{-1}}^{**}\circ R_g^{**} \Lambda_e$).\\
\end{notations}

In this way, the relation (\ref{eq_BiinvariantLambda}) can be written:
\begin{equation}
\label{eq_BiinvariantLambda1}
\Lambda_{gh}=L_g^{**}\Lambda_h +R_h^{**}\Lambda_g.
\end{equation}
\medskip
The following result generalizes  some essential results on Banach Poisson Lie groups proved in \cite{Tum20}:

\begin{proposition}\label{P_CharacterizationPartialPoissonLiegroup}  
Let $\mathsf{G}$ be a convenient Lie group and 
$ \left( T^\flat \mathsf{G},T\mathsf{G},P,\{\;,\;\}_P \right) $ any partial Poisson structure on $\mathsf{G}$. 
The following assertions are equivalent:
\begin{enumerate}
\item[(1)] 
$ \left( T^\flat \mathsf{G}, T\mathsf{G}, P,\{\;,\;\}_P \right) $ is a convenient partial Poisson (resp. symplectic)  Lie group.
\item [(2)] 
The multiplication $\bf{m}:\mathsf{G}\times \mathsf{G}\to \mathsf{G}$ is a Poisson (resp. partial symplectic) map when $\mathsf{G}\times \mathsf{G}$ is provided with the partial Poisson (resp. symplectic) structure product.
\item [(3)] 
For the Poisson structure $\mathcal{G}\times\mathcal{G}\times \mathcal{G}^-$ on $\mathcal{G}\times\mathcal{G}\times\mathcal{G}$, the graph $G_{\bf m}$  of the multiplication is coisotropic (resp. Lagrangian)
\item[(4)] $T^\flat \mathsf{G}$ is  a bi-invariant subbundle of $T^\prime\mathsf{G}$ and  the partial Poisson tensor $\Lambda$ associated to $P$ satisfies, for all $g,h\in \mathsf{G}$,
\begin{equation}
\label{eq_biinvarianceLambda}
\Lambda_{gh}=L_g^{**}\Lambda_h \textrm{ and } \Lambda_{hg}=R_g^{**}\Lambda_h.
\end{equation}
\end{enumerate}
\end{proposition}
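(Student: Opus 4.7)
The plan is to prove the equivalences by first establishing (2) $\Leftrightarrow$ (3) as a direct application of an earlier general result, then (1) $\Leftrightarrow$ (4) as a pointwise reformulation of the bi-invariance of $P$ in terms of the bivector $\Lambda$, and finally (1) $\Leftrightarrow$ (2) by a tangent/cotangent computation at the multiplication map. The structure will be the cycle (3) $\Leftrightarrow$ (2) $\Leftrightarrow$ (1) $\Leftrightarrow$ (4).

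For (2) $\Leftrightarrow$ (3), I would apply Proposition~\ref{P_CharacterizationPoissonMap} directly to $\mathbf{m}:\mathsf{G}\times\mathsf{G}\to\mathsf{G}$: a smooth map between partial Poisson manifolds is a Poisson morphism (resp.\ partial symplectic map) if and only if its graph is coisotropic (resp.\ Lagrangian) in the product of the source with the opposite structure on the target. Here the relevant product is exactly $\mathsf{G}\times\mathsf{G}\times\mathsf{G}^-$, which yields (3). The hypothesis $(T_{(g,h)}\mathbf{m})^*(T_{gh}^{\flat}\mathsf{G})\subset T_g^{\flat}\mathsf{G}\times T_h^{\flat}\mathsf{G}$ required to invoke Proposition~\ref{P_CharacterizationPoissonMap} follows from bi-invariance \textbf{(BiT)} of $T^\flat\mathsf{G}$, which is part of (1) and (4).

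For (1) $\Leftrightarrow$ (4), the key is to pair the \textbf{(BiP)} identity $P_h\circ L_g^* = TL_g^{-1}\circ P_{gh}$ against a covector $\alpha\in T_{gh}^\flat\mathsf{G}$. Writing $\Lambda_x(\alpha,\beta)=\langle\alpha,P_x(\beta)\rangle$ gives
\[
\Lambda_{gh}(\alpha,\beta)=\langle\alpha,TL_g\,P_h(L_g^*\beta)\rangle=\langle L_g^*\alpha,P_h(L_g^*\beta)\rangle=(L_g^{**}\Lambda_h)(\alpha,\beta),
\]
and the right-translation analogue yields $\Lambda_{hg}=R_g^{**}\Lambda_h$. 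The reverse direction is obtained by reading the same identities backwards, using that $P$ is recovered from $\Lambda$ via left non-degeneracy of the pairing.

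The main computational step is (1) $\Leftrightarrow$ (2). I would first note that at $(g,h)$ the tangent of the multiplication is $T_{(g,h)}\mathbf{m}(u,v)=TR_h(u)+TL_g(v)$, with transpose $(T_{(g,h)}\mathbf{m})^*(\alpha)=(R_h^*\alpha,L_g^*\alpha)$, which by \textbf{(BiT)} lies in $T_g^\flat\mathsf{G}\times T_h^\flat\mathsf{G}$. The Poisson morphism condition expressed pointwise, namely
\[
P_{gh}=T_{(g,h)}\mathbf{m}\circ P_{\mathsf{G}\times\mathsf{G},(g,h)}\circ(T_{(g,h)}\mathbf{m})^*,
\]
expands into the cocycle-type identity \eqref{eq_BiinvariantLambda1} for $\Lambda_{gh}$, matching it precisely against the bi-invariance formulas of (4). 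Conversely, from (1)/(4), substituting the values of $P_g$ and $P_h$ given by \textbf{(BiP)} into the right-hand side of the Poisson morphism condition and using \textbf{(BiT)} to keep everything inside $T^\flat\mathsf{G}$, one recovers $P_{gh}$. In the symplectic case the same computation goes through, with the coisotropic property of $G_\mathbf{m}$ strengthened to Lagrangian by Proposition~\ref{P_CharacterizationPoissonMap}.

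The delicate point, and the main obstacle, is to keep the partial subbundle $T^\flat\mathsf{G}$ in play throughout: all the pull-backs $L_g^*$, $R_h^*$ and the corresponding adjoints must be verified to preserve $T^\flat\mathsf{G}$ fibrewise, and the bidual notation $L_g^{**}$, $R_h^{**}$ of Notations~\ref{Lg**} must be consistently applied. This bookkeeping—rather than any deep analytic issue—is what makes the argument non-trivial in the convenient setting compared to the classical finite-dimensional Poisson Lie group picture.
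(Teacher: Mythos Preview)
Your proposal is correct and follows essentially the same approach as the paper: the paper also derives (2)$\Leftrightarrow$(3) directly from Proposition~\ref{P_CharacterizationPoissonMap}, obtains (1)$\Leftrightarrow$(4) as an immediate translation between \textbf{(BiP)} and the $\Lambda$-identities via the definition $\Lambda(\alpha,\beta)=\langle\alpha,P\beta\rangle$, and handles (1)$\Leftrightarrow$(2) by computing $T_{(g,h)}\mathbf{m}(X_g,X_h)=TL_g(X_h)+TR_h(X_g)$ and its adjoint (the paper phrases the second half through Hamiltonian vector fields rather than the pointwise anchor identity you wrote, but the content is the same). Your observation that the adjoint hypothesis in Proposition~\ref{P_CharacterizationPoissonMap} is supplied by \textbf{(BiT)} is exactly what the paper uses as well.
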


\begin{remark} 
Note that Proposition \ref{P_CharacterizationPartialPoissonLiegroup}, (2)   implies that the Definition 5.3 of a Banach Poisson Lie group in \cite{Tum20} is coherent with our Definition \ref{D_PoissonLIeGroup} in the Banach setting.
\end{remark}

\begin{proof} 
We only look for the partial Poisson situation. For the partial symplectic framework, the same argument works too.\\
(1)$\Leftrightarrow $(2)\footnote{This proof is an adaptation of the proof of Proposition 5.4 in \cite{Tum20}.} 
In the one hand, for the partial Poisson structure on $\mathsf{G}\times \mathsf{G}$,  we have $T_{gh}^\flat \mathsf{G}\times\mathsf{G}=T_g^\flat \mathsf{G}\times T_h^\flat \mathsf{G}$. If $(X_g,X_h)\in T_{gh}^\flat \left( \mathsf{G}\times\mathsf{G} \right) $, then we have: 
\begin{equation}
\label{eq_Tm}
T_{(g,h)}\mathbf{m} (X_g,X_h)=TL_g(X_h)+TR_h(X_g).
\end{equation}
 The bi-invariance of $T^\flat \mathsf{G}$ means that, for any $\alpha \in T_h^\flat\mathsf{G}$ and $\alpha\circ T_gR_h \in 
T_{gh}^\flat \mathsf{G}$ then $\alpha\circ T_hL_g$ belongs to $T_g^\flat\mathsf{G}$. It follows that the relation (\ref{eq_Tm}), for all $g,h\in \mathsf{G}$ and $X_g,X_h \in T_{gh}^\flat \mathsf{G}\times\mathsf{G}$, is equivalent to ({\bf BiT}).
On the other hand, let $(\phi,\psi)$ be a smooth function around $(g,h)\in \mathsf{G}\times\mathsf{G}$. We denote by $X^{\phi}$ and $X^{\psi}$ the Hamitonian vector fields associated to $\phi$ and $\psi$ respectively. Then, for the partial product Poisson $P\times P$, the Hamiltonian field of $(\phi,\psi)$ is $(X^\phi,X^\psi)$. Then according to (\ref{eq_Tm}), we must have
\[
T_{(g,h)}{\bf m}(X^\phi,X^\psi)=T_hL_g(X^\phi)+T_gR_h(X^\psi).
\]
Now by definition of the product Poisson structure on  $\mathsf{G}\times\mathsf{G}$, we must have: 
\begin{equation}
\label{eqPtimesP}
(P\times P)(d\phi,d\psi)(g,h)=X^{(\phi,\psi)}_{(g,h)}=T_hL_g(X^\psi)+T_gR_h(X^\psi)=TL_p(P_h(d\phi))+TR_h( P_g(d\psi))
\end{equation}
${\bf m}$ is a Poisson morphism if and only if 
$$T_{(g,h)}{\bf m} (X^{(\phi,\psi)})=T_hL_g(X^\phi)+T_gR_h(X^\psi)$$
which ends the proof of the announced  equivalence.\\

The equivalence 
(2)$\Leftrightarrow $(3) is a consequence of Proposition \ref{P_CharacterizationPoissonMap}.\\

For the equivalence (1)$\Leftrightarrow $(4), we have only to prove the equivalence between ({\bf BiP}) and the  Relation (\ref{eq_biinvarianceLambda}) which is an easy consequence of the definition of $\Lambda$ from $P$ by  using (\ref{eq_BiinvariantLambda1}).
\end{proof}

\subsection{Multiplicative Admissible $2$-form and Lie Bialgebra}
\label{___Multiplicative1Cocycle} 

In this section, we fix a  bi-invariant  convenient subbundle $T^\flat \mathsf{G}$ of $T^\prime \mathsf{G}$ and we set $\mathfrak{g}^\flat=T_e^\flat\mathsf{G}\subset \mathfrak{g}^\prime$. The convenient dual of  $\mathfrak{g}^\flat $ will be denoted $\mathfrak{g}^{\flat\prime}$.\\

\subsubsection{Multiplicative admissible $2$-forms}

 According to \cite{CaPe23}, Definition~7.12,  for $k\geq 2$, {\it an admissible section $\Lambda \in\bigwedge^kT^{\flat \prime}\mathsf{G}$} is a section for which there exists a skew-symmetric morphism   $P:(T^{\flat }\mathsf{G})^{k-1}\to T\mathsf{G}$ such that
\[
\Lambda(\alpha_1,\dots,\alpha_k)=\alpha_1(P(\alpha_2,\dots, \alpha_k))
\]
 for all $(\alpha_1,\alpha_2,\dots,\alpha_k)\in (T^{\flat \prime}\mathsf{G})^{k}$.\\
For the sake of simplicity, such a section 
will be called \emph{ an admissible $k$-form}. Note that the partial Poisson tensor $\Lambda$ associated a partial Poisson anchor $P:T^\flat\mathsf{G}\to T\mathsf{G}$ is an $2$-admissible form.\\

According to Property  (\ref{eq_biinvarianceLambda}), as in finite dimension for  the notion of multiplicative tensor (cf. for example \cite{Dri83}, \cite{LuWe90}, \cite{Marl98} and \cite{Kos04}), we introduce:
\begin{definition}
\label{D_multipivative} 
An admissible $k$-form  $\Lambda$ is called \emph{multiplicative}\index{multiplicative $k$-form} if it satisfies:
\begin{equation}
\label{eq_biinvarianceLambda2}
\Lambda_{gh}=L_g^{**} \Lambda_h +R_h^{**}\Lambda_g
\end{equation}
for all $(g,h)\in \mathsf{G}^2$.
\end{definition}

\begin{remark}
\label{R_LambdaMultiplicative}${}$
\begin{enumerate}
\item[1.] 
If $\Lambda$ is multiplicative, for $h=g=e$, from the relation (\ref{eq_biinvarianceLambda2}) we obtain $\Lambda_e=2\Lambda_e$ and so $\Lambda _e=0$.
\item[2.]   
If $\Lambda_e=0$, for any $v\in T_e\mathsf{G}$,  choose any vector field on $\mathsf{G}$ such that $X(e)=v$. Since we have (cf. \cite{CaPe23}, Chap.~7)
\begin{equation}
\label{eq_LXOmega}
{L}_X\Lambda(\alpha_1,\dots,\alpha_k)(e)
=d_e\{\Lambda(\alpha_1,\dots,\alpha_k)\}(X)-\displaystyle \sum_{i=1}^k \Lambda_e(\alpha_1,\dots, L_X\alpha_i,\dots,\alpha_k)
\end{equation}
it follows that $\mathcal{L}_X\Lambda(\alpha_1,\dots,\alpha_k)(e)
=d_e\left(\Lambda(\alpha_1,\dots,\alpha_k)\right)(X)$ and so only depends on the $1$-jet of $\Lambda$ at point $e$ and the value $X(e)$. Thus, as in finite dimension, the differential $d\Lambda_e$ is intrinsically defined. 
\end{enumerate}
\end{remark}

As in finite dimension, (cf. \cite{Dri83}), we have the following properties:

\begin{proposition}
\label{P_MultiplicativeLambda}
Given any admissible $k$-form $\Lambda$, we associate to $\Lambda$  two bounded fields
$$\Lambda_L:\mathsf{G}\to \bigwedge^k\mathfrak{g}^{\flat\prime} \textrm{ and } \Lambda_R:\mathsf{G}\to \bigwedge^k\mathfrak{g}^{\flat\prime}$$
defined, for all $g,h\in \mathsf{G}$, by ${(\Lambda_L)}_g=L_{g^{-1}}^{**}\Lambda_g $ and ${(\Lambda_R)}_g=R_{g^{-1}}^{**}\Lambda_g$.
We then have the following equivalences:
\begin{enumerate}
\item[(i)] 
$\Lambda$ is multiplicative.
\item[(ii)] 
$\Lambda_L$ satisfies for all $g,h\in \mathsf{G}$:
\[
{(\Lambda_L)}_{gh}={(\Lambda_L)}_{h}+Ad_{h^{-1}}^{**}{(\Lambda_L)}_{g}.
\]
\item[(iii)]
$\Lambda_R$ satisfies for all $g,h\in \mathsf{G}$:
\[
{(\Lambda_R)}_{gh}=Ad_{g}^{**}{(\Lambda_R)}_{h}+{(\Lambda_R)}_{g}.
\] 
\end{enumerate}
\end{proposition}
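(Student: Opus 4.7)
The plan is to derive both equivalences by applying a carefully chosen translation to the multiplicative relation $\Lambda_{gh} = L_g^{**}\Lambda_h + R_h^{**}\Lambda_g$ and then recognizing each piece through the defining formulas $(\Lambda_L)_g = L_{g^{-1}}^{**}\Lambda_g$, $(\Lambda_R)_g = R_{g^{-1}}^{**}\Lambda_g$, $\operatorname{Ad}_g^{**} = R_{g^{-1}}^{**}\circ L_g^{**}$ and $\operatorname{Ad}_{g^{-1}}^{**} = L_{g^{-1}}^{**}\circ R_g^{**}$. For the direction (i)$\Rightarrow$(ii), I would apply $L_{(gh)^{-1}}^{**}$ to both sides; for (i)$\Rightarrow$(iii), I would apply $R_{(gh)^{-1}}^{**}$. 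The converse implications follow by applying $L_{gh}^{**}$ (resp.\ $R_{gh}^{**}$) to rebuild the multiplicative relation from the cocycle identity.

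The key computational ingredients are three simple facts about the $^{**}$-actions. First, since $L_g\circ L_h = L_{gh}$ and pullback reverses composition, one has $L_g^{**}\circ L_h^{**} = L_{gh}^{**}$; analogously, because $R_{ab} = R_b\circ R_a$, one obtains $R_b^{**}\circ R_a^{**} = R_{ab}^{**}$ (note the reversed order, which is the only bookkeeping subtlety). Second, left and right translations commute as maps on $\mathsf{G}$, hence $L_g^{**}$ and $R_h^{**}$ commute on forms. Third, the $\operatorname{Ad}$-actions are tautologically the compositions noted above.

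With these facts in hand, the computation for (i)$\Rightarrow$(ii) is: apply $L_{(gh)^{-1}}^{**} = L_{h^{-1}}^{**}\circ L_{g^{-1}}^{**}$ to the multiplicative identity. The LHS becomes $(\Lambda_L)_{gh}$ by definition. In the first RHS term, $L_{g^{-1}}^{**}\circ L_g^{**} = L_e^{**} = \operatorname{Id}$, leaving $L_{h^{-1}}^{**}\Lambda_h = (\Lambda_L)_h$. In the second RHS term, commuting $L_{g^{-1}}^{**}$ past $R_h^{**}$ rearranges it as $L_{h^{-1}}^{**}\circ R_h^{**}\circ L_{g^{-1}}^{**}\Lambda_g = \operatorname{Ad}_{h^{-1}}^{**}(\Lambda_L)_g$. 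A mirror-image calculation with $R_{(gh)^{-1}}^{**} = R_{g^{-1}}^{**}\circ R_{h^{-1}}^{**}$ delivers (iii), where this time the cancellation $R_{h^{-1}}^{**}\circ R_h^{**} = R_e^{**}$ isolates $(\Lambda_R)_g$ and the commutation $R_{h^{-1}}^{**}L_g^{**} = L_g^{**}R_{h^{-1}}^{**}$ produces $\operatorname{Ad}_g^{**}(\Lambda_R)_h$.

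No genuine analytic difficulty arises: everything is a pointwise identity between bounded multilinear forms and the bi-invariance of $T^\flat\mathsf{G}$ (which is part of the standing hypothesis once $\Lambda$ is built from an admissible $P$) guarantees that the $^{**}$-operators send admissible forms to admissible forms. The only thing to get right is the ordering convention in $R_b^{**}\circ R_a^{**} = R_{ab}^{**}$ versus $L_g^{**}\circ L_h^{**} = L_{gh}^{**}$, which is where sign/order errors typically creep in; that is the main (mild) obstacle, and it is handled once by fixing the convention at the outset.
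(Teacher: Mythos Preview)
Your proposal is correct and follows essentially the same approach as the paper: the paper computes the difference $(\Lambda_L)_{gh}-(\Lambda_L)_{h}-\operatorname{Ad}_{h^{-1}}^{**}(\Lambda_L)_g$ and factors it as $L_{(gh)^{-1}}^{**}\bigl(\Lambda_{gh}-L_g^{**}\Lambda_h-R_h^{**}\Lambda_g\bigr)$, which is exactly your ``apply $L_{(gh)^{-1}}^{**}$ to both sides'' argument rewritten as a single identity, and then remarks that the (i)$\Leftrightarrow$(iii) case is a formally analogous computation. Your explicit bookkeeping of the composition conventions $L_a^{**}L_b^{**}=L_{ab}^{**}$ and $R_b^{**}R_a^{**}=R_{ab}^{**}$ is a useful addition that the paper leaves implicit.
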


\begin{remark} 
\label{R_ProprInFinite}
In finite dimension, when the Lie group $\mathsf{G}$ is connected, the following properties: 
\begin{enumerate}
\item[(iv)] 
$\Lambda_e=0$ and $\mathcal{L}_X\Lambda$  are left invariant whenever $X$ is a left invariant vector field.
\item[(v)] 
$\Lambda_e=0$ and $\mathcal{L}_X\Lambda$  are right invariant whenever $X$ is a right invariant vector field.
\end{enumerate}
are equivalent to properties (i), (ii) and (iii)
(cf.  \cite{Dri83} and \cite{Kos95}).\\
Properties (iv) or (v) are equivalent to any of  Properties (i), (ii) and (iii) in this Proposition for  connected  convenient lie group with an exponential maps for which $\mathsf{G}$ is generated by the range of the exponential map, which is in particular the case for  connected and simply connected Banach Lie groups.  Unfortunately, in the general case for convenient Lie groups, such properties are not true since  the previous assumption is not satisfied.
\end{remark}

\begin{proof}
According to Notations~\ref{Lg**}, for all $(g,h) \in \mathsf{G}^2$, we have
\[
\begin{array}{rcl}
{(\Lambda_L)}_{gh}-{(\Lambda_L)}_{h}-Ad_{h^{-1}}^{**}{(\Lambda_L)}_g
&=&
L_{(gh)^{-1}}^{**} \Lambda_{gh}-L_{h^{-1}}^{**} \Lambda_h-L_{h^{-1}}^{**}\circ R_h^{**}\circ L_{g^{-1}}^{**}\Lambda_g \\
	&=&L_{(gh)^{-1}}^{**}\left(\Lambda_{gh}-L_g^{**}\Lambda_h-R_h^{**}\Lambda_g\right)
\end{array}
\]
which implies the equivalence between (i) and (ii). \\
A formally comparable calculus shows the equivalence between (i) and (iii).
\end{proof}

\subsubsection{1-cocycles and  Lie Bialgebras}\label{___1-cocycle}

For this  paragraph, we refer to \cite{Tum20}, 2.5 and 4.1.  For the sake of completeness, we adapt these results to the convenient context.\\

Let $\mathsf{G}$ be a convenient Lie group and $\Phi: \mathsf{G}\to \operatorname{Aff}(\mathbb{E})$ a representation into the Affine group of a vector space $\mathbb{E}$.  As 
$\operatorname{Aff}(\mathbb{E})$ is a semi-direct product of the linear group $GL(V)$ and the translation group on $\mathbb{E}$ identified with $\mathbb{E}$,  $\Phi$ can be written as a pair $(\phi,\Theta)$ where $\phi:\mathsf{G}\to GL(\mathbb{E})$ and $\Theta:\mathsf{G}\to \mathbb{E}$. Since $\Phi$ is a group morphism it follows that $\phi$ is a group morphism and $\Theta$ satisfies
\begin{equation}
\label{eq_1CocylceGroup}
\Theta(gh)=\Theta(g)+\phi(g)\left(\Theta(h)\right)
\end{equation}
for all $g$ and $h$ in $\mathsf{G}$. In this case,  $\Theta$ is called \emph{${\bf 1}$-cocyle on $\mathsf{G}$ relative to $\phi$}. \\

The derivative $T_e\Phi $ is a Lie algebra morphism from $\mathfrak{g}$ to the Lie algebra $\mathfrak{aff}(\mathbb{E})$ of $ \operatorname{Aff}(\mathbb{E})$  which can be written as a pair $(T_e\Phi, T_e\Theta)$ where $T_e\phi$ and $T_e\Theta$ gives rise to a Lie algebra morphism $d\phi$ from $\mathfrak{g}$ to $\mathfrak{gl}(\mathbb{E})$ and $d\Theta$ from $\mathfrak{g}$ to $\mathbb{E}$ respectively which satisfies the relation

\begin{equation}
\label{eq_1CocycleLieAlgebra}
d\Theta([x,y])=d\phi(x)\left(d\Theta(y)\right)-d\phi(y)\left(d\Theta(x)\right)
\end{equation}
for all $x$ and $y$ in $\mathfrak{g}$. In this case, $d\Theta$ is called {\bf ${\bf 1}$-cocyle on $\mathfrak{g}$ relative to $d\phi$}. 

\begin{example}
\label{Ex_LambdaRCocycle}  
Consider a Lie group  $\mathsf{G}$ and a bi-invariant subbundle $T^\flat \mathsf{G}$  of $T\mathsf{G}$. If we denote by $\mathfrak{g}^\flat
$ the fibre $T_e^\flat\mathsf{G}$. Then $\mathfrak{g}^\flat$ is invariant by the coadjoint action $\mathsf{G}$ on $T^\prime \mathsf{G}$  (formally, same proof as in \cite{Tum20}, Proposition~5.6). Let $\Lambda$ be an admissible multiplicative $ 2$-form and $\Lambda_R:\mathsf{G}\to \bigwedge^2\mathfrak{g}^\flat$ given by
\[
(\Lambda_R)_g=R_{g^{-1}}^{**}\Lambda_g
\] 
 for all $g\in \mathsf{G}$.\\
Then $\Lambda_R$ is a $1$-cocycle on $\mathsf{G}$ with respect to the coadjoint action $Ad^{**}$ on  the vector space $\bigwedge^2\mathfrak{g}^\flat$  that is
 \begin{equation}
 \label{eq_1CocyleLambdaR}
(\Lambda_R)_{gh}=(\Lambda_R)_g+Ad^{**}(g)(\Lambda_R)_h
\end{equation}
for all $g,h$ in $\mathsf{G}$
(Formally again, the proof is the same as for Proposition 5.7  Assertion (2) in \cite{Tum20}). 
Therefore, we have 
\begin{equation}
\label{eq_AdCocycleAlgebra}
T_e\Lambda_R([x,y])
=\operatorname{ad}^{**}_x(T_e\Lambda_R(y))-\operatorname{ad}^{**}_y(T_e\Lambda_R(x))
\end{equation}
which means exactly:
\begin{equation}
\label{eq_AdCocycleAlgebra2}
\begin{array}{rcl}
T_e \Lambda_R([x,y])(\alpha,\beta)
   &=& T_e\Lambda_R(y)(\operatorname{ad}^{*}_x\alpha,\beta)+ T_e\Lambda_R(y)(\alpha,\operatorname{ad}^{*}_x\beta)\\
   &~~~~~& -T_e\Lambda_R(x)(\operatorname{ad}^{*}_y\alpha,\beta)- T_e\Lambda_R(x)(\alpha,\operatorname{ad}^{*}_y\beta).
\end{array}
\end{equation}
\end{example}

This example justifies the following  context and Definition.\\`

 Consider a convenient Lie algebra $(\mathfrak{g},[.,.])$, a convenient subspace  $\mathfrak{g}^\flat$ of $\mathfrak{g}^\prime$ whose inclusion is bounded and the coadjoint action  in restriction to $\mathfrak{g}\times \mathfrak{g}^\flat$  which  takes values in $\mathfrak{g}^\flat$ and is bounded.\\

For any bounded map bilinear map  $\Lambda\in  \bigwedge^2\mathfrak{g}^\flat$ we say that \emph{ $\Lambda$ is admissible} (cf. \cite{CaPe23}, Definition~7.12) if there exists a map  $P:=\Lambda^\flat: \mathfrak{g}^\flat \to \mathfrak{g}$ given by $\Lambda^\flat(\alpha)=\Lambda(., \alpha)$. Note that this map is a bounded linear map.
Given such an admissible $\Lambda$ we can define a natural coadjoint action on $\bigwedge^2\mathfrak{g}^{\flat\prime}$ defined by:
\begin{equation}
\label{eq_Ad20}
\operatorname{ad}^{(2,0)}_x\Lambda(\alpha,\beta)=\Lambda(\operatorname{ad}^*_x\alpha,\beta)+ \Lambda(\alpha,\operatorname{ad}^*_x\beta)
\end{equation}
for all $\Lambda\in \bigwedge^2\mathfrak{g}^{\flat\prime}$.\\

\begin{definition}
\label{D_1CocylcleRelAd} 
Under the previous assumptions, a map $\lambda: \mathfrak{g}\to  \bigwedge^2\mathfrak{g}^\flat$   is called  a $1$-cocycle relative to the coadjoint action of $\mathfrak{g}$ onto $\bigwedge^2\mathfrak{g}^{\flat\prime}$ if, for all $x\in \mathfrak{g}$ and $\alpha, \beta$ in $\mathfrak{g}^\flat$,
\begin{enumerate}
\item[(i)]  
there exists $\mathsf{p}:\mathfrak{g}\times\mathfrak{g}^\flat \to \mathfrak{g}$ such that
$\lambda(x)(\alpha,\beta)=<\alpha, \mathsf{p}(x)(\beta)>$;
\item[(ii)]
we have the following relation: 
\begin{equation}
\label{eq_1Cocycle}
\lambda([x,y])=\operatorname{ad}^{(2,0)}_x(\lambda(y))-\operatorname{ad}^{(2,0)}_y(\lambda(x)).
\end{equation}
\end{enumerate}
\end{definition}

\emph{Note that the relation (\ref{eq_AdCocycleAlgebra2}) implies that  the map $\lambda(x)=T_e\Lambda_R(x)$ is a $1$-cocycle relative to the coadjoint action of $\mathfrak{g}$ onto $\bigwedge^2\mathfrak{g}^{\flat\prime}$}.\\

The notion of Lie bialgebra in finite dimension\footnote{See \cite{Kos04}.} appears as a particular case of  Lie bialgebroid (see \cite{MaXu94} for instance). A generalization to the Banach setting can be found in \cite{Tum20}. We propose the following  natural adaptation:
 
\begin{definition}
\label{D_PartialLieBialgebra} 
Let $(\mathfrak{g},[.,.])$ be a convenient Lie algebra and $\mathfrak{g}^\flat$ be a  convenient space of $\mathfrak{g}^\prime$ whose inclusion is bounded. We will say that $(\mathfrak{g},\mathfrak{g}^\flat)$ is a \emph{partial Lie bialgebra}\index{partial!Lie bialgebra} if:
\begin{enumerate}
\item[(i)]  
the restriction  coadjoint action 
 to $\mathfrak{g}\times \mathfrak{g}^\flat$ takes values in $\mathfrak{g}^\flat$ and is bounded;
\item[(ii)]
there exists a Lie algebra bracket $[.,.]^\flat$ on $\mathfrak{g}^\flat$ such that  the dual map of $[.,.]^\flat:\mathfrak{g}^\flat\times\mathfrak{g}^\flat\to \mathfrak{g}^\flat$ restricts to a $1$-cocycle $\lambda:\mathfrak{g}\to \bigwedge^2\mathfrak{g}^{\flat\prime}$ relative to the coadjoint action of $\mathfrak{g}$ onto $\mathfrak{g}^\flat$.
\end{enumerate}
\end{definition} 
 
\begin{remark}
\label{R_CocycleCondition} 
According to \cite{Tum20}, Remark 7.3, by formal  same arguments adapted to the convenient setting, the $1$-cocycle condition can be written:
\begin{equation}
\label{eq_1Cocycle2}
\begin{matrix}
<[X,Y],[\alpha,\beta]^\flat>=&<Y,[\operatorname{ad}_X^*\alpha,\beta]^\flat>+<Y,[\alpha,\operatorname{ad}_X^*\beta]^\flat>\hfill{}\\
					&-<X,[\operatorname{ad}_Y^*\alpha,\beta]^\flat>-<X,[\alpha,\operatorname{ad}_Y\beta]^\flat>.\hfill{}\\
\end{matrix}
\end{equation}
\end{remark}

\subsubsection{Lie Bialgebra Associated to a Multiplicative $2$-form}

The following result gives a link between  admissible  multiplicative  $2$-forms on a Lie group $\mathsf{G}$  and $1$-cocycles:

\begin{theorem}
\label{T_CharacPartialBialgebra} 
Let  $\Lambda $ be an admissible  multiplicative  $2$-form  and  the associated  skew-symmetic morphism   $P:T^{\flat }\mathsf{G}\to T\mathsf{G}$. Assume that $P_e$ is injective. 
Then $[\alpha, \beta]_P= d_e\Lambda(\alpha,\beta)$ defines an almost bracket on $\mathfrak{g}^\flat$ which is a Lie bracket if and only if 
$ \left( T^\flat \mathsf{G}, T\mathsf{G}, P,\{\;,\;\}_P \right) $ is a convenient partial Poisson  Lie group. In this case, 
$ \left( T\mathsf{G}, T^\flat\mathsf{G} \right) $ is a partial Poisson partial bialgebroid and $d_e\Lambda$ is a $1$-cocycle on $\mathfrak{g}$ relative to the $\operatorname{ad}^{**}$ action on $\bigwedge^2\mathfrak{g}^\flat$.
\end{theorem}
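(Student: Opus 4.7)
Multiplicativity forces $\Lambda_e=0$ (set $g=h=e$ in (\ref{eq_biinvarianceLambda2})), so by Remark \ref{R_LambdaMultiplicative}(2) the $1$-jet $d_e\Lambda:\mathfrak{g}\to\bigwedge^2\mathfrak{g}^{\flat\prime}$ is intrinsic. I first show $[\alpha,\beta]_P$ is a well-defined element of $\mathfrak{g}^\flat$. Given $\alpha,\beta\in\mathfrak{g}^\flat$, Lemma \ref{L_P(U)generates} furnishes $f_\alpha,f_\beta\in\mathfrak{A}(U)$ with $d_ef_\alpha=\alpha$ and $d_ef_\beta=\beta$, and I set $[\alpha,\beta]_P:=d_e\{f_\alpha,f_\beta\}_P$. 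Since $\mathfrak{A}(U)$ is stable under the almost-bracket (Remark \ref{R_About1}), the right-hand side lies in $T_e^\flat\mathsf{G}=\mathfrak{g}^\flat$. Expanding $\{f_\alpha,f_\beta\}_P=\Lambda(df_\alpha,df_\beta)$ via the Leibniz rule at $e$ and using $\Lambda_e=0$ to kill the higher-jet contributions yields the intrinsic formula $\langle X,[\alpha,\beta]_P\rangle=d_e\Lambda(X)(\alpha,\beta)$, establishing independence of representatives and skew-symmetry.

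The forward direction of the equivalence is then immediate: differentiating the Jacobi identity $\{\{f_\alpha,f_\beta\}_P,f_\gamma\}_P+\mathrm{cyclic}=0$ at $e$ produces Jacobi for $[\cdot,\cdot]_P$. For the converse, by Theorem \ref{T_CharacterizationPoisson} it suffices to show $[\Lambda,\Lambda]\equiv 0$ on $\mathsf{G}$. My strategy is to pass to the right-trivialisation $\Lambda_R$ of Proposition \ref{P_MultiplicativeLambda}, which multiplicativity promotes to a group $1$-cocycle with values in $\bigwedge^2\mathfrak{g}^{\flat\prime}$ (Example \ref{Ex_LambdaRCocycle}). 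A direct calculation analogous to the finite-dimensional bialgebra argument (transferred to the convenient setting in the spirit of \cite{MaXu94}, \cite{Tum20}) shows that the Schouten tensor $[\Lambda,\Lambda]$ is likewise governed globally by its $1$-jet at $e$ through this cocycle relation, and identifies $d_e[\Lambda,\Lambda]$ with (a multiple of) the Jacobiator of $[\cdot,\cdot]_P$ tested against $P_e$. The injectivity of $P_e$ then converts the vanishing of the Jacobiator into $[\Lambda,\Lambda]_g=0$ at every $g\in\mathsf{G}$.

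Once $P$ is Poisson, the remaining assertions follow formally: differentiating the group cocycle identity (\ref{eq_1CocyleLambdaR}) for $\Lambda_R$ at $e$ yields exactly the Lie-algebra cocycle condition (\ref{eq_AdCocycleAlgebra2}) for $d_e\Lambda$ with $\mathsf{p}=P_e$, fulfilling the hypotheses of Proposition \ref{P_1CocylcleRelAd}; and Example \ref{exBialgebroid}(2), whose underlying compatibility is Lemma \ref{L_dPLambdaLambda}(2), upgrades the partial Poisson manifold structure on $\mathsf{G}$ to the partial bialgebroid $(T\mathsf{G},T^\flat\mathsf{G})$. The principal obstacle is the converse step above: in the convenient framework one has no exponential-map-based generating set to propagate Jacobi from $e$ to all of $\mathsf{G}$ (cf. Remark \ref{R_ProprInFinite}), so the argument must proceed purely through the formal cocycle structure of $\Lambda_R$ and the injectivity of $P_e$ used as a nondegeneracy device for testing tensor equalities.
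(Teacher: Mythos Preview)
Your forward direction and the final two assertions are aligned with the paper's treatment. The substantive gap is in your converse, and it is exactly the step you flag as ``the principal obstacle'' but do not resolve.

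You assert that ``the Schouten tensor $[\Lambda,\Lambda]$ is likewise governed globally by its $1$-jet at $e$ through this cocycle relation'', and that the injectivity of $P_e$ then forces $[\Lambda,\Lambda]_g=0$ for every $g$. Neither clause stands. First, without an exponential map (or at least a generating neighbourhood of $e$), a group $1$-cocycle is not determined by its differential at $e$; the cocycle identity (\ref{eq_1CocyleLambdaR}) is a functional equation, not an ODE you can integrate, so ``formal cocycle structure'' alone does not propagate vanishing from the identity. Second, multiplicativity forces $\Lambda_e=0$, and the paper's Lemma~\ref{L_LambdaR} then shows $P_e\equiv 0$; so ``testing against $P_e$'' gives no information, and invoking injectivity of $P_e$ at this stage is vacuous. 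Whatever role $P_e$-injectivity plays, it is not as a nondegeneracy device for the Schouten tensor at $e$.

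The paper sidesteps the propagation problem entirely, and this is the idea you are missing. Rather than local representatives $f_\alpha$ furnished by Lemma~\ref{L_P(U)generates}, it uses the \emph{global right-invariant} functions $\phi_\alpha$ determined by $d_g\phi_\alpha=R_{g^{-1}}^*\alpha$. For these, the relation $\Lambda_g=R_g^{**}(\Lambda_R)_g$ gives, at \emph{every} $g\in\mathsf{G}$,
\[
d_g\{\phi_\alpha,\phi_\beta\}_P \;=\; d_e\Lambda_R(\alpha,\beta)\;=\;[\alpha,\beta]_P
\]
(after right trivialisation). Iterating, the differential of the Jacobiator $J_P(\phi_{\alpha_1},\phi_{\alpha_2},\phi_{\alpha_3})$ at every $g$ equals the Jacobiator of $[\cdot,\cdot]_P$ on $(\alpha_1,\alpha_2,\alpha_3)$, which vanishes by hypothesis. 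Hence $J_P$ is locally constant, and since $\Lambda_e=0$ forces $J_P(e)=0$, one concludes $J_P\equiv 0$. Because the $d_g\phi_{\alpha}$ span $T_g^\flat\mathsf{G}$ by bi-invariance and the Jacobiator of $\{\cdot,\cdot\}_P$ is tensorial (it is $[\Lambda,\Lambda]$), this gives $[\Lambda,\Lambda]\equiv 0$. The point is that nothing is propagated from $e$: the computation is done directly at each $g$ using representatives whose differentials are controlled everywhere, not just at the identity.
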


\begin{proof} 
At first, recall that  the bi-invariance of $T^\flat\mathsf{G}$ implies that $T^\flat\mathsf{G}\equiv\mathsf{G}\times\mathfrak{g}^\flat$ \textit{via} right or left translation on $\mathsf{G}$.\\ 
In the one hand, from  the right invariance of  $T^\flat\mathsf{G}$, the map $P_R:\mathsf{G}\times\mathfrak{g}^\flat\to TG$  where $(P_R)_g:=P\circ R_{g^{-1}}$ is well defined.\\
On the other hand,  from the definition of $\Lambda_R$ from $\Lambda$ (cf. Proposition \ref{P_MultiplicativeLambda}), we have
 $$(\Lambda_R)_g(\alpha,\beta)=<\alpha,( P_R)_g\beta>$$
 for all $\alpha, \beta\in \mathfrak{g}^\flat$.
 
We will need  the following result:
\begin{lemma}
\label{L_LambdaR}
Under the assumptions of Theorem~\ref{T_CharacPartialBialgebra},
for any $g\in \mathsf{G}$, we have:
$P_e\equiv 0$ and 
$$d\left(\Lambda(\alpha,\beta)\right)(X_e)=T_e\Lambda_R(\alpha,\beta)(X_e)=<\alpha, T_e P_R(X_e)(\beta)>$$
for any local sections  $\alpha, \beta$ of $T_e^\flat G$ around $e$ and any $X_e\in T_e\mathsf{G}$.\\
\end{lemma}

\begin{proof}[Proof of Lemma \ref{L_LambdaR}] 
Fix some neighbourhood $V$ of $g$ in $\mathsf{G}$. For all $h\in V$ we have

  $\Lambda_h(\alpha,\beta)=(\Lambda_R)(R^*_{h^{-1}}\alpha, R^*_{h^{-1}}\beta)=<R^*_{h^{-1}}\alpha, (P_R)_h(\beta)>$.
\\
Since $\Lambda_R$ is a map from $\mathsf{G}$ to $ \bigwedge^2\mathfrak{g}^\flat$, it follows that we have:

\begin{equation}
\label{eq_dLambda}
\begin{array}{rcl}
d\left(\Lambda(\alpha,\beta)\right)(X_g)
  &=&<T_g\widetilde{\alpha}(X_g), (P_R)_g(\beta)>+<\alpha, T_g P_R(X_g)(\beta)\\
  &~~~~~&+ <\alpha, P_R(T_g \widetilde{\beta}(X_g))>
\end{array}
\end{equation}
where $\widetilde{\alpha}(h)=R_{h^{-1}}\alpha(h)$ and $\widetilde{\beta}(h)=R_{h^{-1}}\beta(h)$,  for all $h\in V$.\\
But  we have $\Lambda_e(\alpha, \beta)=<\alpha, P_e\beta>=0$, for all $\alpha$ and $\beta$ in $\mathfrak{g}^\flat$.  According to section 
\ref{__PartialLinearPoissonConvenientSpaces}, this implies that  $(\mathfrak{g}^\flat)^{\perp_{P_e}}=\{0\}$ and so $P_e(\beta)$ belongs to $P_e(\mathfrak{g}^\flat)^{\perp_{P_e}}=0$. Since $P_e$ is injective, it follows that $P_e(\beta)=0$ for all  $\beta\in \mathfrak{g}^\flat$. Thus from (\ref{eq_dLambda}), since  we have $(P_R)_e=P_e$ we obtain:
\begin{equation}
\label{eq_dLmabda1}
d\left(\Lambda(\alpha,\beta)\right)(X_g)=<\alpha, T_g P_R(X_g)(\beta)>.
\end{equation}
In particular, for $g=e$, we have $(P_R)_e=P_e=0$ and so we get:
\begin{equation}
\label{eq_dLmabda1}
d\left(\Lambda(\alpha,\beta)\right)(X_e)=<\alpha, T_e P_R(X_e)(\beta)>.
\end{equation}
But we can note that since $(\Lambda_R)_e=\Lambda_e=0$, the second member of (\ref{eq_dLmabda1}) is nothing but that $T_e\Lambda_R(\alpha,\beta)(X_e)$,
which ends the proof of Lemma  \ref{L_LambdaR}.
\end{proof}

Let $\mathfrak{A}(\mathsf{G}):=\{ \phi\in C^\infty(\mathsf{G}),\; d_g\phi\in T_g^\flat \mathsf{G}\}$. Then $\mathfrak{g}^\flat$ can be identified with the set
\[
\{\phi_\alpha\in \mathfrak{A}(\mathsf{G}),\; \phi_\alpha(e)=0, \;d_g\phi_\alpha=R_{g^{-1}}^*\alpha,>,\; \forall \alpha\in \mathfrak{g}^\flat,\; \forall g\in \mathsf{G}\}.
\]
Since $P$ is an almost Poisson anchor, we have an almost Lie Poisson bracket $\{\phi,\psi\}_P$ on  $\mathfrak{A}(\mathsf{G})$ given by
\[
\{\phi,\psi\}_P=<d\psi, P(d\phi)>
\]
which is  Poisson bracket if and only if $P$ is a Poisson anchor (cf Proposition~\ref{P_PropertiesPartialPoissonManifold} and Remark~\ref{R_About1}).\\ 
Now, from Remark \ref{R_About1}, $\{\phi,\psi\}_P$ induces a an almost Lie bracket $[\alpha,\beta]_P=d\{\phi_\alpha,\phi_\beta\}$ on $\mathfrak{g}^\flat$ which satisfies  the Jacobi identity.\\
From the definition of  $\Lambda_R$ as a map from $G$ to $\bigwedge^2\mathfrak{g}^\flat$ it follows that $T_e\Lambda $ is map from $\mathfrak{g}$ to $\bigwedge^2\mathfrak{g}^\flat$. Therefore,  the almost  bracket on $\mathfrak{g}$ satisfies the relations:
\[
[\alpha,\beta]_P=d\{\phi_\alpha,\phi_\beta\}=d_g\Lambda(R_{g^{-1}}^*\alpha, R_{g^{-1}}^*\beta)=T_e\Lambda_R(\alpha,\beta).
\]
Lemma \ref{L_LambdaR} implies that  $[\alpha, \beta]_P= d_e\Lambda(\alpha,\beta)$.\\ 
Now, if $P$ is a Poisson anchor, this implies that the bracket on $\mathfrak{A}$ satisfies the Jacobi identity  and from Proposition \ref{P_PropertiesPartialPoissonManifold} the almost bracket $[.,.]_P$ also satisfies the Jacobi identity since $P$ is injective by assumption.
Therefore, the almost Lie bracket induced on $\mathfrak{g}$ also satisfies the Jacobi identity.\\

Conversely, is the almost bracket on $\mathfrak{g}$ satisfies the Jacobi identity.\\ 
At first note that from  Remark \ref{R_About1}, we have an almost Lie bracket $[.,.]_P$  on  $T^\flat\mathfrak{g}$ which satisfies:
\[
[d\phi,d\psi]_P=d\Lambda(d\phi, d\psi).
\]
For any function $\phi_1,\phi_2,\phi_3$ in $\mathfrak{A}$, we set: 
\[
J_P(\phi_1,\phi_2,\phi_3)=\{\phi_1\{\phi_2,\phi_1\}_P\}_P+\{\phi_2\{\phi_3,\phi_1\}_P\}_P+\{\phi_3\{\phi_1,\phi_2\}_P\}_P.
\]
Note that, for any $g\in \mathsf{G}$ and any $\phi_i$ there exists $\alpha_i\in \mathfrak{g}^\flat$ such that  $d_g\phi_i=R_g^*\alpha_i$ for $i \in \{1,2,3\}$.\\
With all these properties we have sucessively:
\[
\begin{matrix}
d_g\{\phi_2,\phi_1\}_P&=d_g\Lambda(d\phi_2, d\phi_3)\hfill{}\\
			&=d_g\Lambda(R_g^*\alpha_2,R_g^* \alpha_3)\hfill{}\\
			&=d_e\Lambda_R(\alpha_2,\alpha_3)\hfill{}\\
			&=[\alpha_2,\alpha_3]_P.\hfill{}
\end{matrix}
\]
So we obtain
\[
d_g\left(\{\phi_1\{\phi_2,\phi_1\}_P\}_P\right)=d_e\Lambda_R(\alpha_1,[\alpha_2,\alpha_3]_P)=[\alpha_1[\alpha_2,\alpha_3]_P]_P.
\]
It follows that $d_gJ_P(\phi_1,\phi_2,\phi_3)$ is the Jacobi expression of the bracket on $\mathfrak{g}^\flat$ for $(\alpha_1,\alpha_2,\alpha_3)$.  Therefore
\[
d_gJ_P(\phi_1,\phi_2,\phi_3)=0
\]
for all $g\in \mathsf{G}$ and $\phi_1,\phi_2, \phi_3$ in $\mathfrak{A}$. By the way, $J_P$ must be must be constant and this value does not depend on the choice $(\phi_1,\phi_2,\phi_3)$ in particular of their values at $g$.  Thus $J_P$ must be identically zero, which ends the announced equivalence.\\

Now if $P$ is a Poisson anchor, from Proposition \ref{L_dPLambdaLambda},  $(T\mathsf{G}, T^\flat\mathsf{G})$ is a partial Poisson partial bialgebroid.\\
It remains to show that $d_e\Lambda$ is a $1$-cocycle on $\mathfrak{g}$ relative to the $\operatorname{ad}^{**}$ action on $\bigwedge^2\mathfrak{g}^\flat$.\\
 From Lemma \ref{L_LambdaR}, we have
\[   
[\alpha, \beta]_P= d_e\Lambda(\alpha,\beta)=T_e\Lambda_R(\alpha,\beta).
\]
The last assertion is then a direct consequence of  Example~\ref{Ex_LambdaRCocycle}.
\end{proof}

\subsubsection{Partial Bialgebras  and Manin triples}
\label{___ManinTriple}
In \cite{Tum20},  the reader can find a  definition of a Manin triple  in the Banach setting and the link with $1$-cocycles and bialgebras.  We can note that, in this context,  a Manin triple is a triple of Banach Lie algebras $(\mathfrak{g},\mathfrak{g}_+, \mathfrak{g}_-)$ such that  $\mathfrak{g}=\mathfrak{g}_+\oplus \mathfrak{g}^-$ and $\mathfrak{g}_+$ and $\mathfrak{g}_-$ are in non degenerated-duality. Such a context is also true for the notion of Banach bialgebra used in \cite{Tum20} and we have a kind of equivalence between both notions.\\
Unfortunately, in our setting of  partial Poisson  manifold, for the  associated notion of partial bialgebra, we have a very weak notion of duality between $\mathfrak{g}$ and $\mathfrak{g}^\flat$ since the bounded linear form $\mathfrak{g}\times \mathfrak{g}^\flat$ is only non degenerate on the first argument but it is in general degenerate on the second one.  It could be possible to  define a  notion of "partial" Manin triple, but we have no very original examples in opposition of the rich context exposed in \cite{Tum20}.

\subsection{On Direct Limits of  (partial) Banach Poisson-Lie Groups} 
\label{___OnDirectLimitsOfPartialBanachPoissonLieGroupsAndExamples}

Any Banach Poisson-Lie group in the sense of \cite{Tum20} is in particular a partial (convenient) Poisson-Lie group. In \cite{Tum20}, the author points out many  interesting examples of Banach Poisson-Lie  group  and so they   are also partial convenient Poisson-Lie groups. We do not list here all these examples. Interested readers can directly consult this paper.
 
In this paragraph, direct limits  of ascending sequences of  (partial) Banach Poisson Lie groups will give examples of convenient Poisson-Lie groups which {\bf are not  Banach Poisson-Lie groups}.
 
\subsubsection{Direct Limits of some Ascending Sequences of Banach (Partial) Poisson-Lie Groups}\label{___DirectLimitBanachLieGroups} 
 
At first, we need to recall some results on Direct limit  of ascending sequences of Banach Poisson manifolds and Banach Lie groups. For more details see \cite{CaPe23}, 7.6  and 5.8.\\ 
 
Let  
$ \left( M_{i},\mathfrak{A}(M_i),\{.,.\}_{P_{i}} \right) _{i\in \N}$ be  a sequence of partial Poisson Banach manifolds where
$p_{i}^{\flat}:T^{\flat}M_{i}\to M_{i}$ is a Banach subbundle of
$p_{M_{i}}^{\ast}:T^{\ast}M_{i}\to M_{i}$ and $P_{i}:T^{\flat}M_{i}\to TM_{i}$ is the associated Poisson anchor. We denote by
$\mathbb{M}_{i}$ the Banach space on which $M_{i}$ is modelled, and by $\mathbb{F}_{i}$ the model of the typical fibre of the subbundle $p_{i}^{\flat}:T^{\flat}M_{i}\to M_{i}$ and we assume that $\mathbb{F}_{i}$ is a Banach
subspace of the dual $\mathbb{M}_{i}^{\ast}$ of $\mathbb{M}_{i}$. For any $i\in \N$, we denote by $\varepsilon_i^{i+1}:M_i\to M_{i+1}$ the inclusion map.

\begin{definition}
\label{D_DirectSequenceOfPartialPoissonBanachManifolds}
The sequence 
$ \left( M_{i},\mathfrak{A}(M_i),\{.,.\}_{P_{i}}\right) _{i \in \N}$ is called a \emph{direct sequence of partial Poisson manifolds}\index{direct sequence!of partial Poisson Banach manifolds} if 
\begin{description}
\item[\textbf{(DSpPM1)}]
$ \left( M_{i} \right) _{i \in \N}$ is an ascending sequence of Banach $C^{\infty }$-manifolds, where $M_{i}$ is modelled on the Banach space $\mathbb{M}_{i}$ such that $\mathbb{M}_{i}$ is a supplemented Banach subspace of $\mathbb{M}_{i+1}$ and such that $M_{i}$ is a closed submanifold of $M_{i+1}$; 
\item[\textbf{(DSpPM2)}]
for  each $i\in \mathbb{N}$, the map  
\[
\varepsilon_i^{i+1}:
 \left( T^\flat M_i, TM_i,  P_i,\{\;,,\;\}_{P_i} 
 \right) 
\to 
 \left(  T^\flat M_{i,+1} TM_{i+1},  P_{i+1}i,\{\;,,\;\}_{P_{i+1}} 
 \right) 
\]
is a Poisson map; 
\item[\textbf{(DSpPM3)}]
around each $x\in M$, there exists an ascending  sequence of charts 
$ \left( U_{i},\phi_{i} \right) _{i\in\N}$ such that we have the following commutative  diagram:
\[
\xymatrix{
T^\flat U_i \ar[d]  &T^\flat U_{i+1}\ar[d]\ar[l]_{T^*\varepsilon_i^{i+1}} \\
U_i \ar[r]_{\varepsilon_i^{i+1} }         &U_{i+1}}
\]
\end{description}
\end{definition}

\begin{remark}
\label{R_PropertyOfDirectLimitFibre}${}$
\begin{enumerate}
\item[1.]Since 
$ \left( M_i,\varepsilon_i^{i+1} \right) _{i \in \N}$ is an ascending  sequence of Banach manifolds, we may assume that $\mathbb{M}_{i}$ is a Banach subspace of $\mathbb{M}_{i+1}$. By abuse of notation, we continue to denote by $\varepsilon_{i}^{i+1}$ the natural inclusion of $\mathbb{M}_{i}$ in $\mathbb{M}_{i+1}$ and  $(\varepsilon_{i}^{i+1})^{\ast}:\mathbb{M}_{i+1}^{\ast}\to\mathbb{M}_{i}^{\ast}$ 
the adjoint operator. 
Note that $(\varepsilon_{i}^{i+1})^{\ast}$ is surjective. Since
$\mathbb{F}_{i}$ is the typical fibre of $T^{\flat}M_{i}$,  
we must have 
$(\varepsilon_{i}^{i+1})^{\ast}(\mathbb{F}_{i+1})=\mathbb{F}_{i}$  
and  each $\mathbb{F}_i$ is a Banach subspace of $\mathbb{M}_I^\prime$.
\item[2.] 
In fact, $\textbf{(DSpPM2)}$ implies  ${P_{i+1}}_{| T^\flat M_I}=P_i$.
\end{enumerate}
\end{remark}

At first, we have: 

\begin{proposition}
\label{P_ConvenientTangentBundleFrechetCotangentBundleBundleMorphismDirectSequence}
Let $ \left( M_{i},\mathfrak{A}(M_i),\{.,.\}_{P_{i}}\right) _{i\in\N}$ be a direct sequence of partial Poisson Banach manifolds and let $M$ be the direct limit of the sequence $ \left( M_i \right) _{i \in \N} $. We then have:
\begin{enumerate}
\item
$p_{M}:TM\to M$ is a convenient bundle which is the kinematic bundle of $M$.
\item
$p'_M:T'M\to M$ and $p^{\flat}:T^{\flat}M\to M$
are locally trivial convenient bundles over $M$. Moreover $p'_M:T'M \to M$ is the kinematic cotangent bundle of $M$.
\item
There exists a canonical bundle morphism $P:T^{\flat}M\to TM$ whose typical fibre  $\underleftarrow{\lim}\mathbb{F}_i$ if $\mathbb{F}_i$ is the typical fibre of $T^\flat M_i$
characterized, for $x_i\in M_{i}$, by
\begin{equation}
\label{eq_LinkPiPi+1}
{P}(x_i,\xi)=P(x_i,\underleftarrow{\lim}_{k\geq i}(\xi_{k}))=\underrightarrow
{\lim}_{i \leq k}(P_{k}(x_i,\xi_{k})).
\end{equation}
Moreover, $P$ is skew-symmetric, relatively to the canonical dual pairing between $T'M$ and $TM$ in restriction to $T^{\flat}M\times TM$.
\end{enumerate}
\end{proposition}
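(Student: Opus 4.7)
The plan is to reduce everything to the direct/inverse limit constructions already developed for ascending sequences of Banach manifolds (cf.\ \cite{CaPe23}, Chap.~5.8 and 7.6), and then to exhibit $P$ fibrewise using the compatibility condition \textbf{(DSpPM2)}.

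\emph{Step 1: Convenient manifold and tangent bundle.} By \textbf{(DSpPM1)}, $(M_i)_{i\in\N}$ is an ascending sequence of Banach manifolds with each $M_i$ closed and split-immersed in $M_{i+1}$. Choosing in \textbf{(DSpPM3)} an ascending sequence of compatible charts around any $x\in M$, the direct limit $M=\varinjlim M_i$ acquires a convenient atlas modelled on $\mathbb{M}:=\varinjlim\mathbb{M}_i$ (equipped with the final locally convex topology). The kinematic tangent bundle $p_M:TM\to M$ is then obtained as $\varinjlim TM_i$; local trivializations come from the tangent maps of the chart sequences, and the transition maps inherit smoothness in the convenient sense from the smoothness of the transition maps at each finite stage. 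This gives point~(1).

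\emph{Step 2: The cotangent and $T^\flat$ bundles.} For point~(2), note that $T'M$ is by definition the kinematic cotangent bundle, whose typical fibre is $\mathbb{M}'=(\varinjlim \mathbb{M}_i)'=\varprojlim \mathbb{M}_i^\ast$, with the adjoint inclusions $(\varepsilon_i^{i+1})^\ast$ as the projective system. Local triviality follows by taking the fibrewise dual of the trivializations of $TM$ constructed in Step~1 and using the commutative diagram in \textbf{(DSpPM3)}. For $T^\flat M$, Remark~\ref{R_PropertyOfDirectLimitFibre}(1) tells us that $(\varepsilon_i^{i+1})^\ast(\mathbb{F}_{i+1})=\mathbb{F}_i$, so the sequence $(\mathbb{F}_i)_{i\in\N}$ with these adjoint maps forms a projective system. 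The typical fibre of $T^\flat M$ is then $\varprojlim \mathbb{F}_i$, a convenient space obtained as the projective limit in the category of convenient spaces. The local trivializations of $T^\flat M_i$, made compatible via the second part of \textbf{(DSpPM3)}, pass to the limit to produce local trivializations of $T^\flat M$, establishing that it is a locally trivial convenient subbundle of $T'M$.

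\emph{Step 3: The Poisson anchor $P$.} For point~(3), the key input is that \textbf{(DSpPM2)} forces the compatibility ${P_{i+1}}_{|T^\flat M_i}=P_i$ (Remark~\ref{R_PropertyOfDirectLimitFibre}(2)). Consequently, for any $x\in M$, choosing $i$ with $x\in M_i$ and any $\xi=\varprojlim_{k\ge i}\xi_k\in T_x^\flat M$, the family $\bigl(P_k(x,\xi_k)\bigr)_{k\ge i}$ is coherent with respect to the inclusions $TM_k\hookrightarrow TM_{k+1}$, so its direct limit defines an element $P(x,\xi)\in T_xM$. The formula~(\ref{eq_LinkPiPi+1}) gives $P$ on each chart, and local triviality from Step~2 plus smoothness of each $P_i$ shows that $P$ is a smooth convenient bundle morphism. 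Skew-symmetry is immediate: for $\eta$ of the same form, $\langle\xi,P(x,\eta)\rangle=\varprojlim\langle\xi_k,P_k(x,\eta_k)\rangle=-\varprojlim\langle\eta_k,P_k(x,\xi_k)\rangle=-\langle\eta,P(x,\xi)\rangle$, using the skew-symmetry at each finite level and compatibility of the dual pairings.

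\emph{Expected main obstacle.} The most delicate part is not the formal limit construction but making sure the convenient structures on $T'M$ and $T^\flat M$ are genuinely locally trivial and that the identification of the typical fibre of $T^\flat M$ with $\varprojlim\mathbb{F}_i$ is compatible with the inclusion into $\mathbb{M}'=\varprojlim\mathbb{M}_i^\ast$. This requires carefully handling the projective limit topologies (in particular checking that boundedness on bounded sets of the inclusion $\mathbb{F}_i\hookrightarrow\mathbb{M}_i^\ast$ transfers to the limit so that $T^\flat M\hookrightarrow T'M$ remains a bounded convenient bundle morphism), and this is where the preparatory results of \cite{CaPe23} are essential.
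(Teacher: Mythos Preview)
The paper does not actually supply a proof of this proposition; it is stated as a recall of results from \cite{CaPe23}, Chapters~5.8 and~7.6 (see the sentence ``For more details see \cite{CaPe23}, 7.6 and 5.8'' immediately preceding Definition~\ref{D_DirectSequenceOfPartialPoissonBanachManifolds}). Your proposal correctly reconstructs the argument that those references contain: direct limit for $M$ and $TM$, projective limit for the dual and $T^\flat$ bundles using the commutative diagram in \textbf{(DSpPM3)}, and the fibrewise construction of $P$ via the compatibility ${P_{i+1}}_{|T^\flat M_i}=P_i$ recorded in Remark~\ref{R_PropertyOfDirectLimitFibre}(2). This is exactly the expected route, and your identification of the delicate point---local triviality of the projective-limit bundles and the boundedness of the inclusion $T^\flat M\hookrightarrow T'M$---is accurate and is precisely what the machinery of \cite{CaPe23} is invoked to handle.
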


Finally we have:

\begin{theorem}
\label{T_PartialPoissonStructureOnDirectLimitOfPoissonBanachManifolds}
Let $\left( M_{i},\mathfrak{A}(M_{i}),\{.,.\}_{P_{i}} \right) _{i\in\N}$ be a direct sequence  of partial Poisson Banach manifolds and
$M=\underrightarrow{\lim}(M_{i})$.\\
There exists a weak subbundle $p^{\flat}:T^{\flat}M\to M$ of $p'_M:T'M\to M$ and a skew-symmetric morphism $P:T^{\flat
}M\to TM$ such that $ \left( M,\mathfrak{A}_P(M),\{.,.\}_{P} \right) $ is a partial
Poisson manifold with the following characterization:\\
If ${\varepsilon}_{i}:M_{i}\to M$ is the canonical injection then ${\varepsilon}_{i}$ is a Poisson morphism from $ \left( M_{i},\mathfrak{A}(M_{i}),\{.,.\}_{P_{i}} \right) $ to $ \left( M,\mathfrak{A}(M),\{.,.\}_{P} \right) $ for all $i\in\mathbb{N}$, and we have
\[
\mathfrak{A}_P(M)
=\underrightarrow{\lim}\mathfrak{A}(M_i)
\]
\[
\{.,.\}_{P}=\underrightarrow{\lim}\{.,.\}_{P_{i}}.
\]
\end{theorem}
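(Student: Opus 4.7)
The plan is to leverage Proposition~\ref{P_ConvenientTangentBundleFrechetCotangentBundleBundleMorphismDirectSequence}, which already provides the weak subbundle $p^{\flat}:T^\flat M\to M$ and the skew--symmetric morphism $P:T^\flat M\to TM$ defined through the universal property of the direct limit. Thus only the Poisson content (functions, bracket, Jacobi identity, compatibility with each $\varepsilon_i$) needs to be constructed. The key is the pointwise description: for $x\in M$, there is $i_0$ with $x\in M_{i_0}$, and for $i\geq i_0$ we have $T_x^\flat M_i\subset T_x^\flat M_{i+1}$ with $(P_{i+1})_{| T^\flat M_i}=P_i$ (Remark~\ref{R_PropertyOfDirectLimitFibre}(2)), so that in the direct limit $P_x\in\mathrm{Hom}(T_x^\flat M,T_xM)$ restricts to each $(P_i)_x$ via the natural inclusions.

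First, define a sub-sheaf $\mathfrak{A}_P$ of $C^\infty_M$ by declaring $f\in\mathfrak{A}_P(U)$ iff for every $i$ the restriction $f_{|U\cap M_i}$ lies in $\mathfrak{A}(U\cap M_i)$; the stability of $\mathfrak{A}(U\cap M_i)$ under products (Proposition~\ref{P_AUalgebra}) makes each $\mathfrak{A}_P(U)$ a subalgebra of $C^\infty(U)$, and the compatibility $\underrightarrow{\lim}\mathfrak{A}(M_i)=\mathfrak{A}_P(M)$ follows from the universal property of the direct limit of locally convex spaces. For $f,g\in\mathfrak{A}_P(U)$ set
\[
\{f,g\}_P(x)\;=\;-<d_xf,\,P(d_xg)>,
\]
which makes sense because $d_xf,d_xg\in T_x^\flat M$. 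The bi-index compatibility $(P_{i+1})_{|T^\flat M_i}=P_i$ plus the fact that $d_xf=d_x(f_{|M_i})$ inside $T_x^\flat M_i$ imply
\[
\{f,g\}_P(x)\;=\;\{f_{|U\cap M_i},\,g_{|U\cap M_i}\}_{P_i}(x)
\]
whenever $x\in U\cap M_i$. Consequently the pointwise expression lands back in $\mathfrak{A}_P(U)$, and $\{.,.\}_P=\underrightarrow{\lim}\{.,.\}_{P_i}$.

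Skew--symmetry and the Leibniz rule for $\{.,.\}_P$ are inherited from each $\{.,.\}_{P_i}$ via the above pointwise identification. For the Jacobi identity, pick $f,g,h\in\mathfrak{A}_P(U)$ and $x\in U$; choose $i$ with $x\in M_i$, so that the three bracket terms making up the Jacobiator of $(f,g,h)$ at $x$ coincide term by term with the Jacobiator of $(f_{|M_i},g_{|M_i},h_{|M_i})$ at $x$ for the Banach partial Poisson structure $\{.,.\}_{P_i}$; this last Jacobiator is zero by assumption \textbf{(DSpPM2)} applied to $P_i$. Thus $\{.,.\}_P$ is a genuine partial Poisson bracket on $\mathfrak{A}_P$, which together with $P$ endows $M$ with a partial Poisson structure in the sense of Definition~\ref{D_PartialPoisson}.

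Finally, to verify that each canonical injection $\varepsilon_i:M_i\to M$ is a Poisson morphism (and also a Poisson map), note that $(T_x\varepsilon_i)^*$ sends $T_{\varepsilon_i(x)}^\flat M$ into $T_x^\flat M_i$ because $T^\flat M$ was built as the direct limit whose restriction over $M_i$ is exactly $T^\flat M_i$; and the intertwining $P_i=(T\varepsilon_i)^{-1}\circ P\circ(T\varepsilon_i)^*$ is precisely equation~\eqref{eq_LinkPiPi+1}. The main obstacle, in my view, is not any of the algebraic verifications but rather checking that $\mathfrak{A}_P(U)$ really deserves to be the sheaf attached to the ambient weak subbundle $T^\flat M$, i.e.\ that for each $x\in M$ the differentials $\{d_xf:f\in\mathfrak{A}_P(U)\}$ span $T_x^\flat M$ (the analogue of Lemma~\ref{L_P(U)generates}); this requires extending local $\mathfrak{A}(M_i)$--functions defined near $x\in M_i$ to smooth functions on a neighbourhood of $x$ in $M$ whose differentials remain in the subbundle $T^\flat M$, which is done via the local trivialisation provided in Proposition~\ref{P_ConvenientTangentBundleFrechetCotangentBundleBundleMorphismDirectSequence}(2) together with the hypothesis that each $\mathbb{M}_i$ is supplemented in $\mathbb{M}_{i+1}$ (\textbf{(DSpPM1)}).
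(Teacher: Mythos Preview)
The paper does not give a self-contained proof of this theorem: it is recalled from \cite{CaPe23}, 7.6, and the only argument supplied in the present text is a short Lemma (immediately after the theorem) showing that on an ascending chart domain $U=\underrightarrow{\lim}U_i$ every smooth $f:U\to\mathbb{R}$ satisfies $f=\underrightarrow{\lim}f_{|U_i}$, which is what guarantees that the a priori smaller sheaf $\mathfrak{A}_P=\underrightarrow{\lim}\mathfrak{A}(M_i)$ coincides with the sheaf $\mathfrak{A}$ attached to $T^\flat M$. Your sketch of the algebraic part (construction of $\{.,.\}_P$ via restriction, pointwise verification of Leibniz and Jacobi, the Poisson-morphism property of each $\varepsilon_i$) is correct and is essentially what the reference \cite{CaPe23} does.

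What is noteworthy is that you correctly single out the genuine issue---whether $\mathfrak{A}_P(U)$ is the ``right'' sheaf for $T^\flat M$---and propose to resolve it by \emph{extending} local functions from $M_i$ to $M$ using supplemented charts. The paper takes the converse route: rather than extending from below, it shows that every smooth function on the limit is automatically the direct limit of its restrictions (invoking \cite{CaPe23}, Lemma~5.2). Both arguments hinge on \textbf{(DSpPM1)} and the ascending-chart condition \textbf{(DSpPM3)}, but the paper's version is slightly cleaner because it avoids constructing extensions and directly yields $\mathfrak{A}_P(U)=\mathfrak{A}(U)$ rather than only the spanning property for differentials at a single point.
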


Therefore $ \left( T^\flat M, M, P, \{.,.\}_P \right) $ is {\bf   not \textit{a priori}} a partial Poisson manifold since its Poisson bracket is  defined only on the  sheaf of subalgebras $\mathfrak{A}_P(M)=\underrightarrow{\lim}\mathfrak{A}(M_i)$ of sheaf $\mathcal{A}_M$ of smooth functions associated  to  the convenient bundle $T^\flat M$. However:

\begin{lemma} 
Let $ \left( U_i, \Phi_i \right) _{i \in \N} $ be an ascending sequence of charts. Set $U=\underrightarrow{\lim}U_i, \Phi=\underrightarrow{\lim}\Phi$. 
Then, for any smooth function $f:U\to M$, if $f_i=f_{| U_i}$, then $f=\underrightarrow{\lim}f_i$.
\end{lemma}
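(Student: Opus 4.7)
The approach is to verify that $f$ satisfies the universal property characterizing the direct limit $\underrightarrow{\lim} f_i$ in the category of convenient smooth maps, and then to conclude by uniqueness. I would organize the argument in three observations.

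First, denoting by $\varepsilon_i: U_i \hookrightarrow U$ the canonical inclusion into the direct limit, I would show that each restriction $f_i = f \circ \varepsilon_i$ is smooth. Since $U_i$ is a closed submanifold of every $U_j$ for $j \geq i$ and the transition inclusions $\varepsilon_i^{i+1}$ are smooth, $\varepsilon_i$ is smooth as a map into the direct limit chart $U$. Composing with the smooth $f$ yields smoothness of $f_i$.

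Second, the family $(f_i)_{i\in\N}$ is compatible with the transition maps of the directed system: since $\varepsilon_i = \varepsilon_{i+1} \circ \varepsilon_i^{i+1}$ by construction of the direct limit, we have
\[
f_{i+1} \circ \varepsilon_i^{i+1} = (f \circ \varepsilon_{i+1}) \circ \varepsilon_i^{i+1} = f \circ \varepsilon_i = f_i.
\]
Hence $(f_i)_{i\in\N}$ is a cocone over $(U_i,\varepsilon_i^{i+1})$. By the universal property of the direct limit in the category of convenient smooth manifolds, there is a unique smooth map $\widetilde{f}: U \to M$ satisfying $\widetilde{f} \circ \varepsilon_i = f_i$ for every $i \in \N$, and this map is by definition $\underrightarrow{\lim} f_i$. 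Since $f$ itself verifies $f \circ \varepsilon_i = f_i$ by the very definition of $f_i$, the uniqueness clause forces $f = \underrightarrow{\lim} f_i$.

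The only potential obstacle is verifying that the universal property applies in the convenient smooth setting for the particular direct limit construction of $U$ used here (where the model spaces $\mathbb{M}_i$ form an ascending sequence of supplemented Banach subspaces). This is guaranteed by the general results on direct limits of ascending sequences of Banach manifolds recalled earlier (cf.\ \cite{CaPe23}, 5.8), according to which smooth maps out of $\underrightarrow{\lim} U_i$ are in bijective correspondence with compatible sequences of smooth maps defined on the $U_i$. Granted this, the three observations above immediately yield the conclusion.
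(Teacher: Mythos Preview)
Your proposal is correct and essentially equivalent to the paper's argument: both reduce the claim to a result from \cite{CaPe23}. The paper's proof is slightly more concrete---it first passes to the local model (assuming without loss of generality that each $U_i$ is a $c^\infty$-open set in the Banach space $\mathbb{M}_i$, with $\mathbb{M}=\underrightarrow{\lim}\mathbb{M}_i$) and then cites \cite{CaPe23}, Lemma~5.2 directly---whereas you package the same content as the universal property of the direct limit in the convenient smooth category, citing \cite{CaPe23},~5.8. The substantive input is identical; your phrasing is more categorical, the paper's more chart-based, but neither furnishes a self-contained argument beyond the cited reference.
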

\begin{proof} 
Recall that if $M_i$ is modelled on the Banach space $\mathbb{M}_i$, we may assume that $\mathbb{M}_i$ is a supplemented subspace  of $\mathbb{M}_{i+1}$ and let $\mathbb{M}=\underrightarrow{\lim} \mathbb{M}_i$ which is a convenient space. Without loss of generality, we may assume that  $U_i$ is a $c^\infty$-open set of $\mathbb{M}_i$.Then from \cite{CaPe23}, Lemma~5.2,    we must have $f=\underrightarrow{\lim}f_i$.
\end{proof}

It follows that 
$ \left( T^\flat M, M, P, \{.,.\}_P \right) $ is {\bf really} a partial Poisson manifold.\\

We will apply these results to the following context of Banach (partial) Poisson-Lie groups.\\ Note that the direct limit of an ascending sequence of Banach manifolds can be can been as a non Hausdorff manifold. But an ascending sequence of finite dimensional manifolds is always a Haussdorff manifold (\cite{Glo03}).\\
However for an ascending sequence
\[
\mathsf{G}_1\subset \cdots\subset \mathsf{G}_i\subset\mathsf{G}_{i+1}\subset\cdots
\]
where $\mathsf{G}_i$ is a Banach Lie subgroup of $\mathsf{G}_{i+1}$, the associated sequence of Lie algebras
\[
\mathfrak{g}_1\subset \cdots\subset \mathfrak{g}_i\subset\mathfrak{g}_{i+1}\subset\cdots
\]
is a strong ascending sequence of  closed Banach spaces and so the direct limit $\mathfrak{g}=\underrightarrow {\lim}\mathfrak{g}_i$ is a Hausdorff locally convex space and so a Hausdorff convenient space. Since $C_{\mathbb{K}}^\omega$
 differentiability implies convenient smoothness,   by \cite{Dah11}, Theorem~4.11, we obtain:
\begin{theorem}\label{T_DirecLlimitBanachLiegroups} 
Consider an ascending sequence $\mathsf{G}_1\subset \cdots\subset \mathsf{G}_i\subset\mathsf{G}_{i+1}\subset\cdots$ of 
Banach Lie groups such that $\mathsf{G}_i$ is a Banach Lie subgroup of $\mathsf{G}_{i+1}$. Assume that, for each $i\in\mathbb{N}$, there exists a norm $||.||_i$ on $\mathfrak{g}_i$ 
such that $||[X,Y]||_i\leq ||X||_i \;||Y||_i$ and  the norm of  the morphism $\epsilon_i^{i+1}:\mathfrak{g}_i \to \mathfrak{g}_{i+1}$ canonically  associated to the inclusion $\varepsilon_i^{i+1}$ of $\mathsf{G}_i$ in $\mathsf{G}_{i+1}$ is  at most $1$.\\
Then $\mathsf{G}=\underrightarrow {\lim}\mathsf{G}_i$ is a convenient Lie group whose Lie algebra is $\mathfrak{g}$.  Moreover, 
$\exp_G:=\underrightarrow {\lim}\exp_{\mathsf{G}_i}$ is a convenient  local diffeomorphism  on  a $c^\infty$-open set of $0$.
\end{theorem}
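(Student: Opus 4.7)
The plan is to verify the hypotheses of Dahmen's direct limit theorem for ascending sequences of Banach Lie groups (\cite{Dah11}, Theorem~4.11) and then pass from the resulting real-analytic Lie group structure to the convenient category. First, I would observe that $\mathfrak{g}=\underrightarrow{\lim}\mathfrak{g}_i$ is a Hausdorff convenient Lie algebra: since each $\mathfrak{g}_i$ is a closed subspace of $\mathfrak{g}_{i+1}$ (because $\mathsf{G}_i$ is a Banach Lie subgroup of $\mathsf{G}_{i+1}$), the LB-topology on $\mathfrak{g}$ is Hausdorff, and as such $\mathfrak{g}$ is convenient. The bracket on $\mathfrak{g}$ is the direct limit of the brackets $[.,.]_i$ and is bounded on bounded sets thanks to the submultiplicativity $\|[X,Y]\|_i\leq\|X\|_i\|Y\|_i$.

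Second, I would invoke \cite{Dah11}, Theorem~4.11, to endow $\mathsf{G}=\underrightarrow{\lim}\mathsf{G}_i$ with a (unique) structure of $C_{\mathbb{K}}^\omega$-Lie group modelled on the LB-space $\mathfrak{g}$, with Lie algebra $\mathfrak{g}$, and for which every canonical inclusion $\varepsilon_i:\mathsf{G}_i\hookrightarrow\mathsf{G}$ is a real-analytic homomorphism. Since $C_{\mathbb{K}}^\omega$ differentiability implies convenient smoothness (cf.~\cite{KrMi97}), this automatically promotes $\mathsf{G}$ to a convenient Lie group whose Lie algebra is the convenient Lie algebra $\mathfrak{g}$ built in the first step.

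Third, for the exponential map, each $\exp_{\mathsf{G}_i}:\mathfrak{g}_i\to\mathsf{G}_i$ is a real-analytic local diffeomorphism from some open ball $B_i\subset\mathfrak{g}_i$ centered at $0$ onto a neighbourhood of $e$ in $\mathsf{G}_i$, and the naturality relation $\varepsilon_i^{i+1}\circ\exp_{\mathsf{G}_i}=\exp_{\mathsf{G}_{i+1}}\circ\epsilon_i^{i+1}$ follows from functoriality of $\exp$ under Lie group homomorphisms. The family $(\exp_{\mathsf{G}_i})_{i\in\mathbb{N}}$ is therefore a compatible direct system, so its direct limit $\exp_{\mathsf{G}}:\mathfrak{g}\to\mathsf{G}$ is well defined. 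In Dahmen's construction the canonical chart of $\mathsf{G}$ around $e$ is obtained precisely as the direct limit of the exponential charts around $e\in\mathsf{G}_i$; consequently, on the $c^\infty$-open neighbourhood $U=\underrightarrow{\lim}B_i\subset\mathfrak{g}$ of $0$, the map $\exp_{\mathsf{G}}$ is a $C_{\mathbb{K}}^\omega$-, and hence convenient, diffeomorphism onto an open neighbourhood of $e$ in $\mathsf{G}$.

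The main obstacle is checking that the norm hypotheses really do supply the data required by Dahmen's theorem. The submultiplicativity makes each $(\mathfrak{g}_i,\|\cdot\|_i)$ a contractive (BCH-)Banach Lie algebra, so that the Baker--Campbell--Hausdorff series converges uniformly on a suitable open ball $B_i$; the contractivity $\|\epsilon_i^{i+1}\|\leq 1$ forces $\epsilon_i^{i+1}(B_i)\subset B_{i+1}$, which is exactly the compatibility of BCH-neighbourhoods needed to pass to the direct limit of the local multiplication maps and thereby to glue the Banach Lie group structures of the $\mathsf{G}_i$ into one on $\mathsf{G}$.
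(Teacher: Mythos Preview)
Your approach is essentially the same as the paper's: the paper does not give a detailed proof but simply observes that the sequence of Lie algebras $\mathfrak{g}_i$ is a strong ascending sequence of closed Banach spaces (so $\mathfrak{g}$ is Hausdorff convenient), notes that $C_{\mathbb{K}}^\omega$ differentiability implies convenient smoothness, and then invokes \cite{Dah11}, Theorem~4.11. Your proposal is a faithful expansion of exactly this argument, spelling out how the submultiplicative norms and the contractivity $\|\epsilon_i^{i+1}\|\leq 1$ feed into Dahmen's hypotheses and how the exponential map arises as the direct limit of the $\exp_{\mathsf{G}_i}$.
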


Note that this result is especially true for sequences of finite dimensional Lie groups (cf. \cite{Glo03}).\\

In a more general context, as  Corollary of Theorem~\ref{T_DirecLlimitBanachLiegroups}, we have the following result  
 (cf. \cite{CaPe23}, 5.8):
\begin{corollary}
\label{C_DirectLimitOfLinearGroupsOnAscendingSequencesOfSupplementedBanachSubspaces}
Let $(\mathbb{E}_{n})_{n\in\N}$ be an ascending sequence of Banach spaces such that $\mathbb{E}_n$ is a supplemented Banach subspace of $\mathbb{E}_{n+1}$. \\
Then $\mathbb{E}=\bigcup\limits_{n\in\N}\mathbb{E}_{n}$ is an \textrm{LB}-space and $\mathcal{L}(\mathbb{E})=\bigcup\limits_{n\in\N}\mathcal{L}(\mathbb{E}_{n})$ is also an \textrm{LB}-space, where $\mathcal{L}(\mathbb{E}_{n})$ is the Banach space of continuous linear operators of $\mathbb{E}_{n}$. Moreover, $\mathcal{GL}(\mathbb{E})=\bigcup\limits_{n\in\mathbb{N}}\mathcal{GL}(\mathbb{E}_{n})$ has a structure of convenient Lie group modelled on $\mathcal{L}(\mathbb{E})$, where $\mathcal{GL}(\mathbb{E}_{n})$ is the Banach Lie group of linear continuous automorphisms of $\mathbb{E}_{n}$.
\end{corollary}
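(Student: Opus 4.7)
The argument splits into three layers of increasing delicacy, each of which I would treat briefly before invoking Theorem~\ref{T_DirecLlimitBanachLiegroups} at the end.

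First, the fact that $\mathbb{E}=\bigcup_{n} \mathbb{E}_{n}$ is an \textrm{LB}-space is classical: the inclusions $\mathbb{E}_n \hookrightarrow \mathbb{E}_{n+1}$ are bounded linear maps with closed range (since $\mathbb{E}_n$ is supplemented in $\mathbb{E}_{n+1}$), so the locally convex inductive limit is, by definition, an \textrm{LB}-space. I would state this without further comment and fix, once and for all, topological supplements $\mathbb{E}_{n+1}=\mathbb{E}_n\oplus\mathbb{F}_n$ with associated continuous projections $p_n:\mathbb{E}_{n+1}\to\mathbb{E}_n$.

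Second, to identify $\mathcal{L}(\mathbb{E})=\bigcup_n\mathcal{L}(\mathbb{E}_n)$ as an \textrm{LB}-space, I would use the supplements to produce the bounded linear embeddings
\[
\iota_n^{n+1}:\mathcal{L}(\mathbb{E}_n)\to\mathcal{L}(\mathbb{E}_{n+1}),\qquad T\longmapsto T\circ p_n = T\oplus 0,
\]
whose range is the closed subspace of operators preserving $\mathbb{E}_n$ and vanishing on $\mathbb{F}_n$. By replacing the norm on $\mathbb{E}_{n+1}$ by the equivalent decomposed norm $\|x+y\|'=\|x\|_n+\|y\|_{\mathbb{F}_n}$, I may assume $\|p_n\|\le 1$, and consequently $\|\iota_n^{n+1}\|\le 1$. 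This gives a direct system of Banach spaces with norm-$\le 1$ connecting maps, hence $\mathcal{L}(\mathbb{E})=\underrightarrow{\lim}\,\mathcal{L}(\mathbb{E}_n)$ is an \textrm{LB}-space and in particular a convenient vector space.

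Third, I would apply Theorem~\ref{T_DirecLlimitBanachLiegroups} to the ascending sequence $\mathcal{GL}(\mathbb{E}_n)$. Each $\mathcal{GL}(\mathbb{E}_n)$ is open in $\mathcal{L}(\mathbb{E}_n)$ (Neumann series), hence a Banach Lie group with Lie algebra $\mathcal{L}(\mathbb{E}_n)$ and exponential $\exp_n(T)=\sum_{k\ge 0}T^k/k!$. The key point is that the set-theoretic inclusion must be $\mathcal{GL}(\mathbb{E}_n)\hookrightarrow \mathcal{GL}(\mathbb{E}_{n+1})$, $T\mapsto T\oplus\mathrm{Id}_{\mathbb{F}_n}$, so as to preserve invertibility; its tangent map at the identity is the linear embedding $\iota_n^{n+1}$ above, which makes the two inductive systems (of groups and of algebras) compatible. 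After rescaling each operator norm by a factor $\tfrac12$, the bracket estimate $\|[X,Y]\|_n=\|XY-YX\|_n\le \|X\|_n\|Y\|_n$ holds together with $\|\iota_n^{n+1}\|\le 1$, which are exactly the hypotheses of Theorem~\ref{T_DirecLlimitBanachLiegroups}. Its conclusion provides a convenient Lie group structure on $\mathcal{GL}(\mathbb{E})=\bigcup_n\mathcal{GL}(\mathbb{E}_n)$ modelled on $\mathcal{L}(\mathbb{E})$, with exponential map $\exp=\underrightarrow{\lim}\,\exp_n$.

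The only point that requires some care is the simultaneous renormalisation so that both the Lie-bracket inequality and the norm-$\le 1$ condition on the connecting maps of Theorem~\ref{T_DirecLlimitBanachLiegroups} hold; this is purely bookkeeping (replace the norm on each $\mathbb{E}_{n+1}$ by the additively decomposed norm on $\mathbb{E}_n\oplus \mathbb{F}_n$ and rescale the induced operator norms by $\tfrac12$) and does not alter any of the topological conclusions. Everything else is either the classical Banach theory of $\mathcal{GL}$ or a direct invocation of the already established Theorem~\ref{T_DirecLlimitBanachLiegroups}.
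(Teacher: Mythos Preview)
Your proposal is correct and follows exactly the route the paper indicates: the paper gives no explicit proof of this corollary, merely stating it as a consequence of Theorem~\ref{T_DirecLlimitBanachLiegroups} with a reference to \cite{CaPe23}, 5.8. Your argument---embedding $\mathcal{L}(\mathbb{E}_n)$ via $T\mapsto T\oplus 0$, embedding $\mathcal{GL}(\mathbb{E}_n)$ via $T\mapsto T\oplus\mathrm{Id}_{\mathbb{F}_n}$, and then renormalising so that the bracket estimate and the norm-$\le 1$ condition of Theorem~\ref{T_DirecLlimitBanachLiegroups} are simultaneously satisfied---is precisely the intended unpacking of that reference.
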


\subsubsection{Ascending Sequences of Banach (Partial) Poisson-Lie Groups}
\label{____AscendingSequencesOfBanachPartialPoissonLieGroup}

We have seen that any Banach Poisson-Lie group is in particular a Banach partial Poisson-Lie group, so we only consider sequences of Banach partial Poisson-Lie groups. Thus naturally, an  ascending sequence of Banach (partial) Poisson Lie groups  $ \left( T^\flat\mathsf{G}_i, \mathsf{G}_i, P_i \right)_{i\in \N}$  is an ascending sequence of Banach  (partial) Poisson manifolds. 
 
\begin{theorem}\label{T_DirectLimitPartialPoissonGroups} 
Consider an ascending sequence of Banach (partial) Poisson Lie groups 
$\{(T^\flat\mathsf{G}_i, \mathsf{G}_i, P_i)\}_{i\in \N}$. Assume that the ascending sequence of groups $ \left( \mathsf{G}_i \right) _{i\in \N}$ satisfies the assumption of Theorem~\ref{T_DirecLlimitBanachLiegroups}.\\
Then the direct limit  
$\mathsf{G}=\underrightarrow {\lim}\mathsf{G}_i$ can be endowed in fact with a (partial) Poisson-Lie group structure. 
\end{theorem}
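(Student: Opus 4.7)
My plan is to build the partial Poisson-Lie structure on $\mathsf{G}=\underrightarrow{\lim}\mathsf{G}_i$ by assembling three ingredients already available in the paper: Theorem~\ref{T_DirecLlimitBanachLiegroups} supplies the convenient Lie group structure, Theorem~\ref{T_PartialPoissonStructureOnDirectLimitOfPoissonBanachManifolds} supplies the partial Poisson structure on the limit, and Definition~\ref{D_PoissonLIeGroup} (equivalently Proposition~\ref{P_CharacterizationPartialPoissonLiegroup}) provides the compatibility criterion that still needs to be verified.

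First I would check that the data $\{(T^\flat\mathsf{G}_i,\mathsf{G}_i,P_i)\}_{i\in\N}$ really is a direct sequence of partial Poisson Banach manifolds in the sense of Definition~\ref{D_DirectSequenceOfPartialPoissonBanachManifolds}: condition (DSpPM1) is built into the ascending sequence of closed Banach Lie subgroups, (DSpPM2) follows from the fact that each embedding $\varepsilon_i^{i+1}:\mathsf{G}_i\hookrightarrow\mathsf{G}_{i+1}$ is itself a morphism of Banach partial Poisson-Lie groups (which I read as part of the hypothesis ``ascending sequence of Banach (partial) Poisson-Lie groups''), while (DSpPM3) I would obtain around any $x\in\mathsf{G}_{i_0}$ by using left-translates of ascending exponential charts, the bi-invariance of each $T^\flat\mathsf{G}_i$ forcing $(\varepsilon_i^{i+1})^*$ to identify the prescribed $\mathfrak{g}^\flat$-fibres compatibly. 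Theorem~\ref{T_DirecLlimitBanachLiegroups} then equips $\mathsf{G}$ with a convenient Lie group structure whose Lie algebra is $\mathfrak{g}=\underrightarrow{\lim}\mathfrak{g}_i$, and Theorem~\ref{T_PartialPoissonStructureOnDirectLimitOfPoissonBanachManifolds} produces a weak subbundle $T^\flat\mathsf{G}=\underrightarrow{\lim}T^\flat\mathsf{G}_i$ of $T'\mathsf{G}$ together with an anchor $P=\underrightarrow{\lim}P_i$ and a bracket $\{.,.\}_P=\underrightarrow{\lim}\{.,.\}_{P_i}$ on $\mathfrak{A}_P(\mathsf{G})=\underrightarrow{\lim}\mathfrak{A}(\mathsf{G}_i)$.

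Next I would verify the bi-invariance conditions (BiT) and (BiP) of Definition~\ref{D_PoissonLIeGroup}. Given $g,h\in\mathsf{G}$, there is some $i_0$ with $g,h\in\mathsf{G}_{i_0}$; for every $i\geq i_0$, bi-invariance at level $i$ gives $L_g^*(T_{gh}^\flat\mathsf{G}_i)=T_h^\flat\mathsf{G}_i$ and $P_h\circ L_g^*=TL_{g^{-1}}\circ P_{gh}$, together with the right-translation analogues, and since every structure map on $\mathsf{G}$ is by construction the direct limit of the corresponding structure maps on the $\mathsf{G}_i$, these identities pass to the limit. Equivalently, and I would probably phrase the final verification this way, at each finite level the graph of the multiplication $\mathbf{m}_i$ is coisotropic in $\mathsf{G}_i\times\mathsf{G}_i\times\mathsf{G}_i^-$ by Proposition~\ref{P_CharacterizationPartialPoissonLiegroup}, and coisotropy passes to the direct limit of this compatible tower of closed submanifolds, so the graph of $\mathbf{m}=\underrightarrow{\lim}\mathbf{m}_i$ is coisotropic in $\mathsf{G}\times\mathsf{G}\times\mathsf{G}^-$, which by the same Proposition concludes.

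The main obstacle I expect is the verification of (DSpPM3), that is the production of ascending charts around a general point $x\neq e$ with the required commutative diagram between $T^\flat U_i$ and $T^\flat U_{i+1}$; exponential maps only give this directly at the identity, and at other points the natural candidate $\Phi_i=\exp_{\mathsf{G}_i}^{-1}\circ L_{x^{-1}}$ requires one to trace through the bi-invariance of the $T^\flat$-bundles in order to guarantee that $(\varepsilon_i^{i+1})^*$ restricts correctly between the model fibres $\mathfrak{g}_{i+1}^\flat$ and $\mathfrak{g}_i^\flat$. Once this chart compatibility is clean, everything else reduces to a formal invocation of the direct-limit theorems and of the bi-invariance hypothesis at finite levels.
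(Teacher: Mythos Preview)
Your proposal is correct and follows essentially the same route as the paper: assemble the convenient Lie group structure from Theorem~\ref{T_DirecLlimitBanachLiegroups}, the partial Poisson structure from Theorem~\ref{T_PartialPoissonStructureOnDirectLimitOfPoissonBanachManifolds}, and then verify (BiT) and (BiP) by noting that the translations $L_g,R_g$ on $\mathsf{G}$ are direct limits $L_g=\underrightarrow{\lim}L_{g_i}$, $R_g=\underrightarrow{\lim}R_{g_i}$, so bi-invariance at each finite level passes to the limit. The paper's argument is exactly this, only terser.

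One clarification regarding what you flag as ``the main obstacle'': in the paper's setup (see the paragraph introducing~$\S$\ref{____AscendingSequencesOfBanachPartialPoissonLieGroup}), an \emph{ascending sequence of Banach (partial) Poisson-Lie groups} is by definition already an ascending sequence of partial Poisson Banach manifolds in the sense of Definition~\ref{D_DirectSequenceOfPartialPoissonBanachManifolds}. So conditions (DSpPM1)--(DSpPM3) are part of the hypothesis, not something to be derived; the paper does not attempt to produce the ascending charts from exponential maps and bi-invariance as you outline. Your sketch of how one \emph{could} verify (DSpPM3) from more primitive data is reasonable, but for the theorem as stated it is not needed. Your alternative phrasing via coisotropy of the graph of ${\bf m}$ is a valid equivalent endpoint (Proposition~\ref{P_CharacterizationPartialPoissonLiegroup}), though the paper stops at (BiT)/(BiP) directly.
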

 
\begin{proof} 
It remains to show that the partial Poisson manifold $ \left( T^\flat \mathsf{G}, \mathsf{G}, P \right) $ is a (partial) Poisson Lie group.  
 
In complement to the conclusion of Theorem \ref{T_DirecLlimitBanachLiegroups}, we have the following results:\\
 
Assume that $g=\underrightarrow {\lim}g_i$. By using classical results on direct limits and maps and their differentials, we have the following relations and results :\\
  
The left (resp. right) translation $L_g$ (resp. $R_g$)  is $L_g=\underrightarrow {\lim}L_{g_i}$ (resp. $R_g=\underrightarrow {\lim}R_{g_i})$    and analog result for $\mathsf{Ad}_g$ on $\mathsf{G}$. \\
   
Since any $x\in \mathfrak{g}$ can be written $x=\underrightarrow {\lim}x_i$ with $x_i\in \mathfrak{g}_i$,  one has  $\mathsf{ad}_x=\underrightarrow {\lim}\mathsf{ad}_{x_i}$ and $\mathsf{ad}_x^* \underleftarrow {\lim} \mathsf{ad}_{x_i}^*$.\\

If we set $\mathfrak{g}^\flat=T_e^\flat\mathsf{G}$, we have $\mathfrak{g}^\flat= \underleftarrow {\lim}\mathfrak{g}_i^\flat$ (cf. \cite{CaPe23} p.~351).    \\  
  
The bi-invariance of each $T^\flat\mathsf{G}_i$ implies the bi-invariance of $T^\flat \mathsf {G}$.\\

From Remark~\ref{R_PropertyOfDirectLimitFibre} 2.,  the bi-invariance or each $P_i$  implies  the bi-invariance of $P$.\\

This completes the proof for direct limits of sequences of  partial Banach Poisson-Lie groups.
\end{proof}

\subsubsection{Some Examples of Convenient Partial  Poisson Lie Groups}\label{___ConvenientNotBanachPoissonLiegroups}

\begin{example}
\label{ex_UnC}
{\sf Structure of Poisson-Lie group on $\mathsf{U}(n)$.}\\ 
We first recall how to build a structure of Poisson-Lie group on $\mathsf{U}(n):=\mathsf{U}(n,\mathbb{C})$. Let $\mathfrak{u}(n)$ (cf.\cite{Tum20})\footnote{This situation is a particular case of a more general context exposed in \cite{Kos04} or \cite{LuWe90} for instance.}
the Lie algebra of the unitary group $\mathsf{U}(n)$, that is  the space of skew-symmetric matrices and $\mathfrak{b}(n)$ the Lie algebra  of the Borel group $ \mathsf{B}(n):= \mathsf{B}(n, \mathbb{C})$, that is the space of upper triangular matrices with real coefficient on diagonal.
Then the Lie algebra of the group $\mathsf{GL}(n,\mathbb{C})$ is $\mathfrak{gl}(n)$ of all complex matrices and the  Iwasawa decomposition gives rise to the decomposition 
\[
\mathfrak{gl}(n)=\mathfrak{u}(n)\oplus \mathfrak{b}(n).
\]
Consider  the non degenerate symmetric bilinear continuous map $\langle.,.\rangle $ given by
$$\langle A,B\rangle=\Im\mathsf{Tr}(AB)$$
where  $\Im\mathsf{Tr}(AB)$ denotes the imaginary part of the trace of the matrix $AB$.
 Then $(\mathfrak{u}(n),\mathfrak{b}(n))$ is a bialgebra to which is associated a canonical Poisson Lie structure on  $\mathsf{U}(n)$. More precisely, in this finite dimensional situation, $\mathfrak{b}(n))$ is isomorphic to the Lie algebra $\mathfrak{u}(n)^*$. From the decomposition $\mathfrak{gl}(n)=\mathfrak{u}(n)\oplus \mathfrak{b}(n)$ we denote by $p_1$ (resp. $p_2$) the canonical projection of  $ \mathfrak{gl}(n)$ on $\mathfrak{u}(n)$ (resp. $\mathfrak{b}(n)$  the associated  multiplicative Poisson tensor  $\Lambda_R^n:  \mathsf{U}(n) \to \bigwedge \mathfrak{b}(n)$\footnote{$\mathfrak{b}(n)$ being identified with $\mathfrak{u}(n)^*$.} is characterized by (cf. \cite{LuWe90})
\begin{equation}
\label{eq_LambdaRUn}
(\Lambda_R^n)_g(\alpha_1,\alpha_2):=\Im\mathsf{Tr}\left(p_2(\mathsf{Ad}_{g^{-1}}\alpha_1)p_1(\mathsf{Ad}_{g^{-1}}\alpha_2)\right)
 \end{equation}
 The corresponding multiplicative $2$-form on  $\mathsf{U}(n)$ is\footnote{For notations, see Example~\ref{Ex_LambdaRCocycle}.} 
\[
(\Lambda^n)_g(\alpha_1,\alpha_2)=R^{**}(\Lambda_R^n)_g(\alpha_1,\alpha_2)
\]
and the  Poisson anchor $P_n:T^*\mathsf{U}(n)\to T\mathsf{U}(n)$ is the map
\[
(g,\alpha)\mapsto (\Lambda^n)_g(.,\alpha).
\]
\end{example}

\begin{example}
\label{ex_Non BanachPoissonLie group}
{\bf  A  convenient  Poisson-Lie group.}\\
We have a canonical embedding from $\mathsf{U}(n)$ into $\mathsf{U}(n+1)$ and associated, canonical embedding from $\mathfrak{u}(n)$ (resp. $\mathfrak{b}(n))$) in $\mathfrak{u}(n+1)$ (resp. $\mathfrak{b}(n+1)$) (cf.  proof of Proposition \ref{P_inclusion BoundedSets}).  
Consider  the Poisson anchor $P_n$ (resp. $P_{n+1}$) Poisson anchor  $P:T^*\mathsf{U}(n)\to T\mathsf{U}(n)$ (resp. $P_{n+1}:T*\mathsf{U}(n+1)\to T\mathsf{U}(n+1)$) as previously defined. It is clear for the previous  construction that $\{(T^\flat\mathsf{U}(n), \mathsf{U}(n), P_n)\}_{n\in \mathbb{N}}$ is an ascending sequence of finite dimensional Poisson-Lie groups whose direct limit is $\mathsf{U}(\infty)$ and which is so  provided with a {\bf convenient Poisson-Lie group structure $(T^*\mathsf{U}(\infty), \mathsf{U}(\infty), P_\infty)$ which is not a Banach Poisson Lie group}.  Note that if $f$ and $g$ are smooth maps on $\mathsf{U}(\infty)$ and if $g\in \mathsf{U}(\infty)$ then $g$ belongs to some $\mathsf{U}(n)$ and $\{f,g\}_{P_\infty}(g)=\{f,g\}_{P_n}(g)$ which does not depends of the integer $n$ such that $g$ belongs to $\mathsf{U}(n)$.
\end{example}

\begin{example}\label{ex_StrictPartialPoissonLiegroup} 
{\sf A partial Banach Poisson-Lie group which is not a Banach Poisson-Lie group.}\\
We consider the context of \cite{TuGo23}.\\
Let $\mathcal{H}$ be a separable complex Hilbert space provided with an Hilbert basis  $\left(\;\vert i\rangle \right) _{i\in \Z}$. We consider the group $\mathsf{U}(\mathcal{H})$ of skew-hermitian bounded operators and we denote by $\mathfrak{u}(\mathcal{H})$ its Lie algebra and  $\mathfrak{u}_1(\mathcal{H})$ the the Lie subalgebra of trace class of elements of $\mathfrak{u}(\mathcal{H})$. We also consider the Lie algebra of algebra of upper triangular trace-class operators :
\[
\mathfrak{b}_1^+ (\mathcal{H}):=\{\alpha\in \mathsf{L}_1 (\mathcal{H}) \footnote{cf. Appendix~\ref{__BoundedOperatorsOnAHilbertSeparableComplexSpaceAndVonNeumanAlgebras}.}\;:\; :\;\alpha \vert i\rangle \in \textrm{span}\{ \;\vert j\rangle ,j\leq i\} \textrm{ and } \langle i\vert \alpha\vert i\rangle \in \mathbb{R},\; \forall i\in \mathbb{Z}\}
\]
Then the bilinear map
\[
\begin{matrix}
\mathfrak{u}(\mathcal{H})\times \mathfrak{b}_1^+(\mathcal{H})&\to& \mathbb{R}\hfill{}\\
\hfill  (A,B)&\to&\Im\mathsf{Tr}(AB)\hfill{}
\end{matrix}
\]
is a duality pairing between $\mathfrak{u}(\mathcal{H})$ and $ \mathfrak{b}_1^+(\mathcal{H})$ (cf. \cite{TuGo23}, Proposition 10). By the way, it can be considered as a subspace of the dual  $\mathfrak{u}^*(\mathcal{H})$.\\
On the other hand, the map $\Phi: \mathsf{L}_1 (\mathcal{H})\to \mathfrak{u}^*(\mathcal{H})$ given by $\Phi(a)(x)=\Im\mathsf{Tr}(ax)$. Then the kernel on $\Phi$ is $\mathfrak{u}_1(\mathcal{H})$ and we obtain a 
continuous injective linear map from $\mathsf{L}_1 (\mathcal{H})/\mathfrak{u}_1(\mathcal{H})$ into $\mathfrak{u}^*(\mathcal{H})$ which is the predual  $\mathfrak{u}_*(\mathcal{H})$ of $\mathfrak{u}(\mathcal{H})$ is 
preserved by the coadjoint action of $\mathsf{U}(\mathcal{H})$. Moreover $\Phi(\mathfrak{b}_1^+ (\mathcal{H}))$ is a proper dense subspace of  $\mathsf{L}_1 (\mathcal{H})/\mathfrak{u}_1(\mathcal{H}$ (cf. \cite{TuGo23}, Proposition~13).

The Lie-Poisson structure on $\mathsf{U}(\mathcal{H})$ is built from the decomposition
\[
\mathsf{L}_2(\mathcal{H})=\mathfrak{u}_2(\mathcal{H})\oplus \mathfrak{b}_2^+(\mathcal{H})
\]
where $ \mathfrak{u}_2(\mathcal{H})=\mathfrak{u}(\mathcal{H})\cap \mathsf{L}_2(\mathcal{H})$ and $\mathfrak{b}_2^+(\mathcal{H})$ is the Lie algebra of  Hilbert-Schmidt upper triangular operators with real coefficients on the diagonal. Then we have a Banach Poisson-Lie structure on $\mathsf{U}(\mathcal{H})$ defined in the following way (cf. \cite{TuGo23}, Theorem~14):

$T^\flat \mathsf{U}(\mathcal{H})=\bigcup_{g\in \mathsf{U}(\mathcal{H}}R_{g^{-1}}\mathfrak{u}_*(\mathcal{H})$ which the predual bundle of $T\mathsf{U}(\mathcal{H})$

the Poisson structure is defined by $\Lambda_R:\mathsf{U}(\mathcal{H})\to  \bigwedge^2\mathfrak{u}_*(\mathcal{H})$ \footnote{cf. Proposition~\ref{P_MultiplicativeLambda}.} given by 
\begin{equation}
\label{eq_LambdaRH}
(\Lambda_R^{\mathcal{H}})_g([\alpha_1],[\alpha_2])=\Im\mathsf{Tr}\left(p_2(\mathsf{Ad}_{g^{-1}}\alpha_1)p_1(\mathsf{Ad}_{g^{-1}}(p_2(\alpha_2))\right)
\end{equation}
where $[\alpha_l]=\Phi (\alpha_l)\in \mathsf{L}_1 (\mathcal{H})/\mathfrak{u}_1(\mathcal{H})\equiv \mathfrak{u}_*(\mathcal{H})$ for $l=1,2$. We denote by $P_\mathcal{H}$ the associated Poisson anchor.\\

 Now,  considering $\mathcal{H}$ provided with a fixed Hilbert basis  $\left( \;\vert i\rangle \right)_{i\in \Z}$, for each $n\in \N$, we have a canonical embedding of $\mathsf{U}(n)$ in $\mathsf{U}(\mathcal{H})$ which is a closed Banach Lie subgroup. By the way, $\mathfrak{b}(n)$  this gives rise to an embedding of Banach space of $\mathfrak{b}(n)$ in  $\mathfrak{b}_2^+(\mathcal{H})$ which identified with embedding.
Note that  $\Phi(\alpha)$ belongs to $\mathfrak{u}^*(n)$ and so  the class $[\alpha]$ is  reduced to $\alpha$. Moreover, in one hand,  according to the relations (9) and (10) in \cite{TuGo23}, $\mathfrak{u}(n)$  and  $\mathfrak{b}(n)$  are invariant by conjugation by a unitary operator. In the other hand,    according to (\ref{eq_LambdaRUn}) and  (\ref{eq_LambdaRH})  and the previous identifications, $\Lambda^n_R$ is the restriction of $\Lambda_R^{\mathcal{H}}$ to $\mathfrak{b}(n)$. 
We denote by $P_\mathcal{H}:T^*\mathsf{U}(\mathcal{H})\to T\mathsf{U}(\mathcal{H})$ the Poisson anchor associated to $\Lambda_R^{\mathcal{H}}$, it follows that $P_n$ is nothing but the restriction of $P_\mathcal{H}$ to $T^*\mathsf{U}(n)$ 
and so $ \left( T^*\mathsf{U}(n),\mathsf{U}(\mathcal{H}), P_\mathcal{H} \right) _{\vert T^*\mathsf{U}(n))})$ is a {\bf Banach partial Poisson-Lie group  which is not a Banach Poisson-Lie group}.
\end{example}

\begin{remark}
\label{ex_complement}${}$
\begin{enumerate}
\item[1.] 
As in finite dimension, we can define the notion of convenient partial Poisson-Lie group as follows:\\
let $(T^\flat \mathsf{G}\mathsf{G},P)$ be a Poisson Lie group.  A closed supplemented subgroup $\mathsf{H}$ of $\mathsf{G}$ will be called a \emph{convenient partial Poison-Lie subgroup} of $\mathsf{G}$ if there exists a convenient partial Poisson Lie group $ \left( T^\flat \mathsf{H}, \mathsf{H}, Q \right) $ on $\mathsf{H}$ such that $T^\flat \mathsf{H}$ is a closed supplemented subbundle of $T^\flat \mathsf{G}$ whose  typical fibre $\mathfrak{h}^\flat$ is a Lie subalgebra of $\mathfrak{g}^\flat$. By the way, $ \left( T^*\mathsf{U}(n), \mathsf{U}(n), P_\mathcal{H} \right) $ is a Banach Lie-Poisson subgroup of  $T_*\mathsf{U}(\mathcal{H}), \mathsf{U}(\mathcal{H}, P_\mathcal{H})$. 
\item[2.] 
From Example \ref{ex_Non BanachPoissonLie group} and Example \ref{ex_StrictPartialPoissonLiegroup}, it follows easilly that $\mathsf{U}(\infty)$ can be considered as a subset of  $\mathsf{U}(\mathcal{H})$,  the inclusion is a continuous smooth injective map  and  $\mathsf{U}(\infty)$ is dense in $\mathsf{U}(\mathcal{H})$. It is a convenient weak submanifold of $\mathsf{U}(\mathcal{H})$ but it is not a convenient partial Poisson Lie subgroup of  $\mathsf{U}(\mathcal{H})$.
\end{enumerate}
\end{remark}

\section{Convenient Poisson-Lie Groupoids}
\label{__ConvenientPoissonLieGroupoids}

In this section, we recall the definition of a topological groupoid  and some generalization, from \cite{BGJP19}, of these definitions to convenient Lie groupoids and associated Lie algebroids, in particular in the Banach setting. The reader will find the definition of partial Poisson-Lie groupoid and symplectic Lie groupoid and also their principal properties in the last paragraph.

\subsection{Topological Groupoids}
\label{___TopologicalGroupoids}

\begin{definition}
\label{D_TopologicalGroupoid}
A topological groupoid $\mathcal{G} \rightrightarrows M$ is a pair $(\mathcal{G},{M})$ of topological spaces such that  $\mathcal{G}$ may not be Hausdorff but $M$ is Hausdorff,  with the following structure maps:
\begin{description}
\item[\textbf{(TGr1)}]
two surjective open continuous maps ${\bf s}:  \mathcal{G}\rightarrow M$ and ${\bf t}: \mathcal{G}\rightarrow M$ called \emph{source}\index{source} and \emph{target}\index{target} maps, respectively;
\footnote{Each point $g\in G$ can be seen as an arrow $g : \textbf{s}(g)\rightarrow{\bf t}(g)$ which joins ${\bf s}(g)$ to ${\bf t}(g)$.}
\item[\textbf{(TGr2)}]
a continuous map  ${\bf m}:\mathcal{G}^{(2)}\rightarrow\mathcal{G}$, where $\mathcal{G}^{(2)}:=\{(g,h)\in\mathcal{G}\times\mathcal{G}\mid {\bf s}(g) = {\bf t}(h)\}$ is provided with the induced topology from the  product topology on $\mathcal{G}\times \mathcal{G}$, called \emph{multiplication}\index{multiplication!in a groupoid} denoted ${\bf m}(g,h)=gh$ and satisfying an associativity relation  in the sense that the product $(gh)k$ is defined if and only if $ g(hk)$  is defined and in this case we must have $(gh)k = g(hk)$;
\item[\textbf{(TGr3)}]
a continuous  embedding ${\bf 1} : M \rightarrow\mathcal{G}$ called \emph{identity section}\index{identity section!of a groupoid} which satisfies
$g{\bf 1}_x = g$ for all $g\in{\bf s}^{-1}(x)$, and ${\bf 1}_x g = g$ for all $g\in{\bf t}^{-1}(x)$
(what in particular implies ${\bf s}\circ{\bf 1}=\operatorname{id}_{M}={\bf t}\circ{\bf 1}$);
\item[\textbf{(TGr4)}]
a homeomorphism  ${\bf i} : \mathcal{G}\rightarrow \mathcal{G}$,  denoted ${\bf i}(g)=g^{-1}$ called \emph{inversion}\index{inversion!in a groupoid}, which satisfies
$gg^{-1}={\bf 1}_{{\bf t}(g)}$, $\;g^{-1}g={\bf 1}_{{\bf s}(g)}$ (what in particular implies ${\bf s}\circ{\bf i}={\bf t}$, $\; {\bf t}\circ {\bf i}={\bf s}$).
\end{description}
The space $ M$ is  called the \emph{base} of the groupoid\index{base!of a groupoid}, and $\mathcal{G}$
is called the \emph{total space}\index{total space} of the groupoid.
\end{definition}
For any $(x,y) \in M^2$, we denote $\mathcal{G}(x,-):={\bf s}^{-1}(x)$,  $\mathcal{G}(-,y):={\bf t}^{-1}(y)$, and
\[
\mathcal{G}(x,y):=\{g\in\mathcal{G}\mid {\bf s}(g)=x,\ {\bf t}(g)=y\}=\mathcal{G}(x,-)\cap\mathcal{G}(-,y).
\]
The \emph{isotropy group}\index{isotropy group} at $x\in M$ is the set
\[
\mathcal{G}(x)=\{ g\in \mathcal{G}\mid {\bf s}(g)=x={\bf t}(g)\}=\mathcal{G}(x,x)\subseteq\mathcal{G}
\]
and the \emph{orbit}\index{orbit} of $x\in M$ is the set
\[
\mathcal{G}.x=\{{\bf t}(g)\mid g\in {\bf s}^{-1}(x)\}={\bf t}(\mathcal{G}(x,-))\subseteq M.
\]
For any $g\in \mathcal{G}$, if ${\bf s}(g)=x$ and ${\bf t}(g)=y$,
we define its corresponding left translation\index{left translation}
\[
L_g: \mathcal{G}(-,x)\rightarrow \mathcal{G}(-,y), \quad h\mapsto gh,
\] and similarly its right translation\index{right translation}
\[
R_g:\mathcal{G}(x,-)\rightarrow \mathcal{G}(y,-), \quad h\mapsto hg.
\]
Each of these maps $L_g$ and $R_g$ is a homeomorphism.

The topological groupoid $\mathcal{G}\rightrightarrows M$ is called \emph{{\bf s}-connected}\index{groupoid!{\bf s}-connected} if its {\bf s}-fibres $\mathcal{G}(x,-)$ are connected for all $x\in M$, and \emph{{\bf s}-simply connected}\index{groupoid!{\bf s}-simply connected} if all its {\bf s}-fibres are connected and simply connected.

The topological groupoid is called  \emph{transitive}\index{groupoid!transitive} if the map
$({\bf s},{\bf t}): \mathcal{G}\rightarrow M\times M$
is surjective.

A \emph{topological morphism between  topological groupoids} $ \mathcal{G}\rightrightarrows M$ and $\mathcal{H}\rightrightarrows N$ is given by a pair of  continuous maps $\Phi:\mathcal{G}\rightarrow \mathcal{H}$ and $\phi:M\rightarrow N$ which are compatible with the structure maps, that is:
\begin{itemize}
\item[$\bullet$]
$\forall g\in \mathcal{G},\; \phi({\bf s}(g))={\bf s}(\Phi(g))$, $\;\;\phi({\bf t}(g))={\bf t}(\Phi(g))$  and  $\Phi(g^{-1})=\Phi(g)^{-1}$;
\item[$\bullet$]
$\forall(g,g')\in\mathcal{G}^{(2)}, \;  \Phi(gg')=\Phi(g)\Phi(g')$;
\item[$\bullet$]
$\forall x \in M, \; \Phi({\bf 1}_x)={\bf 1}_{\phi(x)}$.
\end{itemize}
Note that $\phi$ is uniquely determined by $\Phi$ and therefore a morphism between the groupoids $ \mathcal{G}\rightrightarrows M$ and $\mathcal{H}\rightrightarrows N$ is given only by a continuous map $\Phi:\mathcal{G}\rightarrow \mathcal{H}$ which satisfies the above compatibility conditions.

A \emph{subgroupoid}\index{subgroupoid} of $\mathcal{G}\rightrightarrows M$  is a groupoid  $\mathcal{H}\rightrightarrows N$ such that  $\mathcal{H}\subset \mathcal{G}$
and the inclusion $\iota:\mathcal{H}\rightarrow \mathcal{G}$ is a topological morphism of groupoids.
A subgroupoid $\mathcal{H}\rightrightarrows N$ of $\mathcal{G}\rightrightarrows M$ is called a \emph{wide subgroupoid}\index{subgroupoid!wide}  if $N=M$.

\subsection{Convenient Lie groupoids}
\label{___ConvenientLieGroupoids}
As in \cite{BGJP19}, for a Banach-Lie algebroid, the definition of a n.n.H. convenient Lie groupoid requires the following analogue basic facts whose proof requires adapted arguments.

\begin{proposition}
\label{P_BasicPropertiesOfTopologicalGroupoids}
Let $\mathcal{G}\rightrightarrows M$ be a topological groupoid  satisfying the following conditions:
\begin{enumerate}
\item[(i)]
$ \mathcal{G}$ is a n.n.H convenient manifold and $M$ is a Hausdorff convenient  manifold;
\item[(ii)]
the map ${\bf s}:\mathcal{G}\to M$ is a submersion;
\item[(iii)]
the map ${\bf i}: \mathcal{G}\to \mathcal{G}$ is a smooth diffeomorphism.
\end{enumerate}
Then ${\bf t}$ is also a  submersion and so, for any $x\in M$, each fibre $\mathcal{G}(x,-)$ and $\mathcal{G}(-,x) $ are n.n.H.  convenient submanifolds of $\mathcal{G}$ and $\mathcal{G}(x,-)$ is Hausdorff if and only if  $\mathcal{G}(-,x) $ is Hausdorff.  Moreover we also have:
\begin{enumerate}
\item
the topological space $\mathcal{G}^{(2)}$ is a n.n.H. convenient submanifold of $\mathcal{G}\times \mathcal{G}$;
\item
the map ${\bf 1} : M \to\mathcal{G}$ is smooth and ${\bf 1}(M)$ is a  convenient  submanifold of  $\mathcal{G}$.
\end{enumerate}
Moreover, if  $ \mathcal{G}$ is a  Hausdorff manifold, then, for each $x\in M$, each fibre $\mathcal{G}(x,-)$ and $\mathcal{G}(-,x) $ are   Hausdorff submanifolds of $\mathcal{G}$  and $\mathcal{G}^{(2)}$ is a  Hausdorff submanifold of $\mathcal{G}\times \mathcal{G}$.
\end{proposition}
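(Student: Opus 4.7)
The plan is to extract everything from the single identity $\mathbf{t}=\mathbf{s}\circ\mathbf{i}$ together with the submersion calculus in the n.n.H.\ convenient setting. First I would observe that the groupoid axioms give $\mathbf{s}\circ\mathbf{i}=\mathbf{t}$, so by (ii) and (iii) the target $\mathbf{t}$ is smooth and, being the composition of a diffeomorphism with a submersion, it is itself a submersion. Consequently, for each $x\in M$, the fibres $\mathcal{G}(x,-)=\mathbf{s}^{-1}(x)$ and $\mathcal{G}(-,x)=\mathbf{t}^{-1}(x)$ are (n.n.H.) convenient submanifolds of $\mathcal{G}$, by the local-product model for submersions in charts. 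The restriction of $\mathbf{i}$ to $\mathcal{G}(x,-)$ is a smooth bijection onto $\mathcal{G}(-,x)$ (it lands in the target-fibre since $\mathbf{s}\circ\mathbf{i}=\mathbf{t}$, and $\mathbf{i}\circ\mathbf{i}=\operatorname{id}$ gives the inverse), and hence a diffeomorphism; in particular one fibre is Hausdorff iff the other is, and both are Hausdorff as soon as $\mathcal{G}$ is, since Hausdorffness passes to submanifolds.

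For Point~(1), I would treat $\mathcal{G}^{(2)}$ as the preimage of the diagonal. Consider
\[
\Phi:\mathcal{G}\times\mathcal{G}\longrightarrow M\times M,\qquad \Phi(g,h)=(\mathbf{s}(g),\mathbf{t}(h)).
\]
Since $\mathbf{s}$ and $\mathbf{t}$ are submersions and both factors appear in independent arguments, $\Phi$ is a submersion. Because $M$ is a Hausdorff convenient manifold, the diagonal $\Delta_M\subset M\times M$ is a closed convenient submanifold, so $\mathcal{G}^{(2)}=\Phi^{-1}(\Delta_M)$ is a convenient submanifold of $\mathcal{G}\times\mathcal{G}$ of the expected codimension. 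It is n.n.H.\ in general, and Hausdorff whenever $\mathcal{G}$ is.

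For Point~(2), the strategy is local. Fix $x\in M$ and work in a chart around $\mathbf{1}_x$ in which $\mathbf{s}$ becomes a linear projection; such a chart exists by the submersion assumption (ii). Shrinking it so that $\mathbf{t}$ is also a projection in suitable coordinates, the unit section is characterized in the chart by the conditions $\mathbf{s}(g)=\mathbf{t}(g)=y$ together with the fixed-point equation $\mathbf{i}(g)=g$, the latter using only the smooth involution supplied by (iii). Since $\mathbf{i}$ is a smooth diffeomorphism with $\mathbf{i}^2=\operatorname{id}$, its local fixed-point set is a smooth split submanifold, and intersecting with the isotropy locus cut out by the pair $(\mathbf{s},\mathbf{t})$ carves out $\mathbf{1}(M)$ locally as a smooth submanifold on which $\mathbf{s}$ restricts to a diffeomorphism onto an open set of $M$. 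Smoothness of $\mathbf{1}$ then follows as $\mathbf{s}|_{\mathbf{1}(M)}^{-1}$. Combined with the relation $\mathbf{s}\circ\mathbf{1}=\operatorname{id}_M$, the tangent map $T_x\mathbf{1}$ is a split injection, giving that $\mathbf{1}(M)$ is a convenient submanifold; Hausdorffness is again inherited from $\mathcal{G}$.

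The main obstacle, and the only nontrivial step, is upgrading the continuous embedding $\mathbf{1}$ to a smooth one without any a priori smoothness of the multiplication: the geometric facts on $\mathbf{t}$, on the fibres, and on $\mathcal{G}^{(2)}$ follow formally from the submersion calculus, but the smoothness of $\mathbf{1}$ requires exploiting the structural interplay of $\mathbf{s}$, $\mathbf{t}$ and $\mathbf{i}$ via the fixed-point characterization, which must be checked carefully in the n.n.H.\ convenient framework since one cannot invoke the usual finite-dimensional or even Banach-analytic implicit function theorem directly.
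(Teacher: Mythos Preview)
Your treatment of $\mathbf{t}$, the fibres, the Hausdorff equivalence, and $\mathcal{G}^{(2)}$ is correct and essentially identical to the paper's proof: the paper also writes $\mathbf{t}=\mathbf{s}\circ\mathbf{i}$, invokes the submersion preimage theorem for the fibres, uses that $\mathbf{i}$ exchanges $\mathcal{G}(x,-)$ and $\mathcal{G}(-,x)$ for the Hausdorff statement, and realises $\mathcal{G}^{(2)}$ as $(\mathbf{s},\mathbf{t})^{-1}(\Delta_M)$.

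For Point~(2), however, your argument has a genuine gap. Your proposed local characterisation of $\mathbf{1}(M)$ by the conditions $\mathbf{s}(g)=\mathbf{t}(g)$ and $\mathbf{i}(g)=g$ is false as stated: an element $g$ with $\mathbf{s}(g)=\mathbf{t}(g)=x$ and $g=g^{-1}$ is any element of order at most $2$ in the isotropy group $\mathcal{G}(x)$, not necessarily $\mathbf{1}_x$. You implicitly assume that near $\mathbf{1}_x$ there are no such nontrivial involutions, but this is exactly the kind of statement that would require knowing $\mathcal{G}(x)$ is a Lie group with a local exponential---structure not yet available at this stage of the argument. Moreover, even the preliminary step that $\operatorname{Fix}(\mathbf{i})$ is a split submanifold relies on the Bochner--Cartan linearisation of an involution, whose standard proof invokes the inverse function theorem and is therefore not available in the general convenient setting; you flag this as delicate but do not resolve it.

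The paper does not give a self-contained argument for Point~(2) either; it simply refers to \cite{BGJP19}, Proposition~3.1, saying the proof is ``formally the same''. So the honest answer is that your fixed-point approach does not work as written, and the paper's own proof consists of a citation rather than an argument you could have reconstructed blind.
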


\begin{proof}
At first, since from (ii), ${\bf s}$ is a submersion (cf. \cite{CaPe23}, Definition~1.13) and, from  (iii), ${\bf i}$ is a diffeomorphism,  it follows that ${\bf t}={\bf s}\circ {\bf i}$, is a  submersion (cf. \cite{CaPe23}, Proposition~1.15).
Therefore, from \cite{CaPe23}, Theorem~1.14, the fibres $\mathcal{G}(x,-)$ and $\mathcal{G}(-,x) $ are split  n.n.H. convenient immersed submanifolds of $\mathcal{G}$, for any $x\in M$.
Now assume that $\mathcal{G}(x,-)$ is Hausdorff.
We have $\mathcal{G}(-,x)={\bf i}(\mathcal{G}(x,-))$. 
Since ${\bf i}$ is a diffeomorphism of $\mathcal{G}$ this implies that $\mathcal{G}(-,x)$ is also Hausdorff.
The same argument can be applied for the converse.

Consider the map $({\bf s},{\bf t}):\mathcal{G}\times \mathcal{G}\to M\times M$.
If $\mathcal{D}$ denotes the diagonal in $ M\times M$, we have
 $\mathcal{G}^{(2)}=({\bf s},{\bf t})^{-1}(\mathcal{D})$.
Therefore, again from  \cite{CaPe23}, Theorem~1.14, $\mathcal{G}^{(2)}$ is a n.n.H. convenient submanifold of $\mathcal{G}\times \mathcal{G}$ since ${\bf s}$ and ${\bf t}$ are   submersions.
Note that  if  $\mathcal{G}$ is Hausdorff so are $ \mathcal{G}(x,-)$ and $\mathcal{G}(-,x) $ for all $x\in M$ and the same is true for $\mathcal{G}^{(2)}$.  \\
The proof of the last part is formally the same as in \cite{BGJP19}, Proposition~3.1.
\end{proof}

\begin{definition}
\label{D_ConvenientLieGroupoid}
A \emph{n.n.H. convenient Lie groupoid}\index{convenient!Lie groupoid} is a topological groupoid  $\mathcal{G}\rightrightarrows M$
satisfying the following conditions:
\begin{description}
\item[\textbf{(CLGr1)}]
$ \mathcal{G}$ is a n.n.H. convenient manifold and $M$ is a Hausdorff convenient manifold;
\item[\textbf{(CLGr2)}]
the map ${\bf s}:\mathcal{G} \to M$ is a
submersion;
\item[\textbf{(CLGr3)}]
the map ${\bf i}: \mathcal{G} \to \mathcal{G}$ is  smooth;
\item[\textbf{(CLGr4)}]
the multiplication ${\bf m}:\mathcal{G}^{2} \to \mathcal{G}$  is smooth.
\end{description}
A convenient Lie  groupoid $\mathcal{G}\rightrightarrows M$ is called split\index{groupoid!split} if, for every $(x,y) \in M^2$, the set $\mathcal{G}(x,y)$ is a  submanifold of $\mathcal{G}$.\\
A  convenient Lie morphism between the  convenient Lie groupoids $ \mathcal{G}\rightrightarrows M$ and $\mathcal{H}\rightrightarrows N$ is a  topological morphism $\Phi:\mathcal{G}\to\mathcal{H}$ which is a smooth map.
\end{definition}

When $ \mathcal{G}$  is modelled on a Banach space (resp. Fr\'echet space), $\mathcal{G}$ is called a Banach-Lie groupoid (resp. Fr\'echet-Lie groupoid).

\subsection{Examples of Convenient Lie Groupoids}
\label{___ExamplesOfConvenientLieGroupoids}
We will adapt to the convenient context  some classical examples about finite-dimensional Lie groupoids (cf. \cite{BGJP19}).

\subsubsection{Convenient Lie Groups}
\label{____ExGroupoids_BanachLieGroups}
Any convenient Lie group $G$ is a convenient Lie groupoid: the set of  arrows $\mathcal{G}$ is the set $G$ and the set of objects $M$ is reduced to the singleton $\{{\bf 1}\}$, where ${\bf 1}\in G$ is the unit element.

\subsubsection{Convenient Lie Pair Groupoid}
\label{____ExGroupoids_BanachLiePair}
Given a convenient manifold $M$, let $\mathcal{G}:=M \times M$ and let ${\bf s}$ and ${\bf t}$ be the Cartesian projections of $M\times M$ on the first and the second factor, respectively.
The multiplication map ${\bf m}$ and the inverse ${\bf i}$ are respectively  defined as  ${\bf m}((x,y),(y,z))=(x,z)$ and ${\bf i}(x,y)=(y,x)$ and so they are smooth. Finally, the map ${\bf 1}: x \mapsto (x,x)$ is also smooth.
We then obtain a convenient-Lie groupoid $M\times M \rightrightarrows M$.

\subsubsection{General Linear Banach-Lie Groupoids}
\label{____ExGroupoids_GeneralLinearBanachLieGroupoids}
Let $\pi:\mathcal{A}\to M$ be a Banach vector bundle.
The general linear Banach  groupoid $\mathcal{GL}(\mathcal{A}) \rightrightarrows M$ is the  Banach groupoid such that $\mathcal{GL}(\mathcal{A})$ is the set of  linear isomorphisms $g:\mathcal{A}_x \to \mathcal{A}_y$ between each pair of fibres $(\mathcal{A}_x,\mathcal{A}_y)$. The source map and the target map are obvious and the multiplication is the composition of linear isomorphisms and ${\bf 1}_x=\operatorname{Id}_{\mathcal{A}_x}$.\\

\subsubsection{Disjoint Union of n.n.H. Convenient Lie groupoids}
\label{____ExGroupoids_DisjointUnionOfnnHBanachLieGroupoids}
Let $\{\mathcal{G}_\lambda \rightrightarrows M_\lambda\}_{\lambda\in \textbf{L}}$ be a family of n.n.H. convenient-Lie groupoids.
We denote $\mathcal{G}:=\bigsqcup\limits_{\lambda\in \textbf{L}}\mathcal{G}_\lambda$ and $M:=\bigsqcup\limits_{\lambda\in \textbf{L}}M_\lambda$.
Since we consider here disjoint unions, the structure of the n.n.H. convenient-Lie groupoid $\mathcal{G} \rightrightarrows M$ is clearly defined by the collection of structure maps for each particular $\mathcal{G}_\lambda \rightrightarrows M_\lambda$ for $\lambda\in \textbf{L}$.\\
For example, if we consider a finite sequence of convenient  bundles $\mathcal{A}_i \rightarrow M_i$, for $i \in \{1,\dots,n\}$,
then we have the natural structure of a n.n.H. convenient-Lie groupoid  $\bigsqcup\limits_{i=1}^n \mathcal{GL}(\mathcal{A}_i) \rightrightarrows \bigsqcup\limits_{i=1}^n  M_i$.\\
More generally, given any convenient-Lie n.n.H. groupoid $\mathcal{G}\rightrightarrows M$, if $M=\bigsqcup\limits_{\lambda \in \textbf{L}} M_\lambda$ is a partition of $M$ into open $\mathcal{G}$-invariant submanifolds, then the corresponding groupoids $\mathcal{G}_\lambda \rightrightarrows M_\lambda$ obtained by restriction are n.n.H. Banach-Lie groupoids and $\mathcal{G} \rightrightarrows M$ is the disjoint union of the family $\{\mathcal{G}_\lambda \rightrightarrows M_\lambda\}_{\lambda \in \textbf{L}}$.

\subsubsection{Action of a Convenient Lie Group}
\label{____ExGroupoids_ActionOfABanachLieGroup} 
To each  smooth action 
$
\begin{array}{cccc}
A:	& G \times M	& \to	& M \\
	&	(g,x)		& \mapsto & g.x
\end{array}
$ 
of a convenient Lie group on a convenient manifold, is associated a convenient Lie groupoid $\mathcal{G} \rightrightarrows M$ defined in the following way:
\begin{itemize}
\item[$\bullet$]
$\mathcal{G}:=M\times G$;
\item[$\bullet$]
${\bf s}(x,g):=x$ and ${\bf t}(x,g):= A(g,x)=g.x$;
\item[$\bullet$]
if $y=g.x$ then ${\bf m}((y,h),(x,g)):=(x,hg)$;
\item[$\bullet$]
${\bf i}(x,g):=(g.x,g^{-1})$;
\item[$\bullet$]
${\bf 1}_x=(x,{\bf 1})$.
\end{itemize}

The smoothness of the action clearly implies the smoothness of all these previous maps.\\

It is easily seen that for any $x_0,y_0\in M$ one has
\[
\mathcal{G}(x_0,y_0)=\{x_0\}\times\{g\in G\mid g.x_0=y_0\},
\]
and in particular the isotropy group at $x_0$ is
\[
\mathcal{G}(x_0)=\{x_0\}\times G(x_0)
\]
where $G(x_0):=\{g\in G\mid g.x_0=x_0\}$.

It is worth pointing out that  the above groupoid needs not split.
To obtain a specific example, let $\mathcal{X}$ be any real Banach space with a closed linear subspace $\mathcal{X}_0$ which has no supplement in $\mathcal{X}$, e.g. the space of all bounded sequences of real numbers $\mathcal{X}=\ell^\infty \left( \mathbb{R} \right) $ with its subspace $\mathcal{X}_0=c_0$ consisting of the sequences which converge to $0$ (cf. \cite{CaPe23}, Example~1.2 ).
Then the abelian Banach-Lie group $G= \left( \mathcal{X},+ \right) $ acts smoothly transitively on the Banach manifold $M=\mathcal{X} \backslash \mathcal{X}_0$ by  $A(g,x+\mathcal{X}_0):=g+x+\mathcal{X}_0$ for all $g,x \in \mathcal{X}$.
If we define the groupoid $\mathcal{G}:=M\times G \rightrightarrows M$ as above, then the isotropy group at the point $\mathcal{X}_0\in M$, that is,
\[
\mathcal{G}(\mathcal{X}_0)=\{\mathcal{X}_0\} \times \mathcal{X}_0 \subseteq \mathcal{G}
\]
is not a submanifold of $\mathcal{G}=M \times \mathcal{X}$ since $\mathcal{X}_0$ is not a supplemented subspace of $\mathcal{X}$.

\subsubsection{The Gauge Groupoid of a Banach Principal Bundle}
\label{____ExGroupoids_TheGaugeGroupoidOfAPrincipalBundle}
A Banach principal bundle is a locally trivial fibration $\pi: P\to M$ over a connected Banach manifold whose typical fibre is a Banach-Lie group $G$.
We then have a right action of $G$ on $P$ whose corresponding quotient $P/G$ is canonically diffeomorphic to $M$.
We get a right diagonal  action of $G$ on $P\times P$ in an evident way.
The \emph{gauge groupoid}\index{gauge groupoid} is the set of orbits of this action, that is, the quotient $\mathcal{G}:=(P\times P)/G$ provided with the quotient topology.
The source (resp. target) of the equivalence class of $(v,u)$ is  $\pi(u)$ (resp. $\pi(v)$) (cf. \cite{Marl08}).

\subsubsection{Fundamental Groupoid of a  Banach Manifold}
\label{____ExGroupoids_FundamentalGroupoidOfABanachManifold}
Recall that to any connected topological space $M$ is associated its corresponding \emph{fundamental topological groupoid} $\Pi(M)\rightrightarrows M$ where  $\Pi(M)$ is the set of homotopy classes of paths with fixed end points with source map  (resp. target map) the origin (resp. the end) of a class of paths, where the composition is the concatenation of paths whenever  it is defined and the inverse is the opposite class.
When $M$ is a connected  Banach manifold, then each source fibre  is a universal covering of $M$.
In particular, such a fibre is a Banach manifold. Moreover, it is also a principal bundle over $M$ with structural group $\pi_1(M)$. Then the gauge groupoid of this principal bundle can be identified with $\Pi(M)$ and so we get a structure of n.n.H. Banach-Lie groupoid on $\Pi(M)$.

\subsection{Additional Properties of Banach-Lie Groupoids}
\label{___SomePropertiesOfBanachLieGroupoids}%
We recall below some properties of a Banach-Lie groupoid (cf. \cite{BGJP19}, Theorem~3.3). 

\begin{theorem}
\label{T_OrbitBanachGroupoid}
Let  $\mathcal{G}\rightrightarrows M$ be a n.n.H. (pure) Banach-Lie groupoid and define ${\bf t}_x$ as the restriction of ${\bf t}$ to $\mathcal{G}(x,-)$.\\
For any $x \in M$, the following assertions hold.
\begin{enumerate}
\item
For all $x\in M$ and any $y\in \mathcal{G}.x$ the set $\mathcal {G}(x,y)$ is a closed
submanifold of $\mathcal{G}$. In particular, the  isotropy group $\mathcal{G}(x)$ is a Banach-Lie group and its Lie algebra is isomorphic to $\ker T_{{\bf 1}_x}{\bf s}\cap \ker T_{{\bf 1}_x}{\bf t} $.
\item
For any $x\in M$ for which $\mathcal{G}(x,y)$ is a  Banach submanifold of $\mathcal{G}$, for all $y\in \mathcal{G}.x={\bf t}_x(\mathcal{G}(x,-))$, then the orbit $\mathcal{G}.x$ is a pure Banach manifold whose inclusion map $\mathcal{G}.x \hookrightarrow{} M$ is smooth.
\item
Under the assumption of (2), ${\bf t}_x: \mathcal{G}(x,-) \to \mathcal{G}.x$ is a Banach principal $\mathcal{G}(x)$-bundle.
\end{enumerate}
\end{theorem}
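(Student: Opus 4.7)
The strategy is the Banach adaptation of the classical finite-dimensional argument: reduce (1) to a statement about the isotropy group $\mathcal{G}(x)$ by using that left and right translations are diffeomorphisms between source/target fibres, and then obtain (2)--(3) by viewing ${\bf t}_x$ as the quotient of $\mathcal{G}(x,-)$ by the right action of $\mathcal{G}(x)$.

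For (1), fix $y\in\mathcal{G}.x$ and choose $g_0\in\mathcal{G}(x,y)$. A direct check using the groupoid axioms shows that the left translation $L_{g_0^{-1}}:\mathcal{G}(-,y)\to\mathcal{G}(-,x)$, $h\mapsto g_0^{-1}h$, is a smooth diffeomorphism between two closed Banach submanifolds of $\mathcal{G}$ (Proposition~\ref{P_BasicPropertiesOfTopologicalGroupoids}) that restricts to a bijection $\mathcal{G}(x,y)\to\mathcal{G}(x,x)=\mathcal{G}(x)$. It is therefore enough to prove that $\mathcal{G}(x)$ is a closed Banach submanifold of $\mathcal{G}(-,x)$ with tangent space at ${\bf 1}_x$ equal to $\ker T_{{\bf 1}_x}{\bf s}\cap\ker T_{{\bf 1}_x}{\bf t}$; transport via $L_{g_0}$ then propagates the conclusion to $\mathcal{G}(x,y)$. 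To handle the isotropy claim, I consider the restriction ${\bf t}_x:\mathcal{G}(x,-)\to M$, whose fibre over $x$ is $\mathcal{G}(x)$. For any $h\in\mathcal{G}(x)$ the right translation $R_h:\mathcal{G}(x,-)\to\mathcal{G}(x,-)$, $g\mapsto gh$, is a smooth diffeomorphism satisfying ${\bf t}_x\circ R_h={\bf t}_x$, so the kernel and image of $T_g{\bf t}_x$ are transported isomorphically along the orbits, in particular along $\mathcal{G}(x)$. Applying the Banach constant-rank/implicit-function theorem then identifies $\mathcal{G}(x)={\bf t}_x^{-1}(x)$ as a closed Banach submanifold of $\mathcal{G}(x,-)$ with the announced tangent space, and the smoothness of multiplication and inversion on $\mathcal{G}$ endows it with a Banach--Lie group structure.

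For (2) and (3), assume the hypothesis of (2). The right multiplication action of $\mathcal{G}(x)$ on $\mathcal{G}(x,-)$ is smooth and free, and its orbits are exactly the sets $\mathcal{G}(x,y)$ for $y\in\mathcal{G}.x$, each diffeomorphic to $\mathcal{G}(x)$ by Step~1 and a Banach submanifold by assumption. The slice theorem for free smooth actions of a Banach--Lie group with submanifold orbits then endows the quotient $\mathcal{G}(x,-)/\mathcal{G}(x)$ with a pure Banach manifold structure and makes the projection into a principal $\mathcal{G}(x)$-bundle. Since ${\bf t}_x$ is constant on orbits and separates them, it induces a bijection between this quotient and $\mathcal{G}.x$, transferring the Banach manifold structure to $\mathcal{G}.x$; the canonical inclusion $\mathcal{G}.x\hookrightarrow M$ is smooth because it is induced by the smooth map ${\bf t}_x$, proving (2), while the principal bundle structure of the quotient map yields (3).

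The main obstacle arises in the first step. In the Banach category, constant rank of a smooth map does not by itself produce a submanifold: one additionally needs the kernel to be topologically split and the image to be closed and topologically split in the target. The delicate point is thus to verify that $\ker T_{{\bf 1}_x}{\bf s}\cap\ker T_{{\bf 1}_x}{\bf t}$ is a split subspace of $T_{{\bf 1}_x}\mathcal{G}(x,-)=\ker T_{{\bf 1}_x}{\bf s}$ and that $T_{{\bf 1}_x}{\bf t}_x$ has closed split image in $T_xM$. This is where the ``pure'' and ``Banach'' hypotheses enter in an essential way, through the fact that ${\bf s}$ and ${\bf t}$ are submersions sharing the common smooth right inverse ${\bf 1}$, which provides the topological complements required to run the implicit function theorem.
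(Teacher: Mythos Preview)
Your reduction in part~(1) via $L_{g_0^{-1}}$ and your treatment of (2)--(3) via the free right $\mathcal{G}(x)$-action on $\mathcal{G}(x,-)$ match the paper's strategy. The genuine gap lies in the core of part~(1): the constant-rank argument for ${\bf t}_x$.

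You correctly identify the obstacle---that in the Banach category one needs $\ker T_{{\bf 1}_x}{\bf t}_x=\ker T_{{\bf 1}_x}{\bf s}\cap\ker T_{{\bf 1}_x}{\bf t}$ to be \emph{split} in $\ker T_{{\bf 1}_x}{\bf s}$ and the image of $T_{{\bf 1}_x}{\bf t}_x$ to be closed and split in $T_xM$---but your proposed resolution does not deliver these. The fact that ${\bf 1}$ is a common section of ${\bf s}$ and ${\bf t}$ gives $T_{{\bf 1}_x}\mathcal{G}=T_{{\bf 1}_x}{\bf 1}(T_xM)\oplus\ker T_{{\bf 1}_x}{\bf s}=T_{{\bf 1}_x}{\bf 1}(T_xM)\oplus\ker T_{{\bf 1}_x}{\bf t}$, i.e.\ the \emph{same} complement for two subspaces; this says nothing about a complement for their intersection inside either one. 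Likewise the image $T_{{\bf 1}_x}{\bf t}(\ker T_{{\bf 1}_x}{\bf s})$ is only $\rho(\mathcal{AG}_x)\subset T_xM$, with no reason to be closed or complemented. So the implicit-function step is unjustified, and indeed the theorem only asserts that $\mathcal{G}(x,y)$ is a \emph{closed} (weakly immersed) submanifold, not a split one---which is precisely why part~(2) carries an additional hypothesis (cf.\ Remark~\ref{R_AssumptionPoint2}).

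The paper circumvents the splitting issue altogether: instead of a pointwise rank argument, it observes that $\Delta_g:=\ker T_g{\bf s}\cap\ker T_g{\bf t}$ defines an \emph{integrable} Banach subbundle of $T\mathcal{G}(x,-)$ (closedness and constant Banach type follow from transport by translations), and then invokes the Banach Stefan--Sussmann theorem (\cite{CaPe23}, Theorem~8.1). This produces a foliation of $\mathcal{G}(x,-)$ by closed immersed leaves, and $\mathcal{G}(x,y)$ is a union of such leaves, hence a closed immersed submanifold without any splitting required. The Hausdorff property of $\mathcal{G}(x)$ is then obtained separately via \cite{Bou67}. If you want to rescue your approach, you would need an independent argument that $\Delta_{{\bf 1}_x}$ is complemented in $\ker T_{{\bf 1}_x}{\bf s}$; absent that, the foliation route is the right one.
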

\begin{remark}
\label{R_AssumptionPoint2}
Note that the assumptions in (2) are satisfied for all $(x,y)\in M^2$ as soon as $\mathcal{G}\rightrightarrows M$ is split.
\end{remark}
\begin{proof}[Sketch of the proof]
To prove 1., we consider the distribution
\[
\Delta_g=\ker T_g{\bf s}\cap \ker T_g{\bf t}
  \quad \text{for all }g\in \mathcal{G}(x,-),
\]
and we get  a Banach subbundle of $\Delta\subseteq T\mathcal{G}(x,-)$ which is integrable. Thus, from\footnote{This theorem was proved in the context of (Hausdorff) Banach manifolds but all the arguments used in the proof of this result are based on local charts, so the theorem also holds for n.n.H. Banach manifolds.} \cite{CaPe23}, Theorem~8.1, there exists a partition of $\mathcal{G}(x,-)$ into  closed immersed n.n.H. Banach manifolds and each one is modelled on the Banach space $\Delta_{{\bf 1}_x}$.\\
Moreover, since $\Delta_g=\ker T_g{\bf t}_x$, the maximal leaf through $g\in \mathcal{G}(x,-)$ is a connected component of ${\bf t}^{-1}({\bf t}(g))$ and so is a closed subset of the  Banach manifold $ \mathcal{G}(x,-)$.
In particular,  the isotropy group $\mathcal{G}({{\bf t}(g)})$ is a union of such leaves and so is a closed immersed n.n.H. Banach manifold. Since each point in  $\mathcal{G}({{\bf t}(g)})$ is closed, thus it follows from  \cite{Bou67} that $\mathcal{G}({{\bf t}(g)})$ is in fact Hausdorff.
This implies that $\mathcal{G}({{\bf t}(g)})$ has the structure of a Banach-Lie group.\\

For the proof of 2., we consider the map
\[
\begin{array}
[c]{cccc}
\Psi\colon  & \mathcal{G}(x,-)/\mathcal{G}(x)   & \to 		 & \mathcal{G}.x		\\
            & g\cdot\mathcal{G}(x)  	        & \mapsto 	 & {\bf t}(g).
\end{array}
\]
It is easy to prove that  $\Psi$ is bijective. We can show that, under the assumption that  $\mathcal{G}(x,y)$ is a  Banach submanifold of $\mathcal{G}$, for all $y\in \mathcal{G}.x$, the quotient set $\mathcal{G}(x,-)/\mathcal{G}(x)$ has the structure of a smooth manifold for which the map $\Psi\colon \mathcal{G}(x,-)/\mathcal{G}(x) \to M$ is smooth (cf. proof of Theorem 3.3 in \cite{BGJP19}).\\

The last Point 3. is a consequence of \cite{BGJP19}, Lemma 2.5.
\end{proof}

Unfortunately,  the properties of a Banach-Lie groupoid described in Theorem \ref{T_OrbitBanachGroupoid} are not true in general for a convenient Lie groupoid even when it is split. The essential reason is that we have no general theorem of existence of a local flow for a vector field in the convenient setting.  We will show that, under some strong assumptions on projective or direct limits of Banach-Lie groupoids, such properties will be valid.

\subsection{Convenient Lie Algebroid Associated to a Convenient Lie Groupoid}
\label{___ConvenientLieAlgebroidAssociatedToAConvenientLieGroupoid}
In the same way as in the  context of Banach-Lie groupoid, to a convenient  Lie  groupoid is associated a natural convenient Lie algebroid on  $M$. Now we briefly recall its construction.\\
Denote by  $T^{\bf s}\mathcal{G}$  the vertical bundle associated to the submersion ${\bf s}: \mathcal{G}\to M$ and by $\Gamma_{\operatorname{inv}}^r(\mathcal{G})$  the vector space  of  right invariant vector fields. $\Gamma_{\operatorname{inv}}^r(\mathcal{G})$ can be provided with the Lie bracket of vector fields as a Lie subalgebra of the Lie algebra of vertical vector fields. The tangent map $T{\bf t}$ induces a morphism of Lie algebras from $\Gamma_{\operatorname{inv}}^r(\mathcal{G})$ to   the Lie algebra $\Gamma(M)$ of vector fields on $M$.
We consider the pull-back $\mathcal{AG}\to M $ by ${\bf 1}:M\to\mathcal{G}$  of  the convenient bundle $T^{\bf s}\mathcal{G}\to {\bf 1}(M)$  which can be considered as the restriction of  $T^{\bf s}\mathcal{G}$ to ${\bf 1}(M)$ identified with $M\subset\mathcal{G}$.\\
Note that since $M$ is Hausdorff it follows that $ \mathcal{AG}$ is a Hausdorff convenient bundle.

On the one hand, we have a canonical convenient bundle morphism ${\bf 1}_*:\mathcal{AG}\to T^{\bf s}\mathcal{G} $ from the bundle  $\mathcal{AG}\to M $ to the bundle $T^{\bf s}\mathcal{G}\to {\bf 1}(M)$ which is an isomorphism in restriction to each fibre.
On the other hand, if  $\Gamma_{M}(T^{\bf s}\mathcal{G})$  is  the vector space of smooth sections of $T^{\bf s}\mathcal{G}\to {\bf 1}(M)$, then $T_{\bf 1}{\bf t}$ gives rise to an isomorphism from
 $\Gamma_{M}(T^{\bf s}\mathcal{G})$  onto $\Gamma_{\operatorname{inv}}^r(\mathcal{G})$ which is also a morphism of Lie algebras.
Now, the morphism  ${\bf 1}_*$ gives rise to a Lie bracket on the vector space $\Gamma(\mathcal{AG})$ of smooth sections of  $\mathcal{AG}\to M $, which will be denoted by $[.,.]$.
Finally,  by composition, we obtain an isomorphism of Lie algebras between $\Gamma_{M}(T^{\bf s}\mathcal{G})$ and  $\Gamma_{\operatorname{inv}}^r(\mathcal{G})$. Now, $\mathcal{AG}$ is provided with the anchor $\rho=T{\bf t}_{| M}$. From the property of Lie bracket of vector fields on $\mathcal{G}$, it follows that the Lie bracket $[.,.]$  on $\mathcal{AG}$ is localizable by using the same arguments as in the proof of \cite{BGJP19}, Theorem~4.19. Moreover, since  this Lie bracket of vector of  kinematic vector fields only depends on the $1$-jets of such vector fields,  it follows that  $(\mathcal{AG}, M,\rho,[.,.])$ is a convenient Lie algebroid (cf. \cite{CaPe23}, Definition~3.78).\\

In a dual way, we can consider the vertical bundle $T^{\bf t}\mathcal{G}$ associated to the submersion ${\bf t}$ and the Lie algebra $\Gamma_{\operatorname{inv}}^l(\mathcal{G}$ of left invariant vector fields. Then the tangent map $T{\bf s}$ induces a morphism of Lie algebras from $\Gamma_{inv}^l(\mathcal{G})$ to the Lie algebra $\mathfrak{X}(M)$ of vector fields on $M$. As previously, we consider the restriction $\overline{\mathcal{A}}\mathcal{G}$ of  $T^{\bf s}\mathcal{G}$ to ${\bf 1}(M)$ identified with $M$. If $\rho':=T{\bf s}_{| M}$ is then an anchor for  $\overline{\mathcal{A}}\mathcal{G}$ and the isomorphism between $\Gamma_{\bf 1}(\mathcal{G}^{\bf t})$ and $\Gamma_{inv}^l(\mathcal{G})$ provides the set of sections of  $\overline{\mathcal{A}}\mathcal{G}$ with a Lie bracket denoted $\overline{[\;,\;]}$; so we get a convenient Lie algebroid $(\overline{\mathcal{A}}\mathcal{G}, M,\overline{\rho}, \overline{[\;,\;]})$.
Since $T{\bf i}$  transforms a right invariant vector field into a left invariant vector field, this map induces an isomorphism of Lie algebroid from  $(\mathcal{AG}, M,\rho,[\;,\;])$ to $(\overline{\mathcal{A}}\mathcal{G}, M,\overline{\rho}, \overline{[\;,\;]})$.

\subsection{Partial Poisson Convenient Groupoids}\label{__PartialPoissonBanachGroupoids}
Consider a Banach Lie groupoid $\mathcal{G}\tto M$. The mutiplication ${\bf m}$ is a map from $\mathcal{G}^{(2)}$ to $\mathcal{G}$. Since  $\mathcal{G}^{(2)}$ is a closed submanifold of $\mathcal{G}\times\mathcal{G}$ it follows that the graph ${G}_{\bf m}$ of ${\bf m}$ is a closed submanifold of $\mathcal{G}\times\mathcal{G}\times\mathcal{G}$.

\begin{definition}${}$
\label{PartialPoissonBanachLieGroupoid}
\begin{enumerate}
\item[1.] 
A groupoid  $\mathcal{G}\tto M$ is called a \emph{partial Poisson Banach-Lie groupoid}\index{partial!Poisson Banach-Lie groupoid} if there exists a  partial Poisson structure associated to some $P:T^{\flat}
\mathcal{G}\to T\mathcal{G}$ such that 
 for the Poisson structure $\mathcal{G}\times\mathcal{G}\times \mathcal{G}^-$ on $\mathcal{G}\times\mathcal{G}\times\mathcal{G}$, the submanifold $G_{\bf m}$ is coisotropic.
\item[2.] 
A groupoid  $\mathcal{G}\tto M$ is called a \emph{partial symplectic}  (resp.  strong symplectic) \emph{Banach-Lie groupoid} if there exists an isomorphism $P$ from $T^\flat\mathcal{G}$ to $T\mathcal{G}$ (resp. $T^{\prime}\mathcal{G}$ to $T\mathcal{G}$) such that for the associated  Poisson 
structure $\mathcal{G}^+\times\mathcal{G}^+\times \mathcal{G}^-$ on $\mathcal{G}\times\mathcal{G}\times\mathcal{G}$, the submanifold $G_{\bf m}$ is Lagrangian.
\end{enumerate}
\end{definition}

For a partial Poisson Lie convenient groupoid, we have the following properties. In fact, the properties of a partial symplectic Banach-Lie groupoid are stronger than the context  of partial Poisson convenient Lie groupoid. For this reason, we will give these results in another theorem.

\begin{theorem}
\label{T_PropertiesPoissonGroupoid}
Let  $\mathcal{G}\tto M$ be a partial  Poisson Banach-Lie  groupoid. Then we have the following properties:
 \begin{enumerate}
\item[(i)] 
The submanifold $M$ is coisotropic in $\mathcal{G}$.
\item[(ii)] 
The inversion in $\mathcal{G}$ is an anti-Poisson morphism.
\item[(iii)] 
There exists a unique  Lie Poisson algebras sheaf on $M$ such that ${\bf s}$ (resp. ${\bf t}$) is a Poisson morphism (resp. anti-Poisson morphism). Moreover if $T^\flat \mathcal{G}=T^{\prime}\mathcal{G}$ or if $ \left( T^\flat \mathcal{G}, T\mathcal{G}, P \right) $ is  partial symplectic, then there exists a unique  partial Poisson structure on $M$ such ${\bf s}$ (resp. ${\bf t}$) is a Poisson morphism (resp. anti-Poisson morphism).
\end{enumerate}
\end{theorem}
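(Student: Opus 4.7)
The three assertions follow the spirit of Weinstein's proofs in the finite-dimensional case, recast through the relation calculus of Section 2.

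\textbf{Plan for (i).} The strategy is to expand the coisotropy of $G_{\mathbf{m}}$ at the point $(\mathbf{1}_x,\mathbf{1}_x,\mathbf{1}_x)$. First I would establish (by parametrising smooth curves through the unit) the identity
\[
T_{(\mathbf{1}_x,\mathbf{1}_x)}\mathbf{m}(u,v)=u+v-T\mathbf{1}(T\mathbf{s}(u))
\qquad\text{on }T\mathcal{G}^{(2)},
\]
and use it to characterise $(T_{(\mathbf{1}_x,\mathbf{1}_x,\mathbf{1}_x)}G_{\mathbf{m}})^0\cap T^{\flat}$ as the set of triples of the form $(T^{\ast}\mathbf{s}(\gamma_1)-\alpha,\,T^{\ast}\mathbf{t}(\gamma_2)-\alpha,\,\alpha)$ with $\gamma_1+\gamma_2=T^{\ast}\mathbf{1}(\alpha)$. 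Then, given any $\alpha\in(T_{\mathbf{1}_x}M)^0\cap T_{\mathbf{1}_x}^{\flat}\mathcal{G}$, one has $T^{\ast}\mathbf{1}(\alpha)=0$ (since $T_{\mathbf{1}_x}M=T\mathbf{1}(T_xM)$), so the triple $(-\alpha,-\alpha,\alpha)$ lies in $(TG_{\mathbf{m}})^0$. Testing the coisotropy of $G_{\mathbf{m}}$ with two such triples built from $\alpha,\beta\in(T_{\mathbf{1}_x}M)^0\cap T^{\flat}_{\mathbf{1}_x}\mathcal{G}$, and using the partial Poisson structure $(P,P,-P)$, collapses (after cancellation of two of the three terms) to $\langle\alpha,P\beta\rangle=0$. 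By Lemma~\ref{L_FirstProperties}(5) this is exactly the coisotropy of $M$.

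\textbf{Plan for (ii).} I would identify the graph of the inversion with a composition of relations already known to be Poisson: since $\mathbf{m}(g,h)\in M$ if and only if $h=g^{-1}$, one has
\[
G_{\mathbf{i}}\;=\;G_{\mathbf{m}}^{-1}(M),
\]
where $G_{\mathbf{m}}^{-1}$ is viewed as a Poisson relation $\mathcal{G}\rel\mathcal{G}\times\mathcal{G}$ (this uses that Poisson relations are stable under inversion, a consequence of the symmetry of the coisotropy condition with respect to the sign of $P$) and $M$ is viewed as the coisotropic relation $\{\ast\}\rel\mathcal{G}$ provided by (i). Cleanness of the pair $(G_{\mathbf{m}}^{-1},M)$ follows from Lemma~\ref{L_Sudmertion} applied to the submersion $\mathbf{m}$. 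Corollary~\ref{C_CompositionRelationPoissonCoisotropic} then gives that $G_{\mathbf{i}}$ is coisotropic in $\mathcal{G}\times\mathcal{G}$ (signs of the two factors agree after composition), and Proposition~\ref{P_CharacterizationPoissonMap} concludes that $\mathbf{i}$ is an anti-Poisson morphism.

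\textbf{Plan for (iii).} I would apply Theorem~\ref{T_ExistencePartialPoissonStructure} to the surjective submersion $\mathbf{s}:\mathcal{G}\to M$; the hypothesis to verify is that $\mathbf{s}^{-1}\circ\mathbf{s}=\{(g,h):\mathbf{s}(g)=\mathbf{s}(h)\}$ is a Poisson relation in $\mathcal{G}\times\mathcal{G}^{-}$. I would write
\[
\mathbf{s}^{-1}\circ\mathbf{s}\;=\;p_{2,3}(G_{\mathbf{m}}),
\]
where $p_{2,3}\colon\mathcal{G}\times\mathcal{G}\times\mathcal{G}^{-}\to\mathcal{G}\times\mathcal{G}^{-}$ is the projection, and then check directly from the definition that $p_{2,3}$ is a Poisson map with respect to the product structures $(P,P,-P)$ and $(P,-P)$. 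Viewing $p_{2,3}$ as a Poisson relation and composing with the coisotropic $G_{\mathbf{m}}$, Corollary~\ref{C_ImageOfCoisotropic} (after verifying the cleanness of the pair via Lemma~\ref{L_Sudmertion}, which applies because $p_{2,3}$ is a submersion) yields that $\mathbf{s}^{-1}\circ\mathbf{s}$ is Poisson. Theorem~\ref{T_ExistencePartialPoissonStructure}(i) then produces the (unique) Lie--Poisson algebras sheaf on $M$ making $\mathbf{s}$ a Poisson map. For $\mathbf{t}$ one uses the factorisation $\mathbf{t}=\mathbf{s}\circ\mathbf{i}$ together with (ii) to conclude that $\mathbf{t}$ is anti-Poisson. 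Under the stronger assumptions ($T^{\flat}\mathcal{G}=T'\mathcal{G}$, or $P$ partial symplectic), parts (ii)--(iii) of Theorem~\ref{T_ExistencePartialPoissonStructure} upgrade the sheaf to a genuine partial Poisson structure on $M$; uniqueness is inherited from the theorem.

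\textbf{Main obstacle.} The routine part is the linear algebra at $(\mathbf{1}_x,\mathbf{1}_x,\mathbf{1}_x)$ in (i). The real difficulty lies in (iii): one must check both that the projection $p_{2,3}$ is a Poisson map (which is a short computation with the definition) \emph{and} that the resulting pair of relations is clean, so that Corollary~\ref{C_ImageOfCoisotropic} produces a closed submanifold $\mathbf{s}^{-1}\circ\mathbf{s}$. In the convenient setting one cannot rely on finite-dimensional regular-value arguments, so cleanness has to be reduced carefully to Lemma~\ref{L_Sudmertion} via the submersion property of $\mathbf{s}$, and one has to be attentive that the ``$\flat$'' subbundles are preserved throughout, exactly as demanded in the hypotheses of Theorem~\ref{T_ExistencePartialPoissonStructure}.
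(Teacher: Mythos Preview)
Your overall approach to (ii) and (iii) coincides with the paper's: both write the relevant sets as images $R(C)$ of coisotropic submanifolds under Poisson relations built from $G_{\mathbf m}$, invoke Corollary~\ref{C_CompositionRelationPoissonCoisotropic} after verifying cleanness, and then feed into Theorem~\ref{T_ExistencePartialPoissonStructure}. Your presentation of (iii) via the projection $p_{2,3}$ is a cosmetic variant of the paper's choice $R(C)$ with $C=\mathcal G$ and $R$ the graph of multiplication. The genuine difference is in (i): the paper stays within the relation calculus, writing $M=R(\Delta_{\mathcal G})$ with $R=G_{\mathbf m}^{-1}$ and then devoting most of the effort to checking that $(R,\Delta_{\mathcal G})$ is a clean pair, whereas you bypass relations entirely and extract coisotropy of $M$ directly from coisotropy of $G_{\mathbf m}$ at $(\mathbf 1_x,\mathbf 1_x,\mathbf 1_x)$ via the annihilator element $(-\alpha,-\alpha,\alpha)$. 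Your argument for (i) is correct and shorter; the paper's route has the advantage of being uniform across all three parts.

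One caution: your repeated appeal to Lemma~\ref{L_Sudmertion} for cleanness does not literally apply. That lemma treats pairs $(\phi,R)$ where $\phi$ is the graph of a smooth map and $R$ satisfies a diffeomorphism-onto-$M_3$ condition; neither the pair $(G_{\mathbf m}^{-1},M)$ in (ii) nor $(G_{p_{2,3}},G_{\mathbf m})$ in (iii) fits this template (in particular $M$ and $G_{\mathbf m}$ are not graphs of maps from a point). The paper instead verifies cleanness by hand in each case --- writing down the intersection explicitly and checking the tangent-space and submersion conditions of Definition~\ref{D_AssumptionClean} --- and in (ii)--(iii) even leaves the details to the reader. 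You should plan to do the same rather than invoke the lemma; the verifications are elementary but do not follow from Lemma~\ref{L_Sudmertion} as stated.
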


\begin{proof} 
The proof of this theorem is an adaptation of the proof of Theorem 4.2.3 of \cite{Wei88} to our context.\\
1. (i)  Note that, according to axiom ({\bf TGr3}) in Definition \ref{D_TopologicalGroupoid}, $M$  is the set
 $$\{x\in \mathcal{G},\; \textrm{such that } yx=y \textrm{ for some } y \in \mathcal{G}) \}.$$  Let  $C$ be the diagonal $\Delta_\mathcal{G}$ of $\mathcal{G}^2$ and $R:\mathcal{G}^2\rel \mathcal{G}$ whose graph is 

$\{(yx, y ,x),\; (y,x)\in \mathcal{G}\times \mathcal{G}\}.$
 
Then 
$R(C)=\{x\in \mathcal{G} \textrm{ such that } (y,y,x) \in R \textrm{ for some } (y,y)\in \Delta_\mathcal{G}\}$\\ 
which is $M$ according to the previous characterization.\\
In the one hand,   $\operatorname{Id}_\mathcal{G}$ is a Poisson morphism, so its graph $\Delta_\mathcal{G}$ is coisotropic in $\mathcal{G}^-\times \mathcal{G}^+$.  
On the other hand, since $G_{\bf m}$ is coisotropic in $\mathcal{G}^+\times \mathcal{G}^+\times \mathcal{G}^-$, it follows that  $R$,  whose graph is $G_{\bf m}^{-1}$, is also coisotropic in $\mathcal{G}^-\times \mathcal{G}^+\times \mathcal{G}^+$. Thus by  Corollary~\ref{C_CompositionRelationPoissonCoisotropic}, $R(C)=M$ will be coisotropic in $\mathcal{G}^+$ if we show that $(R,C)$ is a clean pair.\\

According Definition~\ref{D_AssumptionClean} and Definition~\ref{D_RCclean}, if $\Delta_{\mathcal{G}^2}$ denotes the diagonal of  ${\mathcal{G}^2}$, we must show at first  that  $(\Delta_\mathcal{G}\times R)\cap (\Delta_{\mathcal{G}^2}\times \mathcal{G})$ is a closed submanifold of $(\mathcal{G}^2)\times\mathcal{G}^2\times \mathcal{G}$ whose tangent bundle is the intersection $(T \Delta_\mathcal{G}\times TR)\cap (T\Delta_{\mathcal{G}^2}\times T\mathcal{G})$.\\
But we have:
\[
(\Delta_\mathcal{G}\times R)\cap (\Delta_{\mathcal{G}^2}\times \mathcal{G})=\{(y,y,y, y,x)\in C\times R\; \textrm{such that } yx=y \}
\] 
which is the range of the embedding of $\mathcal{G}$ in $\mathcal{G}^2\times \mathcal{G}^2\times \mathcal{G}$ 
\[
{\bf j}:y\mapsto (y,y,y,y,{\bf s}(y)).
\] 
Now the tangent space of this submanifold at a point $(y,y,y,y,x)$ is the range $T_y{\bf j}$. To show that this space is exactly 
\[
 (T_{(y,y,y,y,x)}\Delta_\mathcal{G}\times TR)\cap (T_{(y,y,y,y,x)}\Delta_{\mathcal{G}^2}\times T\mathcal{G})
\]
we have only to show that $T_{(y,y,x)}R\cap (T{(y,y)}\Delta_\mathcal{G}\times T_x \mathcal{G})$ is a set of type 
\[
\{(u,u,v):\; u\in T_y\mathcal{G},\; v=T_y{\bf s}(u)\}.
\]
At first, any vector in   $T_{(y,y,x)}R\cap (T{(y,y)}\Delta_\mathcal{G}\times T_x \mathcal{G})$ is of type $(u,u,v)$ for  $u\in T_y\mathcal{G}$ and $v\in T_x\mathcal{G}$. Let 
 $x(t)$ and $y(t)$ be two smooth curves in $\mathcal{G}$  defined on $]-\varepsilon ,\varepsilon [$  such that $x(0)=x, y(0)=y$, $\dot{x}(0)=v$, $\dot{y}(0)=u$ and $(\dot{yx})
 (0)=u$. Since $y(0)x(0)=y(0)$, this implies that $x(0)$ belongs to $M$. We can write
 
 $x(t)={\bf t}({\bf L}_{y(t)^{-1}}(yx(t))$.\\
Thus we obtain:
 
 $v=\dot{x}(0)=T_y{\bf t}(u)$.\\
Since $\displaystyle\frac{d}{dt}(y(t))_{t=0}= \displaystyle\frac{d}{dt}(y(t)x(t))_{t=0}$,  the last condition 2. (ii) of Definition~\ref{D_AssumptionClean} is clear. This ends the proof of (i).  \\
 
(ii) Once more, the graph of the inversion is of type $R(C)$ where $R:\mathcal{G}\rel \mathcal{G}\times \mathcal{G}$ whose graph is $\{(gh, g ,h), \;:\;g, h\in \mathcal{G}\}$ 
 and $C=M$ since 
\[
R(C)=\{(g,h)\in \mathcal{G}\times \mathcal{G}\textrm{ such that } (x, g,h)\in R \textrm{ for some } x\in M\}.
\]
Now, $M$ is coisotropic in $\mathcal{G}^+$ and  $R$ is coisotropic in $\mathcal{G}^+\times \mathcal{G}^-\times\mathcal{G}^-$. Thus if the pair $(R,C)$  is clean, then $R(C)$ is coisotropic in $\mathcal{G}^-\times\mathcal{G}^-$ which is equivalent to say that $R(C)$, that is the graph of ${\bf i}$, is coisotropic in $\mathcal{G}^+\times \mathcal{G}^+$ and so  ${\bf i} $ is an anti-Poisson morphism.\\

The intersection 
$(M\times R) \cap 
(\Delta_\mathcal{G}\times \mathcal{G}^2)$ is the set 
\[
\{(x,x,g,h)\in M\times R\}
\] which is the range of the embedding of $M\times\mathcal{G}^2$ into $\mathcal{G}^2\times\mathcal{G}^2$
$${\bf j}: (x,g,h)\mapsto (x,x,g,h).$$
The same type of argument used in the proof of point (i) permits to show that
$$T(M\times R)\cap (\Delta_\mathcal{G}\times \mathcal {G}^2)=T(M\times R)\cap T(\Delta_\mathcal{G}\times \mathcal {G}^2)$$ and is  left to the reader.
Finally, the last condition 2. (ii) of Definition \ref{D_AssumptionClean} is clear and so the proof (ii) is complete.\\

(iii) We have ${\bf s}(g)={\bf s}(g')$ if and only if, there exists $h\in \mathcal{G}$   $g'=hg$ (for instance $h=g^{-1}g'$). So the graph of ${\bf s}\circ {\bf s}^{-1}$ can be written 
$R( C)$ with $R: \mathcal{G}\times \mathcal{G}\rel \mathcal{G}$  whose graph is $\{(h,g, gh),\; (h,g)\in\mathcal{G}^2\}$  and $C=\mathcal{G}$. Indeed, we have 
 $${\bf s}\circ {\bf s}^{-1}=\{(g,g')\in \mathcal{G}^2 \textrm{ such that } g'=gh \textrm{ for some } h \in g\}.$$
 Since $g$ is coisotropic in $\mathcal{G}^+$ and $R$ is coisotropic $\mathcal{G}^+\times\mathcal{G}^+\times \mathcal{G}^-$, if the pair $(R,C)$ is clean the $R(C)$ is 
 coisotropic in $\mathcal{G}^+\times \mathcal{G}^-$ and so ${\bf s}\circ {\bf s}^{-1}$ is a Poisson relation. The result follows from Theorem 
 \ref{T_ExistencePartialPoissonStructure}. Now the proof of the property of cleanliness of $(R,C)$ is elementary and is left to the reader. Since  ${\bf t}={\bf s}\circ {\bf i}$  and ${\bf i}$ is an anti-Poisson morphism, it follows that ${\bf t}$  is an anti-Poisson morphism. 
\end{proof}

\begin{remark}
\label{R_SheafLiePoissonIniii}
In the context of Theorem~\ref{T_PropertiesPoissonGroupoid}, if $T^\flat \mathcal{G}=T^{\prime}\mathcal{G}$ or if $(T^\flat \mathcal{G}, T\mathcal{G}, P)$ is  partial symplectic, the unique  partial Poisson structure on $M$ such ${\bf s}$ (resp. ${\bf t}$) is a Poisson morphism (resp. anti-Poisson morphism) is such that the  Lie-Poisson algebras  sheaf on $M$ built in Point (i) is exactly the Poisson Lie algebras sheaf  associated to this  partial Poisson structure on $M_2$. However even if the partial Poisson  structure on $\mathcal{G}$ is symplectic the unique induced Poisson structure on $M$ is not in general symplectic even in finite dimension (cf. \cite{AlDa90}).
\end{remark}

Now, for a partial symplectic Lie convenient groupoid, we have the following properties:
\begin{theorem}\label{T_PropertiesSymplecticgroupoid}  
Let $\mathcal{G}\tto M$ be a partial  symplectic convenient Lie groupoid. 
Then we have: 
\begin{enumerate}
\item[(i)] 
The submanifold $M$ is Lagrangian in $\mathcal{G}$.
\item[(ii)] 
The inversion in $\mathcal{G}$ is an anti-symplectic map. 
\item[(iii)] 
There exists a unique strong Poisson structure on $M$ such that ${\bf s}$ (resp. ${\bf t}$) is a Poisson morphism (resp. anti-Poisson morphism).
\item[(iv)] 
There exists a weak symplectic $2$-form $\omega$  such that $\omega^\flat: T\mathcal{G}\to T^{\prime}\mathcal{G}$ whose range is $T^\flat M$ and $\omega^\flat_{| T^\flat M}=P$.
\item[(v)] 
For each $g\in\mathcal{G}$, each vector space $T_g\mathcal{G}({\bf s}(g),-)$ and $T_g\mathcal{G}(-, {\bf t}(g))$ are orthogonal symplectic subspaces of the symplectic space $(T_g\mathcal{G},\omega)$.
\item[(vi)] 
If, moreover, $\mathcal{G}\tto M$ is a split  Banach-Lie groupoid,  $\omega$ induces a weak symplectic form on  each orbit of $\mathcal{G}$ in $M$.
\end{enumerate}
\end{theorem}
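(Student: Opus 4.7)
The plan is to follow the template established in Theorem~\ref{T_PropertiesPoissonGroupoid}, upgrading ``coisotropic'' to ``Lagrangian'' throughout the relational arguments and then extracting the additional symplectic data from the isomorphism $P$. For (i) and (ii), I would revisit the clean-pair computations of the proof of Theorem~\ref{T_PropertiesPoissonGroupoid}: write $M = R(C)$ with $C = \Delta_{\mathcal{G}}$ and $R$ the inverse graph of ${\bf m}$, and analogously express the graph of ${\bf i}$ as an image of $M$ under a suitable Lagrangian relation. Because the hypothesis that $G_{\bf m}$ is Lagrangian in $\mathcal{G}^+\times\mathcal{G}^+\times\mathcal{G}^-$ propagates through Corollary~\ref{C_CompositionRelationPoissonCoisotropic} in its Lagrangian version, the composites $R(C)$ are Lagrangian. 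The clean-pair verifications are essentially the same ones used in the Poisson case, as they involve only tangent-space identifications independent of the Poisson/symplectic distinction.

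For (iii), I would invoke Theorem~\ref{T_ExistencePartialPoissonStructure}(iii) directly with $\phi = {\bf s}$, since the partial-symplectic hypothesis on $\mathcal{G}$ is precisely one of the two cases treated there; uniqueness is part of the statement. The analogous assertion for ${\bf t}$ follows by composing with the anti-symplectic ${\bf i}$ using (ii). Assertion (iv) is essentially formal: since $P: T^\flat \mathcal{G} \to T\mathcal{G}$ is a convenient isomorphism, $\omega^\flat := P^{-1}$ is a bounded bundle morphism $T\mathcal{G}\to T^\flat \mathcal{G} \hookrightarrow T'\mathcal{G}$, and $\omega(u,v) := \langle P^{-1}(u), v\rangle$ is skew-symmetric by skew-symmetry of $P$. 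Closedness of $\omega$ is exactly Remark~\ref{R_Leaf}(3): in the partial-symplectic setting the characteristic distribution is $T\mathcal{G}$, the unique leaf is $\mathcal{G}$ itself, and the Jacobi identity for $\{\,,\,\}_P$ forces $d\omega = 0$ through Cartan's formula applied to triples of Hamiltonian fields.

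For (v), I would unwind the definition of the symplectic orthogonal in $(\,T\mathcal{G}, \omega\,)$ using Proposition~\ref{P_PartialSymplectic}. The identifications $T_g\mathcal{G}({\bf s}(g),-) = \ker T_g{\bf s}$ and $T_g\mathcal{G}(-,{\bf t}(g)) = \ker T_g{\bf t}$ are immediate from the submersion property. The crucial step is to show $(\ker T_g{\bf s})^{\perp_\omega} = \ker T_g{\bf t}$: at $g = {\bf 1}_x$, this follows from the Lagrangianity of $T_{{\bf 1}_x}M$ established in (i) combined with the splitting $T_{{\bf 1}_x}\mathcal{G} = T_{{\bf 1}_x}M \oplus \ker T_{{\bf 1}_x}{\bf s}$ coming from ${\bf s} \circ {\bf 1} = \operatorname{Id}_M$; algebraically, the fact that ${\bf s}$ is a Poisson morphism (Proposition~\ref{P_CharacterizationPoissonMorphism}) translates $\omega$-orthogonality of kernels of ${\bf s}$ and ${\bf t}$ at identities into a direct linear-Poisson computation. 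The extension to arbitrary $g$ proceeds by transporting via the left and right translations $L_h$ and $R_h$, which are (anti-)symplectic thanks to the multiplicativity of $G_{\bf m}$. Non-degeneracy of $\omega$ on each of the two subspaces then follows from the transversality $\ker T_g{\bf s} + \ker T_g{\bf t} = T_g\mathcal{G}$ away from isotropy and, in general, from the explicit description of $(\ker T_g{\bf s})^{\perp_\omega}$ just obtained. This is the step I expect to require the most care, as the computation of symplectic orthogonals from Poisson-map data must be carried out in the weak-symplectic infinite-dimensional setting where closures and supplementations behave delicately.

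Finally, for (vi), assuming $\mathcal{G}\tto M$ is a split Banach--Lie groupoid, Theorem~\ref{T_OrbitBanachGroupoid} endows the orbit $\mathcal{G}.x$ with a Banach-manifold structure and realizes ${\bf t}_x : \mathcal{G}(x,-) \to \mathcal{G}.x$ as a principal $\mathcal{G}(x)$-bundle. I would pull back: for $y \in \mathcal{G}.x$ and any $g \in \mathcal{G}(x,-)$ with ${\bf t}(g) = y$, the tangent space $T_y(\mathcal{G}.x)$ is identified with $\ker T_g{\bf s} / (\ker T_g{\bf s} \cap \ker T_g{\bf t})$, where the denominator is $T_g\mathcal{G}(x)$. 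By (v), the restriction $\omega|_{\ker T_g{\bf s}}$ has characteristic kernel exactly $\ker T_g{\bf s} \cap \ker T_g{\bf t}$, so a standard Marsden--Weinstein--style reduction produces a well-defined, closed, weakly non-degenerate $2$-form $\omega_{\mathcal{G}.x}$ on $\mathcal{G}.x$ whose pullback to $\mathcal{G}(x,-)$ equals $\omega|_{\mathcal{G}(x,-)}$. The main obstacle here is confirming, in the Banach setting, that the induced quotient $2$-form is genuinely weakly symplectic; independence of the representative $g$ uses the multiplicativity of $\omega$ via left translations.
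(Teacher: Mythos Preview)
Your treatment of (i)--(iv) matches the paper's: (i)--(iii) are indeed the Lagrangian upgrades of the corresponding arguments in Theorem~\ref{T_PropertiesPoissonGroupoid}, invoking the second part of Corollary~\ref{C_CompositionRelationPoissonCoisotropic}, and (iv) is Remark~\ref{R_Leaf}(3) (equivalently Theorem~\ref{T_FoliationPartialBanachPoissonManifold} applied to the one-leaf characteristic foliation).

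The genuine gap is in (v). Your plan is to establish $(\ker T_g{\bf s})^{\perp_\omega} = \ker T_g{\bf t}$ first at identity points and then ``transport via the left and right translations $L_h$ and $R_h$, which are (anti-)symplectic''. But $L_h$ and $R_h$ are only defined on ${\bf t}$-fibres and ${\bf s}$-fibres respectively, not on all of $\mathcal{G}$; they are not maps of the symplectic manifold $(\mathcal{G},\omega)$, so there is no sense in which they are (anti-)symplectic for $\omega$, and you cannot transport the full orthogonality relation along them. The paper's argument (adapting \cite{Marl05}, Theorem~3.6) bypasses this entirely by working directly with the Lagrangian condition on $G_{\bf m}$: at $k = gh$ one observes that $(X,0,TR_h(X))$ with $X\in\ker T_g{\bf s}$ and $(0,Y,TL_g(Y))$ with $Y\in\ker T_h{\bf t}$ are both tangent to $G_{\bf m}$, and the vanishing of $\omega^3$ on $TG_{\bf m}$ yields $\omega\bigl(TR_h(X),TL_g(Y)\bigr)=0$ immediately; since $TR_h$ and $TL_g$ surject onto $\ker T_k{\bf s}$ and $\ker T_k{\bf t}$, the orthogonality at $k$ follows, and Proposition~\ref{P_PartialSymplectic} then gives equality. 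Note also that statement~(v) asserts only mutual symplectic orthogonality; the ``non-degeneracy of $\omega$ on each of the two subspaces'' that you try to extract is neither claimed nor generally true (see Remark~\ref{R_FiberNonSymplectic}).

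For (vi) your reduction argument is close in spirit to the paper's, which likewise passes to a supplement of $\ker T_g{\bf s}\cap\ker T_g{\bf t}$ and uses the identification $T\mathcal{G}(x,-)/T\mathcal{G}(x)\cong T(\mathcal{G}.x)$. One difference worth noting: the paper obtains closedness of the induced form on each orbit not from a Marsden--Weinstein mechanism but by identifying the orbits with the symplectic leaves of the induced partial Poisson structure on $M$ from (iii) and invoking Theorem~\ref{T_FoliationPartialBanachPoissonManifold}.
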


Following \cite{BGJP19}, 6.2  a convenientcLie groupoid $\mathcal{G}\tto M$ is called  \emph{transitive} if the map 
$({\bf s},{\bf t}):\mathcal{G}\ap M\times M$
is a surjective submersion. In this case, the action of $\mathcal{G}$ on $M$ has only one orbit reduced to $M$.
Thus by application of Theorem \ref{T_PropertiesSymplecticgroupoid}, we obtain:

\begin{corollary}\label{C_TransitivePartialBanachgroupoid}
Let $\mathcal{G}\tto M$ be a transitive partial symplectic Banach-Lie groupoid. Then  the weak symplectic $2$-form $\omega$  on $\mathcal{G}$ induces a weak symplectic form on $M$.
\end{corollary}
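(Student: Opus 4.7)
The plan is to reduce the Corollary to a direct application of Theorem~\ref{T_PropertiesSymplecticgroupoid}, (vi), by verifying the two missing ingredients: that a transitive convenient Lie groupoid is automatically split, and that in the transitive case the unique orbit is $M$ itself.

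First, I would check the splitting property. By the transitivity assumption, $(\textbf{s},\textbf{t}):\mathcal{G}\to M\times M$ is a surjective submersion, so for every $(x,y)\in M^2$ the preimage $\mathcal{G}(x,y)=(\textbf{s},\textbf{t})^{-1}(x,y)$ is a closed (Banach) submanifold of $\mathcal{G}$, which is exactly the definition of split after Definition~\ref{D_ConvenientLieGroupoid}. In particular, the hypotheses of Theorem~\ref{T_OrbitBanachGroupoid} are satisfied at every $x\in M$ (see Remark~\ref{R_AssumptionPoint2}), and Theorem~\ref{T_OrbitBanachGroupoid}~(2) ensures that the orbits $\mathcal{G}.x$ are pure Banach submanifolds of $M$ with smooth inclusion.

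Next, I would observe that because $(\textbf{s},\textbf{t})$ is surjective, given any $x\in M$ and any $y\in M$ there exists $g\in\mathcal{G}$ with $\textbf{s}(g)=x$ and $\textbf{t}(g)=y$, so $y\in \textbf{t}(\textbf{s}^{-1}(x))=\mathcal{G}.x$. Hence $\mathcal{G}.x=M$ for every $x\in M$, i.e. the action of $\mathcal{G}$ on $M$ has a single orbit equal to the whole base.

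Finally, I would invoke Theorem~\ref{T_PropertiesSymplecticgroupoid}~(vi): since $\mathcal{G}\rightrightarrows M$ is a split partial symplectic Banach-Lie groupoid, the weak symplectic form $\omega$ on $\mathcal{G}$ induces a weak symplectic form on each orbit of $\mathcal{G}$ in $M$. As the unique orbit is $M$, this yields a weak symplectic form on $M$ itself, which is the desired conclusion.

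I expect no serious obstacle here: the content of the Corollary is essentially that the split hypothesis of Theorem~\ref{T_PropertiesSymplecticgroupoid}~(vi) is automatic under transitivity, and the only mildly technical point is checking that $(\textbf{s},\textbf{t})^{-1}(x,y)$ being a submanifold suffices for splittedness and for the applicability of Theorem~\ref{T_OrbitBanachGroupoid}~(2), both of which follow directly from the submersion theorem (\cite{CaPe23}, Theorem~1.14) applied to the surjective submersion $(\textbf{s},\textbf{t})$.
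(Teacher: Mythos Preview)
Your proposal is correct and follows essentially the same approach as the paper: the paper simply observes, just before stating the Corollary, that transitivity means $({\bf s},{\bf t})$ is a surjective submersion, so there is a single orbit equal to $M$, and then applies Theorem~\ref{T_PropertiesSymplecticgroupoid}. You have added the useful explicit verification that transitivity forces the groupoid to be split (since each $\mathcal{G}(x,y)=({\bf s},{\bf t})^{-1}(x,y)$ is a submanifold by the submersion theorem), which the paper leaves implicit but is indeed needed to invoke part~(vi).
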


\begin{remark}
\label{R_FiberNonSymplectic} 
Consider a partial symplectic Banach-Lie  groupoid $\mathcal{G}\tto M$.
 The $2$-form induced by $\omega$ on $T_g\mathcal{G}( {\bf s}(g),-)$ (resp. $T_g\mathcal{G}(-, {\bf t}(g))$ is not symplectic since the intersection of these two tangent spaces is isomorphic to the Lie algebra of the isotropic group of ${\bf s}$ under the assumptions  of Theorem~\ref{T_OrbitBanachGroupoid}. In this context, this is true if and only if the isotropy group of ${\bf s}(g)$ is reduced to the identity. Moreover, the partial Poisson structure on $M$ cannot be weak symplectic in general from from Point (vi).
\end{remark}

Note that from Point (i) and (iii)  in Theorem~\ref{T_PropertiesSymplecticgroupoid}, Proposition~\ref{P_DualPairs} and Remark~\ref{R_CasePhiantiPoisson}, we obtain:

\begin{corollary}
\label{C_Involution} 
Let $\mathcal{G}\tto M$ be a partial  symplectic Banach-Lie  groupoid. For any open set $U$ in $M$, let $(\mathcal{E}_M(U),\{\;,\;\}_U)$ be the Poisson Lie algebra on $U$ associated to the partial Poisson structure on $M$ and $\{\;,\;\}_P$ the Poisson bracket on $\mathcal{G}$ associated to the partial symplectic structure on $\mathcal{G}$. Then any pair $(f_1,f_2)$ in $\mathcal{E}_M(U)$ are in involution, that is $\{f_1,f_2\}_U=0$.
\end{corollary}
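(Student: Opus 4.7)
\textit{Plan.} I would argue by recognizing $({\bf s},{\bf t})$ as a dual pair of Poisson/anti-Poisson maps from the partial symplectic manifold $\mathcal{G}$, applying Proposition~\ref{P_DualPairs} together with Remark~\ref{R_CasePhiantiPoisson}, and finally transferring the resulting vanishing bracket from $\mathcal{G}$ down to $M$ via the Lagrangian identity section. First, Theorem~\ref{T_PropertiesSymplecticgroupoid}(iv) supplies a weak symplectic form $\omega$ on $\mathcal{G}$ whose associated flat map satisfies $\omega^\flat_{|T^\flat\mathcal{G}} = P^{-1}$, and Point~(v) of the same theorem asserts that at every $g \in \mathcal{G}$ the kernels $\ker T_g{\bf s} = T_g\mathcal{G}({\bf s}(g),-)$ and $\ker T_g{\bf t} = T_g\mathcal{G}(-,{\bf t}(g))$ are mutually $\omega$-orthogonal. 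Proposition~\ref{P_PartialSymplectic} identifies $\perp_\omega$ with $\perp_P$ on $c^\infty$-closed subspaces, yielding $\ker T_g{\bf s} = (\ker T_g{\bf t})^{\perp_P}$ together with the symmetric identity, so that $({\bf s},{\bf t})$ satisfies Definition~\ref{D_DualPairs}.

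Theorem~\ref{T_PropertiesSymplecticgroupoid}(iii) then endows $M$ with the unique partial Poisson structure making ${\bf s}$ Poisson and ${\bf t}$ anti-Poisson. Since the anti-Poisson morphism ${\bf t}$ becomes a Poisson morphism into $M^-$, Remark~\ref{R_CasePhiantiPoisson} allows one to invoke Proposition~\ref{P_DualPairs} on the mixed dual pair $({\bf s},{\bf t})$ and conclude
\[
\{{\bf s}^* f_1,\, {\bf t}^* f_2\}_P = 0 \quad \text{on } {\bf s}^{-1}(U) \cap {\bf t}^{-1}(U),
\]
for all $f_1, f_2 \in \mathcal{E}_M(U)$.

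Finally, I would transfer this identity to $M$ by restricting to the identity section. By Point~(i) of Theorem~\ref{T_PropertiesSymplecticgroupoid}, ${\bf 1}(M)$ is Lagrangian in $\mathcal{G}$, and Proposition~\ref{P_CharacterizationCoisotropicSub} implies that the Hamiltonian vector field of any function vanishing on ${\bf 1}(M)$ is tangent to ${\bf 1}(M)$. The relations ${\bf s}\circ{\bf 1}={\bf t}\circ{\bf 1}=\operatorname{id}_M$ ensure that ${\bf s}^* f - {\bf t}^* f$ lies in the ideal $\mathcal{I}_{{\bf 1}(M)}$ and that $T{\bf s}$ and $T{\bf t}$ agree on $T{\bf 1}(M)$. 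Splitting
\[
\{{\bf s}^* f_1,\, {\bf s}^* f_2\}_P = \{{\bf s}^* f_1,\, {\bf t}^* f_2\}_P + \{{\bf s}^* f_1,\, {\bf s}^* f_2 - {\bf t}^* f_2\}_P
\]
and evaluating at ${\bf 1}(x)$, the first term vanishes by the dual-pair identity, while the second, by the tangency just noted and the compatibility of $T{\bf s}$ with the identification $T_{{\bf 1}(x)}{\bf 1}(M)\cong T_xM$, is also seen to vanish. Since ${\bf s}$ is Poisson, the left-hand side equals ${\bf s}^*\{f_1,f_2\}_U$, which at ${\bf 1}(x)$ is $\{f_1,f_2\}_U(x)$; hence $\{f_1,f_2\}_U \equiv 0$ on $U$.

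\textit{Main obstacle.} The delicate step is the last one: the dual-pair theorem supplies only the vanishing of the \emph{cross} bracket $\{{\bf s}^* f_1, {\bf t}^* f_2\}_P$ on $\mathcal{G}$, whereas the induced $M$-bracket is recovered from the \emph{diagonal} bracket $\{{\bf s}^* f_1, {\bf s}^* f_2\}_P$ via the Poisson-morphism property of ${\bf s}$. Bridging the two requires a careful use of the Lagrangian character of ${\bf 1}(M)$ to control the Hamiltonian contribution of the defect ${\bf s}^* f_2 - {\bf t}^* f_2$, and it is here that Point~(i) of Theorem~\ref{T_PropertiesSymplecticgroupoid} plays its essential role.
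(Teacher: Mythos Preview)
Your first two steps—establishing the dual-pair condition for $({\bf s},{\bf t})$ via Theorem~\ref{T_PropertiesSymplecticgroupoid}(v) and then invoking Proposition~\ref{P_DualPairs} together with Remark~\ref{R_CasePhiantiPoisson} to obtain $\{{\bf s}^*f_1,{\bf t}^*f_2\}_P=0$—are correct and match the paper's one-line justification. (The paper cites Point~(i), but it is Point~(v) that actually supplies $\ker T{\bf s}=(\ker T{\bf t})^{\perp_P}$; your reading is the right one.)

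The gap is in your third step, which you rightly flagged as the obstacle. Your claim that the term $\{{\bf s}^*f_1,\,{\bf s}^*f_2-{\bf t}^*f_2\}_P$ vanishes at ${\bf 1}(x)$ is not justified, and in fact it is false. Write $h={\bf s}^*f_2-{\bf t}^*f_2$. Tangency of $X_h$ to ${\bf 1}(M)$ only yields $X_h({\bf 1}(x))=T_x{\bf 1}(v)$ for some $v\in T_xM$, whence the bracket at ${\bf 1}(x)$ equals $-d_xf_1\bigl(T{\bf s}\circ T{\bf 1}(v)\bigr)=-d_xf_1(v)$, which has no reason to be zero. More precisely, the dual-pair condition gives $X_{{\bf s}^*f_2}\in(\ker T{\bf s})^{\perp_P}=\ker T{\bf t}$ and $X_{{\bf t}^*f_2}\in\ker T{\bf s}$, so $T{\bf s}(X_h)=T{\bf s}(X_{{\bf s}^*f_2})=X^M_{f_2}$ because ${\bf s}$ is Poisson. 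The second term therefore equals $\{f_1,f_2\}_U(x)$ itself, and your splitting collapses to the tautology $\{f_1,f_2\}_U(x)=0+\{f_1,f_2\}_U(x)$.

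This is not a repairable defect of the argument: the conclusion $\{f_1,f_2\}_U\equiv 0$ as literally written is false in general. The paper's own Theorem~\ref{T_Cotangent bundleG} exhibits the partial symplectic groupoid $T^\prime\mathsf{G}\tto\mathfrak{g}^\prime$, whose induced Poisson structure on $\mathfrak{g}^\prime$ is the nonzero Kostant--Kirillov--Souriau bracket. What the cited ingredients (Point~(iii), Point~(v), Proposition~\ref{P_DualPairs}, Remark~\ref{R_CasePhiantiPoisson}) actually deliver is the involution $\{{\bf s}^*f_1,{\bf t}^*f_2\}_P=0$ on $\mathcal{G}$—the classical dual-pair statement—and that is precisely what your first two steps prove. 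The paper's argument stops there; your attempt to push further to $\{f_1,f_2\}_U=0$ is chasing a misstated target.
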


\begin{proof}[Proof of Theorem~\ref{T_PropertiesSymplecticgroupoid}] The proof of Point (i) is formally the same as in Theorem~\ref{T_PropertiesPoissonGroupoid}, (i) but under the assumption that $G_{\bf m}$ is Lagrangian. 
Since, in this case, $\Delta_\mathcal{G}$ is also Lagrangian in $\mathcal{G}^-\times\mathcal{G}^+$, we use here the second part of Corollary ~\ref{C_CompositionRelationPoissonCoisotropic}.  In fact, in this case, the  Lie-Poisson algebras sheaf on $M_2$ is exactly the sheaf obtained from the  partial Poisson
structure on $M$ (cf. Remark~\ref{R_SheafLiePoissonIniii}).\\
Point (ii) and (iii) are exactly the result of point (ii) and (iii) of Theorem~\ref{T_PropertiesPoissonGroupoid}.\\
Point (iv) is a consequence of Theorem~\ref{T_FoliationPartialBanachPoissonManifold} 
applied to the characteristic foliation whose leaves are the connected components of $\mathcal{G}$.\\
For Point (v), we will adapt the proof of Theorem  3.6 of \cite{Marl05}.  
According to Point (iv), we have a weak symplectic form $\omega$ on $\mathcal{G}$ such that:
 $\omega(w,w')=<P^{-1}(w), w'>$ for any $w, w'$ in $T_g\mathcal{G}$ and any $g\in \mathcal{G}$. In this way,  on $\mathcal{G}^+
 \times\mathcal{G}^+\times\mathcal{G}^-$, the partial symplectic structure  is an isomorphism ${P}^3$ from $(T^\flat \mathcal{G})^3$ to  $(T \mathcal{G})^3$ given by 
\[
P^3(w_1,w_1,w_3)
= \left( P(w_1),P(w_2),-P(w_3) \right) .
\]
Thus, we also have a weak symplectic form $\omega^3$ on  $(\mathcal{G})^3$  associated to ${P}^3$ given by
\begin{eqnarray}
\label{E_OP3}
\omega^3((w_1,w_2, w_3),(w'_1,w'_2,w'_3))=\omega(w_1,w'_1)+\omega(w_2,w'_2)-\omega(w_3,w'_3).\\
\end{eqnarray}
 
For each $g\in \mathcal{G}$, we set $A_g=\mathcal{G}({\bf s}(g),-)$ and $B_g=\mathcal{G}(-, {\bf t}(g))$ the fibre  of ${\bf s}$  and  ${\bf t}$ respectively.\\
Fix some $k\in \mathcal{G}$. For any $(g,h)\in\mathcal{G}^{(2)}$, we set $k=gh$. Then we have   and ${\bf R}_h(A_h)=A_k$ and ${\bf L}_g(B_g)=B_k$. Now, for $u\in T_gA_g$ 
and $v\in T_hA_h$, if $w_1$ and $w_2$ belongs to $T_k\mathcal{G}$, the vectors $(0,u, w_1)$ and $(0, v,w_2)$ belongs to $T_{(g,h, k)}\mathcal{G}^3$ and are tangent to 
the graph $G_{\bf m}$ of ${\bf m}$ if and only if 
\begin{eqnarray}
\label{E_TRhTLg}
w_1= T{\bf R}_h(u)\textrm{ and }w_2=T{\bf L}_g(v).
\end{eqnarray}

Now by assumption, $G_{\bf m}$ is Lagrangian in $\mathcal{G}^+\times\mathcal{G}^+\times\mathcal{G}^-$. Thus,  according to  Proposition~\ref{P_PartialSymplectic}, the definition of a Lagrangian manifold, the relations (\ref{E_OP3}) and (\ref{E_TRhTLg}), we must have:
\[
\omega(u,0)+\omega(0,v)-\omega( T{\bf R}_h(u),T{\bf L}_g(v)=0,
\]
for all $u\in T_g A_g$ and $v\in T_hB_h$. Since $T{\bf R}_h(T_hA_h)= T_k A_k$ and $T{\bf L}_g(T_gB_g)=T_k B_k$, this implies that
\begin{center}
 $T_kB_k=  (T_kA_k)^{\perp_P}$ and  $T_kA_k=  (T_kB_k)^{\perp_P}$
\end{center}
according to Proposition~\ref{P_PartialSymplectic} which ends the proof of Point (v).\\

(vi) If, moreover, $\mathcal{G}\tto M$ is split, for each $x\in M$, from Theorem~\ref{T_OrbitBanachGroupoid}, $\mathcal{G}(x,-)$ is a closed split submanifold of $\mathcal{G}$. 
For any $g\in \mathcal{G}(x,-)$, let $H_g$ be a supplement of $T_g\mathcal{G}(x,-)\cap T_g\mathcal{G}(-,x)$ in $T_g\mathcal{G}(-,x)$. Then from Point (v) of Theorem~\ref{T_PropertiesSymplecticgroupoid}   and Point (ii) of Proposition \ref{P_PartialSymplectic}, $\omega$ induces 
on $H_g$ a symplectic form. Since we have a canonical isomorphism from $T\mathcal{G}(x,-)/T\mathcal{G}(x)$  to $T\mathcal{G}.x$ (cf Proof of Point (iii) of Theorem 3.3 in 
\cite{BGJP19}), by  using the same arguments as in the proof of Proposition~2.4.5 in \cite{PeCa19}, we obtain a  symplectic form $\widetilde{\omega}$ on $T\mathcal{G}.x$. This means that the set of connected components of the orbit of $\mathcal{G}$ is exactly  the characteristic foliation of the partial Poisson structure on $M$. 
Since this partial Poisson structure is strong, form Point (iii) of Theorem,  it follows that the almost symplectic form on each orbit is closed (cf. Theorem~\ref{T_FoliationPartialBanachPoissonManifold} ). 
\end{proof}

\section{Banach Bialgebroids and Banach-Lie Groupoids}
\label{__BanachBialgebroidsAndBanachLieGroupoids}

This section is devoted to the generalization to the convenient  setting of results contained in \cite{MaXu94}.

\subsection{Properties of the Partial Conormal Bundle of a Coisotropic Manifold}

Let $ \left( T^\prime M, TM, P,\{\;,\;\}_{P} \right) $ be a partial
Poisson manifold.  If $C$ is a closed submanifold of $M$, we denote by $T^\prime_CM$ (resp. $T^\flat_C M$) the restriction of   $T^\prime M$ (resp. $T^\flat M$) to $C$ that is the pull-back of  $T^\prime M$ (resp. $T^\flat M$) over the inclusion of $C$ in $M$.

\begin{definition} ${}$
\begin{enumerate}
\item[(1)]
The  \emph{conormal}\index{conormal}   $\mathcal{N}^a(C)$ is the subbundle of $T^\prime_C M$ defined by:
\[
\mathcal{N}^a(C)
=\{(x,\eta)\in T^\prime_C M:\;
\forall u\in T_xC,\,  <\eta, u>=0\}.
\]
\item[(2)] 
The \emph{partial  conormal}\index{partial conormal}  $\mathcal{N}^0(C)$ of $T^\flat_C M$ defined by:
\[
\mathcal{N}^0(C)
=\{(x,\eta)\in T^\flat_C M:\;  
\forall u\in T_xC,\, <\eta, u>=0\}.
\]
\end{enumerate}
\end{definition}

\begin{remark}
\label{R_N0C}
It is clear that $\mathcal{N}^a(C)$ is a   subbundle of $T^\prime_CM$,  but in general,  $\mathcal{N}^0(C)$ {\bf will not be a bundle} over $C$. \\
\end{remark}


Now, for each open set $U$ in $M$, from Proposition~\ref{P_PropertiesPartialPoissonManifold}, the sheaf of brackets $[\;,\;]_P$ provides each module $\Gamma(T^\prime M_U)$ with a structure of Lie algebra.
\begin{theorem}
\label{T_PartialConormalC} 
Let $C$ be a coisotropic submanifold of $M$. 
\begin{enumerate}
\item[(1)] 
The partial conormal $\mathcal{N}^0(C)$ is exactly $(TC)^\perp$. If  $T^\flat M=T^\prime M$  or  if  $P$ is partial symplectic and $C$ is Lagrangian, then $\mathcal{N}^0(C)$ is a subbundle of $T^\flat M$.  
\item[(2)]
If $\mathcal{ N}^0(C)$ is a  closed sub-bundle of $T^\flat_CM$, it  can be  provided with a Lie algebroid structure whose anchor  is  the restriction of $P$ to $\mathcal{N}^0(C)$.
\end{enumerate}
\end{theorem}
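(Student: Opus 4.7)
For Point (1), the fibrewise equality $\mathcal{N}^0(C)_x=(T_xC)^0\cap T^\flat_xM$ is immediate from the definitions; the content is the subbundle assertion in the two special cases. When $T^\flat M=T^\prime M$ the set $\mathcal{N}^0(C)$ coincides with the ordinary conormal $\mathcal{N}^a(C)$, which is a closed split subbundle of $T^\prime_CM$ because $C$ is a closed (hence split) submanifold. When $P$ is partial symplectic and $C$ is Lagrangian, $P:T^\flat M\to TM$ is a convenient isomorphism, and by Definition~\ref{D_Lagrangian} we have $P(\mathcal{N}^0(C)_x)=T_xC$ at every $x\in C$. Thus $\mathcal{N}^0(C)=P^{-1}(TC)$ as a subset of $T^\flat_CM$, and since $TC$ is a closed split subbundle of $TM_C$ and $P$ is a bundle isomorphism, $\mathcal{N}^0(C)$ inherits a closed subbundle structure.

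For Point (2), assume $\mathcal{N}^0(C)$ is a closed subbundle of $T^\flat_CM$ and set $\rho:=P|_{\mathcal{N}^0(C)}$. By Proposition~\ref{P_PF0perp}(1) applied fibrewise, the coisotropy of $C$ gives $P((T_xC)^0)\subset T_xC\cap P(T^\flat_xM)\subset T_xC$, so $\rho$ indeed takes values in $TC$. This will be the anchor.

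The bracket is constructed by localising to $C$ the almost Poisson bracket on $T^\flat M$ provided by Proposition~\ref{P_PropertiesPartialPoissonManifold}. Given local sections $\alpha,\beta$ of $\mathcal{N}^0(C)$ over an open $U\cap C$, choose extensions $\tilde\alpha,\tilde\beta\in\Gamma(T^\flat M_U)$ whose restrictions to $C$ lie in $\mathcal{N}^0(C)$ (possible by the subbundle hypothesis and the existence of local trivialisations); these can moreover be chosen inside $\mathfrak{P}(U)$ after shrinking $U$, by Lemma~\ref{L_P(U)generates}. Set
\[
[\alpha,\beta]_{\mathcal{N}^0(C)}:=\bigl([\tilde\alpha,\tilde\beta]_P\bigr)\big|_C
=\bigl(L_{P\tilde\alpha}\tilde\beta-L_{P\tilde\beta}\tilde\alpha-d\langle\tilde\alpha,P\tilde\beta\rangle\bigr)\big|_C.
\]
Three verifications are required. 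First, this value lies in $\mathcal{N}^0(C)$: for any vector field $X$ on $M$ tangent to $C$, expand $\langle L_{P\tilde\alpha}\tilde\beta,X\rangle=(P\tilde\alpha)\langle\tilde\beta,X\rangle-\langle\tilde\beta,[P\tilde\alpha,X]\rangle$; on $C$ both $P\tilde\alpha$ and $X$ are tangent to $C$, the function $\langle\tilde\beta,X\rangle$ vanishes on $C$ (since $\tilde\beta|_C\in\mathcal{N}^0(C)$), and $[P\tilde\alpha,X]$ remains tangent to $C$, so both terms vanish on $C$; symmetrically for $L_{P\tilde\beta}\tilde\alpha$; finally $\langle\tilde\alpha,P\tilde\beta\rangle$ vanishes on $C$, so its differential kills tangent vectors to $C$. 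Second, independence of the extensions: any other extensions differ by sections $f_i\gamma_i$ with $f_i|_C=0$; using the derivation property of $L_{P(\cdot)}$ and of $\langle\tilde\alpha,P\cdot\rangle$, each correction contains either a factor $f_i$ or a derivative $P\tilde\alpha\cdot f_i$ taken in a direction tangent to $C$, all of which vanish on $C$. Third, the anchor identity $[\alpha,f\beta]=f[\alpha,\beta]+(\rho(\alpha)\!\cdot\!f)\beta$ and skew-symmetry are inherited from the formula~(\ref{eq_AlmostPoissonBracket}); Jacobi descends from the Jacobi identity for $\{\cdot,\cdot\}_P$ established in Proposition~\ref{P_PropertiesPartialPoissonManifold}(2), since the construction restricts a sheaf-theoretic Lie bracket on $\mathfrak{P}_M$ to $C$ and the correction terms killed by the restriction preserve the Jacobiator.

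The main obstacle is the well-definedness step: the bracket $[\cdot,\cdot]_P$ from Proposition~\ref{P_PropertiesPartialPoissonManifold} is defined on the sheaf $\mathfrak{P}_M$ generated by differentials in $\mathfrak{A}(U)$, so one must first verify that arbitrary local sections of $T^\flat M$ extending sections of $\mathcal{N}^0(C)$ can be handled — which forces the careful use of Lemma~\ref{L_P(U)generates} together with the $\mathfrak{A}(U)$-module structure — and then check that the restriction to $C$ truly depends only on the initial data $\alpha,\beta$, using in an essential way that the coisotropy of $C$ makes $P\tilde\alpha$ and $P\tilde\beta$ tangent to $C$ on $C$.
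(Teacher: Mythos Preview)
Your proposal is correct and follows essentially the same route as the paper. For Point~(1) both you and the paper treat the two special cases identically (identifying $\mathcal{N}^0(C)$ with $\mathcal{N}^a(C)$ when $T^\flat M=T^\prime M$, and with $P^{-1}(TC)$ in the Lagrangian partial-symplectic case); for Point~(2) the paper rewrites $[\alpha,\beta]_P$ via Cartan's formula as $d\beta(P\alpha,\cdot)-d\alpha(P\beta,\cdot)+\cdots$ and then pairs with a tangent field $X$, which is the same computation as your direct expansion of $\langle L_{P\tilde\alpha}\tilde\beta,X\rangle$ --- the key step in both arguments is that $P\alpha$, $P\beta$, $X$, and hence $[P\alpha,X]$, are tangent to $C$ so all relevant pairings vanish on $C$. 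You are more explicit than the paper about the extension step and the independence of the bracket from the choice of extensions; the paper leaves this implicit and works directly with sections over $U$ without comment, so your version is in fact somewhat more careful on this point.
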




\begin{proof}${}$\\
(1) From the definition \ref{D_Coisotropic} we have  $(T_xM)^{\perp_{P_x}}=P_x((T_xM)^0)$ and so we have  $\mathcal{N}^0(M)=(TM)^\perp$. At first assume that $T^\flat M=T^\prime M$ then $\mathcal{N}^a(C)=\mathcal{N}^0(C)$ and so is a subbundle of $T^\prime M$. Assume now that $P$ is partial symplectic.  
Since $C$ is Lagrangian for each $x\in C$ we have $P_x(\mathcal{N}_x^0(C))= T_xC$. Thus $\mathcal{N}^0(C)=P^{-1}(TC)$ and since $P$ is an isomorphism, it follows that   $\mathcal{N}^0(C)$ is a subbundle of $T^\flat_CM$.

\noindent
(2) Let $P_C$ be the restriction of $P$ to $\mathcal{N}^0(C)$. Since $C$ is coisotropic, we have already seen that  $P(\mathcal{N}^0(C)) \subset TC$ and so $P_C$ takes values in $TC$.
 We must firstly show that the bracket defined in (\ref{eq_AlmostPoissonBracket}) restricted to sections of $\mathcal{N}^0(C)$ takes values in $\mathcal{N}^0(C)$.
Fix some open set $U$ in $M$.  Using the relation $L_X\omega=d\omega(X,\;)+d(\omega(X))$ and  the definition of $[\;,\;]_P$,   for any $\alpha$ and $\beta$ in $\Gamma(T^\flat M_U)$ we have 
$$[\alpha,\beta]=d\beta(P\alpha,\;)-d\alpha(P\beta,\;)-d\beta(P\alpha,\;)-d<\alpha,P\beta>.$$
 It follows that for $X\in T(C\cap U)$  and any  section $\alpha$ and $\beta$ of $\mathcal{N}^0(C)$ over $C\cap U$ we have $<\alpha,P\beta>\equiv 0$ on $C\cap U$ and so 
 we obtain $[\alpha,\beta](X)=d\beta(P\alpha,X)-d\alpha(P\beta,X)$. But since $P\alpha$ and $X$ are tangent to $C$ so is the bracket $[P\alpha, X]$ then we get 
 $d\beta(P\alpha,X)=0$. By  same arguments we also have $d\alpha(P\beta,X)=0$. Thus  $[\alpha,\beta]$ belongs to $\mathcal{N}^0(C)$. Since the Jacobiator of $[\;,\;]_P$ is 
 zero on sections of $T^\flat M$ and the restriction of $[\;,\;]_P$ to section of $\mathcal{N}^0(C)$ takes values  in  $\mathcal{N}^0(C)$, this  Jacobiator is also zero on sections of $
 \mathcal{N}^0(C)$. Finally the Leibniz rule on $\Gamma(T^\flat M_U)$ implies the Leibniz rule in restriction to $\Gamma(\mathcal{N}^0(C\cap U))$. \end{proof}

\subsection{The partial Dual Lie Algebroid of  the  Lie Algebroid of a Banach Lie-Poisson Groupoid}

Let $ \left( \mathcal{AG}, M,\rho,[\;,\;] \right)  $  be the  Lie algebroid associated to a groupoid  $\mathcal{G}\tto M$.
We denote by $\mathcal{AG}^\prime\to M$ the dual bundle of $\mathcal{AG}\to M$.   We have the following properties:

\begin{theorem}
\label{T_PartialConormal} 
Assume that $\mathcal{G}\tto M$ is a convenient Lie-Poisson groupoid  such that partial conormal $\mathcal{N}^0(M)$  is a split convenient subbundle of $T^\flat_M\mathcal{G}$. Then we have
\begin{enumerate}
\item[(1)] If $T^\flat \mathcal{G}=T^\prime \mathcal{G}$ the previous assumption is always true and then  $\mathcal{N}0(M)=\mathcal{N}^a(M)$  is canonically isomorphic to the dual $\mathcal{AG}^\prime\to M$ of the Lie algebroid $\mathcal{AG}\to M$.
\item[(2)] The inclusion of  $\mathcal{N}^0(M)$ in the conormal bundle $\mathcal{N}^a(M)$ is a bundle morphism.
\item[(3)] $\mathcal{N}^0(M)$  can be  provided with a Lie algebroid structure whose anchor  is  the restriction of $P$ to $\mathcal{N}^0(M)$.\\
\end{enumerate}
\end{theorem}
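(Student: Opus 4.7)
The core idea is to exploit the natural splitting of $T\mathcal{G}|_M$ coming from the identity section $\mathbf{1}$ together with the coisotropy of $M$ established in Theorem~\ref{T_PropertiesPoissonGroupoid}(i), and then to invoke Theorem~\ref{T_PartialConormalC}(2) for the Lie algebroid structure on $\mathcal{N}^0(M)$. First I would handle Part~(1). Since $\mathbf{s}\circ\mathbf{1}=\operatorname{id}_M$, for every $x\in M$ the tangent map $T_x\mathbf{1}\colon T_xM\to T_x\mathcal{G}$ is a bounded right inverse of $T_x\mathbf{s}$, so $T_x\mathcal{G}=T_xM\oplus \ker T_x\mathbf{s}=T_xM\oplus \mathcal{A}_x\mathcal{G}$. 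This direct sum decomposition is smooth in $x$, hence $M$ is a split submanifold of $\mathcal{G}$, which immediately yields that $\mathcal{N}^a(M)$ is a convenient subbundle of $T^\prime_M\mathcal{G}$. When moreover $T^\flat\mathcal{G}=T^\prime\mathcal{G}$, the very definitions give $\mathcal{N}^0(M)=\mathcal{N}^a(M)$, and the split assumption is automatic. Dualising the splitting above produces the canonical identification $T^\prime_x\mathcal{G}\simeq T^\prime_xM\oplus \mathcal{AG}^\prime_x$, under which the annihilator $(T_xM)^0$ is precisely the second summand $\mathcal{AG}^\prime_x$; smoothness in $x$ turns this into a canonical bundle isomorphism $\mathcal{N}^a(M)\simeq \mathcal{AG}^\prime$.

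For Part~(2) I would argue as follows: the inclusion $\iota\colon T^\flat\mathcal{G}\hookrightarrow T^\prime\mathcal{G}$ is, by hypothesis on the ambient partial Poisson structure, a bounded convenient bundle morphism. Restricting $\iota$ to $M$ and noting that by definition $\mathcal{N}^0(M)=\iota^{-1}(\mathcal{N}^a(M))\cap T^\flat_M\mathcal{G}$, the restriction $\iota|_{\mathcal{N}^0(M)}$ takes values in $\mathcal{N}^a(M)$ and is a convenient bundle morphism. Since $\mathcal{N}^0(M)$ is assumed to be a \emph{split} convenient subbundle of $T^\flat_M\mathcal{G}$ (so in particular it is a well-defined convenient bundle with its own topology), and $\mathcal{N}^a(M)$ is a subbundle of $T^\prime_M\mathcal{G}$ by Part~(1), the claim follows.

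Finally, for Part~(3) I would reduce directly to Theorem~\ref{T_PartialConormalC}(2). The submanifold $M\subset\mathcal{G}$ is coisotropic by Theorem~\ref{T_PropertiesPoissonGroupoid}(i). By assumption $\mathcal{N}^0(M)$ is a split convenient subbundle of $T^\flat_M\mathcal{G}$; since a split convenient subbundle is in particular closed in the ambient convenient bundle, the hypotheses of Theorem~\ref{T_PartialConormalC}(2) are met. Thus $\mathcal{N}^0(M)$ inherits a Lie algebroid structure whose anchor is the restriction of $P$ to $\mathcal{N}^0(M)$, which by Proposition~\ref{P_PF0perp}(1) indeed takes values in $TM=TM\cap P(T^\flat\mathcal{G})$.

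The only delicate point is verifying in Part~(1) that the splitting induced by $T\mathbf{1}$ is smooth and compatible with the dual pairing so as to produce a genuine \emph{bundle} isomorphism $\mathcal{N}^a(M)\simeq \mathcal{AG}^\prime$ over $M$; this is routine in the Banach case but in the convenient setting requires a small verification that the transition cocycles obtained from local trivialisations of $T\mathcal{G}$ adapted to $\mathbf{1}(M)$ dualise to transition cocycles for $\mathcal{AG}^\prime$. I do not expect any new difficulty there, but it is the one step that goes beyond a direct citation of the earlier results.
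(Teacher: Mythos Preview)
Your plan is correct and follows essentially the same route as the paper. For Part~(1) the paper also uses the splitting $T_M\mathcal{G}=\mathcal{AG}\oplus TM$ coming from the submersion $\mathbf{s}$ (equivalently from the section $\mathbf{1}$) and dualises; for Part~(3) the paper likewise reduces directly to Theorem~\ref{T_PartialConormalC}(2). The only real difference is in Part~(2): where you argue abstractly that the inclusion $\mathcal{N}^0(M)\hookrightarrow\mathcal{N}^a(M)$ is the restriction of the bundle morphism $\iota\colon T^\flat\mathcal{G}\to T^\prime\mathcal{G}$ to a split subbundle landing in a split subbundle, the paper carries this out explicitly in adapted local trivialisations, checking that in coordinates the inclusion reads $(y,\eta,0)\mapsto(y,\eta,0)$ with $\eta\in(\mathbb{G}^{\mathbf{s}})^\prime\cap\mathbb{G}^\flat$ and that transition maps preserve this subspace. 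Your categorical argument is cleaner; the paper's chart computation is what actually confirms the subtlety you flag at the end about compatibility of transition cocycles in the convenient setting.
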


\begin{corollary}\label{C_PartialCononormalsymplectic} 
If a Banach Lie groupoid $\mathcal{G}\tto M$ satisfies one of the following assumptions:
\begin{enumerate}
\item[(i)]   
$\mathcal{G}\tto M$ is a Lie-Poisson groupoid such that $T^\flat \mathcal{G}=T^\prime\mathcal{G}$;
\item[(ii)]  
The kernel of $P$ in restriction to $M$ is a split subbundle of $T_M^\flat \mathcal{G}$  and its range is $T_M\mathcal{G}$;
\item[(iii)]  
$\mathcal{G}\tto M$ is a partial symplectic Lie groupoid;
\end{enumerate}
then $\mathcal{N}^0(M)$ is a split subbundle of $T^\flat \mathcal{G}$. \\
If we identify $\mathcal{AG}^\prime$ with  $\mathcal{N}^a(M)$ then $\mathcal{N}^0(M)$ is a weak subbundle of  $\mathcal{AG}^\flat$ which has a structure of convenient  Lie algebroid.
\end{corollary}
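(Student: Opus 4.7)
The plan is to verify the split-subbundle assertion case by case and then read off the Lie-algebroid structure from Theorem~\ref{T_PartialConormal}. The starting observation, which is common to all three cases, is that the identity section ${\bf 1}\colon M\hookrightarrow \mathcal{G}$ is a closed split embedding and that the submersion ${\bf s}$ together with $T{\bf 1}$ gives the canonical decomposition $T_M\mathcal{G}=TM\oplus \mathcal{AG}$ over $M$; dually, $\mathcal{N}^a(M)\subset T'_M\mathcal{G}$ is a split subbundle canonically isomorphic to $\mathcal{AG}^\prime$. Under~(i), $T^\flat\mathcal{G}=T'\mathcal{G}$ so $\mathcal{N}^0(M)=\mathcal{N}^a(M)\cong \mathcal{AG}^\prime$ and there is nothing to prove.

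For case~(iii), Theorem~\ref{T_PropertiesSymplecticgroupoid}(i) says that $M$ is Lagrangian, hence Theorem~\ref{T_PartialConormalC}(1) applies and yields $\mathcal{N}^0(M)=P^{-1}(TM)$. Since $P\colon T^\flat\mathcal{G}\to T\mathcal{G}$ is a convenient bundle isomorphism and $TM$ is split in $T_M\mathcal{G}$, $\mathcal{N}^0(M)$ is split in $T^\flat_M\mathcal{G}$. Case~(ii) is the delicate one: combining skew-symmetry with surjectivity of $P|_{T^\flat_M\mathcal{G}}$ onto $T_M\mathcal{G}$ already forces $\ker P|_{T^\flat_M\mathcal{G}}=0$ (an element $\omega$ of the kernel satisfies $\langle\omega,P\eta\rangle=-\langle\eta,P\omega\rangle=0$ for every~$\eta$, and the full range then gives $\omega=0$), so $P|_M$ is an isomorphism. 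Combined with coisotropy of $M$ (Theorem~\ref{T_PropertiesPoissonGroupoid}(i)) one concludes that $M$ is Lagrangian for the induced partial symplectic structure on $T_M\mathcal{G}$, because the isomorphism $P|_M$ together with the groupoid splitting $T_M\mathcal{G}=TM\oplus\mathcal{AG}$ identifies $TM$ and $\mathcal{AG}$ as mutual duals, so coisotropy $(TM)^{\perp_P}\subset TM$ upgrades to equality by a bipolar-type argument inside the split pair $TM\subset T_M\mathcal{G}$. Then once again $\mathcal{N}^0(M)=P^{-1}(TM)$ and we conclude as in~(iii).

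Finally, once $\mathcal{N}^0(M)$ is known to be a split subbundle of $T^\flat_M\mathcal{G}$, Theorem~\ref{T_PartialConormal} supplies, with $C=M$, the Lie-algebroid structure on $\mathcal{N}^0(M)$ with anchor $P|_{\mathcal{N}^0(M)}$; the inclusion $\mathcal{N}^0(M)\hookrightarrow \mathcal{N}^a(M)\cong \mathcal{AG}^\prime$ is a bounded bundle morphism (inherited from the bounded inclusion $T^\flat\mathcal{G}\hookrightarrow T'\mathcal{G}$), which makes $\mathcal{N}^0(M)$ the weak subbundle denoted $\mathcal{AG}^\flat$. The chief obstacle is the Lagrangian upgrade in case~(ii): purely linear coisotropy does not imply isotropy in the Banach setting, so the argument must genuinely exploit the identity-section splitting $T_M\mathcal{G}=TM\oplus\mathcal{AG}$ together with the skew-symmetry of $P$, to rule out any extra directions in $(TM)^{\perp_P}$ beyond $TM$ itself.
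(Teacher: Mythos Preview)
Cases~(i) and~(iii) are handled correctly and match the paper's argument. The final paragraph, invoking Theorem~\ref{T_PartialConormal} once the split-subbundle assertion is established, is also correct.

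Your observation in case~(ii) that skew-symmetry plus surjectivity force $\ker P|_M=0$ is valid and sharper than what the paper writes; assumption~(ii) then collapses to ``$P|_M$ is a fibrewise isomorphism onto $T_M\mathcal{G}$.'' But the step that follows---upgrading coisotropy of $M$ to the Lagrangian property---is where the gap lies. Coisotropy gives only $\mathcal{N}^0(M)\subset P^{-1}(TM)$; the reverse inclusion is equivalent to $\langle P^{-1}u,v\rangle=0$ for all $u,v\in T_xM$, that is, to \emph{isotropy} of $TM$ for the form induced by $P|_M$. You cite no mechanism that produces isotropy: Theorem~\ref{T_PropertiesSymplecticgroupoid}(i) requires the graph $G_{\mathbf m}$ to be Lagrangian in $\mathcal{G}\times\mathcal{G}\times\mathcal{G}^-$, a \emph{global} hypothesis on $P$, not one that holds merely along $M$. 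The sentence ``the isomorphism $P|_M$ together with the groupoid splitting identifies $TM$ and $\mathcal{AG}$ as mutual duals'' is not correct as stated---$P|_M$ maps $T^\flat_M\mathcal{G}$ to $T_M\mathcal{G}$, and there is no reason it should carry $\mathcal{N}^0(M)$ onto $TM$; that is precisely the claim to be proved. Your closing paragraph concedes this is the ``chief obstacle'' without removing it.

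The paper's route in case~(ii) avoids the Lagrangian question. It fixes a complement $H^\flat M$ of $K^\flat M:=\ker P|_M$ inside $T^\flat_M\mathcal{G}$, so that $P_H:=P|_{H^\flat M}$ is an isomorphism onto $T_M\mathcal{G}$, then pulls back the split decomposition $T_M\mathcal{G}=TM\oplus\mathcal{AG}$ along $P_H$ to obtain $H^\flat M=P_H^{-1}(TM)\oplus P_H^{-1}(\mathcal{AG})$, and identifies $\mathcal{N}^0(M)$ directly with the summand $K^\flat M\oplus P_H^{-1}(TM)$. The splitting of $\mathcal{N}^0(M)$ is then read off from this explicit three-term decomposition rather than from any Lagrangian characterisation.
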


\begin{remark}\label{R_OtherConditionAssumprionTheorem} According to the decomposition  (\ref{eq_TMGDecomposition}), if $\mathcal{G}$ is an Hilbert groupoid, or if $TM$ is finite dimenisonal or finite codimensional, the assumptions of Theorem \ref{T_PartialConormal}  are also satisfied.
\end{remark}

Under the assumptions of Theorem \ref{T_PartialConormal}, if we identify $\mathcal{AG}^\prime$ with  $\mathcal{N}^a(M)$ then $\mathcal{N}^0(M)$ is a weak subbundle of  $\mathcal{AG}^\flat$ which has a structure of Banach Lie algebroid. So we introduce:

\begin{definition}
\label{D_PartialDual} 
Assume that $\mathcal{G}\tto M$ is a {\bf Banach} Lie-Poisson groupoid  such that partial conormal $\mathcal{N}^0(M)$  is a split  Banach  subbundle of $T^\flat_M\mathcal{G}$. 
Then  $\mathcal{N}^0(M)$ is called \emph{the partial dual of $\mathcal{AG}$} and is denoted $\mathcal{AG}^\flat$.
\end{definition}




\begin{proof} [Proof of Theorem \ref{T_PartialConormal} ]${}$\\
(1) Assume that $T^\flat \mathcal{G}=T^\prime \mathcal{G}$. Then $\mathcal{N}^0(M)=\mathcal{N}^a(M)$. At first since ${\bf s}:\mathcal{G}\to M$, is a submersion, as  $M$  is   considered as a  submanifold of $\mathcal{G}$, it follows that
\begin{equation}\label{eq_TMGDecomposition}
T_M\mathcal{G}=T_M^{\bf s}\mathcal{G}\oplus TM=\mathcal{AG}\oplus TM.
\end{equation}
If  $p_M^{\bf s}:T_M\mathcal{G}\to T\mathcal{G}^{\bf s}$ is the associated projection,  we obtain an isomorphism $\widehat{p^{\bf s}_M}:T\mathcal{G}^{\bf s}\to T_M\mathcal{G}/TM$. 
This implies that  $T^\prime_M\mathcal{G}$ is isomorphic to $ T^\prime_M\mathcal{G}^{\bf s}\oplus T^\prime M$. Thus   $\mathcal{N}^a(M)$ can be identified with  to $T^\prime_M\mathcal{G}^{\bf s}$ and is a split subbundle of $T^\flat\mathcal{G}$.  Since  $\mathcal{AG}\equiv T^\prime_M\mathcal{G}^{\bf s}$,  therefore, $\mathcal{N}^0(M)$ can be identified with $\mathcal{AG}^\prime $.\\

(2) For simplicity, assume that $M$ is connected and we use the context of the previous arguments.  

Let $\mathbb{G}^{\bf s}$ the typical fibre of $T_M\mathcal{G}^{\bf s}$ and $\mathbb{M}$ the typical model of $M$. Then, for each $x\in M$, there exists a chart $(U,\psi)$ of $x$ in $\mathcal{G}$ such that $\psi(U)=\bar{U}_1\times\bar{U_2}\subset \mathbb{M}\times \mathbb{G}^{\bf s}$. In this way, we get a trivialization:
\[
T^\prime\psi^{-1}: T^\prime\mathcal{G}_{U}\to (\bar{U}_1\times\bar{U}_2)\times \left( \mathbb{M}^\prime\oplus (\mathbb{G}^{\bf s})^\prime\right).
\]
In this chart, we have $(T^\prime\psi)^{-1}\left(\mathcal{N}^a(M)_{| U}\right)=\bar{U}_1\times \bar{U}_2\times (\mathbb{G}^{\bf s})^\prime$. 
Without loss of generality we may assume that $\bar{U}_1$ and $\bar{U}_2$ are simply connected. Now, since $\mathcal{N}^0(M)$ is a closed subbundle of $T^\flat \mathcal{G}$,  there exists a local trivialization $\Psi$ of $T^\flat \mathcal{G}_U$ such that $\Psi (T^\flat \mathcal{G}_U)=\bar{U}_1\times \bar{U}_2\times \mathbb{G}^\flat$ if $\mathbb{G}^\flat$ is the typical fibre of $T^\flat \mathcal{G}_U$.
 On the other hand, since $\mathcal{N}^0(M)$ is a closed  subbundle of $T_M^\flat \mathcal{G}$, if we identify $\mathbb{G}^\flat$ with $T_x^\flat \mathcal{G}$, we have a decomposition $ \mathbb{G}^\flat=\mathbb{N}^\flat\oplus \mathbb{H}$ where $\mathbb{N}^\flat$ is the typical fibre of  $\mathcal{N}^0(M)$.  Thus we may assume that $\Psi (T^\flat \mathcal{G}_U)=(\bar{U}_1\times \bar{U}_2)\times (\mathbb{N}^\flat\oplus \mathbb{H})$.  
 But since we have $\mathcal{N}^0(M)=\mathcal{N}^a(M)\cap T^\prime_M \mathcal{G}$, we must have $\mathbb{N}^\flat= (\mathbb{G}^{\bf s})^\prime\cap \mathbb{G}^\flat$. Thus, in these local coordinates associated to these trivializations,  the inclusion of $\mathcal{N}^0(M)_{| U}$ into $\mathcal{N}^a(M)_{| U}$ is the map 
\[
( y, \eta, 0)\mapsto (y,\eta,0).
\]
Now consider  another chart $(U',\psi')$ around $x$  and another trivialization $
 \Psi'(T^\flat \mathcal{G}_U)=\bar{U}'_1\times \bar{U}'_2\times \mathbb{G}^\flat$  with the same properties and decompositions.  Since $\mathcal{N}^0(M)$ (resp.  $\mathcal{N}^a(M)$) is a split subbundle of 
 $T^\flat\mathcal{G}$ (resp. $T^\prime\mathcal{G}$), the transition maps $\Psi'\circ \Psi^{-1}$ (resp. $(T^\prime\psi')^{-1}\circ (T^\prime\psi)$) preserves the vector space $(\mathbb{G}^{\bf 
 s})^\prime\cap \mathbb{G}^\flat$ (resp. $ (\mathbb{G}^{\bf s})^\prime$), which ends the proof of (2).\\
 Point (3) is a direct application of Theorem \ref{T_PartialConormalC} (2).
\end{proof}

\begin{proof}[Proof of Corollary \ref{C_PartialCononormalsymplectic}]
This corollary is a direct consequence of Theorem \ref{T_PropertiesSymplecticgroupoid}, (1), Theorem \ref{T_PartialConormalC} (1) and Theorem \ref{T_PartialConormal} if we show that under the assumption (i)   (ii) or (iii) , $\mathcal{N}^0(M)$ is a split subbundle of $T^\flat _M\mathcal{G}$. \\

1. Note that according to the proof of Point (1) of Theorem~\ref{T_PartialConormal},  under assumption (i), $\mathcal{N}^0(M)=\mathcal{N}^a(M)$ and so it is closed split subbundle of $T^\flat _M\mathcal{G}=T^\prime_M\mathcal{G}$. \\

2. Under the assumption (ii),  we denote by $K^\flat M$ the kernel of $P$ in restriction to $T^\flat_M\mathcal{G_M}$ and $H^\flat M$ a a complementary  of $K^\flat M$ in $T_M^\flat \mathcal{G}$. We denote by $P_H$ the restriction of $P$ to $H^\flat M$. Since $P$ is surjective, it follows that $P_H:H^\flat M \to T_M\mathcal{G}$ is an isomorphism. Taking in account the decomposition (\ref{eq_TMGDecomposition}), we set $H^0 M= P_H^{-1}(TM)$ and $K^0 M=P_H^{-1}(\mathcal{AG})$. By construction, for any $\alpha \in K^0 M$ (resp. $\alpha \in H^0 M$), we have $<\alpha, u>=0$,  for all $u\in  TM$ (resp. $u\in  \mathcal{AG})$).  Note that $\mathcal{N}^0(M)=K^\flat M\oplus H^0 M$ and so $\mathcal{N}^0(M)$ is a split subbundle of $T^\flat \mathcal{G}$.\\

3. Under assumption (iii),    $M$  is a particular case of the context of assumption (ii).
\end{proof}

We  are now in situation to prove the following result which generalizes a result in finite dimension for Poisson Lie groupoids (cf. \cite{MaXu94}):
\begin{theorem}\label{T_BialgebroidPoissongroupoid}  
Let  $\mathcal{G}\tto M$ be  a  Banach Lie-Poisson groupoid  such that partial conormal $\mathcal{N}^0(M)$ is a split  Banach subbundle  of $T^\prime_M\mathcal{G}$. Then $(\mathcal{AG}, \mathcal{AG}^\flat)$ is a partial bialgebroid.
\end{theorem}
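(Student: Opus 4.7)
The plan is to derive the partial bialgebroid identity on $(\mathcal{AG}, \mathcal{AG}^\flat)$ by restricting to $M$ the partial bialgebroid identity on $(T\mathcal{G}, T^\flat\mathcal{G})$, after extending local sections of $\mathcal{AG}$ to right-invariant vector fields on $\mathcal{G}$. The Lie algebroid $(\mathcal{AG}, M, \rho, [\cdot,\cdot])$ is the classical one associated to the groupoid (section~\ref{___ConvenientLieAlgebroidAssociatedToAConvenientLieGroupoid}), with $\rho = T\mathbf{t}|_M$ and bracket obtained from right-invariant extensions. By Corollary~\ref{C_PartialCononormalsymplectic}, the split subbundle $\mathcal{AG}^\flat = \mathcal{N}^0(M)$ carries a Lie algebroid structure whose anchor $\rho^\flat$ is the restriction of $P$ --- which takes values in $TM$ because $M$ is coisotropic by Theorem~\ref{T_PropertiesPoissonGroupoid} --- and whose bracket $[\cdot,\cdot]_{\rho^\flat}$ is the restriction to $\Gamma(\mathcal{AG}^\flat)$ of the sheaf bracket $[\cdot,\cdot]_P$ of Proposition~\ref{P_PropertiesPartialPoissonManifold}. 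Moreover, since $\mathcal{G}$ is itself a partial Poisson manifold, Examples~\ref{exBialgebroid} asserts that $(T\mathcal{G}, T^\flat\mathcal{G})$ is a partial bialgebroid, so that for any local vector fields $\widetilde X, \widetilde Y$ on $\mathcal{G}$,
\begin{equation*}
d_P[\widetilde X,\widetilde Y] \;=\; L_{\widetilde X}\, d_P \widetilde Y \;-\; L_{\widetilde Y}\, d_P \widetilde X \quad (\star).
\end{equation*}

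The next step is the lifting mechanism. Given $X, Y \in \Gamma(\mathcal{AG}_U)$ with $U\subset M$ open, take their right-invariant extensions $\widetilde X, \widetilde Y$ to vector fields on $\mathbf{t}^{-1}(U)\subset\mathcal{G}$; the right-invariant vector field $[\widetilde X,\widetilde Y]$ restricts along $M$ to $[X,Y]_{\mathcal{AG}}$. For $\alpha,\beta\in\Gamma(\mathcal{AG}^\flat_U)$, pick any smooth extensions $\widetilde\alpha,\widetilde\beta\in\Gamma(T^\flat\mathcal{G})$ on a neighbourhood of $U$ in $\mathcal{G}$. The coisotropy computation at the beginning of the proof of Theorem~\ref{T_PartialConormalC} shows, on the one hand, that $[\widetilde\alpha,\widetilde\beta]_P|_M$ lies in $\mathcal{N}^0(M)$ and coincides there with $[\alpha,\beta]_{\rho^\flat}$, and on the other hand that the Lie derivative of a conormal section along a vector field tangent to $M$ remains conormal along $M$.

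One then evaluates $(\star)$ at points of $M$ on the pair $(\widetilde\alpha,\widetilde\beta)$. Expanding the left-hand side via formula (\ref{eq_dPX}),
\[
d_P[\widetilde X,\widetilde Y](\widetilde\alpha,\widetilde\beta) = P(\widetilde\alpha)\langle\widetilde\beta,[\widetilde X,\widetilde Y]\rangle - P(\widetilde\beta)\langle\widetilde\alpha,[\widetilde X,\widetilde Y]\rangle - \langle[\widetilde\alpha,\widetilde\beta]_P,[\widetilde X,\widetilde Y]\rangle,
\]
and restricting to $M$: the vectors $P(\widetilde\alpha), P(\widetilde\beta)$ are tangent to $M$ by coisotropy, while $\widetilde\alpha,\widetilde\beta$ annihilate $TM$, so each pairing depends only on the $\mathcal{AG}$-component of $[\widetilde X,\widetilde Y]|_M = [X,Y]_{\mathcal{AG}}$. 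This reproduces exactly $d_{\rho^\flat}[X,Y]_{\mathcal{AG}}(\alpha,\beta)$. A parallel expansion of the right-hand side of $(\star)$, using the preservation of conormal sections under Lie derivatives along tangent directions, produces the matching Lie-derivative terms and hence the full partial bialgebroid identity for $(\mathcal{AG}, \mathcal{AG}^\flat)$.

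The main obstacle will be the systematic verification that every off-$M$ extension choice disappears upon restriction, both for the right-invariant lifts $\widetilde X, \widetilde Y$ (which is immediate, as two such lifts coincide on all of $\mathbf{t}^{-1}(U)$) and, more delicately, for the extensions $\widetilde\alpha, \widetilde\beta$ of conormal sections. The latter rests on three intertwined invariance facts already established in the paper: $\rho^\flat(\mathcal{AG}^\flat) \subset TM$; the bracket $[\cdot,\cdot]_P$ preserves $\Gamma(\mathcal{N}^0(M))$ along $M$; and Lie derivatives of conormal sections along vector fields tangent to $M$ remain conormal along $M$. Once this bookkeeping is in place, restricting $(\star)$ to $M$ yields precisely the partial bialgebroid identity required of $(\mathcal{AG}, \mathcal{AG}^\flat)$.
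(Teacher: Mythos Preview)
Your proof is correct and follows the same strategy as the paper: starting from the ambient partial bialgebroid identity $d_P[\widetilde X,\widetilde Y]=L_{\widetilde X}d_P\widetilde Y-L_{\widetilde Y}d_P\widetilde X$ on $(T\mathcal{G},T^\flat\mathcal{G})$ provided by Lemma~\ref{L_dPLambdaLambda}, then restricting along $M$ using that $[\cdot,\cdot]_P$ preserves $\mathcal{N}^0(M)$ (Theorem~\ref{T_PartialConormalC}) and that $P(\mathcal{N}^0(M))\subset TM$ by coisotropy. The paper's proof is terser and does not spell out the lifting mechanism; your explicit use of right-invariant extensions for $X,Y$ is a natural refinement that makes the identification $[\widetilde X,\widetilde Y]|_M=[X,Y]_{\mathcal{AG}}$ transparent, whereas the paper appeals directly to the explicit formulae (\ref{LXL}) and (\ref{dPXY}) to verify that each term restricts well.
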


\begin{proof} 
Since $T^\flat \mathcal{G}$ has a partial Poisson structure, from Proposition \ref{L_dPLambdaLambda},  for any open set $U$ in $\mathcal{G}$,   we have:
\begin{equation}
\label{dPXY2}
d_P[X,Y]=L_Xd_PY-L_Yd_PX
\end{equation}
for all vector fields  $X$ and $Y$ on $U$   considered as $1$-forms on $T^\flat \mathcal{G}$.\\
On the other hand, we have seen in the proof of Theorem \ref{T_PartialConormalC}, 2., for each open set $U$ in $\mathcal{G}$ that the restriction of $[.,.]_P$ to $\mathcal{AG}^\flat_{U\cap M}$ takes values in $\mathcal{AG}^\flat_{U\cap M}$.
According to  (\ref{LXL}) and Proposition  \ref{L_dPLambdaLambda} Point 1, if $X$ is a section of $\mathcal{AG}_{U\cap M}$  it follows that $d_PX$ in restriction to $\mathcal{AG}^\flat_{U\cap M}$  is a $2$-form on  $\mathcal{AG}^\flat_{U\cap M}$.
Finally from (\ref{dPXY}) and Proposition  \ref{L_dPLambdaLambda} Point 2, it follows that the restriction of each member of (\ref{dPXY2})  to  $\mathcal{AG}^\flat_{U\cap M}$  induces a $2$-form on $\mathcal{AG}^\flat_{U\cap M}$ which satisfies relation (\ref{dPXY2}). This  ends the proof according to the definition of a partial bialgebroid.
\end{proof}

\begin{remark}
\label{R_NotTrueInConvenient}
All these previous results are not true in general in the convenient setting. The essential reason is that even if $\mathbb{F}$ is a split convenient subspace of a convenient 
space $\mathbb{E}$ then if $\mathbb{E}=\mathbb{F}\oplus \mathbb{G}$, then the canonical  projection of $\mathbb{G}$ onto $\mathbb{E}/\mathbb{F}$ is not a convenient isomorphism in general.  

In particular,  $\mathbb{F}^*$ is not isomorphic to the annihilator of $\mathbb{G}$ and so $\mathcal{N}^a(M)$ cannot be identified with $\mathcal{AG}^\prime$.\\ 
Thus the whole proof of Theorem \ref{T_PartialConormal}  does not work.
However, in the convenient setting,  if $T^\flat \mathcal{G}=T^\prime \mathcal{G}$ and $P=T^\flat  \mathcal{G}\to T\mathcal{G}$ is a convenient bundle isomorphism, that means that it is a strong symplectic convenient groupoid, so the typical fibre $\mathbb{G}$ of $\mathcal{G}$ is reflexive and, in particular, $\mathcal{AG}$ is isomorphic to  $\mathcal{AG}^\prime$. In fact, we also can show that  $(\mathcal{AG},\mathcal{AG}^\prime)$ is a partial bialgebroid (formally analog proof as Theorem~\ref{T_BialgebroidPoissongroupoid}).  Note that we will see that Theorem~\ref{T_PartialConormal}  is true for direct limit of finite dimensional Poisson groupoids.
\end{remark}

\section{Sub-Poisson Groupoid Structures}
\label{__SubPoissonGroupoidStructures}
In this section, we propose a context  which is a generalization of classical results  in finite dimension to the convenient setting. These concepts  are illustrated by \cite{OJS18}.

\subsection{Partial Symplectic Poisson Structure on the Cotangent Bundle of a Convenient Lie Groupoid}
Let $\mathcal{G}\tto M$ be a convenient  Lie groupoid.  As in finite dimension, the  conormal    $\mathcal{N}^a(M)$ is the subbundle of $T^\prime_M \mathcal{G}$ defined by:
$$\mathcal{N}^a(M)=\{(x,\eta)\in T^\prime_ M\mathcal{G}, \textrm{ such that } <\eta, u>=0,\;\; \forall u\in T_xM\}.$$
It is clearly a closed subbundle of $T^\prime M$. From the decomposition (\ref{eq_TMGDecomposition}), we have 
\begin{equation}\label{eq_VirtualDualAG}
T^\prime_M\mathcal{G}=\mathcal{N}^a(M)  \oplus ( \mathcal{AG})^a
\end{equation}
where $( \mathcal{AG})^a$ is the annihilator of $\mathcal{AG}$. Then $\mathcal{N}^a(M)$ is then  isomorphic to  the dual bundle of $ \mathcal{AG}$. 
 we will call $\mathcal{N}^a(M)$  the{ \bf  dual bundle }    of $ \mathcal{AG}$ and  will be denoted $\mathcal{AG}^\prime$. Note that the typical fibre of $\mathcal{AG}^\prime$ is isomorphic to $\mathbb{M}^a$ and so to $\mathbb{G}^\prime$    and  the typical fibre of $\mathcal{AG}^a$ is isomorphic to $\mathbb{G}^a$ and also $\mathbb{M}^\prime$. As  in finite dimension, we have:

\begin{theorem}
\label{T_GroupoidT'G} 
Let $\mathcal{G}\tto M$ be a convenient  Lie groupoid. Then we have
\begin{enumerate}
\item[1.]  $T^\prime \mathcal{G}\tto\mathcal{AG}^\prime$ has  a groupoid structure.
\item[2.]  $T^\prime \mathcal{G}\tto \mathcal{AG}^\prime $ has a canonical  partial symplectic groupoid and so $(\mathcal{AG},\mathcal{AG}^\prime$) is a  partial Lie bialgebroid.
\end{enumerate}
\end{theorem}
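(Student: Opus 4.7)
The plan is to adapt the classical construction of Coste--Dazord--Weinstein \cite{CDW87} to the convenient setting, in two main stages. For Part~1, I would define the structural maps of $T^\prime\mathcal{G}\rightrightarrows\mathcal{AG}^\prime$ via the cotangent lift of the groupoid structure of $\mathcal{G}$. Using the decomposition $T_M\mathcal{G}=\mathcal{AG}\oplus TM$ and the identification $\mathcal{AG}^\prime\equiv\mathcal{N}^a(M)\subset T^\prime_M\mathcal{G}$, I would define the target map $\widetilde{\mathbf{t}}:T^\prime\mathcal{G}\to\mathcal{AG}^\prime$ for $\alpha_g\in T^\prime_g\mathcal{G}$ by
\[
\widetilde{\mathbf{t}}(\alpha_g)(X)=\alpha_g(TR_{g^{-1}}X)\quad\text{for all }X\in\mathcal{AG}_{\mathbf{t}(g)},
\]
extended by zero on the $TM$-component, and symmetrically $\widetilde{\mathbf{s}}(\alpha_g)(Y)=\alpha_g(TL_{g}Y)$ for $Y\in\mathcal{AG}_{\mathbf{s}(g)}$ (using the isomorphism with left-invariant sections via $T\mathbf{i}$). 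The multiplication $\widetilde{\mathbf{m}}$ is forced by the identity $(\alpha_g\cdot\beta_h)(TL_gv+TR_hw)=\beta_h(v)+\alpha_g(w)$ on the image of $T_{(g,h)}\mathbf{m}$; the inverse is $-T^*\mathbf{i}$, and the identity section sends $\xi\in\mathcal{AG}^\prime_x$ to its canonical extension in $T^\prime_{\mathbf{1}_x}\mathcal{G}$ vanishing on $T_xM$. The smoothness of these maps follows from the smoothness of $\mathbf{s},\mathbf{t},\mathbf{m},\mathbf{i}$ and their tangent and cotangent lifts in the convenient category (\cite{KrMi97}).

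For Part~2, I would endow $T^\prime\mathcal{G}$ with the canonical Liouville $1$-form $\theta$, defined by $\theta_{\alpha}(V)=\alpha(Tp^\prime_{\mathcal{G}}V)$ for $V\in T_{\alpha}(T^\prime\mathcal{G})$, and set $\omega=d\theta$. Standard arguments (cf. \cite{KrMi97}, Section 48) show $\omega$ is a weak symplectic form on $T^\prime\mathcal{G}$. Taking $T^\flat(T^\prime\mathcal{G})$ to be the range of $\omega^\flat:T(T^\prime\mathcal{G})\to T^\prime(T^\prime\mathcal{G})$ and $P=(\omega^\flat)^{-1}$ gives the partial symplectic structure of Definition~\ref{D_Symplectic}. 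It then remains to show that the graph $G_{\widetilde{\mathbf{m}}}$ of $\widetilde{\mathbf{m}}$ is Lagrangian in $T^\prime\mathcal{G}\times T^\prime\mathcal{G}\times(T^\prime\mathcal{G})^-$, so that $T^\prime\mathcal{G}\rightrightarrows\mathcal{AG}^\prime$ qualifies as a partial symplectic Banach--Lie groupoid in the sense of Definition~\ref{PartialPoissonBanachLieGroupoid}(2).

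The Lagrangian check, which is the main obstacle, I would carry out following the classical computation: the graph of $\widetilde{\mathbf{m}}$ is the image of a natural lift of $G_{\mathbf{m}}\subset\mathcal{G}^3$ under a fibered cotangent construction, and the canonical $1$-form $\theta_1\oplus\theta_2\oplus(-\theta_3)$ pulls back to an exact form on $G_{\widetilde{\mathbf{m}}}$, so $\omega_1\oplus\omega_2\oplus(-\omega_3)$ vanishes on $TG_{\widetilde{\mathbf{m}}}$, giving isotropy. The delicate point is then to check the maximality condition $G_{\widetilde{\mathbf{m}}}=(G_{\widetilde{\mathbf{m}}})^{\perp_P}$ of Definition~\ref{D_Lagrangian}. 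In finite dimension this follows from a dimension count; here I would argue via Proposition~\ref{P_PartialSymplectic}, using that $T_{(g,h)}G_{\mathbf{m}}$ is a closed coisotropic subspace with annihilator spanned precisely by the image of $T^*_{(g,h)}\mathbf{m}$, hence its image under the lifted construction fills out the orthogonal. The difficulty is that convenient cotangent bundles are only weakly symplectic, so annihilator and orthogonal are not automatically interchangeable; I would circumvent this by verifying the identity fibrewise at each $(g,h)\in\mathcal{G}^{(2)}$, where the relevant decompositions are available since $T^*\mathcal{G}$ locally trivializes.

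Finally, the bialgebroid statement follows from the previously developed machinery: since $T^\prime\mathcal{G}\rightrightarrows\mathcal{AG}^\prime$ is a partial symplectic Lie groupoid, Corollary~\ref{C_PartialCononormalsymplectic}(iii) ensures that the partial conormal of $\mathcal{AG}^\prime$ inside $T^\flat(T^\prime\mathcal{G})$ is a split subbundle and carries a convenient Lie algebroid structure; Theorem~\ref{T_BialgebroidPoissongroupoid} then yields that $(\mathcal{A}(T^\prime\mathcal{G}),\mathcal{A}(T^\prime\mathcal{G})^\flat)$ is a partial bialgebroid. Identifying $\mathcal{A}(T^\prime\mathcal{G})\cong\mathcal{AG}$ via the canonical isomorphism at the unit section (coming from $T\widetilde{\mathbf{t}}$ restricted to the zero section $\mathcal{AG}^\prime$), and the partial dual with $\mathcal{AG}^\prime$, delivers the partial Lie bialgebroid structure on $(\mathcal{AG},\mathcal{AG}^\prime)$.
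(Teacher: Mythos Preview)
Your approach to Part~1 and to the partial symplectic structure in Part~2 is essentially that of the paper: cotangent lift of the structure maps, canonical weak symplectic form on $T^\prime\mathcal{G}$ with $P=(\omega^\flat)^{-1}$. Two minor formula corrections are needed. The target should read $\langle\widetilde{\mathbf t}(\xi),b\rangle=\langle\xi,TR_g(b)\rangle$ for $b\in\mathcal{AG}_{\mathbf t(g)}$ (right-translate by $g$, not $g^{-1}$, so that the vector lands at $g$). The source requires the correction $\langle\widetilde{\mathbf s}(\xi),a\rangle=\langle\xi,TL_g(a-\rho(a))\rangle$: since $a\in\mathcal{AG}_{\mathbf s(g)}\subset\ker T\mathbf s$, one must subtract its anchor $\rho(a)\in TM$ to obtain a vector in $\ker T\mathbf t$ on which $L_g$ is defined. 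Your explicit Lagrangian verification via the Liouville form is actually more detailed than the paper, which records only the key observation that the graph of $\widetilde{\mathbf m}$ coincides with the annihilator of the graph of $T\mathbf m$ and then asserts the partial symplectic groupoid conclusion.

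The genuine gap is in your final step. Applying Theorem~\ref{T_BialgebroidPoissongroupoid} to the partial symplectic groupoid $T^\prime\mathcal{G}\rightrightarrows\mathcal{AG}^\prime$ produces a partial bialgebroid $(\mathcal{A}(T^\prime\mathcal{G}),\mathcal{A}(T^\prime\mathcal{G})^\flat)$ \emph{over the base $\mathcal{AG}^\prime$}, not over $M$. Your proposed identification $\mathcal{A}(T^\prime\mathcal{G})\cong\mathcal{AG}$ cannot hold, because these are bundles over different bases: in finite dimension one has $\mathcal{A}(T^*\mathcal{G})\cong T^*(\mathcal{A}^*\mathcal{G})$, which is a bundle over $\mathcal{A}^*\mathcal{G}$, not $\mathcal{AG}$ over $M$. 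So Theorem~\ref{T_BialgebroidPoissongroupoid} delivers the wrong object, and no restriction to the zero section repairs this. The paper avoids the detour entirely and closes the argument in one line by invoking Proposition~\ref{L_dPLambdaLambda} directly: the compatibility $d_P[X,Y]=L_Xd_PY-L_Yd_PX$ valid on any partial Poisson manifold (cf.\ Example~\ref{exBialgebroid}(2)) is what yields the partial Lie bialgebroid structure on $(\mathcal{AG},\mathcal{AG}^\prime)$.
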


\begin{proof} 
For this proof  we need local charts in this context. For simplicity, we assume that $M$ is connected. Let $\mathbb{G}$ the typical fibre of $T_M^{\bf s}\mathcal{G}$ and $\mathbb{M}$ the typical model of $M$.  Note that since ${\bf s}_\mathcal{B}$ and ${\bf t}_\mathcal{B}$  are diffeomorphisms on $U_1$ and $V_1$ respectively on their range, it follows ${ F_\mathcal{B}:=\bf t}_{\mathcal{B}}\circ {\bf s}_{\mathcal{B}}^{-1}$ is a diffeomorphism from $U_1$ to $V_1$. Therefore  $\mathbb{G}$ is also the typical fibre of $T_M^{\bf t}\mathcal{G}$. 
Fix some $y_0\in M$.  There is a chart $(V,\psi)$ of $y_0 \in\mathcal{G}$  with domain simply connected and such that $\psi(V)=\bar{V}_1\times \bar{V}_2 \subset \mathbb{M}\times \mathbb{G}$ and $\psi(y_0)=(0,0)$. In this way $\psi $ is a pair $(\psi_1,\psi_2) $ and we set $\psi_i(y)=\bar{y}_i$ for $i \in \{1,2\}$ and so $\bar{y}_i:V\to \bar{V}_i$ is a smooth map.\\
If $T^\ast \psi$ denotes the adjoint of $T\psi$, we get a trivialization
\[
(T^\prime\psi)^{-1}: T^\prime\mathcal{G}_{| V}\to \bar{V}_1\times \bar{V}_2 \times ( \mathbb{M}\times \mathbb{G})^\prime.
\]
But we have 
\[
( \mathbb{M}\times \mathbb{G})^\prime=\mathbb{M}^\a\times \mathbb{G}^a \equiv \mathbb{G}^\prime \times\mathbb{M}^\prime
\]
where $\mathbb{E}^a$ denotes the annihilator of any closed subspace $\mathbb{E}$ of $\mathbb{M}\times \mathbb{G}$.  \\
If we set $V_1=\psi^{-1}(\bar{V}_1\times \{0\})$,  then $(V_1,(\psi_1)_{| V_1})$ is a chart around $z\in M$ and,  in this chart,  we have:  

$(T^\prime\psi)^{-1}(\mathcal{AG}^a_{| V_1})=\bar{V}_1\times  \mathbb{G}^\prime$,

$(T^\prime\psi)^{-1}(\mathcal{AG}^\prime_{| V_1})=\bar{V}_1\times  \mathbb{M}^\prime$

On the one hand, for any $\xi\in T^\prime M_{| V}$, $(T^\prime\psi)^{-1}(\xi)$ can, be written a a pair $(\bar{\xi}_1,\bar{\xi}_2)$ with $\bar{\xi}_1:V\to \mathbb{G}^\prime $ and $\bar{\xi}_2:V\to \mathbb{M}^a\prime$ are smooth maps. \\
 
 Since ${\bf t} : \mathcal{G}\to M$ is a submersion, around  any $g_0\in \mathcal{G}$, we have  a chart $(\mathcal{V},\Psi)$ with simply connected domain and    such that 
$$\Psi(\mathcal{V})=\bar{\mathcal{V}}_1\times \bar{\mathcal{V}}_2\subset \mathbb{M}\times\mathbb{G}$$
and $\Psi( g_0)=0$.  Again $\Psi$ is a pair $(\Psi_1,\Psi_2)$.  If  $\mathcal{V}_1=\Psi^{-1}(\bar{\mathcal{V}}_1\times\{0)\})$ and $\mathcal{V}_2=\Psi^{-1}(\{0\}\times \bar{\mathcal{V}}_2$, the expression of   ${\bf t}_{| \mathcal{V}}$ in the associated  local coordinates is the map $(\bar{y},\bar{g})\mapsto \bar{y}$.\\

Assume the ${\bf t}(g_0)=y_0$. After shrinking the domains $\mathcal{V}$ and $V$,  if necessary,  and by  composing with a local diffeomorphism in $\mathbb{M}$ which fixes $0\in \mathbb{M}$ and a diffeomorphism of $\mathbb{G}$ which fixes $0\in \mathbb{G}$, we can choose these charts $(V, \psi)$ and $(\mathcal{V},\Psi)$ such that  $\bar{\mathcal{V}}_1=\bar{V}_1$. In the associated 
coordinates, if $\bar{y}=\psi_1({\bf t}(g))$ and $\bar{g}_2=\Psi_2(g)$,  the expression of ${\bf t}$ is  still the map $(\bar{y}, \bar{g}_2)\mapsto \bar{y}$ where $\bar{y}:\mathcal{V}\to \mathbb{M}$ and $\bar{g}_2:\mathcal{V}\to \mathcal{G}$ are smooth maps.

At the level of the tangent bundle of $\mathcal{G}$, according to the previous notations, if  we have a trivialization
\[
T\Psi: T\mathcal{V}\to (\bar{{V}}_1\times \mathbb{M})\times( \bar{\mathcal{V}}_2\times \mathbb{G})
\] 
and the following commutative diagram
\begin{equation}
\label{eq_DiagramTPSi}
\begin{tikzcd}
T\mathcal{G}_{|\mathcal{V}} \arrow[r, "T\Psi"] \arrow[d, "T{\bf t}"]
&  (\bar{{V}}_1\times \mathbb{M})\times( \bar{\mathcal{V}_2} \times \mathbb{G}) \arrow[d, "pr_{V_1}"] \\
TM_{| V}\arrow[r,  "T\psi_{| TV} " ]
& \bar{V}_1\times \mathbb{M}
\end{tikzcd}
\end{equation}
with  $T\Psi\left((g, Y)\right)=\left((\bar{y},\bar{Y}_1),(\bar{g}_2,\bar{Y}_2)\right)$ and 
$T\Psi\circ T{\bf t}\left((g, Y\right))=(\bar{y}, \bar{Y}_1))$
where $\bar{y}: \mathcal{V} \to  \mathcal{V}_1$ and $\bar{Y}_1: \mathcal{V}\to \mathbb{M}$ are smooth maps. \\

Now, ${\bf s}$ is also a submersion and so we have an analogue argument of chart $(U,\phi)$ of $x_0={\bf s}(g_0)$ such that $\Phi(U)=\bar{U}_1\times \bar{U}_2\subset\mathbb{M}\times\mathbb{G}$ and a chart $(\mathcal{U}, \Phi)$ around $g\in \mathcal{G}$ such that $\Phi(\mathcal{U})=\bar{U}_1\times\bar{\mathcal{U}}_2$ and with the same properties of the pairs $(V, \psi)$ and $(\mathcal{V},\Psi)$.
 As we have seen (modulo the choice of a bissection $\mathcal{B}$),  there exists a diffeomorphism $F:U_1\to V_1$ such that $F((y_0)=x_0$.\\

 On the one hand, for any $\xi$, $(T^\prime\psi)^{-1}(\xi)$ can be written as a pair $(\bar{\xi}_1,\bar{\xi}_2)$ with $\bar{\xi}_1:V_1\to \mathbb{G}^\prime $ and $\bar{\xi}_2:V_1\to \mathbb{M}^\prime$. \\

On the  other hand,  if $1_M\to \mathcal{G}$ is the  unit map for $T\mathcal{G}\tto M$ and will be identified as a submanifold of $\mathcal{G}$, then each $ Z\in T_x \mathcal{G}$ can be written in a unique way as $Z=(a, X)$ where $X\in T_xM$ and $a\in\mathcal{AG}_x$. Then the unit map $\widetilde{1}:\mathcal{AG}^\prime\to T^\prime\mathcal{G}$ is defined by 
$$<\widetilde{1}(\xi) ,(a, X)>=<\xi, a>$$

According to the previous notations, it follows that in local coordinates  $\widetilde{\bf 1}$ is the  identity  of $ \bar{V}_1\times  \mathbb{M}^\prime$. It follows that 
$1_M\to T^\prime \mathcal{G}$ is an embedding  and so $\widetilde{1}(\mathcal{AG}^\prime)$ is the subbundle of $T_M\mathcal{G}$  which is the annihilator of $TM$.

{\it Without loss of generality, under the previous context,} we may assume that $\mathcal{V}_1=\bar{V}_1\subset \mathbb{M}$ and $\mathcal{V}_2=\bar{\mathcal{V}}_2\subset  \mathbb{G}$. By the way and $T_\mathcal{V}\mathcal{G}:=T\mathcal{G}_{|\mathcal{V}}=(V_1\times  \mathcal{V}_2\times\mathbb{M}\times \mathbb{G})$ and $T{\bf t}_{| T_\mathcal{V}\mathcal{G}}=pr_{V_1}$. 
 In particular each $Z\in T_V\mathcal{G}$ is a pair $(Z_1, Z_2)$ where $Z_1:V\to \mathbb{M}$ and $Z_2:V\to \mathbb{G}$ are smooth maps. Moreover, according to diagram (\ref{eq_DiagramTPSi}), $T{\bf t}=p_{V_1}$.

In a dual way, we have  $T^\prime_{\mathcal{V}}\mathcal{G}=V_1\times  \mathcal{V}_2 \times \mathbb{G}^\prime\times \mathbb{M}^\prime$ and and in particular:
\begin{equation}
\label{eq_Dual}
(\mathcal{AG}^a_{| V_1})=\bar{V}_1\times  \mathbb{M}^\prime  \textrm{ and } \mathcal{AG}^\prime_{| V_1}=\bar{V}_1\times  \mathbb{G}^\prime.
\end{equation}

Therefore, for any $g=(y,g_2)\in \mathcal{V}$ any $\xi\in T_g^\prime \mathcal{V}$ can be written as a pair $(\xi_1,\xi_2)\in \{(y,g_2)\}\times \mathbb{G}^\prime\times \{(y,g_2)\}\times \mathbb{M}^\prime$ where  $\xi_1$ is a smooth map from 
$\mathcal{V}$ to $\mathbb{G}^\prime$ and $\xi_2$ a smooth map from $\mathcal{V}$ to $\mathbb{M}^\prime$. 

Also, for any $\xi\in T^\prime_{U_1} \mathcal{G}$ , its components on $\mathcal{AG}_{| U_1}$ is defined by its values on any section $a$ of  $\mathcal{AG}_{| U_1}$.\\
 
 On the other hand, for a section $b$  of $\mathcal{AG}_{| V_1}$, that is $b: V_1\to \mathbb{G}$, then $TR_g(b)$ belongs to $T{\bf s}^{-1}({\bf s}(g))$ and, for any section $a$ of $\mathcal{AG}_{| U_1}$, then $a-T{\bf t}(a)$ is a 
 section of $\ker T{\bf t}_{| U_1}$ and so $TL_g(a-\rho(a))$ belongs to $T{\bf t}^{-1}({\bf t}(g))$. Moreover,  the maps $((b,g)\mapsto TR_g(b)$ and $ (a,g)\mapsto TL_g(a-T{\bf t}(a))$ are locally smooth in the convenient setting (one can use smooth curves).\\
 
{\it According to previous arguments, as in finite dimension, the source and the target} 
$\widetilde{\bf s}, \widetilde{\bf t}: T^\prime \mathcal{G}\tto \mathcal{AG}^\prime$    are  well defined  in the following way:

\begin{equation}
\label{eq_bfsbft}
<\widetilde{\bf s}(\xi),a>(g)=<\xi,TL_g({a-\rho(a))}>  \textrm{ and } <\widetilde{\bf t}(\xi),b>=<\xi,TR_g (b)>,
\end{equation}
for all $\xi\in T_g^\prime\mathcal{G}$, $a \in \mathcal{AG}_{{\bf s}(g)}$ and $b \in \mathcal{AG}_{{\bf t}(g)}.$\\

{\it We must show that and $\widetilde{\bf s}$ and $\widetilde{\bf t}$ are submersions.}  
We first show that $\widetilde{\bf t}$ is a  submersion.\\
In fact, in the previous context, if $\xi$ belongs to $T_g^\prime \mathcal{G}$, then $\xi$ is a pair $(\xi_1,\xi_2)$ where  $\xi_1$ is a smooth map from $\mathcal{V}$ to $\mathbb{G}^\prime$ and $\xi_2$ a smooth map from $\mathcal{V}$ to $\mathbb{M}^\prime$. But  we have $<\widetilde{\bf t}(\xi),b>=<\xi, TR_g(b)=<\xi_1,TR_g(b)>$  and  
since $TR_g(b)$ belongs to $\mathbb{G}$ and $\xi_2$ belongs to $\mathbb{M}^\prime=\mathbb{G}^a$. It follows that $\widetilde{\bf t}$ takes values in $\mathbb{G}^\prime\equiv 
\mathcal{AG}^\prime_g$ which is smooth since for any smooth curve $\g:\R \to V_1$ the map $t\mapsto R^*_{g(\g(t)}(\xi(\g(t)$  is a smooth  section $\g^*( T^\prime {\bf t}^{-1}(M))
\equiv \R \times \mathbb{M}^\prime$
 Note that, for any $\eta\in \mathcal{AG}^\prime_{\bf t}(g)$,  if we set  $\xi=(R_g^{-1})^*\eta$, then $\widetilde{\bf t}(\xi)=\eta$ and so $\widetilde{\bf t}$ is surjective. Therefore, $\widetilde{\bf t}$ is a submersion.\\
 
Let ${\bf i}:\mathcal{G}\to \mathcal{G}$ be the inversion in $\mathcal{G}$. Then $T{\bf i}$ is the inversion for the Lie  groupoid structure $T\mathcal{G}\tto TM$. The adjoint $\widetilde{\bf i}:={\bf i}^\ast:=T^\prime {\bf i}$ is a diffeomorphism of $T\prime \mathcal{G}$. Since ${\bf i}\circ L_g=R_g$ it follows easily that $R^*_g=L_g^* \circ \widetilde{\bf i}$,  and from the definition of $\widetilde{\bf s}$, it is easy to see that  $\widetilde{\bf s}=\widetilde{\bf t}\circ \widetilde{\bf i}$ and the proof is left as an exercise to the reader. From  Proposition \ref{P_BasicPropertiesOfTopologicalGroupoids}, it follows that $\widetilde{s}$ is a submersion.\\

As in finite dimension, the multiplication $\widetilde{\bf m}$ on $T^\prime \mathcal{G}$ is given by 
\[
<\xi\bullet\eta , X\bullet Y>:=<\xi, X>+<\eta, Y>
\]
if $\widetilde{\bf t}(\xi)=\widetilde{\bf s}(\eta)$. 
Note that from Proposition~\ref{P_BasicPropertiesOfTopologicalGroupoids},  we know that the set of such pairs $(\xi,\eta)$ is a submanifold  $(T^\prime\mathcal{G})^{(2)}$ of $T^\prime\mathcal{G}\times T^\prime\mathcal{G}$.  If ${\bf s}(g)={\bf t}(h)$, on the one hand,  the bilinear map $T_g\mathcal{G}\times T_h\mathcal{G}\to T_{\bf m}(g,h)\mathcal{G}$: $(X,Y)\mapsto X+Y$ is a bounded surjective map. On the other hand, 
  the graph of this bilinear map $T_g\mathcal{G}\times T_h\mathcal{G}\to T_{\bf m}(g,h)\mathcal{G}$: $(X,Y)\mapsto X+Y$ can be considered a a  relation from $T_g\mathcal{G}\times 
  T_h\mathcal{G}$ to $ T_{\bf m}(g,h)\mathcal{G}$. From the surjectivity of the multiplication $T{\bf m}$  in $T^\prime \mathcal{G}$ and the definition of the multiplication $\widetilde{\bf 
  m}$ in $T^\prime\mathcal{G}$, it follows that	the graph of $\widetilde{\bf m}$ in $T^\prime\mathcal{G}$ is  $Ann(G_{T{\bf m}})$. 
The associativity of $T{\bf m}$ implies the associativity of $\widetilde{\bf m}$.\\  
Finally, under the previous local context (even in the convenient setting), from its definition,  it is easy to see  that the map $(\xi\eta)\mapsto \xi\bullet\eta$ is a smooth map from $(T^\prime\mathcal{G})^{(2)}$ to  $T^\prime\mathcal{G}$ which ends the proof.

\begin{remark}
\label{R-GrapkMultiplicationTprimeG}
$T^\prime \mathcal{G}\tto \mathcal{AG}^\prime$ is a $\mathcal{VB}$-groupoid dual of $\mathcal{VB}$-groupoid $T\mathcal{G}\tto TM$. 
\end{remark}

2. Let $\omega$ be the canonical $2$-form on  $T^\prime \mathcal{G}$. Therefore, for any $(\xi,g)\in T^\prime \mathcal{G}$, any $X\in  T_{(\xi,g)}T^\prime \mathcal{G}$ can be written as a pair $(v,v')$ where $v$ belongs to $T_g\mathcal{G}$ and $v'$ in $T^\prime \mathcal{G}$. We then have:
\begin{equation}
\label{eq_omega}
\omega_{(\xi,g)}(X,Y)=<w',v>-<v',w>,
\end{equation}
if $X=(v,v')$ and $Y=(w,w')$. Let $T^\flat (T^\prime \mathcal{G})$ be the range of $\omega^\flat: T(T^\prime\mathcal{G})\to T^\prime (T^\prime\mathcal{G})$. Thus, $P:=(\omega^\flat)^{-1}:T^\flat (T^\prime \mathcal{G})\to T (T^\prime \mathcal{G})$ is an isomorphism.\\

It follows that $P$ is a Poisson anchor and so $T^\prime \mathcal{G}\tto \mathcal{N}^a (M) $ is  partial symplectic groupoid.\\
Finally,  Proposition~\ref{L_dPLambdaLambda}  implies that  $(\mathcal{AG},\mathcal{AG}^\prime$) is a  partial Lie bialgebroid.
\end{proof}

\subsection{Sub-Poisson Structure on a  Subgroupoid of  the Cotangent Bundle of a Lie Groupoid}
In the Banach setting used in \cite{OJS18}, the authors introduce a notion of  sub-Poisson structure (p.~29). The following definition  is  an adaptation of the previous one: 
\begin{definition}
\label{D_SuBPoissonGroupoid} 
A convenient Lie groupoid $\mathcal{G}\tto M$ has a \emph{sub-Poisson groupoid structure}\index{sub-Poisson groupoid} if  there exists a Poisson anchor $P:T^\flat \mathcal{G} \to T\mathcal{G}$ such that $T^\flat \mathcal{G}$ can be provided with a convenient subgroupoid  structure $T^\flat \mathcal{G} \tto \mathcal{AG}^\flat$    of $T^\prime \mathcal{G}\tto \mathcal{AG}^\prime$ where $\mathcal{AG}^\flat$ is a weak subbundle of $\mathcal{AG}^\prime $ and there exists a    bundle  morphism 
 $P^\flat:\mathcal{AG}^\flat \to TM$ such that the following diagram is commutative:
\begin{equation}
\label{eq_DiagralSubPoisson}
 \xymatrix{
  T^\flat\mathcal{G}\ar@<-.5ex>[d] \ar@<.5ex>[d]\ar[r]^P&T\mathcal{G}\ar@<-.5ex>[d] \ar@<.5ex>[d]\\
 \mathcal{AG}^\flat \ar[r]^{P^\flat} &TM}
 \end{equation}
and  $P$ is a groupoid  morphism over $P^\flat$.\end{definition}

We have the following  characterization:
\begin{proposition}
\label{P_SubPoissonGroupoid}
Consider a convenient Lie groupoid $\mathcal{G}\tto M$ provided with a Poisson anchor $P:T^\flat \mathcal{G}\to T\mathcal{G}$.  Assume  that the partial conormal set $\mathcal{ N}^0(M)$ is a  closed sub-bundle of $T^\flat_M\mathcal G$,  and, if  $P^\flat $ denotes the restriction of $P$  to $\mathcal{ N}^0(M)$,   the following diagram is commutative.
\begin{equation}
\label{eq_DiagralSubPoissonProp}
 \xymatrix{
  T^\flat\mathcal{G}\ar@<-.5ex>[d]_{\widetilde{\bf s}_{T^\flat \mathcal{G}}}\ar@<.5ex>[d]^{\widetilde{\bf t}_{| T^\flat \mathcal{G}}}\ar[r]^P&T\mathcal{G}\ar@<-.5ex>[d]_{T{\bf s}}\ar@<.5ex>[d]^{T{\bf t}}\\
 \mathcal{N}^0(M)\ar[r]^{P^\flat} &TM}
\end{equation}
Then the convenient Lie groupoid $\mathcal{G}\tto M$ is a  sub-Poisson groupoid  if and only if       $\mathcal{G}\tto M$  has a partial Poisson structure.\\
\end{proposition}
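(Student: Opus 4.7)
The plan is to prove both directions of the equivalence by combining the cotangent groupoid structure of Theorem~\ref{T_GroupoidT'G} with the coisotropy characterization of a partial Poisson Lie groupoid from Definition~\ref{PartialPoissonBanachLieGroupoid}. The guiding idea is that the sub-Poisson condition says exactly that $P$ is a groupoid morphism between a subgroupoid $T^\flat\mathcal{G}\rightrightarrows\mathcal{N}^0(M)$ of the cotangent groupoid and the tangent groupoid $T\mathcal{G}\rightrightarrows TM$, and that this is the dual form of the multiplicativity of the associated Poisson tensor $\Lambda$ on $\mathcal{G}$.

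First I would extract the structural content of the hypotheses. Using the explicit formulae (\ref{eq_bfsbft}) for $\widetilde{\mathbf{s}}$ and $\widetilde{\mathbf{t}}$ together with the fact that $P^\flat$ takes values in $TM$, the commutativity of diagram (\ref{eq_DiagralSubPoissonProp}) forces $\widetilde{\mathbf{s}}(T^\flat\mathcal{G})$ and $\widetilde{\mathbf{t}}(T^\flat\mathcal{G})$ to land in $\mathcal{N}^0(M)$. Since $\mathcal{N}^0(M)$ is assumed to be a closed subbundle of $T^\flat_M\mathcal{G}$, the only remaining point is the stability of $T^\flat\mathcal{G}$ under the cotangent multiplication $\widetilde{\mathbf{m}}$, which will turn out to be equivalent to the multiplicativity of $\Lambda$.

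For the forward implication I would assume $\mathcal{G}$ carries the sub-Poisson structure, so that $P:T^\flat\mathcal{G}\to T\mathcal{G}$ is a groupoid morphism. The graph of $\widetilde{\mathbf{m}}$ is the annihilator of the graph of $T\mathbf{m}$ (Remark~\ref{R-GrapkMultiplicationTprimeG}), so the morphism identity $P(\xi\bullet\eta)=P(\xi)\cdot P(\eta)$ for composable pairs in $T^\flat\mathcal{G}$ translates, via the definition $\Lambda_g(\alpha,\beta)=\langle\alpha,P(\beta)\rangle$, into the multiplicativity relation
\[
\Lambda_{gh}(\alpha,\beta)=\Lambda_h(L_g^{\ast}\alpha,L_g^{\ast}\beta)+\Lambda_g(R_h^{\ast}\alpha,R_h^{\ast}\beta)
\]
on sections of $T^\flat\mathcal{G}$. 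The groupoid version of Proposition~\ref{P_CharacterizationPartialPoissonLiegroup}, whose argument uses only the local form of $T\mathbf{m}$ at composable pairs and hence transports to our setting, then yields the coisotropy of $G_{\mathbf{m}}$ in $\mathcal{G}\times\mathcal{G}\times\mathcal{G}^-$. Conversely, if $G_{\mathbf{m}}$ is coisotropic, Theorem~\ref{T_PropertiesPoissonGroupoid} ensures $M$ is coisotropic and Theorem~\ref{T_PartialConormalC} endows $\mathcal{N}^0(M)$ with the Lie algebroid structure whose anchor is $P^\flat$. Using once more the annihilator description of $\widetilde{\mathbf{m}}$, the coisotropy of $G_{\mathbf{m}}$ reads, fibrewise and via Lemma~\ref{L_FirstProperties}, as the statement that the cotangent-groupoid product of two composable elements of $T^\flat\mathcal{G}$ remains in $T^\flat\mathcal{G}$; combined with the source and target compatibility already extracted from (\ref{eq_DiagralSubPoissonProp}), this equips $T^\flat\mathcal{G}\rightrightarrows\mathcal{N}^0(M)$ with a subgroupoid structure of $T^\prime\mathcal{G}\rightrightarrows\mathcal{AG}^\prime$, and the multiplicativity identity for $\Lambda$ derived from coisotropy is precisely the groupoid-morphism condition on $P$ over $P^\flat$.

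The main obstacle will be the converse direction, namely the rigorous fibrewise translation of the coisotropy of $G_{\mathbf{m}}$ into the groupoid-morphism identity for $P$ in the convenient setting, since the classical Mackenzie--Xu argument leans on the full duality between $T^\prime\mathcal{G}$ and $T\mathcal{G}$, whereas here the pairing between $T^\flat\mathcal{G}$ and $T\mathcal{G}$ is only left non-degenerate. The closedness of $\mathcal{N}^0(M)$ as a subbundle, together with the commutativity of (\ref{eq_DiagralSubPoissonProp}), are precisely what allow the annihilator description of $\widetilde{\mathbf{m}}$ to restrict cleanly to $T^\flat\mathcal{G}$ without invoking reflexivity; extra care must be taken to use only properties that survive the weak pairing.
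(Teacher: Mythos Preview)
Your overall strategy matches the paper's: exploit that the graph of the cotangent multiplication $\widetilde{\mathbf{m}}$ is the annihilator of the graph of $T\mathbf{m}$, and translate between the groupoid-morphism identity $P\circ\widetilde{\mathbf{m}}=T\mathbf{m}\circ(P,P)$ and the coisotropy of $G_{\mathbf{m}}$. The converse direction you sketch (coisotropy $\Rightarrow$ closure of $T^\flat\mathcal{G}$ under $\widetilde{\mathbf{m}}$, then Theorem~\ref{T_PartialConormalC} for the algebroid on $\mathcal{N}^0(M)$, then submersivity of the restricted source/target) is essentially the paper's argument.

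There is, however, a genuine problem in your forward direction. You pass through the Lie-group multiplicativity formula
\[
\Lambda_{gh}(\alpha,\beta)=\Lambda_h(L_g^{\ast}\alpha,L_g^{\ast}\beta)+\Lambda_g(R_h^{\ast}\alpha,R_h^{\ast}\beta)
\]
and then invoke a ``groupoid version of Proposition~\ref{P_CharacterizationPartialPoissonLiegroup}''. But in a groupoid, $L_g$ and $R_h$ are only defined on $\mathbf{t}$- and $\mathbf{s}$-fibres, so $L_g^{\ast}\alpha$ makes sense only as a covector on $T_h\mathcal{G}(-,\mathbf{s}(g))$, not on all of $T_h\mathcal{G}$; the displayed identity therefore has no intrinsic meaning for arbitrary $\alpha,\beta\in T^\flat_{gh}\mathcal{G}$. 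No such groupoid analogue of Proposition~\ref{P_CharacterizationPartialPoissonLiegroup} is available in the paper, and in fact in the groupoid literature ``multiplicativity of $\Lambda$'' is \emph{defined} by coisotropy of $G_{\mathbf{m}}$, so your intermediate step is circular.

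The paper avoids this by working directly: from $P(\xi\bullet\eta)=T\mathbf{m}(P(\xi),P(\eta))$ and the annihilator relation $\langle(\xi,\eta,-\xi\bullet\eta),(X,Y,T\mathbf{m}(X,Y))\rangle=0$, one pairs the image of $(P,P,P)$ against $(G_{\mathbf{m}})^0\cap (T^\flat\mathcal{G})^3$ and reads off coisotropy immediately, without ever writing a translation formula for $\Lambda$. You should drop the $\Lambda$-multiplicativity detour and argue this pairing directly.
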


\begin{corollary}
\label{C_RegularPoissonGroupoid}  
Under any of the assumptions of  Corollary~\ref{C_PartialCononormalsymplectic},  any  Lie groupoid is a sub-Poisson groupoid if and only if it  is a partial Poisson groupoid.
\end{corollary}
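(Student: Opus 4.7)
The plan is to reduce the statement to Proposition~\ref{P_SubPoissonGroupoid} by verifying its two standing hypotheses under each of the three assumptions (i)--(iii) of Corollary~\ref{C_PartialCononormalsymplectic}. The first hypothesis---that $\mathcal{N}^0(M)$ be a closed subbundle of $T^\flat_M\mathcal{G}$---is exactly the conclusion of Corollary~\ref{C_PartialCononormalsymplectic}, and so comes for free. The explicit descriptions produced in its proof will also be useful below: in case (i) one has $\mathcal{N}^0(M)=\mathcal{N}^a(M)$; in case (ii), $\mathcal{N}^0(M)=\ker(P|_M)\oplus H^0M$ with $H^0M=P_H^{-1}(TM)$; and in case (iii), $\mathcal{N}^0(M)=P^{-1}(TM)$.

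The substantive step is the verification of the commutativity of diagram~\eqref{eq_DiagralSubPoissonProp}. For $\xi\in T^\flat_g\mathcal{G}$, using the definitions~\eqref{eq_bfsbft} of $\widetilde{\bf s}(\xi)$ and $\widetilde{\bf t}(\xi)$ coming from Theorem~\ref{T_GroupoidT'G}, one must establish the identities
\[
T_g{\bf t}\bigl(P(\xi)\bigr)=P^\flat\bigl(\widetilde{\bf t}(\xi)\bigr),\qquad T_g{\bf s}\bigl(P(\xi)\bigr)=P^\flat\bigl(\widetilde{\bf s}(\xi)\bigr).
\]
Here I would invoke Theorem~\ref{T_PropertiesPoissonGroupoid}: since $\mathcal{G}\tto M$ is a partial Poisson groupoid (explicitly in cases (i) and (iii), implicitly in (ii) via the existence of $P$ together with the commutativity of the outer square of~\eqref{eq_DiagralSubPoissonProp}), ${\bf t}$ is an anti-Poisson morphism and ${\bf s}$ a Poisson morphism to the canonical partial Poisson structure on $M$, whose anchor along $M\subset\mathcal{G}$ coincides with $P^\flat$. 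Translating the Poisson-morphism relation $T{\bf s}\circ P=P^\flat\circ T^{*}{\bf s}$ at unit points and then transporting it to a general $g\in\mathcal{G}$ by right translation---which identifies the pairing that defines $\widetilde{\bf t}(\xi)$ with the restriction of $T^*{\bf t}$ to $\mathcal{AG}$---produces the first identity; the argument for $\widetilde{\bf s}$ is symmetric, using left translation and the anti-Poisson property of ${\bf t}$.

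Once both hypotheses are in place, Proposition~\ref{P_SubPoissonGroupoid} yields the equivalence between ``sub-Poisson'' and ``partial Poisson'' directly. The main obstacle I foresee lies not in the algebraic manipulations above but in the bundle-theoretic verification that $T^\flat\mathcal{G}$ is genuinely stable under the structural maps $\widetilde{\bf s},\widetilde{\bf t},\widetilde{\bf m}$ of the cotangent groupoid $T^\prime\mathcal{G}\tto\mathcal{AG}^\prime$, so that $T^\flat\mathcal{G}\tto\mathcal{N}^0(M)$ is indeed a subgroupoid in the sense of Definition~\ref{D_SuBPoissonGroupoid}, and that the restriction of $\widetilde{\bf m}$ is smooth in the convenient sense. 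Case (i) is trivial since $T^\flat=T^\prime$; case (iii) follows from the bi-Lagrangian property of Theorem~\ref{T_PropertiesSymplecticgroupoid}(v), which gives the necessary bi-invariance; case (ii) is the most delicate, as one must exploit the interplay between the decomposition $\ker(P|_M)\oplus H^\flat M$ and the left/right translations of $\mathcal{G}$ to secure closure of $T^\flat\mathcal{G}$ under $\widetilde{\bf m}$.
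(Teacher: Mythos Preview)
The paper gives no explicit proof of this corollary; it is presented as an immediate consequence of Proposition~\ref{P_SubPoissonGroupoid} once Corollary~\ref{C_PartialCononormalsymplectic} has supplied the hypothesis that $\mathcal{N}^0(M)$ is a split subbundle of $T^\flat_M\mathcal{G}$. Your overall strategy---reduce to the proposition via that corollary---is therefore exactly the paper's.

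Where you diverge is in the volume of extra work. The concerns in your last paragraph (stability of $T^\flat\mathcal{G}$ under $\widetilde{\bf s},\widetilde{\bf t},\widetilde{\bf m}$; smoothness of the restricted multiplication; the subgroupoid structure $T^\flat\mathcal{G}\tto\mathcal{N}^0(M)$) are not part of proving the corollary at all: they are precisely what the proof of Proposition~\ref{P_SubPoissonGroupoid} itself establishes in its converse direction, where coisotropy of the graph of ${\bf m}$ forces $\widetilde{\bf m}^\flat$ to land in $T^\flat\mathcal{G}$ and be smooth, and the submersion property of $\widetilde{\bf s}|_{T^\flat\mathcal{G}},\widetilde{\bf t}|_{T^\flat\mathcal{G}}$ is deduced from the identity $\widetilde{\bf t}\circ(R_g^*)^{-1}=\operatorname{Id}$. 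You should invoke the proposition, not reprove it.

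Your attempt to verify the commutativity of diagram~\eqref{eq_DiagralSubPoissonProp} is more interesting. The paper leaves this as a standing hypothesis of the proposition and does not revisit it for the corollary, so strictly speaking you are right that it deserves comment. Observe, however, that in the proof of Proposition~\ref{P_SubPoissonGroupoid} this diagram is invoked only in the direction ``partial Poisson $\Rightarrow$ sub-Poisson'', so assuming the partial Poisson structure when checking it (via Theorem~\ref{T_PropertiesPoissonGroupoid}) is not circular. Your parenthetical claim that in case~(ii) the partial Poisson property holds ``implicitly via the existence of $P$ together with the commutativity of the outer square'' \emph{is} circular, though: assumption~(ii) of Corollary~\ref{C_PartialCononormalsymplectic} constrains only $\ker P$ and $P(T^\flat_M\mathcal{G})$ along $M$ and says nothing about coisotropy of the graph of ${\bf m}$. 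Drop that aside and the argument is clean.
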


\begin{remark}
\label{R_SubPoissonPartial Poisson}
We can consider Proposition~\ref{P_SubPoissonGroupoid} as some kind of generalization of a classical characterization  of Poisson groupoid  (cf. \cite{MaXu94}, Proposition~8.1).  From Proposition~\ref{P_SubPoissonGroupoid}, a sub-Poisson  groupoid has a partial Poisson structure.  In particular, all examples of partial Poisson -Lie groupoids given in section \ref{___SomePropertiesOfBanachLieGroupoids} are sub-Poisson Lie groupoids.  But,  a  priori,  the converse is not  true in general without the assumption that $\mathcal{ N}^0(M)$ is a  closed sub-bundle of $T^\flat_M\mathcal G$.  Unfortunately we have no example of  partial Poisson-Lie groupoid for which $\mathcal{ N}^0(M)$ {\bf  is not } a  closed sub-bundle of $T^\flat_M\mathcal G$.
\end{remark}
\begin{proof} [Proof of Proposition \ref{P_SubPoissonGroupoid}]${}$ 

Assume that  $T\mathcal{G} \tto \mathcal{AG}^\prime $ is a sub-Poisson Lie groupoid.  Since $P$ is a groupoid morphism, it  satisfies the  condition 
\begin{equation}
\label{eq_CompatibilityPm}
   P\circ \widetilde{\bf m}(\xi,\eta)=T{\bf m}(P(\xi), P(\eta)).
\end{equation}
for all $(\xi,\eta)\in T^\flat\mathcal{G}^{(2)}$. 
   Now, since we have 
\[
<(\xi,\eta,-\widetilde{\bf m}(\xi,\eta) ), (X,Y,T{\bf m}(X,Y)>=0
\] 
for all $(\xi,\eta)\in T_{(g,h)}^\flat\mathcal{G}^{(2)}$ and $(X,Y) \in T_{(g,h)}\mathcal{G}^{(2)}$, the same is true  for all $(\xi,\eta)$ and $(\xi',\eta')$ in $T_{(g,h)}^\flat\mathcal{G}^{(2)}$ as $T^\flat \mathcal{G}\tto \mathcal{AG}^\flat$ is a sub-groupoid of $T^\prime \mathcal{G}\tto 
  \mathcal{AG}^\prime$.  By application of this relation to the first member of  (\ref{eq_CompatibilityPm}), it follows that $T^\prime \mathcal{G}\tto \mathcal{AG}^\prime$ is a partial Poisson groupoid. On the other hand, $\mathcal{AG}^\flat $ is a subgroupoid of $\mathcal{AG}^\prime$ (in particular $\mathcal{N}^0(M)=\mathcal{AG}^\flat$ is a closed subbundle of $T_M^\flat \mathcal{G}$) and so $P^\flat$ must be the restriction of $P$ to $\mathcal{AG}^\flat $ and the Diagram (\ref{eq_DiagralSubPoisson}) is then the same as the diagram (\ref{eq_DiagralSubPoissonProp}). \\

Conversely, assume that $\mathcal{G}\tto M$  has a partial Poisson structure.  In the one hand, since the graph of  ${\bf m}$ is coisotropic, this implies that the restriction $\widetilde{\bf m}^\flat$ of $
   \widetilde{\bf m}$ to $T_{g,h)}^\flat\mathcal{G}^{(2)}$ takes values in $T^\flat \mathcal{G}$,  and its  graph is a closed submanifold of $\{T^\flat\mathcal{G}\}^3$. It follows that the projection of the graph of $\widetilde{\bf m}^\flat$ onto $T^\flat \mathcal{G}$ is smooth which means that $\widetilde{\bf m}^\flat$ is smooth. Note that the property of coisotropy of the graph means 
   $$<(\xi, P(\eta),-P(\xi\bullet\eta), \xi', \eta',-\xi'\bullet\eta'>=0$$
for  all $(\xi, \eta)$ and $(\xi',\eta')$ in $(T^\flat\mathcal{G})^{(2)}$.  But, for any $g\in \mathcal{G}$, the Annihilator of $T^\flat\mathcal{G}$ is zero.  Since $\widetilde{\bf m}$ is surjective, it follows that This implies that $  P(\xi\bullet\eta)=(P(\xi) +P(\eta))$ and so $P$ satisfies the relation (\ref{eq_CompatibilityPm}).\\

On the other hand, as  $M$ is coisotropic and  $\mathcal{ N}^0(M)$ is a  closed sub-bundle of 
   $T^\flat_M\mathcal G$, from Theorem~\ref{T_PartialConormalC}, $\mathcal{N}^0(M)$ has a structure of Lie algebroid whose anchor is precisely $P^\flat$. In fact, it is a subalgebroid of   $\mathcal{AG}^\prime$  (cf. Proof of Theorem~\ref{T_PartialConormalC}).\\
Now, from Diagram~(\ref{eq_DiagralSubPoissonProp}), we have  $P^\flat(\widetilde{\bf t}_{|T^\flat\mathcal{G}})(\xi)= T{\bf t}\circ P(\xi)$ for all $\xi\in T^\flat\mathcal{G}$.   Recall that, 
   for any $\xi \in T_g^\prime \mathcal{G}$, we have $ \widetilde{\bf t}(\xi)(R_g^*(\xi))=\widetilde{\bf t}(\xi)$ which belongs to $(\mathcal{AG)}^\prime_{{\bf t}(g)}$. Therefore,  $\widetilde{\bf t} (\circ R_g^*)^{-1}=Id_{\mathcal{AG}^\prime_{{\bf t}(g)}}$ for all $ g\in \mathcal{G}$. This implies  that
    $$\widetilde{\bf t}_{| T^\flat \mathcal{G}}\circ (R_g^*)^{-1}=Id_{\mathcal{AG}^\flat_{{\bf t}(g)}}$$
     and it follows easily that $\widetilde{\bf t}_{| T^\flat \mathcal{G}}$ is a submersion. The same arguments can be applied to $\widetilde{\bf s}_{| T^\flat \mathcal{G}}$. Finally, $T^\flat\mathcal{G}\tto \mathcal{AG}^\flat$ is a subgroupoid of $T^\prime\mathcal{G}\tto \mathcal{AG}^\prime$
As  $P$ satisfies the relation (\ref{eq_CompatibilityPm}) as we have seen in the first part of the proof, it follows that $P$ is a groupoid morphism, which ends the proof.
\end{proof}

\subsection{The Cotangent Bundle of a Lie group as Lie  Groupoid} 
\label{___TheCotangentBundleOfALieGroupAsLieGroupoid}

\subsubsection{Some Preliminaries}\label{___PreliminariesCotangentBundleOfALieGroupAsLieGroupoid}
If $\mathsf{G}$ is a Lie group,  we have a natural  right (resp. left) action on  its cotangent bundle $T^\prime \mathsf{G} $   given by the map  
$(g, (h,\xi))\mapsto (gh,T_e^* L_g(\xi)$ (resp. $ (g, (h,\xi))\mapsto (gh, T_e^*R_h(\xi)$) and so a coadjoint action $\mathsf{G}$ on $T^\prime \mathsf{G}$.

This gives rise to an isomorphism $\widetilde{R}$ (resp. $\widetilde{L}$) from  $\mathfrak{g}^\prime\times G$ (resp. $\mathsf{G}\times\mathfrak{g}^\prime$) to  $T^\prime \mathsf{G}$, defined by $\widetilde{R}(g,\xi) =(\widetilde{\bf s}_g(\xi),g)$ (resp.  $\widetilde{L}(g,\xi) =(g,\widetilde{\bf t}_g(\xi))\;$, 
which respects the submersion $\widetilde{\bf s}$ (resp. $\widetilde{\bf t}$),  that is the following  diagrams are commutative:

\begin{equation}
\label{eq_RL}
 \xymatrix{
 \mathfrak{g}^\prime\times\mathsf{G} \ar[d] \ar[r]^{\widetilde{R}} &T^\prime \mathsf{G}\ar[d] ^{\widetilde{\bf s}}\\
\mathfrak{g}^\prime \ar[r]_{Id_{\mathfrak{g}^\prime}}         &\mathfrak{g}^\prime}
\;\;\;  \xymatrix{
\mathfrak{g}^\prime\times \mathsf{G} \ar[d] \ar[r]^{\widetilde{L}} &T^\prime \mathsf{G}\ar[d] ^{\widetilde{\bf t}}\\
\mathfrak{g}^\prime \ar[r]_{Id_{\mathfrak{g}^\prime}}         &\mathfrak{g}^\prime}
\end{equation}
Note that arguments used in the context of finite dimension in  \cite{AGMM90}, 2.3,  also true in the convenient setting (see also \cite{KrMi97} for such a context) for regular convenient Lie groups. Thus, from now on, we assume that $\mathsf{G}$ is  provided with an exponential map  for instance, $\mathsf{G}$ is a regular convenient Lie group.  
We have a coadjoint action of $\mathsf{G}$ on $\mathfrak{g}^\prime$, characterized by $g\mapsto \mathsf{Ad}^*_g(\xi)$.
  As $\widetilde{L}$ is an isomorphism  from $T^\prime\mathsf{G}$ to $\mathsf{G}\times\mathfrak{g}^\prime$,  we can provide $T^\prime \mathsf{G}$ with a semi-direct product given by:
\[
(g,\xi).(h,\eta):=(gh, \mathsf{Ad}^*_{h^{-1}}(\eta)+\xi).
\]
The coadjoint action of $\mathsf{G}$ on $\mathfrak{g}^\prime$ induces a smooth action of  the Lie algebra $\mathfrak{g}$ on $\mathfrak{g}^\prime$ in the following way:
 any $X\in \mathfrak{g}$  gives rise to a vector field $X^\sharp$ on $\mathfrak{g}^\prime$ whose value at $\xi\in \mathfrak{g}^\prime$ is characterized by 
\begin{equation}
\label{eq_FondamentalCoadjointVF}
 \langle X^\sharp_\xi, Y\rangle:=\langle\xi,[X,Y]\rangle=\mathsf{ad}^*_X(Y)
\end{equation}
 for any $Y\in\mathfrak{g}$.
  Thus, for any $\xi \in \mathfrak{g}^\prime$ we can define  a linear $2$ form $\omega_\xi$ on $\mathfrak{g}$ by 
\begin{equation}
\label{eq_KKSform}
 \sigma_\xi(X,Y)=\langle\xi,[X,Y]\rangle=\mathsf{ad}^*_X(Y).
\end{equation}
The kernel of $\sigma_\xi$ is the set of vectors $X$ such that $ \langle X^\sharp_\xi, Y\rangle=0$,  for all $Y\in \mathfrak{g}$. 
Thus, this kernel is the Lie algebra  $\mathfrak{g}_\xi$ of  the stabilizer $\mathsf{G}_\xi$  of the coadjoint action at $\xi$. Let $\mathcal{O}_\xi=\{\mathsf{Ad}_g(\xi),\; g\in \mathsf{G}\}$ be the coadjoint orbit through $\xi$. Unfortunately, in general, $\mathcal{O}_\xi$ does not have a  convenient manifold structure. Even if $\mathsf{G}$ is a Banach Lie group, $\mathcal{O}_\xi$ does not have a Banach manifold structure.  It is  true  if $\mathfrak{g}_\xi$ is a supplemented Banach subalgebra of $\mathfrak{g}$. Of course, it is always true in the Hilbert setting for a finite dimensional Lie group $\mathsf{G}$. In particular, in this last case, $\sigma$ is a symplectic form in restriction to each orbit $\mathcal{O}_\xi$ and  defines a Poisson structure on $\mathfrak{g}
 ^\prime$ which is the famous \emph{Kostant-Kirolov-Souriau Poisson structure} on $\mathfrak{g}^\prime$. We will see in the next section a generalization of such a result in the Banach setting.\\
 
As in \cite{AGMM90}, we consider the map $\kappa^r:T\mathsf{G}\to \mathfrak{g}$ (resp. $\kappa^l: T\mathsf{G}\to 
\mathfrak{g}$) defined by $\kappa_g^r=T_eR_{g^{-1}}$ (resp. $ \kappa_g^l=T_eL_{g^{-1}})$.  Note that these maps are $1$-forms with values in $ \mathfrak{g}$ and note that they  satisfy the Maurer-Cartan equations: 
\[
d\kappa^r+\dfrac{1}{2}\kappa^r\wedge\kappa^r=0 \text{~~(resp. } d\kappa^l-\dfrac{1}{2}\kappa^l\wedge \kappa^l=0 \text{)}.
\]

Note that $\kappa^r$ (resp. $\kappa^l$) is right invariant (resp. left invariant). In fact, 
we have   $\kappa^l={\bf s}$ and $\kappa^r={\bf t}$.\\

We consider the following diagram:

\[
\xymatrix{
T(T^\prime\mathsf{G}) \ar[d]_{\pi_{T^\prime \mathsf{G}}} \ar[r]^{T\pi_\mathsf{G}} &T\mathsf{G}\ar[d]^{\pi_\mathsf{G}} \\
T^\prime\mathsf{G} \ar[r]_{\pi }        &\mathsf{G}}
\]
Under these notations, we have:
 
 $\widetilde{R}={\bf s}\times \pi$ (resp. $\widetilde{L}=\pi\times{\bf t}$);

 $\widetilde{\bf s}_g=((\kappa^l_g)^{-1})^*$ (resp. $\widetilde{\bf t}_g=((\kappa^r_g)^{-1})^*$).\\
   
 
 Let $\lambda$ be the Liouville form on $T^\prime \mathsf{G}$. 
 Then the canonical symplectic form on $T^\prime \mathcal{G}$ is $\omega=-d\lambda$. 
To show that  $T^\prime\mathsf{G}$ has a structure of partial symplectic groupoid, if $pr_1$  (resp. $pr_2$) is the projection of $T^\prime \mathsf{G}\times T^\prime \mathsf{G}$ on the $1^{\text{st}}$ factor (resp. $2^{\text{nd}}$ factor), from Remark ~\ref{R_LambdaMultiplicative} 
we have to show that
\begin{equation}
\label{eq_omegaG}
  pr_1^*\omega+pr_2^*\omega={\bf m}^*\omega 
\end{equation}
Since $\omega=-d\lambda$, it is sufficient to show the relation (\ref{eq_omegaG}) for $\lambda$.\\
  Given $\xi\in T^\prime_g\mathsf{G}$ (resp. $\eta\in T_h^\prime \mathsf{G}$)) and $\Xi\in  T_\xi(T^\prime\mathsf{G})$ (resp. $\Theta\in T_\xi(T^\prime\mathsf{G})$), we have:
  $$  T^\prime\mathsf{G}(\Xi)\in T_g^\prime  \mathsf{G} \;\;\;\;({\rm resp. }\;T^\prime\mathsf{G}(\Theta)\in T_h^\prime\mathsf{G}).$$

Now, from the expression of the multiplication on the tangent space of a groupoid,   
we have 
$T_{(g,h)}\widetilde{\bf m}(\Xi,\Xi_2)=\Xi_1+\Xi_2$ which gives rise to the following:
$$\widetilde{\bf m}^*_{(g,h)}\lambda(\Xi_1, \Xi_2)=\lambda_g(\Xi_1)+\lambda_h(\Xi_2).$$
This  implies that $T^\prime \mathsf{G}$ has a partial symplectic structure defined by the  Poisson anchor $P=(\omega^\flat	)^{-1}:T^\flat(T^\prime \mathsf{G}):=	\omega^\flat(T(T^\prime\mathsf{G}))\to T(T^\prime \mathsf{G})$.					

\subsubsection{Properties of the Cotangent Bundle of a  Lie Group as Lie Groupoid} 
\label{____PropertiesOfTheCotangentBundleOfALieGroupAsLieGroupoid}
We consider the structure of partial  symplectic groupoid on $T^\prime\mathsf{G}\tto \mathfrak{g}^\prime$ as described in the previous section. 

 Essentially, by  application of Theorem~\ref{T_PropertiesSymplecticgroupoid}, we have: 
 
 \begin{theorem}
 \label{T_Cotangent bundleG} 
 Let $\mathcal{G}$ be a regular convenient Lie group. Its cotangent bundle $T^\prime \mathsf{G}$ has a structure of partial symplectic Lie groupoid over its  dual Lie algebra $\mathfrak{g}^\prime$ and we have:
\begin{enumerate}
\item[(1)] 
$\mathfrak{g}^\prime$ considered as the zero section of $T^\prime \mathsf{G}$ is Lagrangian in $T^\prime\mathsf{G}$.
\item[(2)] 
The inversion in $T^\prime\mathsf{G}$ is an anti-symplectic map. 
\item[(3)] 
Let $\omega$ the canonical symplectic  $2$-form on $T^\prime \mathsf{G}$ and the associated partial  Poisson structure $P:T^\flat(T^\prime\mathsf{G}):=\omega^\flat(\mathsf{G})
\to T\mathsf{G}$. In fact,  $T^\prime \mathsf{G}\tto \mathfrak{g}^\prime$ has a {\bf sub-Poisson groupoid structure}. In particular, $T^\flat(T^\prime\mathsf{G})\tto\mathfrak{g}^\prime\times \mathfrak{g}^\flat$ is a Lie groupoid (cf.  left vertical part of Diagram (\ref{eq_DiagralSubPoissonProp})).
\item [(4)] 
For each $g\in\mathcal{G}$, each vector space $T_g(\widetilde{\bf s}^{-1}(g))$ and $T_g(\widetilde {\bf t}^{-1}(g))$ are orthogonal symplectic subspaces of the  linear symplectic space $(T_g(T^\prime\mathsf{G}),\omega)$.
\item[(5)] 
There exists a unique Poisson anchor   $P_{\mathfrak{g}^\prime}=T{\bf s}\circ P\circ T\widetilde{\bf s}=T{\bf t}\circ P\circ T\widetilde{\bf t}$ on $\mathfrak{g}$ such that ${\bf s}$ (resp. ${\bf t}$) is a Poisson morphism (resp. anti-Poisson morphism). More precisely, if $\sigma$ is  the induced $2$ form on $\mathfrak{g}^\prime$ by $\omega$ we have:
\begin{enumerate}
\item[(a)] $\sigma$  is the Poisson bivector associated to $P_{\mathfrak{g}^\prime}$.
\item[(b)] For each coadjoint orbit $\mathcal{O}_\xi$ in $\mathfrak{g}^\prime$, for any $\xi\in \mathcal{O}$, then $\sigma_\xi$ is exactly the form defined by relation  (\ref{eq_KKSform}) and its kernel $\mathfrak{g}_\xi$ is a closed Lie subalgebra of $\mathfrak{g}$.
\end{enumerate}
\item[(6)] 
If $\mathsf{G}$ is a {\bf Banach Lie group}  such that $\ker\sigma_\xi$ is split in $\mathfrak{g}$, for all $\xi\in \mathfrak{g}^\prime$, we have:
\begin{enumerate}
\item[(c)] for any $\xi\in \mathfrak{g}^\prime$, the group of isotropy $\mathsf{G}_\xi$ of $\xi$ under the coadjoint action is a Lie subgroup of $\mathsf{G}$ whose Lie algebra is $\mathfrak{g}_\xi$.
\item[(d)] Each orbit $\mathcal{O}_\xi$ is a split submanifold of $\mathfrak{g}^\prime$ which is diffeomorphic to the homogeneous space $\mathbf{G}/\mathsf{G}_\xi$.
\item[(e)] $\omega$ induces a weak symplectic form  on  each orbit $\mathcal{O}_\xi$.  
\item[(f)] The pair $({\bf s}, -{\bf t})$ is a dual pair (cf. Definition \ref{D_DualPairs}).
\end{enumerate}
\end{enumerate}
\end{theorem}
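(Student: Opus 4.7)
The plan is to leverage the partial symplectic groupoid structure on $T^\prime\mathsf{G}\tto\mathfrak{g}^\prime$ constructed in Section~\ref{___PreliminariesCotangentBundleOfALieGroupAsLieGroupoid} and then apply Theorem~\ref{T_PropertiesSymplecticgroupoid} more or less directly. The multiplicativity $\widetilde{\mathbf m}^*\omega=\mathrm{pr}_1^*\omega+\mathrm{pr}_2^*\omega$ established via $\omega=-d\lambda$ and the source/target formulas $\widetilde{\bf s}_g=((\kappa^l_g)^{-1})^*$, $\widetilde{\bf t}_g=((\kappa^r_g)^{-1})^*$ makes this a bona fide partial symplectic groupoid with Poisson anchor $P=(\omega^\flat)^{-1}$ defined on $T^\flat(T^\prime\mathsf{G})=\omega^\flat(T(T^\prime\mathsf{G}))$. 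With this in hand, Points (1), (2) and (4) are simply the specializations of assertions (i), (ii) and (v) of Theorem~\ref{T_PropertiesSymplecticgroupoid} to this groupoid.

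For Point (3), I would invoke Corollary~\ref{C_PartialCononormalsymplectic} under its assumption (iii): since the structure is partial symplectic, $\mathcal{N}^0(\mathfrak{g}^\prime)$ is automatically a split subbundle of $T^\flat_{\mathfrak{g}^\prime}(T^\prime\mathsf{G})$, so Theorem~\ref{T_PartialConormal} produces the Lie algebroid structure on $\mathcal{A}(T^\prime\mathsf{G})^\flat$, and Proposition~\ref{P_SubPoissonGroupoid} then upgrades the partial Poisson structure to a sub-Poisson groupoid structure, which in turn yields the groupoid structure on $T^\flat(T^\prime\mathsf{G})\tto \mathfrak{g}^\prime\times\mathfrak{g}^\flat$ displayed in the left vertical part of Diagram~(\ref{eq_DiagralSubPoissonProp}).

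For Point (5), the existence and uniqueness of the Poisson anchor $P_{\mathfrak{g}^\prime}$ on $\mathfrak{g}^\prime$ making $\widetilde{\bf s}$ a Poisson morphism (resp.\ $\widetilde{\bf t}$ an anti-Poisson morphism) is again given by Theorem~\ref{T_PropertiesSymplecticgroupoid} (iii), and by construction $P_{\mathfrak{g}^\prime}=T\widetilde{\bf s}\circ P\circ (T\widetilde{\bf s})^*=T\widetilde{\bf t}\circ P\circ (T\widetilde{\bf t})^*$. The identification of the associated bivector with the form $\sigma$ of (\ref{eq_KKSform}) is a local computation: use the trivialization $\widetilde{L}:\mathsf{G}\times\mathfrak{g}^\prime\to T^\prime\mathsf{G}$ together with the Maurer--Cartan expression of $\kappa^r$, $\kappa^l$ to write $\omega$ and its inverse at points of the zero section, and check that $T\widetilde{\bf t}\circ P$ applied to a left-invariant $1$-form $X$ at $\xi\in\mathfrak{g}^\prime$ yields precisely the fundamental vector field $X^\sharp_\xi$ of the coadjoint action, defined in (\ref{eq_FondamentalCoadjointVF}). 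Closedness of $\mathfrak{g}_\xi=\ker\sigma_\xi$ is then immediate from boundedness of $\sigma_\xi^\flat:\mathfrak{g}\to\mathfrak{g}^\prime$.

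For Point (6), the Banach assumption together with the hypothesis that $\ker\sigma_\xi$ is split in $\mathfrak{g}$ allows one to integrate $\mathfrak{g}_\xi$ into a Banach Lie subgroup $\mathsf{G}_\xi\subset\mathsf{G}$ by the standard Banach version of the subgroup--subalgebra correspondence; then $\mathcal{O}_\xi$ inherits the structure of a split submanifold of $\mathfrak{g}^\prime$ diffeomorphic to $\mathsf{G}/\mathsf{G}_\xi$. The coadjoint orbits are precisely the leaves of the characteristic foliation of $(\mathfrak{g}^\prime,P_{\mathfrak{g}^\prime})$, so Theorem~\ref{T_PropertiesSymplecticgroupoid} (vi) (equivalently Theorem~\ref{T_FoliationPartialBanachPoissonManifold}) gives the weak symplectic form on each $\mathcal{O}_\xi$. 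Finally, the dual pair property (f) follows by combining Point (4) with the observation that along the zero section $\ker T\widetilde{\bf s}=(\ker T\widetilde{\bf t})^{\perp_P}$ and $\ker T\widetilde{\bf t}=(\ker T\widetilde{\bf s})^{\perp_P}$, which in turn rests on the transversality of source and target fibers along $\mathfrak{g}^\prime$ and Proposition~\ref{P_DualPairs} together with Remark~\ref{R_CasePhiantiPoisson}. The main obstacle I expect is the explicit identification in (5)(b) of the induced bivector with the KKS form: the computation must be carried out in the convenient (or Banach) setting using only the intrinsic expressions for $\widetilde{\bf s}$, $\widetilde{\bf t}$ and $\omega$ rather than the familiar finite-dimensional shortcuts, and care must be taken because $P$ is defined only on the range $T^\flat(T^\prime\mathsf{G})=\omega^\flat(T(T^\prime\mathsf{G}))$, which is in general a proper weak subbundle of $T^\prime(T^\prime\mathsf{G})$.
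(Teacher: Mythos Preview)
Your outline matches the paper's argument almost exactly for Points (1), (2), (4), (5) and (6): all of these are obtained by specializing Theorem~\ref{T_PropertiesSymplecticgroupoid} and then carrying out the explicit identification of $P_{\mathfrak{g}^\prime}$ with the KKS bivector via the trivialization and the Maurer--Cartan form, just as you sketch. The paper's computation for (5) is slightly more concrete than yours (it writes $P(\eta,X)=(\eta+\mathsf{ad}_X^*\xi,-X)$ directly and reads off $P_{\mathfrak{g}^\prime}$), but the content is the same.

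There is, however, a genuine gap in your treatment of Point~(3). Proposition~\ref{P_SubPoissonGroupoid} has \emph{two} hypotheses: that $\mathcal{N}^0(M)$ is a closed subbundle, \emph{and} that the diagram~(\ref{eq_DiagralSubPoissonProp}) commutes (i.e.\ $T{\bf s}\circ P = P^\flat\circ \widetilde{\bf s}_{|T^\flat\mathcal{G}}$ and similarly for $\bf t$). Corollary~\ref{C_PartialCononormalsymplectic} only delivers the first hypothesis; it says nothing about the second. The paper's proof of (3) is almost entirely devoted to verifying this commutativity: it pulls $\omega$ back along the right trivialization $\widetilde{R}$, computes $\overline{\omega}^\flat$ explicitly as in (\ref{eq_Omega}), identifies the range $T^\flat(T^\prime\mathsf{G})$ with $T^*\widetilde{R}\bigl((\mathfrak{g}^\prime\times\mathfrak{g}^\flat)\times T^\prime\mathsf{G}\bigr)$ (where $\mathfrak{g}^\flat=\mathfrak{i}(\mathfrak{g})\subset\mathfrak{g}^{\prime\prime}$), and then builds the two cubic diagrams (\ref{eq_DiagramPoissonT'Gs}) and (\ref{eq_DiagramPoissonT'Gt}) whose left faces give exactly (\ref{eq_DiagralSubPoissonProp}). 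Along the way this also identifies $\mathcal{N}^0(\mathfrak{g}^\prime)$ with $\mathfrak{g}^\prime\times\mathfrak{g}^\flat$, which is what makes the statement ``$T^\flat(T^\prime\mathsf{G})\tto\mathfrak{g}^\prime\times\mathfrak{g}^\flat$ is a Lie groupoid'' meaningful. Your proposal skips this step entirely, so as written it does not establish (3); you should insert this computation (or an equivalent verification that $P$ intertwines the groupoid structure maps) before invoking Proposition~\ref{P_SubPoissonGroupoid}.
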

 
\begin{remark}
\label{R_GeneralizationFiniteDimensionGroupoidStructureOfTheCotangentBundle}${}$
\begin{enumerate}  
\item[1.]
Theorem~\ref{T_Cotangent bundleG} can be considered as a generalization to the Banach setting of a classical result in finite dimension for the groupoid structure of the cotangent bundle of a finite dimensional Lie group $\mathsf{G}$ and the link with the Kostant-Kirilov-Souriau  Poisson structure on the dual of the Lie algebra of $\mathsf{G}$ (see for instance  \cite{Marl05}, 1).\
\item[2.] 
In general, in the convenient setting, the last Assertion (6) of Theorem \ref{T_Cotangent bundleG} is not true without these complementary assumptions. However, in $\S$  \ref{___ExamplesDirectLimitPoisson}, we exhibit a context in which this result is true without these additional assumptions.
\end{enumerate}
\end{remark}

\begin{proof}${}$\\
Assertions (1) and (2) are direct applications of Theorem~\ref{T_PropertiesSymplecticgroupoid}.\\
Assertion(3) will be a consequence of Proposition~\ref{P_SubPoissonGroupoid} if we show that the diagram (\ref{eq_DiagralSubPoissonProp}) is commutative in this context. \\
 
Since $\widetilde{R}$ is an isomorphism, $\overline{\lambda}=\widetilde{R}^*\lambda$ is the Liouville $1$-form on $\mathfrak{g}^\prime \times\mathsf{G}$ and so $\overline{\lambda}( X,\xi)=\langle \xi, X \rangle$. Therefore,  $\overline{\omega}=\widetilde{R}^*\omega$
is the canonical symplectic form on $\mathfrak{g}^\prime \times\mathsf{G}$ and so 
if $\mathfrak{X}_1=(\eta_1, X_1)$  and $\mathfrak{X}_2=(\eta_2,X_2)$ belong to $T_\xi\mathfrak{g}^\prime\times T_g\mathsf{G})$ we have 
$$\overline{\omega}(\mathfrak{X}_1, \mathfrak{X}_2)=\langle \eta_1, (X_2)_e\rangle-\langle \eta_2, (X_1)_e \rangle+\langle \xi,[X_1, X_2]\rangle$$
where  $(X_i)_e\in \mathfrak{g}$ is such that $T_eR_g((X_i)_e)= X_i$ for $i \in \{1,2\}$. \\
Thus, for $\mathfrak{X}=\mathfrak{X}=(\eta, X))\in  T_\xi\mathfrak{g}^\prime\times T_g\mathsf{G})$, we  have:
\begin{equation}
\label{eq_Omega}
 (\overline{\omega})^\flat( (\mathfrak{X})=(\mathfrak{i}(\eta+\xi \circ  \mathsf{ad}_{X_e}),-X)\in T^\prime(\mathfrak{g}^\prime\times G)=\mathfrak{g}^\prime\times \mathfrak{g}^{\prime\prime}\times T^\prime \mathsf{G},
\end{equation}
 where $\mathfrak{i}$  is the canonical inclusion of $\mathfrak{g}$ into $\mathfrak{g}^{\prime\prime}$ and   we set $\mathfrak{g}^\flat=\mathfrak{i}(\mathfrak{g})$.  
 This implies that the range of $(\overline{\omega})^\flat$  is $(\mathfrak{g}^\prime\times \mathfrak{g}
^\flat)\times T^\prime \mathsf{G}$. Thus  
$T^\flat(T^\prime \mathsf{G}):=\omega^{\flat}(T(T^\prime \mathsf{G}))$ is equal to $T^*\widetilde{R}((\mathfrak{g}^\prime\times \mathfrak{g}
^\flat)\times T^\prime \mathsf{G})$ and so  is a closed subbundle of $T^\prime (T^\prime\mathsf{G})$. If  
$\bar{P}:= \left( (\overline{\omega})^\flat  \right) ^{-1}$ and 
$P=\left( ({\omega})^\flat \right) ^{-1}$ the following diagram is commutative:

\begin{equation}
\label{eq_DiagramPoissonT'Gs}
  \begin{tikzcd}
        T^\flat(T^\prime\mathsf{G})\arrow[rr, "T^*\widetilde{R}"near end ]\arrow[dr, "P"]\arrow[dd, "T\widetilde{\bf s}_{| T^\flat(T^\prime \mathsf{G})}", swap]& &\mathfrak{g}^\prime\times\mathfrak{g}^\flat\times T^\prime\mathsf{G}\ \arrow[dr, "\overline{P}"]\arrow[dd, "p_{\mathfrak{g}^\prime\times\mathfrak{g}^\flat}", near end] & \\
        & T(T^\prime \mathsf{G})& &\mathfrak{g}^\prime \times \mathfrak{g}^\prime\times T\mathsf{G}\arrow[ll, "T\widetilde{R}", crossing over, near end]  \arrow[dd, "p_{\mathfrak{g}^\prime}\times {\bf s}"] \\
       \mathfrak{g}^\prime\times \mathfrak{g}^\flat \arrow[rr, "Id_{\mathfrak{g}^\prime\times\mathfrak{g}^\flat}", near end]\arrow[dr, "P_{\mathfrak{g}^\prime}"] & & \mathfrak{g}^\prime\times\mathfrak{g}^\flat\arrow[dr, "\overline{P}_{\mathfrak{g}^\prime}"] & \\
        & \mathfrak{g}^\prime\times\mathfrak{g}^\prime \arrow[from=uu, "{T{\bf s}}", crossing over, near start, swap] & &\mathfrak{g}^\prime\times\mathfrak{g}^\prime\arrow[ll, "Id_{\mathfrak{g}^\prime\times\mathfrak{g}^\prime}"]
    \end{tikzcd}
    \end{equation}
where ${\bf s}=T_eL:T\mathsf{G}\to \mathfrak{g} $. 
Note by same arguments,  that we have also the diagram 
\begin{equation}
\label{eq_DiagramPoissonT'Gt}
  \begin{tikzcd}
        T^\flat(T^\prime\mathsf{G})\arrow[rr, "T^*\widetilde{L}"near end ]\arrow[dr, "P"]\arrow[dd, "T\widetilde{\bf t}_{| T^\flat(T^\prime \mathsf{G})}", swap]& &\mathfrak{g}^\prime\times\mathfrak{g}^\flat\times T^\prime\mathsf{G}\ \arrow[dr, "\overline{P}"]\arrow[dd, "p_{\mathfrak{g}^\prime\times\mathfrak{g}^\flat}", near end] & \\
        & T(T^\prime \mathsf{G})& &\mathfrak{g}^\prime \times \mathfrak{g}^\prime\times T\mathsf{G}\arrow[ll, "T\widetilde{L}", crossing over, near end]  \arrow[dd, "p_{\mathfrak{g}^\prime}\times {\bf t}"] \\
       \mathfrak{g}^\prime\times \mathfrak{g}^\flat \arrow[rr, "Id_{\mathfrak{g}^\prime\times\mathfrak{g}^\flat}", near end]\arrow[dr, "P_{\mathfrak{g}^\prime}"] & & \mathfrak{g}^\prime\times\mathfrak{g}^\flat\arrow[dr, "\overline{P}_{\mathfrak{g}^\prime}"] & \\
        & \mathfrak{g}^\prime\times\mathfrak{g}^\prime \arrow[from=uu, "{T{\bf t}}", crossing over, near start, swap] & &\mathfrak{g}^\prime\times\mathfrak{g}^\prime\arrow[ll, "Id_{\mathfrak{g}^\prime\times\mathfrak{g}^\prime}"]
    \end{tikzcd}
    \end{equation}

Now, note that  it is clear that   $\mathcal{N}^0(G)$ is nothing but else than $\mathfrak{g}
^\prime\times\mathfrak{g}^\flat$.
By superposing  the Diagrams (\ref{eq_DiagramPoissonT'Gs}) and (\ref{eq_DiagramPoissonT'Gt}) the  vertical  right face of this new diagram implies that its left vertical face gives rise to the Diagram \ref{eq_DiagralSubPoisson} in this context. Therefore, the assumptions of Proposition \ref{P_SubPoissonGroupoid}, which ends the proof of Assertion (iii).\\

Assertion (4) is a direct application of 
Theorem \ref{T_PropertiesSymplecticgroupoid}  Point(iv).\\

For Assertion (5),   the existence, unicity  and the expression of  $P_{\mathfrak{g}^\prime}$ is an application of Theorem \ref{T_PropertiesSymplecticgroupoid}, (iii). \\

 {\it From now on, until the end of the proof of Assertion(5), we identify  $\mathfrak{g}^\flat$ and $\mathfrak{g}$ }.\\
 
 Now, we are in the context of Theorem~\ref{T_LinkPoissonAlgebroid} with $\mathcal{A}= T^\prime \mathsf{G}$,  provided with  the Lie bracket of vector fields and the anchor being the identity. Therefore, according to  
  \cite{CaPe23}, relation (7.19), for  $\mathfrak{X}=(\eta, X)\in T^\flat_{(\xi, g)}(T^\prime \mathsf{G})$, we have: 

\begin{equation}
\label{eq_PCanonicalSymplectic}
P(\eta, X)=(\eta+\xi\circ\mathsf{ad}_X,-X)=(\eta+\ad_X^*\xi,-X)
\end{equation}

Since $\mathfrak{g}^\prime\times \mathfrak{g}^\prime$ is identified with $\mathfrak{g}^\prime\times\mathfrak{g}^\prime\times \{0\}$,  so  $P_{\mathfrak{g}^\prime}$ is  a map from $\mathfrak{g}^\prime \times \mathfrak{g}$ to $\mathfrak{g}^\prime \times \mathfrak{g}^\prime$ which is given by
\begin{equation}
\label{eq_Pg'}
(P_{\mathfrak{g}^\prime})_\xi (X)=\mathsf{ad}_X^*\xi
\end{equation}
In the one hand, from (\ref{eq_Omega}), the  $2$-form $\omega_{\mathfrak{g}^\prime}$ induced by  $\omega$ on $\mathfrak{g}^\prime$  is given by 
 $$(\omega_{\mathfrak{g}^\prime})_\xi(X,Y)=\mathsf{ad}_X^*\xi, Y\rangle=\langle \xi,[X, Y]\rangle=\sigma_\xi(X,Y)$$ (cf. (\ref{eq_KKSform})).
 On the other hand, we have   
 $$\omega_\mathfrak{g}^\prime(X,Y)=\langle P_{\mathfrak{g}^\prime}(X), Y\rangle.$$
 Since  $\left(P_{\mathfrak{g}^\prime}(\{\xi\}\times\mathfrak{g})\right)^0=\{X\in \mathfrak{g}:\langle\mathsf{ad}_X^*\xi, Y\rangle=0\;, \forall Y\in \mathfrak{g}\}$
then $\left(P_{\mathfrak{g}^\prime}(\{\xi\}\times\mathfrak{g})\right)^0=\{X\in \mathfrak{g} :\mathsf{ad}_X^*\xi=0\}=\ker\sigma_\xi$. Since is the pullback of a closed $2$-form is also closed, it follows that   $\sigma $ is closed and thus,  its kernel $\mathfrak{g}_\xi$ is a Lie algebra. This ends the proof of Assertion (5).\\ 

For the proof of Assertion (6), we assume that $\mathsf{G}$ is a Banach Lie group and for all $\xi\in \mathfrak{g}^\prime$ the Lie algebra  $\mathfrak{g}_\xi$ is split. {\it Then the proof is an application of Theorem \ref{T_PropertiesSymplecticgroupoid}, (v). Nevertheless,  we will give a complete proof in this context in order to show  the link between the Lie groupoid approach and the coadjoint action}.
 Under the previous assumptions, it follows that  the Lie algebra of the group of isotropy is 
$G_\xi$ of $\xi$ under the coadjoint action is $\mathfrak{g}_\xi$ and so  a Banach Lie subgroup of $\mathsf{G}$. This implies that the orbit $\mathcal{O}_\xi$ is a Banach submanifold of $
\mathsf{G}$ since its tangent space at $\xi$ (and  so at every point of $\mathcal{O}_\xi$) is isomorphic to $\mathfrak{g}/\mathfrak{g}_\xi$ and so is supplemented. Therefore, 
$\mathcal{O}_\xi$  is a submanifold  of $\mathfrak{g}^\prime$ and is diffeomorphic to $\mathsf{G}/\mathsf{G}_\xi$ (cf. \cite{BGJP19},  Example 3.5 or  \cite{Belt06}).\\

Finally according to  
Definition \ref{D_DualPairs}, the pair $({\bf s}, -{\bf t})$ is a dual pair.
\end{proof}

\begin{remark} ${}$
\begin{enumerate}
\item[1.]
$\mathfrak{g}^\prime\times \mathfrak{g}^\flat$ is the Lie algebroid of the Lie groupoid  $T^\flat(T^\prime\mathsf{G})$ and the anchor is $P_{\mathfrak{g}^\prime}$. Thus when $G$ is a Banach Lie group, the assumption "$\mathfrak{g}_\xi$ is split" is equivalent to $\ker P_{\mathfrak{g}^\prime}$ is split at $\xi$. Therefore,  under the assumptions of Assertion (6), if $\mathsf{G}$ is connected,  there is identity between coadjoint orbits and orbits of the Lie algebroid $\mathfrak{g}^\prime\times \mathfrak{g}^\flat$ on $\mathfrak{g}^\prime$.
\item[2.]
In general, in the convenient setting, the last Assertion (6) of Theorem~\ref{T_Cotangent bundleG} is not true without these complementary assumptions. However, we exhib in $\S$  \ref{___ExamplesDirectLimitPoisson} a context in which this result is true without these additional assumptions 
\end{enumerate}
\end{remark}

\section{Ascending Sequences of Finite Dimensional   Banach (Partial) Poisson-Lie Groupoids and Examples}
This section proposes an adaptation of some previous results to ascending sequences of partial Banach Poisson Lie groupoids with some examples.

\subsection{Ascending Sequences of  Banach (partial) Poisson-Lie Groupoids}  
As in $\S$~\ref{___DirectLimitBanachLieGroups} and for the same reasons,  we only consider sequences of Banach partial Poisson Lie groupoids. Thus,  naturally, an  ascending sequence of Banach Poisson Lie groupoids   $\{(T^\flat\mathcal{G}_i, \mathcal{G}_i, P_i)\}_{i\in \N}$  is an ascending sequence of Banach (partial) Poisson manifolds (cf.  Definition \ref{D_DirectSequenceOfPartialPoissonBanachManifolds}) where 

$\bullet\;$ each $\mathcal{G}_i \tto M_i$ is a finite dimensional Lie groupoid ;

$\bullet\;$  $ \left( M_i \right) _{i\in \N}$ is an ascending sequence of manifolds;

 $\bullet\;$ each $\mathcal{G}_i$ is provided with source and target maps ${\bf s}_i$, ${\bf t}_i$, inverse ${\bf i}_i$, multiplications ${\bf m}_i$ and unity maps ${\bf 1}_i$;
 
 $\bullet\;$ for each $i$, any of these maps at level $i$ is the restriction of the corresponding map at level ${i+1}$.\\
 
At first, recall the following results of ascending sequences of finite dimensional Lie groupoids:
 
\begin{proposition}
\label{P_DirectLimitGroupoid}
Let  $ \left( \mathcal{G}_i\rightrightarrows M_i \right) _{i\in \N}$ be a direct sequence of finite dimensional Banach-Lie groupoids. Then we have:
\begin{enumerate}
\item[(1)]
$\mathcal{G}=\underrightarrow{\lim}\mathcal{G}_{i}$  and  $M=\underrightarrow{\lim}M_i$  are   Hausdorff convenient manifolds.
\item[(2)]
${\bf 1}=\underrightarrow{\lim}\mathbf{1}_{i}$ is a smooth injective closed  immersion from  $M=\underrightarrow{\lim}M_{i}$ into $\mathcal{G}$;
\item[(3)]
${\bf s}=\underrightarrow{\lim}\mathbf{s}_{i}$ (resp. ${\bf t}=\underrightarrow{\lim}\mathbf{t}_{i}$) is a submersion from $\mathcal{G}$ on to $M$;
\item[(4)]
${\bf m}=\underrightarrow{\lim}\mathbf{m}_{i}$ is a smooth map from  the convenient   manifold $\mathcal{G}^{(2)}=\underrightarrow{\lim}\mathcal{G}^{(2)}_{i}$ to $\mathcal{G}$;
\item[(5)]
${\bf i}=\underrightarrow{\lim}\mathbf{i}_{i}$ is a smooth diffeomorphism of $\mathcal{G}$.
\end{enumerate}
In fact, we obtain a natural structure of convenient  groupoid  $\mathcal{G}\tto M$ which is called the {\bf direct limit  Lie groupoid of the ascending sequence of 
$ \left( \mathcal{G}_i\rightrightarrows M_i \right) _{i\in \N}$}. Its Lie algebroid  is the direct  limit of the  sequence 
$ \left( \mathcal{AG}_i, M_i,\rho_i,[\;,\;]_i \right) _{i\in \N}$ of Banach Lie algebroids.
\end{proposition}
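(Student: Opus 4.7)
The plan is to assemble the five assertions by transferring each piece of the groupoid structure through the direct limit and then to identify the Lie algebroid. First, for (1), since each $\mathcal{G}_i$ and each $M_i$ is finite dimensional and the transition maps $\mathcal{G}_i \hookrightarrow \mathcal{G}_{i+1}$ and $M_i \hookrightarrow M_{i+1}$ are smooth closed embeddings, I would invoke Gl\"ockner's theorem on direct limits of ascending sequences of finite dimensional manifolds (\cite{Glo03}) to conclude that $\mathcal{G} = \underrightarrow{\lim}\mathcal{G}_i$ and $M = \underrightarrow{\lim}M_i$ carry natural Hausdorff convenient manifold structures, modelled on the $\textrm{LB}$-spaces $\underrightarrow{\lim}\mathbb{M}_i^\mathcal{G}$ and $\underrightarrow{\lim}\mathbb{M}_i^M$ respectively.

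Next, for (2)--(5), I would observe that each structure map at level $i$ is, by hypothesis, the restriction of the corresponding map at level $i+1$. Hence the direct limits ${\bf 1} = \underrightarrow{\lim}{\bf 1}_i$, ${\bf s} = \underrightarrow{\lim}{\bf s}_i$, ${\bf t} = \underrightarrow{\lim}{\bf t}_i$, ${\bf i} = \underrightarrow{\lim}{\bf i}_i$, ${\bf m} = \underrightarrow{\lim}{\bf m}_i$ are well defined as set-theoretic maps compatible with the filtrations. Using the Lemma recalled just before Theorem~\ref{T_PartialPoissonStructureOnDirectLimitOfPoissonBanachManifolds}, which characterizes smoothness on a $c^\infty$-open set of $\underrightarrow{\lim}\mathbb{M}_i$ as smoothness of all restrictions to finite levels, each such direct-limit map is smooth. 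For the submersivity of ${\bf s}$ and ${\bf t}$, I would use the fact that in finite dimension a submersion admits local splittings; since splittings at level $i$ can be chosen to extend to level $i+1$ (the $M_i$ are closed submanifolds of $M_{i+1}$ and the $\mathcal{G}_i$ of $\mathcal{G}_{i+1}$), the resulting local product decomposition passes to the direct limit, producing a convenient submersion in the sense of \cite{CaPe23}, Definition~1.13. In particular, $\mathcal{G}^{(2)} = \underrightarrow{\lim}\mathcal{G}_i^{(2)}$ is a convenient submanifold of $\mathcal{G}\times\mathcal{G}$, so smoothness of ${\bf m}$ from $\mathcal{G}^{(2)}$ to $\mathcal{G}$ makes sense and follows from the same Lemma. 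For ${\bf 1}$, each ${\bf 1}_i\colon M_i \hookrightarrow \mathcal{G}_i$ is a smooth closed embedding whose tangent has complemented range; passing to the limit yields a smooth injective closed immersion. For ${\bf i}$, the relation ${\bf i}_i \circ {\bf i}_i = \operatorname{Id}_{\mathcal{G}_i}$ passes to the limit so that ${\bf i} \circ {\bf i} = \operatorname{Id}_{\mathcal{G}}$, giving the diffeomorphism.

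Having verified the axioms (\textbf{CLGr1})--(\textbf{CLGr4}) of Definition~\ref{D_ConvenientLieGroupoid}, $\mathcal{G} \rightrightarrows M$ is a convenient Lie groupoid. For the last assertion about the Lie algebroid, I would unwind the construction of $\S$~\ref{___ConvenientLieAlgebroidAssociatedToAConvenientLieGroupoid}: the Lie algebroid $\mathcal{AG}$ is the pull-back by ${\bf 1}$ of the vertical bundle $T^{\bf s}\mathcal{G}$. Since tangent functor commutes with the kind of ascending direct limits under consideration (cf. \cite{CaPe23}, Chap.~5), one has $T\mathcal{G} = \underrightarrow{\lim} T\mathcal{G}_i$ and, because each ${\bf s}_i$ is the restriction of ${\bf s}_{i+1}$, also $T^{\bf s}\mathcal{G} = \underrightarrow{\lim} T^{{\bf s}_i}\mathcal{G}_i$. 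Pulling back over $\underrightarrow{\lim}{\bf 1}_i$ gives $\mathcal{AG} = \underrightarrow{\lim}\mathcal{AG}_i$ as a convenient vector bundle. The anchor is $\rho = T{\bf t}_{| M} = \underrightarrow{\lim}\rho_i$ and the bracket on right-invariant vertical fields is defined on generators of the form $\underrightarrow{\lim} X_i$, with $[\;,\;] = \underrightarrow{\lim}[\;,\;]_i$ by localizability. Together these show $\mathcal{AG}$ is the direct limit Lie algebroid in the sense of \cite{CaPe23}.

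The main obstacle I anticipate is the submersion step and the bookkeeping it entails: one must ensure that the local product decompositions witnessing submersivity at level $i$ can be chosen compatibly with those at level $i+1$ so that they assemble into a $c^\infty$-open chart for $\mathcal{G}$ around each unit, and that this local product structure is precisely what is needed to identify $\mathcal{G}^{(2)}$ as a convenient submanifold of $\mathcal{G}\times\mathcal{G}$. Once this is handled, everything else is a routine direct-limit argument using the Lemma preceding Theorem~\ref{T_PartialPoissonStructureOnDirectLimitOfPoissonBanachManifolds} and the compatibility of the tangent functor with the ascending sequences.
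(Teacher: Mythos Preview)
The paper does not actually give a proof of this proposition: it is introduced with ``At first, recall the following results of ascending sequences of finite dimensional Lie groupoids'' and is stated without any accompanying argument, the implicit justification being the cited background in \cite{Glo03} and \cite{CaPe23}, Chap.~5. Your plan is a reasonable reconstruction of what such a proof would look like, and it lines up with the ingredients the paper points to: Gl\"ockner's theorem for Hausdorffness and the manifold structure of direct limits of finite dimensional manifolds, the level-wise compatibility of the structure maps to pass them to the limit, and the construction of $\mathcal{AG}$ in $\S$~\ref{___ConvenientLieAlgebroidAssociatedToAConvenientLieGroupoid} read through the direct limit. Your identification of the submersion step as the one requiring care (compatible local product charts at successive levels) is exactly the point where the general convenient framework is delicate, and is precisely what the finite-dimensional hypothesis and the machinery in \cite{CaPe23}, Chap.~5 are there to handle. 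Since there is no proof in the paper to compare against, there is nothing further to contrast; your sketch is consistent with the sources the authors invoke.
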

 
\begin{remark} 
Since $\mathcal{G}$ (resp. $M$) is a direct limit of finite dimensional manifolds, it follows that the typical model of $\mathcal{G}$ (resp.$M$)  is isomorphic to $\mathbb{R}^\infty$ which is a convenient reflexive vector space (cf. \cite{KuMcK89}).
\end{remark}

\begin{theorem}
\label{T_ComplementaryPropertiesOfAscendingLimitGroupoids}
Let  $ \left(  \mathcal{G}_i\rightrightarrows M_i \right) _{i\in \N}$ be an ascending  sequence  of  finite dimensional Lie groupoids and $\mathcal{G}\tto M$ its  direct limit  Lie groupoid (cf. Proposition~\ref{P_DirectLimitGroupoid}).
Then, for each $x=\underleftarrow{\lim}x_i\in M$, we have:
\begin{enumerate}
\item[(1)]
The isotropy group $\mathcal{G}(x)$ is a convenient Lie group which is the direct limit of the sequence of isotropy groups 
$ \left( \mathcal{G}_i(x_i) \right) _{i \in \N}$.
\item[(2)]
The orbit $\mathcal{G}.x$ is a closed immersed  convenient submanifold of $M$ which is the direct  limit of the sequence of orbits 
$ \left( (\mathcal{G}_i).x_i \right) _{i \in \N}$.
\item[(3)]
The restriction ${\bf t}_x:={\bf t}_{| \mathcal{G}(x,-)}: \mathcal{G}(x,-)\to M$ is a $\mathcal{G}(x)$ principal bundle which is the projective limit of the sequence of principal bundles $({\bf t}_i)_{x_i}:  \mathcal{G}_i(x_i,-)\to M_i.$
\item[(4)]
The associated convenient Lie algebroid $(\mathcal{AG},M,\rho,[\;,\;])$ is split and, for any $x\in M$,
its orbit $\mathcal{G}.x$ is a closed  immersed submanifold of $M$ whose connected components are orbits of the  Banach-Lie algebroid $\mathcal{AG}$.
\item[(5)] $\mathcal{AG}^\prime=\underrightarrow{\lim} \mathcal{AG}_i^*$ is the dual of $\mathcal{AG}$ and $(\mathcal{AG}, \mathcal{AG}^\prime)$ is a  partial bialgebroid.\\
\end{enumerate}
\end{theorem}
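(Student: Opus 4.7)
The overall plan is to exploit the fact that everything is built as a direct limit of the corresponding finite-dimensional objects, where the classical theory (summarized in Theorem~\ref{T_OrbitBanachGroupoid}, plus the standard finite-dimensional Lie bialgebroid theory of \cite{MaXu94}) applies verbatim. I would then transfer each assertion to the convenient limit by invoking Proposition~\ref{P_DirectLimitGroupoid}, Theorem~\ref{T_DirecLlimitBanachLiegroups}, and the direct-limit tools for manifolds, bundles and partial Poisson structures recalled in Sections~3 and~5 (cf.\ \cite{CaPe23}, Chapters~5 and~7). Throughout, the point $x=\underrightarrow{\lim}\,x_i$ is fixed and the structural maps $\mathbf{s},\mathbf{t},\mathbf{m},\mathbf{i},{\bf 1}$ are the corresponding direct limits, so source-fibres, target-fibres and their intersections commute with direct limits.

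\textbf{Points (1)--(3).} For (1), the equality $\mathcal{G}(x)=\mathbf{s}^{-1}(x)\cap\mathbf{t}^{-1}(x)=\bigcup_i \mathcal{G}_i(x_i)$ is immediate from Proposition~\ref{P_DirectLimitGroupoid}. Each $\mathcal{G}_i(x_i)$ is a (finite-dimensional) Lie subgroup of $\mathcal{G}_{i+1}(x_{i+1})$ with Lie algebra $\ker T_{{\bf 1}_i(x_i)}\mathbf{s}_i\cap \ker T_{{\bf 1}_i(x_i)}\mathbf{t}_i$, and the injections between these groups and their Lie algebras satisfy the hypotheses of Theorem~\ref{T_DirecLlimitBanachLiegroups}, so $\mathcal{G}(x)=\underrightarrow{\lim}\,\mathcal{G}_i(x_i)$ is a convenient Lie group. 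For (2), $\mathcal{G}_i.x_i=\mathbf{t}_i(\mathbf{s}_i^{-1}(x_i))$ is a (closed, immersed) finite-dimensional submanifold of $M_i$ by Theorem~\ref{T_OrbitBanachGroupoid}(2) in the finite-dimensional case, and the inclusions $\mathcal{G}_i.x_i\hookrightarrow \mathcal{G}_{i+1}.x_{i+1}$ are smooth closed immersions; the direct-limit construction for submanifolds then yields that $\mathcal{G}.x=\underrightarrow{\lim}\,\mathcal{G}_i.x_i$ is a closed immersed convenient submanifold of $M$. Point (3) follows in the same spirit: each $({\mathbf{t}}_i)_{x_i}:\mathcal{G}_i(x_i,-)\to \mathcal{G}_i.x_i$ is a principal $\mathcal{G}_i(x_i)$-bundle (Theorem~\ref{T_OrbitBanachGroupoid}(3)); compatibility of local trivializations between consecutive levels (which one can arrange by shrinking and taking direct limits of bisections) lifts to a convenient principal $\mathcal{G}(x)$-bundle structure on $\mathbf{t}_x:\mathcal{G}(x,-)\to \mathcal{G}.x$ by (1) and (2).

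\textbf{Points (4)--(5) and the main obstacle.} For (4), $\mathcal{AG}$ is the pull-back of $T^{\bf s}\mathcal{G}$ by ${\bf 1}$ and, since $T^{\bf s}\mathcal{G}=\underrightarrow{\lim}\,T^{\mathbf{s}_i}\mathcal{G}_i$, one obtains $\mathcal{AG}=\underrightarrow{\lim}\,\mathcal{AG}_i$ as convenient vector bundles, whose typical fibre is isomorphic to $\mathbb{R}^\infty$. Splitness is automatic at each finite level and passes to the direct limit because every finite-dimensional subspace of $\mathbb{R}^\infty$ is supplemented; the anchor $\rho=T\mathbf{t}\circ{\bf 1}_*$ and the algebroid bracket are direct limits of the $\rho_i$ and $[\cdot,\cdot]_i$, so orbits of $\mathcal{AG}$ match the connected components of $\mathcal{G}.x$ via the finite-dimensional statement at each level. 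For (5), the dual bundle $\mathcal{AG}^\prime=\underrightarrow{\lim}\,\mathcal{AG}_i^*$ and, in finite dimension, each pair $(\mathcal{AG}_i,\mathcal{AG}_i^*)$ is a Lie bialgebroid whose compatibility condition can be written as $d_{\rho_i^*}[X,Y]_i=L_{X}d_{\rho_i^*}Y-L_{Y}d_{\rho_i^*}X$. Passing these identities to the direct limit formally gives the partial bialgebroid condition on $(\mathcal{AG},\mathcal{AG}^\prime)$. The delicate step, which I expect to be the main obstacle, is precisely this last passage: one must check that the exterior differential $d_{\rho^\prime}$ intrinsically associated with the convenient algebroid structure on $\mathcal{AG}^\prime$ actually coincides on sections of $\mathcal{AG}^\prime$ coming from finite levels with the direct limit of the $d_{\rho_i^*}$. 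This requires, via Proposition~\ref{P_A(Tflat A)}, identifying $\mathfrak{A}(\mathcal{AG}^\prime_U)$ with the direct limit of the algebras $\mathfrak{A}(\mathcal{AG}^{\prime}_{i,U_i})$ on chart domains $U=\underrightarrow{\lim}\,U_i$, and then verifying that the defining identities (\ref{eq_linAP1})--(\ref{eq_linAP2}) for the convenient linear Poisson anchor are the direct limit of the finite-dimensional ones, so that $(\mathcal{AG},\mathcal{AG}^\prime)$ inherits the partial bialgebroid compatibility condition.
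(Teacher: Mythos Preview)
The paper does not supply a proof of Theorem~\ref{T_ComplementaryPropertiesOfAscendingLimitGroupoids}; it is stated and then immediately used to derive Corollary~\ref{C_ComplemnetaryFoliationFinitePartialBanachPoissonManifold}. So there is nothing to compare your proposal against directly.

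Your strategy is the natural one and is certainly what the paper implicitly intends: apply the finite-dimensional case of Theorem~\ref{T_OrbitBanachGroupoid} level by level, then pass to direct limits using Proposition~\ref{P_DirectLimitGroupoid}, Theorem~\ref{T_DirecLlimitBanachLiegroups}, and the direct-limit machinery of \cite{CaPe23}. Points (1)--(4) go through essentially as you indicate. For (5), your reading is the correct one: the bialgebroid structure on each $(\mathcal{AG}_i,\mathcal{AG}_i^*)$ is the canonical one coming from the linear Poisson structure on $\mathcal{AG}_i^*$ (equivalently, from the symplectic groupoid $T^*\mathcal{G}_i\rightrightarrows\mathcal{AG}_i^*$ of Theorem~\ref{T_GroupoidT'G}), not some additional Poisson datum on $\mathcal{G}_i$; your sketch via the compatibility identity and Theorem~\ref{T_LinkPoissonAlgebroid} is on target. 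One genuine point you should watch: the identification $\mathcal{AG}^\prime=\underrightarrow{\lim}\,\mathcal{AG}_i^*$ is not automatic, since the dual of a direct limit is naturally a \emph{projective} limit; here it works because the typical fibre is $\mathbb{R}^\infty$, whose kinematic dual is $\mathbb{R}^{\mathbb{N}}=\underleftarrow{\lim}\,\mathbb{R}^n$, and the paper is using the reflexivity noted in the Remark following Proposition~\ref{P_DirectLimitGroupoid} together with the identification $(\mathbb{R}^n)^*\cong\mathbb{R}^n$. You should make this explicit rather than assert the direct-limit description of $\mathcal{AG}^\prime$ as if it were formal.
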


 By application of  Theorem~\ref {T_PartialPoissonStructureOnDirectLimitOfPoissonBanachManifolds} in the one hand, and application of  
  Theorem~\ref{T_ComplementaryPropertiesOfAscendingLimitGroupoids}  to Theorem~\ref{T_FoliationPartialBanachPoissonManifold}, on the other hand,  we obtain:  
 
\begin{corollary}\label{C_ComplemnetaryFoliationFinitePartialBanachPoissonManifold} 
Let  $ \left( \mathcal{G}_i\rightrightarrows M_i \right) _{i\in \N}$ be an ascending  sequence  of  finite dimensional Poisson (resp.  symplectic) Lie groupoids and $ \left( T^\flat \mathcal{G}, T\mathcal{G}, P\right) $ be  
the associated   partial convenient direct limit  manifold  defined in Proposition~\ref{P_ConvenientTangentBundleFrechetCotangentBundleBundleMorphismDirectSequence}. Then in each case,  $T^\flat 
\mathcal{G}=T^\prime \mathcal{G}$ and in the symplectic case, $\mathcal{G}$ is a (strong)  symplectic \footnote{Since the symplectic form $\omega$ on $\mathcal{G}$ is the direct limit of $\omega_i$ and as each $\omega_i^\flat$ is an isomorphism so the direct limit of these  operators is an isomorphism.} groupoid. Moreover all the Assertions of Theorm \ref{T_PropertiesPoissonGroupoid} (resp.  Therorem\ref {T_PropertiesSymplecticgroupoid}) are true and in particular the last one without anymore assumptions.\\
\end{corollary}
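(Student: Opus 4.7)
[Proof plan for Corollary \ref{C_ComplemnetaryFoliationFinitePartialBanachPoissonManifold}]
The plan is to reduce everything to the finite-dimensional case at each level $i$ and then to transfer the conclusions to the direct limit using the compatibility properties recorded in Theorem~\ref{T_PartialPoissonStructureOnDirectLimitOfPoissonBanachManifolds} and Theorem~\ref{T_ComplementaryPropertiesOfAscendingLimitGroupoids}. First, I would establish that $T^{\flat}\mathcal{G}=T^{\prime}\mathcal{G}$. Since each $\mathcal{G}_i$ is finite dimensional, one has trivially $T^{\flat}\mathcal{G}_i=T^{\ast}\mathcal{G}_i=T^{\prime}\mathcal{G}_i$, and by Proposition~\ref{P_ConvenientTangentBundleFrechetCotangentBundleBundleMorphismDirectSequence} the typical fibre of $T^{\flat}\mathcal{G}$ is the projective limit $\underleftarrow{\lim}\mathbb{M}_i^{\ast}$, which coincides with the kinematic dual of the model $\mathbb{M}=\underrightarrow{\lim}\mathbb{M}_i\cong \mathbb{R}^{\infty}$. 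Using the reflexivity of $\mathbb{R}^{\infty}$ (cf. the remark after Proposition~\ref{P_DirectLimitGroupoid}), the canonical inclusion of $T^{\flat}\mathcal{G}$ into $T^{\prime}\mathcal{G}$ is in fact an equality.

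Next, in the symplectic case, I would argue that $P=\underrightarrow{\lim}P_i: T^{\flat}\mathcal{G}\to T\mathcal{G}$ is a convenient bundle isomorphism: each $P_i=(\omega_i^{\flat})^{-1}$ is an isomorphism, and on convenient bundles modelled on direct limits of finite-dimensional fibres the direct limit of a coherent family of isomorphisms is again an isomorphism. Equivalently, the $2$-form $\omega=\underrightarrow{\lim}\omega_i$ on $\mathcal{G}$ has $\omega^{\flat}$ surjective onto $T^{\prime}\mathcal{G}$, so $\omega$ is a strong symplectic form. Then I would verify the assertions of Theorem~\ref{T_PropertiesPoissonGroupoid} (resp. Theorem~\ref{T_PropertiesSymplecticgroupoid}) by checking them at each finite level and passing to the limit: coisotropy (resp. Lagrangianity) of $M$ in $\mathcal{G}$ follows from the corresponding property of $M_i$ in $\mathcal{G}_i$ combined with the fact that annihilators and orthogonals commute with these specific direct limits; the inversion ${\bf i}=\underrightarrow{\lim}{\bf i}_i$ inherits its anti-Poisson (resp. anti-symplectic) character from each ${\bf i}_i$; finally, the unique sheaf of Poisson brackets on $M$ making ${\bf s}$ (resp. ${\bf t}$) a Poisson (resp. anti-Poisson) morphism is obtained as the direct limit sheaf $\underrightarrow{\lim}\{.,.\}_{M_i}$, using Theorem~\ref{T_PartialPoissonStructureOnDirectLimitOfPoissonBanachManifolds} and the compatibility of the source/target maps with the inclusions $\varepsilon_i^{i+1}$.

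The crucial point, and what justifies the phrase ``without anymore assumptions'', is the last assertion of Theorem~\ref{T_PropertiesSymplecticgroupoid}, namely the existence of a weak symplectic form on each orbit. In the Banach case this required a splitting hypothesis, but here one can bypass it entirely: by Theorem~\ref{T_ComplementaryPropertiesOfAscendingLimitGroupoids}, every orbit $\mathcal{G}.x$ (for $x=\underleftarrow{\lim}x_i$) is a closed immersed convenient submanifold of $M$ which coincides with the direct limit $\underrightarrow{\lim}\mathcal{G}_i.x_i$. On each finite-dimensional orbit $\mathcal{G}_i.x_i$ the classical finite-dimensional theory supplies a symplectic form $\widetilde{\omega}_i$ induced by $\omega_i$, and the naturality of this construction together with the fact that each inclusion $\mathcal{G}_i.x_i\hookrightarrow \mathcal{G}_{i+1}.x_{i+1}$ is a symplectic embedding yields $\widetilde{\omega}=\underrightarrow{\lim}\widetilde{\omega}_i$, which is a weak symplectic form induced by $\omega$ on $\mathcal{G}.x$.

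I expect the main obstacle to be the careful bookkeeping in the second paragraph: one must verify that the passage to the direct limit commutes with the various constructions (annihilators, partial duals, restriction of $P$ to $\mathcal{N}^0(M)$, construction of the induced sheaf on $M$) so that the conditions of Theorem~\ref{T_ExistencePartialPoissonStructure} and Proposition~\ref{P_SubPoissonGroupoid} are really satisfied at the limit. This amounts to showing that at each point the relevant short exact sequences of finite-dimensional fibres stabilize under the colimit and that the compatibility diagrams established in the proofs of Theorems~\ref{T_PropertiesPoissonGroupoid} and \ref{T_PropertiesSymplecticgroupoid} (in particular the clean pair conditions used for $R(C)$) remain clean in the convenient category; the finite dimensionality at each level makes this verification essentially formal but requires checking that none of the pathologies allowed by the convenient framework (non-splitting of subbundles, non-closedness of images) can appear in the direct limit.
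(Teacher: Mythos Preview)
Your proposal is correct and follows the same approach as the paper. The paper's own ``proof'' consists only of the sentence preceding the corollary statement (``By application of Theorem~\ref{T_PartialPoissonStructureOnDirectLimitOfPoissonBanachManifolds} in the one hand, and application of Theorem~\ref{T_ComplementaryPropertiesOfAscendingLimitGroupoids} to Theorem~\ref{T_FoliationPartialBanachPoissonManifold}, on the other hand, we obtain:'') together with the footnote explaining why the direct limit of the $\omega_i^{\flat}$ is an isomorphism; you have simply supplied a reasonable fleshing-out of that sketch, including the key observation that Theorem~\ref{T_ComplementaryPropertiesOfAscendingLimitGroupoids} furnishes the orbits as direct limits of finite-dimensional orbits, which is exactly what removes the splitting hypothesis needed in the general Banach case.
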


\subsection{Examples}\label{___ExamplesDirectLimitPoisson} ${}$
\begin{enumerate}
\item[1.] 
Consider  $(\mathcal{G}_i\rightrightarrows M_i)_{i\in \mathbb{N}}$ be an ascending  sequence  of  finite dimensional Poisson Lie groupoids.   Then  the associated  sequence  
$ \left( T^*\mathcal{G}_i\rightrightarrows \mathcal{AG}^* \right) _{i\in \N}$ of cotangent groupoids. It is also an ascending sequence of finite dimensional  symplectic Lie groupoids. If $\mathcal{G}=\underrightarrow{\lim}\mathcal{G}_{i}$,  then  we have  $T^\prime\mathcal{G}=\underrightarrow{\lim}T^*\mathcal{G}_{i}$ and $\mathcal{AG}
^\prime=\underrightarrow{\lim}\mathcal{AG}_{i}^*$ and we have a structure of (strong)  symplectic convenient Lie groupoid $T^\prime \mathcal{G}\tto \mathcal{AG}^\prime$. 
\item[2.] 
Let $ \left( \mathsf{G}_i \right) _{i\in \N}$ be an ascending sequence of finite dimensional Lie groups. Then the  sequence of cotangent bundles $ \left( T^\prime \mathsf{G}_i \right) _{i\in N}$ is also an ascending sequence of finite dimensional bundles, each one having a structure of symplectic Lie  groupoid (cf. Theorem~\ref{T_Cotangent bundleG} or  \cite{Marl05}, 1).
If $\mathsf{G}=\underrightarrow{\lim}\mathsf{G}_{i}$  is also a  convenient Lie  group provided with an exponential map (cf. Theorem~\ref{T_DirecLlimitBanachLiegroups}), then   $T^\prime\mathcal{G}=\underrightarrow{\lim}T^*\mathcal{G}_{i}$ is the cotangent bundle of $\mathsf{G}$ (cf. 
Proposition~\ref{P_ConvenientTangentBundleFrechetCotangentBundleBundleMorphismDirectSequence}). It follows that $T^\prime\mathsf{G}$ has also a structure of convenient Lie groupoid on $\mathfrak{g}
^\prime\times\mathfrak{g}^\flat$ (cf. Theorem \ref{T_Cotangent bundleG}). Note that the adjoint action of $\mathsf{G}$ on $\mathfrak{g}^\prime$ is the direct limit of the action of $\mathsf{G}_i$ over $\mathfrak{g}_i^\prime$ and so $\mathcal{O}_\xi$ through $\xi=\underrightarrow{\lim} \xi_i$ is $\underrightarrow{\lim}\mathcal{O}_\xi$ and so is a split convenient submanifold of $\mathsf{G}$. It follows that all assertions of Theorem~\ref{T_Cotangent bundleG}  are true, in particular, the last ones  without any more assumptions.\\

For a  more precise example, take $\mathcal{G}_n=\mathsf{GL}(n, \C)$.  The following description refers to \cite{Bab10}.\\
Using the left translation on $\mathsf{GL}(n,\C)$,  the tangent space  $T\mathsf{GL}(n,\C)$  is isomorphic to $\mathsf{GL}(n,\C)\times\mathfrak{gl}(n,\C)$ \textit{via} the right action of  $\mathsf{GL}(n,\C)$. By the duality bracket  $(A,B)\mapsto 
\mathsf{Tr}(AB)$, we can identify  $\mathfrak{gl}^*(n,\C)$ with $ \mathfrak{gl}(n,\C)$ and so the cotangent bundle of $\mathsf{GL}(n,\C)$ is isomorphic to $\mathsf{GL}(\infty)\times\mathfrak{gl}
(\infty)$. \textit{Via} the duality bracket  $(A,B)\mapsto \mathsf{Tr}(AB)$, we can identify $T^\prime\mathcal{GL}(n,\C)$ with  $\mathsf{GL}(n,\C)\times\mathfrak{gl}(n,\C)$. Therefore, the coadjoint orbit of $\mathsf{GL}(n,\C)$ on $\mathfrak{gl}^*(n,\C)$ can be identified with the adjoint orbit of  $\mathsf{GL}(n,\C)$ on $\mathfrak{gl}(n,\C)$. 
Of course, the canonical symplectic form on $T^*(\mathsf{GL}(n,\C)\times\mathfrak{gl}(n,\C))$ gies rise to    a symplectic  groupoid structure on $\mathsf{GL}(n,\C)\times\mathfrak{gl}
(n,\C)\tto \mathfrak{gl}(n,\C)\times\mathfrak{gl}(n,\C)$. In particular, all the assertions of the theorem, even the last one, is true without any more assumptions.

On the one  hand, each $A\in \mathfrak{gl}(n,\C)$ can be considered  as a linear map on $\C^n$  and let $J_A$ be its normal Jordan form. Then we have $A=gJ_Ag^{-1}$ for some $g\in \mathsf{GL}(n,\C)$. So the adjoint orbit  $\mathcal{O}_A$ of $A$ is equal to  $\mathcal{O}_{J_A}$. Thus any adjoint orbit is parametrized by all Jordan matrices.

On  the one hand, the Kirilov-Kostant-Souriau symplectic form on $\mathcal{O}_A$ is defined in the following way (cf. \cite{Bab10}):
\begin{equation} 
\label{eq_KKSGLNC}
\sigma_A(X_1,X_2)=\mathsf{Tr}(\dot{g}_1g\dot{A}_2)=-\mathsf{Tr}(\dot{g}_2g\dot{A}_1)
\end{equation}
where, for $i \in \{1,2\}$,  $X_i$ is  a vector  tangent  to a curve  on $[0, \varepsilon[$ of type  $A_i(t)=g_i(t)J_Ag_i^{-1}(t)$ given by:
\[
g_i(0)=g,\; \;\dot{g}_i:=\dfrac{d}{dt}_{| t=0} g_i(t),\;\; \dot{A}_i:=\dfrac{d}{dt}_{| t=0} A_i(t).
\]

It follows that  the Lie algebra $\mathsf{GL}(\infty,\C)=displaystyle\bigcup_{n\in \N} \mathsf{GL}(n\C)$  is $\mathfrak{gl}(\infty,\C)=\displaystyle\bigcup_{n\in \N} \mathfrak{gl}(n\C)$. 
Therefore,  for $A$ and $B$ in $\mathfrak{gl}(\infty,\C)$, the product is well defined since we have an integer  $n$ such that $A$ and $B$ belongs to the same $\mathfrak{gl}(n,\C)$ (\textit{via} the successive inclusions $\iota_n$ of $\mathfrak{gl}(n\C)$ into  $\mathfrak{gl}(n\C)$  and so the bilinear form 
\[
\mathsf{Tr}(AB):= \mathsf{Tr}_n(AB)
\]
 is well defined if  $\mathsf{Tr}_n$ is the bilinear operator trace in $\mathfrak{gl}(n\C)$.  
This gives rise to an operator of duality between $\mathfrak{gl}(\infty,\C)$ and $\mathfrak{gl}^*(\infty,\C)$ which is compatible with the previous inclusion $\iota_n$ and so is compatible with the direct limit. For analogue reasons, the adjoint actions  are compatible  with the inclusions of $\mathsf{GL}(n\C)$ into  $\mathsf{GL}(n\C)$ and $\iota_n$ and so any coadjoint orbit can be identified with an adjoint orbit which implies that the same result is true for the direct limits. Thus the coadjoint orbits are parametrized by the union of all Jordan matrices of any finite dimension. Since each such orbit is contained in some $\mathfrak{gl}(n,\C)$,  the Kirillov-Kostant-Sauriau symplectic form is also given by  the formulae (\ref{eq_KKSGLNC}).
\end{enumerate}

\appendix
\section{Bounded Operators on a Hilbert Separable Complex Space and Von Neuman Algebras}
\label{__BoundedOperatorsOnAHilbertSeparableComplexSpaceAndVonNeumanAlgebras}

Von Neumann algebras were originally introduced by John von Neumann. For a complete exposition on Von Neumann algebras, see Dixmier's book \cite{Dix69}.

\subsection{Basic preliminaries}\label{__PrelinnariesBoundeOperator}

The following results  on the space of bounded operators on a complex separable Hilbert space are classical  but, for the sake of completeness, we refer to the concise and complete notes  \cite{Hou13} or also  \cite{Dix69}.

\subsubsection{Basic Properties of Bounded Operators on a Separable Complex Hilbert Space}

Let $\mathcal{H}$ be a separable complex Hilbert space. We denote by $\langle\;|\;\rangle$ its inner product and $||\;||$ its associated norm.  Let $\mathsf{L}_\infty(\Hc)$ be the \emph{Lie algebra Banach space of bounded linear operators} on $\Hc$ provided with the classical norm  $||A||_\infty=\sup_{||x||\leq 1}$ and the Lie bracket of commutateur 
 $[A,B]=A B-B A$ where $AB$ means the composition $A\circ B$. 
 
 We will  enumerate some  classical properties of bounded operators. 
 
 The adjoint of $A\in \mathsf{L}_\infty(\Hc)$ is denoted $A^*$;  
 
 $A\in \mathsf{L}_\infty(\Hc)$ is called{ \it self-adjoint} if $A=A^*$;
 
 $p\in \mathsf{L}_\infty(\Hc)$ is called {\it a projection } if $p^2=p=p^*$;
 
 $A\in \mathsf{L}_\infty(\Hc)$ is called{ \it positive} if $\langle Ax| x\rangle\geq 0$ for all $x\in \Hc$; in this case, $A$ is self-adjoint;
 
 for $A, B\in \mathsf{L}_\infty(\Hc)$ we say that $A\geq B$ if  $A-B$ is positive;
 
 $u\in \mathsf{L}(\Hc)$ is an \emph{isometry}\index{isometry} if  $uu^*=\operatorname{Id}$;
 
  $u\in \mathsf{L}(\Hc)$ is \emph{unitary}\index{unitary operator} if $uu^*=u^*u=\operatorname{Id}$;
 
   $u\in \mathsf{L}(\Hc)$ is a \emph{partial isometry} if $u^*u$ is a projection.
   
\begin{lemma}
\label{L_InitialDomain} 
If $u$ is a partial isometry then so is $u^*$. The subspace $u^*\Hc$ is then closed and called the \emph{initial domain}\index{initial domain} of $u$, the subspace $u\Hc$  is also closed and called the\emph{final domain}\index{final domain} of $u$.
\end{lemma}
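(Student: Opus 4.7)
The plan is to set $p := u^*u$ and exploit the fact that $p$ is a projection to read off all the geometric information from the factorization $u = u \cdot p$. My proof would proceed in three short steps.

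First, I would identify the kernel and the orthogonal complement. Since $\|ux\|^2 = \langle u^*ux, x\rangle = \langle px, x\rangle = \|px\|^2$ for all $x \in \Hc$, I get $\ker u = \ker p$, and hence $(\ker u)^\perp = p\Hc$ because $p$ is a self-adjoint projection. In particular, $u$ vanishes on $(1-p)\Hc$, so $u = up$; and since the range of $u^*$ lies in $(\ker u)^\perp = p\Hc$, I also get $pu^* = u^*$.

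Second, I would check that $u^*$ is a partial isometry by showing $uu^*$ is a projection. Self-adjointness is automatic, and the identity $pu^* = u^*$ from step one gives
\[
(uu^*)(uu^*) = u(u^*u)u^* = upu^* = uu^*,
\]
so $uu^*$ is idempotent. Thus $uu^*$ is a projection and $u^*$ is a partial isometry.

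Third, I would deduce closedness of the two domains. The range of any projection on a Hilbert space is closed, so $uu^*\Hc$ is closed. For any $x \in \Hc$, the decomposition $x = px + (1-p)x$ together with $u(1-p) = 0$ gives $ux = u(px) = u(u^*u x) = (uu^*)(ux)$, so $u\Hc \subseteq uu^*\Hc$; the reverse inclusion is obvious, hence $u\Hc = uu^*\Hc$ is closed. Applying the same argument to $u^*$ (now known to be a partial isometry) yields $u^*\Hc = u^*u\Hc = p\Hc$, which is closed. There is no real obstacle here — the only subtle point is recognizing that $pu^* = u^*$ (equivalently $\overline{u^*\Hc} \subseteq p\Hc$, which actually turns out to be equality), and this is what drives both the idempotency computation and the closedness of the ranges.
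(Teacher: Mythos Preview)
Your proof is correct and is the standard argument for this classical fact. The paper does not actually supply a proof of this lemma: it states the result and defers to the references \cite{Hou13} and \cite{Dix69} for the basic properties of bounded operators, so there is nothing to compare your approach against beyond the fact that you have filled in exactly the elementary computation those references contain.
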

 
If $A$ is positive then $A=A^*$; it has a square root  $B:=A^{\frac{1}{2}}$ i.e. an operator $B$ such  that $B^2=A$.
  
For any $A\in \mathsf{L}(\Hc)$, we set $|A|=(A^* A)^{\frac{1}{2}}$.
  
 \begin{lemma}
 \label{L_PolarDecompositon}  
For any $A\in \mathsf{L}(\Hc)$, there exists a partial isometry $u $ such that $A = u|A|$, and that $u$ is unique subject to the condition that its initial domain is $ (\Ker A)^\perp$. The final domain of this $u$ is $A\Hc = (\Ker A^*)^\perp$. Such a decomposition is called \emph{the polar decomposition of $A$}\index{polar decomposition}.
\end{lemma}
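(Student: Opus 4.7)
The plan is to construct $u$ via an isometric extension argument based on the key identity $\||A|x\| = \|Ax\|$ for all $x \in \mathcal{H}$, then verify uniqueness and identify the final domain.

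First I would establish the fundamental norm identity. Since $|A| = (A^*A)^{1/2}$ is positive and self-adjoint, for every $x \in \mathcal{H}$ one has
\[
\||A|x\|^2 = \langle |A|^2 x, x\rangle = \langle A^*Ax, x\rangle = \|Ax\|^2.
\]
This immediately gives $\Ker|A| = \Ker A$, and, more importantly, shows that the prescription $v_0 : |A|x \mapsto Ax$ is a well-defined linear isometry from the range $|A|\mathcal{H}$ onto $A\mathcal{H}$: if $|A|x = |A|y$ then $|A|(x-y) = 0$, hence $\|A(x-y)\| = \||A|(x-y)\| = 0$.

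Next I would extend $v_0$ to a partial isometry on all of $\mathcal{H}$. Since $v_0$ is isometric, it extends uniquely by continuity to an isometry $v : \overline{|A|\mathcal{H}} \to \overline{A\mathcal{H}}$. Using the standard identity $\overline{T\mathcal{H}} = (\Ker T^*)^\perp$ applied to the self-adjoint operator $|A|$ (so $|A|^* = |A|$) and to $A$, together with the equality $\Ker|A| = \Ker A$ shown above, one gets
\[
\overline{|A|\mathcal{H}} = (\Ker|A|)^\perp = (\Ker A)^\perp, \qquad \overline{A\mathcal{H}} = (\Ker A^*)^\perp.
\]
Define $u$ to be $v$ on $(\Ker A)^\perp$ and $0$ on $\Ker A$. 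Then $u^*u$ is the orthogonal projection onto $(\Ker A)^\perp$, so $u$ is a partial isometry with initial domain $(\Ker A)^\perp$ and final domain $(\Ker A^*)^\perp = \overline{A\mathcal{H}}$. By construction, for every $x \in \mathcal{H}$ we have $u|A|x = Ax$, giving $A = u|A|$.

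For uniqueness, suppose $u'$ is another partial isometry with $A = u'|A|$ and initial domain $(\Ker A)^\perp$. Then on the dense subspace $|A|\mathcal{H}$ of $(\Ker A)^\perp$, $u'|A|x = Ax = u|A|x$, so $u' = u$ on $|A|\mathcal{H}$; continuity of both operators and the fact that each vanishes on $\Ker A$ force $u' = u$ everywhere. The main technical point — and the only one requiring a genuine argument rather than bookkeeping — is the well-definedness and isometry of $v_0$, which hinges on the square-root identity $\||A|x\| = \|Ax\|$; once this is in hand, the construction, uniqueness, and identification of the final domain follow by routine Hilbert-space closure identities.
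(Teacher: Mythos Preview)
Your proof is correct and is the standard argument for polar decomposition. Note, however, that the paper does not supply its own proof of this lemma: it is stated in the Appendix as a classical fact, with the surrounding text referring the reader to \cite{Hou13} and \cite{Dix69} for details. Your construction via the isometry $|A|x \mapsto Ax$, extension by continuity to $(\Ker A)^\perp$, and extension by zero on $\Ker A$ is exactly the proof one finds in those references, so there is nothing to compare against beyond confirming that your argument matches the classical one.

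One minor remark: the lemma as stated in the paper writes the final domain as $A\Hc = (\Ker A^*)^\perp$, but your proof correctly shows that the final domain is $\overline{A\Hc} = (\Ker A^*)^\perp$; the range $A\Hc$ itself need not be closed in general. This is a small imprecision in the paper's statement rather than an issue with your argument.
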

  
\subsubsection{Some Classical Banach Sets of Bounded operators} 

The \emph{trace}\index{trace} of a positive operator is 
\[
\operatorname{Tr}A:=\dis\sum_{n}^\infty\langle e_n | A e_n\rangle
\]
where $ \left( e_n \right) $ is an orthonormal basis of $\Hc$. This value does not depend on the choice of the orthonormal basis.\\

The set $\mathsf{L}_2(\Hc)$  of \emph{Hilbert-Schmidt operators}\index{Hilbert-Schmidt operator} is the subspace of $\mathsf{L}_\infty(\Hc)$ of operators $A$  such that $||A||_2:=(\operatorname{Tr}(AA^*))^{\frac{1}{2}}<\infty$. It is a Banach Lie algebra  with the norm $||\;||_2$ and the Lie bracket of commutator. It is an ideal of $\mathsf{L}_\infty(\Hc)$.\\

  For any $A\in \mathsf{L}_\infty(\Hc)$, the square root of $(AA^*)$ is a well defined bounded operator and so the set $\mathsf{L}_1(\Hc)$ of {\it trace-class operators} is the subspace of operators $A$ in  $\mathsf{L}_\infty(\Hc)$ such that
\[
||A||_1:=\operatorname{Tr}(AA^*)^{\frac{1}{2}}<\infty
\]
 It is a Lie algebra for the norm $\vert\vert\;||_1$ and with the Lie bracket of commutator
 Note that we have $|\operatorname{Tr}A|\leq ||A||_1$ and so $\mathsf{L}_2(\Hc)\hookrightarrow \mathsf{L}_1(\Hc)$.\\
 
For any $1<p<\infty$, let $ \mathsf{L}_p(\Hc)$ the subspace of $A\in \mathsf{L}_\infty(\Hc)$ such that
 $$\vert\vert A||_p:=\left(\operatorname{Tr}(AA^*)^{\frac{p}{2}}\right)^{\frac{1}{p}}$$
 It is a Lie algebra for the norm $||\;||_p$ and the Lie bracket of commutator.\\
 
 We have the following classical properties:\\

$\bullet\;$  for $1<p<2<q<\infty$, we have
 \begin{equation}
 \label{eq_ChainInclusionLop}
 \mathsf{L}_1(\Hc)\hookrightarrow  \mathsf{L}_p(\Hc)\hookrightarrow \mathsf{L}_2(\Hc)\hookrightarrow \mathsf{L}_q(\Hc)\hookrightarrow  \mathsf{L}_\infty(\Hc).
 \end{equation}
Note that $\mathsf{L}_p(\Hc)$ is dense in $\mathsf{L}_2(\Hc)$ and a closed subspace of $\mathsf{L}_\infty(\Hc)$ but is not supplemented.\\

$\bullet\;$  The bilinear map $(A,B)\mapsto \operatorname{Tr}(AB)$ defines 

 a strong duality\footnote{A  non degenerate bounded bilinear $(x,y)\mapsto <x,y>$ between convenient spaces $\mathbb{E}$ and $\mathbb{F}$  is called a \emph{weak pairing}\index{weak pairing}. Such a paring is called \emph{strong}\index{strong} if the map $x\mapsto <x,\;>$ (resp. $y\mapsto <\;,y>$) is a convenient  isomorphism from $\mathbb{E}$ (resp. $\mathbb{F}$) to the dual of $\mathbb{F}$ (resp. of  $\mathbb{E}$).} pairing  between  $\mathsf{L}_2(\Hc)$ with itself;\\
 
 a weak duality pairing between $\mathsf{L}_1(\Hc)$ and $\mathsf{L}_\infty(\Hc)$ and, by the way, $\mathsf{L}_\infty(\Hc)$ can be identified with the dual of  $ \mathsf{L}_1(\Hc)$;\\

 a strong  duality pairing between  $\mathsf{L}_p(\Hc)$ and $ \mathsf{L}_q(\Hc)$ if $\dfrac{1}{p}+\dfrac{1}{q}=1$,  and so the dual of $\mathsf{L}_p(\Hc)$ can be identified with $\mathsf{L}_q(\Hc)$ and conversely.\\

\subsection{Finite Rank  and Compact Operators} 
In this paragraph, the orthonormal basis $ \left( e_n \right) $ for $\Hc$ is fixed. Let  $\mathsf{L}_{\operatorname{fin}}(\Hc)$ be the set of operators $A\in  \mathsf{L}_\infty(\Hc)$ whose range is finite dimensional. On the other hand,  $A\in  \mathsf{L}_\infty(\Hc)$ is called \emph{compact}\index{compact operator} if  the image of the closure of the unit ball in $\Hc$ is relatively compact. We denote by  $\mathsf{L}_{K}(\Hc)$  the set of compact operators on $\Hc$. The following results  on the space $\mathsf{L}_{K}(\Hc)$ are classical but for the sake of completeness, we refer to the concise and complete notes of Bourbaki.\\ 

 $\bullet\;$ $\mathsf{L}_{K}(\Hc)$ is the closure of $\mathsf{L}_{\operatorname{fin}}(\Hc)$  in $\mathsf{L}_\infty(\Hc)$ relative to the norm $||\;\||_\infty$. In particular, $\mathsf{L}_K(\Hc)$ is closed in  $\mathsf{L}_\infty(\Hc)$. \\
 
 $\bullet\;$   $\mathsf{L}_K(\Hc)$ is an ideal in the algebra $\mathsf{L}_\infty(\Hc)$. In particular,  $\mathsf{L}_K(\Hc)$ is a Lie subalgebra of $\mathsf{L}_\infty(\Hc)$.\\
 
 $\bullet\;$ $\mathsf{L}_{2}(\Hc)\subset \mathsf{L}_{K}(\Hc)$.\\

$\bullet\;$ $\mathsf{L}_1(\Hc)=\left(\mathsf{L}_K(\Hc)\right)^\prime$ and $\left(\mathsf{L}_1(\Hc)\right)^\prime =\mathsf{L}_\infty(\Hc)$.\\

\begin{remark}
\label{R_linkWithSequences} 
Consider the map $\Phi:\ell^\infty(\mathbb{N})\to \mathsf{L}_\infty(\Hc)$ given by:
\[
\Phi(\lambda_n)(x)=\dis\sum_{n=1}^\infty \lambda_n\langle x\vert e_n\rangle e_n.
\]
Then we have:\\

$\Phi^{-1}(\mathsf{L}_{\operatorname{fin}}(\Hc))=c_{00}(\mathbb{N})$, that is the set of finite complex sequences;\\

$\Phi^{-1}(\mathsf{L}_K(\Hc))=c_{0}(\mathbb{N})$,  that is the set of complex sequences which converge to $0$.\\

\noindent Moreover  in each previous situation $\Phi$ is an isometry.

It is well known that, for $1<p<2<q<\infty $, we have:

$$\ell^1(\mathbb{N})\hookrightarrow \ell^p(\mathbb{N}) \hookrightarrow \ell^2(\mathbb{N})\hookrightarrow \ell^q(\mathbb{N}) \hookrightarrow c_0(\mathbb{N} ) \hookrightarrow \ell^\infty(\mathbb{N})$$
where  each inclusion is continuous.
On the other hand, $c_{00}(\mathbb{N})\subset \ell^1(\mathbb{N})$ is a convenient space as a direct limit of ascending sequence of finite dimensional Banach spaces, say $\ell(n)$ of finite sequences defined  on $\{1,\dots, n\}$ (cf. proof of Proposition \ref{P_inclusion BoundedSets}). Moreover, $c_{00}(\mathbb{N})$ is dense in $ \ell^1(\mathbb{N})$ and its inclusion is continuous according to its convenient topology.
\end{remark}

As in Remark~\ref{R_linkWithSequences}, we have  a comparable  situation for the previous spaces of bounded operators:\\

\begin{proposition}
\label{P_inclusion BoundedSets} 
The set $\mathsf{L}_{\operatorname{fin}}(\Hc)$ has a structure of convenient space  which is dense in $\mathsf{L}_1(\Hc)$ and the inclusion is continuous according to the convenient topology on $\mathsf{L}_{\operatorname{fin}}(\Hc)$ and the Banach structure on $\mathsf{L}_{1}(\Hc)$. In particular, for $1<p<2<q<\infty$, we have the chain of continuous inclusions:
  \begin{equation}
  \label{eq_ChainInclusionLopComp} 
  \mathsf{L}_{\operatorname{fin}}(\Hc)\hookrightarrow  \mathsf{L}_1(\Hc)\hookrightarrow  \mathsf{L}_p(\Hc)\hookrightarrow \mathsf{L}_2(\Hc)\hookrightarrow \mathsf{L}_q(\Hc)\hookrightarrow  \mathsf{L}_K(\Hc)\hookrightarrow  \mathsf{L}_\infty(\Hc),
  \end{equation}
  for $1<p<2<q<\infty$,  
  and $\Phi^{-1}( \mathsf{L}_k(\Hc))=\ell^k(\mathbb{N})$, for any $1\leq k\leq \infty$
\end{proposition}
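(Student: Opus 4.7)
The plan is to handle the three ingredients of the statement separately: the convenient structure and density of $\mathsf{L}_{\operatorname{fin}}(\Hc)$, the chain of continuous inclusions, and the identification of $\Phi^{-1}(\mathsf{L}_k(\Hc))$.

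First, I would realise $\mathsf{L}_{\operatorname{fin}}(\Hc)$ as a countable strict inductive limit of finite-dimensional Banach spaces. For each $n\in\N$, set
\[
\mathsf{L}(n):=\{A\in \mathsf{L}_\infty(\Hc)\ :\ A(\Hc)\subset \operatorname{span}(e_1,\dots,e_n)\textrm{ and }\Ker A\supset \operatorname{span}(e_k,\, k>n)^{\perp\perp}\}
\]
(i.e.\ operators expressible as $n\times n$ matrices in the first $n$ basis vectors), which is a Banach space of complex dimension $n^2$. Then $\mathsf{L}_{\operatorname{fin}}(\Hc)=\bigcup_n \mathsf{L}(n)$ is an ascending sequence of finite-dimensional (hence supplemented) Banach subspaces, so by Corollary~\ref{C_DirectLimitOfLinearGroupsOnAscendingSequencesOfSupplementedBanachSubspaces} the union inherits a canonical convenient (LB-)structure. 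Density of $\mathsf{L}_{\operatorname{fin}}(\Hc)$ in $\mathsf{L}_1(\Hc)$ is a classical application of the spectral theorem: every trace-class $A$ admits a singular value decomposition $A=\sum_n s_n\langle\cdot|f_n\rangle g_n$ with $\sum s_n<\infty$, and the partial sums lie in $\mathsf{L}_{\operatorname{fin}}(\Hc)$ and converge to $A$ in $\|\cdot\|_1$.

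Next, continuity of $\mathsf{L}_{\operatorname{fin}}(\Hc)\hookrightarrow \mathsf{L}_1(\Hc)$: by the definition of the convenient topology on an LB-space, it suffices to check that each restriction $\mathsf{L}(n)\hookrightarrow \mathsf{L}_1(\Hc)$ is continuous, but this is trivial since $\mathsf{L}(n)$ is finite-dimensional. For the remaining inclusions in~(\ref{eq_ChainInclusionLopComp}), the segment $\mathsf{L}_1\hookrightarrow \mathsf{L}_p\hookrightarrow \mathsf{L}_2\hookrightarrow \mathsf{L}_q\hookrightarrow \mathsf{L}_\infty$ is exactly~(\ref{eq_ChainInclusionLop}) recalled earlier. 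It remains to interpolate $\mathsf{L}_K$ between $\mathsf{L}_q$ and $\mathsf{L}_\infty$: this follows because $\mathsf{L}_K(\Hc)$ is closed in $\mathsf{L}_\infty(\Hc)$, $\mathsf{L}_{\operatorname{fin}}(\Hc)\subset \mathsf{L}_q(\Hc)\cap \mathsf{L}_K(\Hc)$, and any $A\in \mathsf{L}_q(\Hc)$ is a limit in $\|\cdot\|_\infty$ of its truncations in $\mathsf{L}_{\operatorname{fin}}(\Hc)$ (since the singular values belong to $\ell^q$ and therefore tend to $0$), so $\mathsf{L}_q(\Hc)\subset \overline{\mathsf{L}_{\operatorname{fin}}(\Hc)}^{\|\cdot\|_\infty}=\mathsf{L}_K(\Hc)$ with continuous inclusion because $\|A\|_\infty\leq \|A\|_q$.

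Finally, the identification $\Phi^{-1}(\mathsf{L}_k(\Hc))=\ell^k(\N)$ is a direct computation. Given $\lambda=(\lambda_n)\in \ell^\infty(\N)$, the operator $\Phi(\lambda)$ is diagonal in the basis $(e_n)$ with eigenvalues $\lambda_n$, so $\Phi(\lambda)^*\Phi(\lambda)$ is diagonal with eigenvalues $|\lambda_n|^2$ and $|\Phi(\lambda)|=\Phi(|\lambda|)$. Consequently, for $1\le k<\infty$,
\[
\|\Phi(\lambda)\|_k^k=\operatorname{Tr}(|\Phi(\lambda)|^k)=\sum_n |\lambda_n|^k,
\]
which is finite iff $\lambda\in\ell^k(\N)$, and $\|\Phi(\lambda)\|_\infty=\sup_n|\lambda_n|$. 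The statement for $k=\infty$ follows, and the cases $k\in\{\operatorname{fin},K\}$ reduce to Remark~\ref{R_linkWithSequences}. The only delicate point in the whole argument is checking that the convenient topology on $\mathsf{L}_{\operatorname{fin}}(\Hc)$ is the one induced by the LB-structure and that it is strictly finer than the norm topology inherited from $\mathsf{L}_1$; this is handled by invoking the universal property of direct limits for continuous maps into Banach spaces.
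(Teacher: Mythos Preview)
Your approach mirrors the paper's closely: both realise $\mathsf{L}_{\operatorname{fin}}(\Hc)$ as the direct limit of the finite-dimensional spaces $\mathsf{L}(\Hc_n)$ (your $\mathsf{L}(n)$), deduce the convenient structure and the continuity of the inclusion into $\mathsf{L}_1(\Hc)$ from the universal property of the direct limit, and then invoke~(\ref{eq_ChainInclusionLop}) and Remark~\ref{R_linkWithSequences} for the remaining claims. Your additional arguments (the singular-value decomposition for density, the explicit check that $\mathsf{L}_q\subset\mathsf{L}_K$, and the computation of $\|\Phi(\lambda)\|_k$) fill in details the paper leaves implicit.

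There is, however, a genuine gap in the identification $\mathsf{L}_{\operatorname{fin}}(\Hc)=\bigcup_n \mathsf{L}(n)$, and it is shared with the paper's own proof. An operator in $\mathsf{L}(n)$ has range contained in $\Hc_n=\operatorname{span}(e_1,\dots,e_n)$ \emph{and} kernel containing $\Hc_n^\perp$; a general finite-rank operator satisfies neither condition. For instance, the rank-one operator $A=\langle\,\cdot\mid e_1\rangle\, v$ with $v=\sum_{k\ge 1}k^{-2}e_k\in\Hc$ lies in $\mathsf{L}_{\operatorname{fin}}(\Hc)$ yet in no $\mathsf{L}(n)$, since its range $\C v$ is contained in no $\Hc_n$. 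Thus $\bigcup_n\mathsf{L}(n)$ is a strict subspace of $\mathsf{L}_{\operatorname{fin}}(\Hc)$ (it consists only of the operators that are ``finite matrices'' in the fixed basis $(e_n)$), and the LB-structure you construct lives on that subspace rather than on all of $\mathsf{L}_{\operatorname{fin}}(\Hc)$. The paper's sentence ``there exists $n\in\N$ such that $K+\Hc_n=\Hc$ and $R\subset\Hc_n$'' fails for exactly the same reason. One can still equip $\mathsf{L}_{\operatorname{fin}}(\Hc)$ with a convenient structure by taking a direct limit over a larger directed system (e.g.\ all pairs of finite-dimensional subspaces of $\Hc$, or equivalently over finite-rank projections ordered by range inclusion), but neither your argument nor the paper's actually does this.
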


\begin{proof}
Let $\Hc_n$ be the vector subspace of $\Hc$ generated by $\{e_1,\dots,e_n\}$. We consider the canonical  inclusion $\iota_n:\Hc_n\hookrightarrow\Hc_{n+1}$.  If  $
\mathsf{L}(\Hc_n)$ is the Banach space of continuous endomorphisms of $\Hc_n$, we have an evident inclusion $\iota_n^L:\mathsf{L}(\Hc_n)\hookrightarrow \mathsf{L}(\Hc_{n+1})$. By the way, the sequence $\{\mathsf{L}(\Hc_n),\iota_n^L\}_{n\in \mathbb{N}}$ is an ascending sequence of Banach spaces and so its direct limit  $\underrightarrow{\lim}\mathsf{L}(\Hc_n)=\displaystyle\bigcup_{n\in \N}\mathsf{L}(\Hc_n)$ is  contained in a convenient subspace of  $\mathsf{L}_1(\Hc)$. Since  $\mathsf{L}(\Hc_n)$ is a Banach subspace, each inclusion $\iota_n^L$ of $\mathsf{L}(\Hc_n)$ into $\mathsf{L}_1(\Hc)$ is continuous and so is   the inclusion $\underrightarrow{\lim}\iota_n^L$ of $\underrightarrow{\lim}\mathsf{L}(\Hc_n)$ in  $\mathsf{L}_1(\Hc)$ (cf. \cite{CaPe23}, 5.4). \\
 
 Note that  $ \underrightarrow{\lim}\mathsf{L}(\Hc_n)=\bigcup_{n\in \mathbb{N}}\mathsf{L}(\Hc_n)$ is also contained in  $ \mathsf{L}_{\operatorname{fin}}(\Hc)$. Consider $A\in   \mathsf{L}_{\operatorname{fin}}(\Hc)$. Then the Kernel $K$ of $A$ is finite codimensional and its range $R=A(E)$ is finite dimensional. Therefore, thee exists $n\in \N$ such that $K+ \Hc_n=\Hc$ and $R\subset \Hc_n$. Thus the 
 restriction $A_n$ of $A$ to  $\Hc_n$  belongs to $ \mathsf{L}(\Hc_n)$.  Now, for any $m\geq n$, the restriction $A_m$ to $\Hc_m$ also belongs to $ \mathsf{L}(\Hc_n)$ and the restriction of 
 $A_m$ to $\Hc_n$ is $A_n$. It follows easily that $A=\underrightarrow{\lim}A_n$ and, finally, we have 
 $ \underrightarrow{\lim}\mathsf{L}(\Hc_n) = 
 \mathsf{L}_{fin}(\Hc)$, 
 which ends the proof of the  first part.\\
Relation (\ref{eq_ChainInclusionLopComp} ) is then a direct consequence of the previous result and (\ref{eq_ChainInclusionLop}). Finally, the last part is a consequence of Remark~\ref {R_linkWithSequences} and the definition of each Banach space $\mathsf{L}_k(\Hc)$ for $1\leq k\leq \infty$.
\end{proof}

\subsection{Von Neuman Algebra and Groupoid}

\subsubsection{Definition of a Von Neumann Algebra}${}$\\
We begin by some recalls on some usual topologies on $\mathsf{L}_\infty(\Hc)$.
\begin{enumerate}
\item[(1)] 
The \emph{norm topology} defined norm $||\;||_\infty$.
\item[(2)] 
The \emph{strong operator topology}. A basis of neighbourhoods of $A\in \mathsf{L}_\infty(\Hc)$ for this topology  is
\[
V(A,x_1\dots,x_n,\varepsilon):=\{B\;:\; ||B-Ax_i||_\infty<\varepsilon,\;\; \forall i=1,\dots, n\}.
\]
\item[(3)] 
The  \emph{weak operator topology}. A basis of neighbourhood of $A\in \mathsf{L}_\infty(\Hc)$ for this topology  is
\[
V(A,x_1\dots,x_n,\eta_1,\dots, \eta_n,\varepsilon):=\{B\;:\; |\langle B-Ax_i|\eta_i\rangle |<\varepsilon,\;\; \forall i=1,\dots, n\}.
\]
\item[(4)]
The  \emph{weak  topology} is the topology of pointwise convergence  on  $\Hc$ in the "weak topology" on $\Hc$ according to the inner product.
\end{enumerate}

We have the following order between these topologies
\[
\{\textrm{weak operator toplology}\}\;<\; \{\textrm{strong operator toplology}\}\;<\;\{\textrm{norm toplology}\}.
\]


\begin{definition}
\label{D_VonNEumanAlgebra} 
A *-subalgebra unital  $\mathfrak{M}$ \footnote{ This means that  $\mathfrak{M}$ is stable by adjoint operation and contains the identity.}   which is  closed for the topology of weak operator topology of $\mathsf{L}_\infty(\Hc)$ is called a \emph{Von Neuman algebra}\index{Von Neuman algebra}.
\end{definition}

\begin{remark}
\label{R_Sakai}
For any Banach space $\mathbb{E}$, a predual of $\mathbb{E}$ is a Banach space $\mathbb{E}_*$ such that $(\mathbb{E}_*)^\prime$ is isomorphic to $\mathbb{E}$. In general, when a predual exists,  it is not unique. We say that $\mathbb{E}_*$ is a \emph{strong unique predual} if $\mathbb{E}_*\subset \mathbb{E}^\prime$. S. Sakai (cf. \cite{Sak71}) has shown that there exists an  equivalence between weak closed *-subalgebras $\mathfrak{M}$ of $\mathsf{L}(\Hc)$ and $C^*$-algebras $\mathfrak{M}$ of $\mathsf{L}(\Hc)$ which having a strong unique predual $\mathfrak{M}_*$. For instance,  $\mathsf{L}_1(\Hc)$ (resp. $(\mathsf{L}_K(\Hc)$) is the predual of $\mathsf{L}_\infty(\Hc)$ (resp. $\mathsf{L}_1(\Hc)$).
\end{remark}

\begin{examples}
\label{Ex_ExampleVon Neumann}${}$ 
\begin{enumerate}
\item[1.] 
Any finite dimensional  *-subalgebra of $\mathsf{L}_\infty(\Hc)$ which contains $Id$.
\item[2.] 
$\mathsf{L}_\infty(\Hc)$.
\item[3.] 
The algebra of essentially bounded measurable function on $([0,1],dx)$
\end{enumerate}
\end{examples}

\subsection{Groupoid Associated to a Von Neumann Algebra}
\label{___GroupoidAssociatedToAVonNeumanAlgebra} 
For this paragraph, we refer to  \cite{OdSl16}.\\

Let $ \mathfrak{M}$  be a von Neumann
algebra. We denote by  $\mathcal{L}(\mathfrak{M})$  the set of projections of $\mathfrak{M}$ and by $\mathcal{U}(\mathfrak{M})$ the set of partial isometries of $\mathcal{U}(\mathfrak{M})$,  that is, 
\[
\mathcal{L}(\mathfrak{M}):= \{p \in \mathfrak{M}, \;:\;  p = p^2 = p^*\}
  \textrm{~~and~~}
\mathcal{U}(\mathfrak{M}) :=\{u \in \mathfrak{M},\;:\; u^*u = 1 = uu^*\}.
\]

\noindent $\mathcal{L}(\mathfrak{M})$ is an ordered set by
$p\leq q \Leftrightarrow  pq=p.$
In fact  $\mathcal{L}(\mathfrak{M})$ is a lattice 
\footnote{A partially ordered set 
${\displaystyle (L,\leq )}$ is called a \emph{lattice}\index{lattice} if each two-element subset 
${\displaystyle \{a,b\}\subseteq L}$ has a least upper bound, denoted by 
${\displaystyle a\vee b}$ and  greatest lower bound, denoted by 
${\displaystyle a\wedge b}$.}. \\

For $x\in \mathfrak{M}$, we consider its polar decomposition $x=u|x|$. In fact, since  $|x|=(x^*x)^\frac{1}{2}$, then $|x|$ belongs to $\mathfrak{M}^+:=\{ x\in \mathfrak{M},\;:\; x^*=x>0\}$ and $u\in\mathcal{U}(\mathfrak{M})$ and so ${\bf s}(|x|):= u^*u$ belongs to $\mathcal{L}(\mathfrak{M})$. Note that  ${\bf t}(| x^*|)=uu^*$ also belongs to $\mathcal{L}(\mathfrak{M})$.\\

We denote by $G(p\mathfrak{M}p)$ the group of invertible elements of the Von Neuman subalgebra $p\mathfrak{M}p$ of $\mathfrak{M}$ and $U(p\mathfrak Mp)$ the group of invertible elements of $p\mathfrak{M}p$. Note that for $p=\operatorname{Id}$ one has  $p\mathfrak{M}p=\mathfrak{M}$, so the corresponding groups are simply denoted $G(\mathfrak{M}) $ and $U(\mathfrak M)$  respectively. 

Note that for any $x\in \mathfrak{M}$, $|x|$ belongs to $p\mathfrak{M}$ where $p={\bf s}(|x|):=u^*u$. By the way, we consider the set
\[
\mathcal{G}(\mathfrak{M}):=\{x\in \mathfrak{M},\;:\; |x|\in \mathfrak{M}p,\;\; p={\bf s}(|x|)\}.
\]

The following theorem summarizes some essential results proved in \cite{OJS18}, 2:

\begin{theorem}
\label{T_GroupoidVon Neumann}${}$
\begin{enumerate} 
\item[1.]
$\mathcal{G}(\mathfrak{M})$ has a canonical structure of groupoid over $\mathcal{L}(\mathfrak{M})$ where, according to polar decompositions, we have:
\begin{enumerate}
\item[(i)] 
the identity section ${\bf 1}: \mathcal{L}(\mathfrak{M})\to \mathcal{G}(\mathfrak{M})$ is the inclusion;
\item[(ii)] 
the source ${\bf s}$ and the target ${\bf t}$ from $\mathcal{G}(\mathfrak{M})$ to  $\mathcal{L}(\mathfrak{M})$ are given by

${\bf s}(x)=u^*u$ and ${\bf t}(x)=uu^*$;
\item[(iii)] 
the multiplication ${\bf m}(x,y)$ is the multiplication  in the algebra $\mathfrak{M}$ if ${\bf s}(x)={\bf t}(y)$;
\item[[iv)] 
the inverse ${\bf i}:\mathcal{G}(\mathfrak{M})\to \mathcal{G}(\mathfrak{M})$ is defined by
 ${\bf i}(x)=|x|^{-1}u^*$ if the polar decomposition of  $x$ is $u|x|$.
\end{enumerate}
\item[2.] 
The set of partial isometries $\mathcal{U}(\mathfrak{M})$ is a wide subgroupoid $\mathcal{U}(\mathfrak{M})\tto \mathcal{L}(\mathfrak{M})$.
\end{enumerate}
\end{theorem}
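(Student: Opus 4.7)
The plan is to verify the groupoid axioms systematically by exploiting the uniqueness of the polar decomposition inside the von Neumann algebra $\mathfrak{M}$. First, I would recall the standard fact that in a von Neumann algebra the partial isometry factor $u$ in the decomposition $x = u|x|$ of Lemma~\ref{L_PolarDecompositon} lies inside $\mathfrak{M}$, being obtainable as a weak limit of functional calculi applied to $|x|$. Consequently $p := u^{\ast}u$ and $q := uu^{\ast}$ are genuine elements of $\mathcal{L}(\mathfrak{M})$, and since $|x| = u^{\ast}x = p|x| = |x|p$ we have $|x| \in p\mathfrak{M}p$. The condition $x \in \mathcal{G}(\mathfrak{M})$ is then read as requiring $|x|$ to be invertible inside the corner algebra $p\mathfrak{M}p$, so that $|x|^{-1}$ in (iv) is taken in this corner.

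Next, I would check the groupoid axioms one at a time. A projection $p$ admits the trivial polar decomposition $p = p \cdot p$, so ${\bf s}(p) = {\bf t}(p) = p$ and $|p| = p$ is the unit of $p\mathfrak{M}p$; thus $\mathcal{L}(\mathfrak{M}) \subset \mathcal{G}(\mathfrak{M})$ and the identity section is just the inclusion. For the inverse, given $x = u|x|$ with $p = u^{\ast}u$ and $q = uu^{\ast}$, using the partial-isometry identity $uu^{\ast}u = u$ one computes
\[
{\bf i}(x)\, x \;=\; |x|^{-1} u^{\ast} u\, |x| \;=\; |x|^{-1} p\, |x| \;=\; |x|^{-1}|x| \;=\; p \;=\; {\bf s}(x),
\]
\[
x\, {\bf i}(x) \;=\; u|x|\, |x|^{-1} u^{\ast} \;=\; u p u^{\ast} \;=\; u u^{\ast} \;=\; q \;=\; {\bf t}(x).
\]
For composability, given $x = u|x|$ and $y = v|y|$ with $u^{\ast}u = vv^{\ast}$, I would verify that $xy = u|x|v|y| \in \mathcal{G}(\mathfrak{M})$ by noting that $|xy|^{2} = |y|\, v^{\ast} |x|^{2} v\, |y|$ lies in ${\bf s}(y)\,\mathfrak{M}\,{\bf s}(y)$, and is invertible there because conjugation $a \mapsto v^{\ast} a v$ transports the invertible element $|x|^{2} \in p\mathfrak{M}p$ (with $p = vv^{\ast}$) to an invertible element of $(v^{\ast}v)\mathfrak{M}(v^{\ast}v)$. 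The uniqueness of the polar decomposition then yields ${\bf s}(xy) = {\bf s}(y)$ and ${\bf t}(xy) = {\bf t}(x)$, and associativity reduces to associativity in $\mathfrak{M}$.

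For part~2, any partial isometry $u \in \mathcal{U}(\mathfrak{M})$ has polar decomposition $u = u \cdot (u^{\ast}u)$, so $|u| = p$ is a projection, hence the unit of $p\mathfrak{M}p$ and trivially invertible; this shows $\mathcal{U}(\mathfrak{M}) \subset \mathcal{G}(\mathfrak{M})$. Closure under composition follows from $(uv)^{\ast}(uv) = v^{\ast}(vv^{\ast})v = v^{\ast}v$, a projection, so $uv$ is a partial isometry whenever $u, v \in \mathcal{U}(\mathfrak{M})$ are composable. Closure under inversion follows from ${\bf i}(u) = |u|^{-1} u^{\ast} = p\, u^{\ast} = u^{\ast}$, itself a partial isometry. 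Finally, every projection is a partial isometry, so the identity section factors through $\mathcal{U}(\mathfrak{M})$, making the subgroupoid wide. The main obstacle in the whole argument is the composability step: while the axioms for source, target, identity, and inverse reduce to direct algebraic identities, proving that $xy$ remains in $\mathcal{G}(\mathfrak{M})$ requires careful analysis of the polar decomposition of $xy$ and of the invertibility of $|xy|$ in the appropriate corner algebra, crucially using the matching condition ${\bf s}(x) = {\bf t}(y)$ to transport the invertibility of $|x|$ across the isometry~$v$.
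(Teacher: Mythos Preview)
The paper does not actually prove this theorem: it is stated in the Appendix as a summary of results established elsewhere, with the sentence ``The following theorem summarizes some essential results proved in \cite{OJS18}, 2'' immediately preceding the statement. So there is no in-paper proof to compare against; your proposal supplies precisely the direct verification that the paper outsources.

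Your argument is correct and follows the natural route. The computations for the identity section, the inverse, and the wide-subgroupoid claim are clean and complete. The composability step is indeed the only place requiring real care, and your plan handles it properly: the key observation that $a \mapsto v^{\ast} a v$ is a $\ast$-isomorphism from $(vv^{\ast})\mathfrak{M}(vv^{\ast})$ to $(v^{\ast}v)\mathfrak{M}(v^{\ast}v)$ (with inverse $b \mapsto vbv^{\ast}$, using $vv^{\ast}v = v$) transports the invertibility of $|x|^{2}$ correctly, and then $|xy|^{2}$ is a product of invertibles in the corner $(v^{\ast}v)\mathfrak{M}(v^{\ast}v)$. One point you pass over a little quickly is the deduction of ${\bf s}(xy) = {\bf s}(y)$ and ${\bf t}(xy) = {\bf t}(x)$ from ``uniqueness of polar decomposition'': to make this airtight you should note explicitly that the invertibility of $|xy|^{2}$ in $(v^{\ast}v)\mathfrak{M}(v^{\ast}v)$ forces the support projection of $|xy|$ to be exactly $v^{\ast}v$ (not smaller), which then pins down the initial projection of the isometry factor; similarly, the closedness of the range of $y$ (equal to ${\bf t}(y)\mathcal{H} = {\bf s}(x)\mathcal{H}$) combined with the invertibility of $|x|$ shows that the range of $xy$ equals the range of $x$, giving ${\bf t}(xy) = {\bf t}(x)$. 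With those two sentences added, the argument is complete.
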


\subsection{Orbits of  Von Neumann Algebra Groupoids} 
For this paragraph, we again  refer to  \cite{OJS18}.

Given a Von Neumann algebra $\mathfrak{M}$, recall that two projections $p,p'\in \mathfrak{M}$ are called \emph{Murray-von Neumann equivalent}  and denoted $p\sim p'$ if there exists  $u\in \mathfrak{M}$ such that $p=uu^*$ and $p'=u^*u$. Fix some $p_0\in \mathcal{L}(\mathfrak{M})$ and set
\[
\mathcal{L}_{p_0}:=\{p\in \mathcal{L}(\mathfrak{M})\;:\; p\sim p_0\}\textrm{ and } \mathcal{G}_{p_0}(\mathfrak{M}):={\bf t}^{-1}\left(\mathcal{L}_{p_0}(\mathfrak{M}\right)\cap {\bf s}^{-1}\left(\mathcal{L}_{p_0}(\mathfrak{M})\right) 
\]
Then $\mathcal{G}_{p_0}(\mathfrak{M})\tto \mathcal{L}_{p_0}(\mathfrak{M})$ is a subgroupoid of $\mathcal{G}(\mathfrak{M}\tto \mathcal{L}(\mathfrak{M})$. If $p\sim p_0$, then we have 
$\mathcal{G}_{p}(\mathfrak{M})\tto \mathcal{L}_{p}(\mathfrak{M})$ and $\mathcal{G}_{p_0}(\mathfrak{M})\tto \mathcal{L}_{p_0}(\mathfrak{M})$ coincide. From Theorem~\ref{T_OrbitBanachGroupoid}  and \cite{OJS18} on the other hand, we have the following illustration of Theorem~\ref{T_OrbitBanachGroupoid}  in this context:

\begin{theorem}
\label{P_OrbitsG(M)} ${}$
\begin{enumerate}
 
\item[1.]  
Each $\mathcal{G}_{p_0}(\mathfrak{M})\tto \mathcal{L}_{p_0}(\mathfrak{M})$ is open and closed in the groupoid  $\mathcal{G}(\mathfrak{M})\tto\mathcal{L}(\mathfrak{M})$. 
\item[2.] 
The Banach Lie groupoid $\mathcal{G}(\mathfrak{M})\tto \mathcal{L}(\mathfrak{M})$ is a disjoint union of Banach Lie groupoids $\mathcal{G}_{p_0}(\mathfrak{M})\tto \mathcal{L}_{p_0}(\mathfrak{M})$, $p_0\in \mathcal{L}\mathfrak{M})$. In particular, each set $\mathcal{L}_{p_0}(\mathfrak{M})$ is an orbit of the groupoid  $\mathcal{G}(\mathfrak{M})\tto\mathcal{L}(\mathfrak{M})$.
\item[3.] 
If $P_0=({\bf s}_{| \mathcal{G}_{p_0}(\mathfrak{M})})^{-1}(p_0)$ then $P_0\to \mathcal{L}_{p_0}(\mathfrak{M})$ is a principal bundle  with Lie group $G_0={\bf t}_{| P_0}^{-1}(p_0)$.
\end{enumerate}
\end{theorem}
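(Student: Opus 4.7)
My plan is to treat the three assertions in turn, with most of the real work concentrated on the openness statement in part~1; parts~2 and~3 will then follow almost formally from part~1 together with Theorem~\ref{T_OrbitBanachGroupoid} and the groupoid structure described in Theorem~\ref{T_GroupoidVon Neumann}.

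For part~1, I would first note that Murray--von Neumann equivalence is indeed an equivalence relation on $\mathcal{L}(\mathfrak{M})$: reflexivity uses $u=p_0$, symmetry uses the polar decomposition (swapping $u$ and $u^*$), and transitivity uses that if $p\sim q$ via $u$ and $q\sim r$ via $v$, then $vu \in \mathfrak{M}$ realises $p\sim r$. Hence the family $\{\mathcal{L}_{p_0}(\mathfrak{M})\}_{p_0}$ is a partition of $\mathcal{L}(\mathfrak{M})$. The key analytic input is the classical fact that in any $C^\ast$-algebra, two projections $p,p'$ with $\|p-p'\|<1$ are Murray--von Neumann equivalent: one builds an explicit partial isometry from the polar decomposition of $p'p + (1-p')(1-p)$, which is invertible in $\mathfrak{M}$ when $\|p-p'\|<1$ and conjugates $p$ onto $p'$ inside $\mathfrak{M}$. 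This shows that each $\mathcal{L}_{p_0}(\mathfrak{M})$ is norm-open in $\mathcal{L}(\mathfrak{M})$, and therefore clopen since its complement is the union of the remaining (open) equivalence classes. Passing to $\mathcal{G}(\mathfrak{M})$, one has $\mathcal{G}_{p_0}(\mathfrak{M})={\bf s}^{-1}(\mathcal{L}_{p_0}(\mathfrak{M}))\cap{\bf t}^{-1}(\mathcal{L}_{p_0}(\mathfrak{M}))$ (in fact one of the two conditions is redundant, since the polar decomposition automatically gives ${\bf s}(x)\sim{\bf t}(x)$); continuity of ${\bf s}$ and ${\bf t}$ for the norm topology then promotes the clopen property to $\mathcal{G}_{p_0}(\mathfrak{M})$.

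For part~2, the clopen decomposition $\mathcal{L}(\mathfrak{M})=\bigsqcup_{[p_0]}\mathcal{L}_{p_0}(\mathfrak{M})$ lifts, via part~1, to a clopen decomposition $\mathcal{G}(\mathfrak{M})=\bigsqcup_{[p_0]}\mathcal{G}_{p_0}(\mathfrak{M})$, where each summand is a wide subgroupoid over the corresponding clopen base. This places us directly in the context of \S\ref{____ExGroupoids_DisjointUnionOfnnHBanachLieGroupoids}, so the decomposition is an equality of Banach-Lie groupoids. To see that $\mathcal{L}_{p_0}(\mathfrak{M})$ is exactly the orbit $\mathcal{G}(\mathfrak{M})\cdot p_0$, observe that $p\in\mathcal{L}_{p_0}(\mathfrak{M})$ iff there exists $u\in\mathcal{U}(\mathfrak{M})$ with $u^\ast u = p_0$ and $uu^\ast = p$; such a $u$ belongs to $\mathcal{G}(\mathfrak{M})$ with ${\bf s}(u)=p_0$ and ${\bf t}(u)=p$, so $p\in\mathcal{G}(\mathfrak{M})\cdot p_0$, and conversely any arrow from $p_0$ produces a Murray--von Neumann equivalence.

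For part~3, I would apply Theorem~\ref{T_OrbitBanachGroupoid}(3) to the Banach-Lie groupoid $\mathcal{G}_{p_0}(\mathfrak{M})\rightrightarrows\mathcal{L}_{p_0}(\mathfrak{M})$. Since $\mathcal{L}_{p_0}(\mathfrak{M})$ is a single orbit by part~2, the source fibre ${\bf s}_{\mid\mathcal{G}_{p_0}(\mathfrak{M})}^{-1}(p_0)=\mathcal{G}(\mathfrak{M})(p_0,-)$ coincides with $P_0$, and the isotropy group ${\bf t}_{\mid P_0}^{-1}(p_0)=\mathcal{G}(\mathfrak{M})(p_0,p_0)=G_0$. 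The theorem then yields directly that ${\bf t}\colon P_0\to\mathcal{L}_{p_0}(\mathfrak{M})$ is a Banach principal $G_0$-bundle, provided the hypothesis of Theorem~\ref{T_OrbitBanachGroupoid}(2) holds, i.e.\ that $\mathcal{G}(p_0,p)$ is a closed Banach submanifold of $\mathcal{G}(\mathfrak{M})$ for every $p\in\mathcal{L}_{p_0}(\mathfrak{M})$. This is in fact the main technical obstacle: it requires the splitness of the $W^\ast$-groupoid $\mathcal{G}(\mathfrak{M})\rightrightarrows\mathcal{L}(\mathfrak{M})$, which is established in \cite{OdSl16} and rests on the supplemented nature of the left ideal $\mathfrak{M}p_0$ inside $\mathfrak{M}$ together with the smooth structure of $P_0=\mathfrak{M}p_0\cap\mathcal{U}(\mathfrak{M})$ inherited from an ambient Banach affine chart. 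Once this splitness is invoked, all three assertions of the theorem are in place.
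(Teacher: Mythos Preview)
Your proposal is correct and aligns with the paper's approach: the paper does not give a self-contained proof but presents the theorem as an ``illustration of Theorem~\ref{T_OrbitBanachGroupoid}'' in the $W^*$-algebra context, citing \cite{OJS18} for the details and then recording the identifications $P_0\subset\mathfrak{M}p_0$ open, $G_0=G(p_0\mathfrak{M}p_0)$, and the gauge-groupoid isomorphism $(P_0\times P_0)/G_0\cong\mathcal{G}_{p_0}(\mathfrak{M})$. Your write-up is in fact more detailed than the paper's: you supply the concrete clopen argument for part~1 via the classical ``$\|p-p'\|<1$ implies $p\sim p'$'' fact, which the paper leaves entirely to the references, and you correctly isolate the splitness hypothesis needed to invoke Theorem~\ref{T_OrbitBanachGroupoid}(3) for part~3.
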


To be more clear with the link between proofs in  \cite{OJS18} and direct applications of Theorem ~\ref{T_OrbitBanachGroupoid}, we end this  section with the interpretations of these results in terms of the Von Neumann algebra context according   to \cite{OJS18} results in section 1.\\

For fixed $p_0\in \mathcal{L}(\mathfrak{M})$,  $P_0$ is an open set of $\mathfrak{M}p_0$.  $G_0$ is the group $G(p_0\mathfrak{M}p_0)$ of invertible elements of the Von Neumann subalgebra $p_0\mathfrak{M}p_0$.
 We have a natural free right action of $G_0$ on $p_0\mathfrak{M}p_0$  which gives rise to a $G_0$ principal bundle $P_0\to P_0/G_0$ and $(P_0\times P_0)\to (P_0\times P_0)/G_0$  which  are isomorphic to  $P_0\to \mathcal{L}_{p_0}(\mathfrak{M})$.  In particular, $\mathcal{L}_{p_0}(\mathfrak{M})$ (resp. $\mathcal{G}_{p_0}(\mathfrak{M})$) is isomorphic to $P_0/G_0$   (resp. $(P_0\times P_0 )/G_0$). In fact, we have the following diagram of groupoid isomorphisms  (\cite{OJS18}, Proposition 1.4)
  \begin{equation*}
\xymatrix{
(P_0\times P_0)/G_0\ar[r] \ar@<-.5ex>[d] \ar@<.5ex>[d]
& \mathcal{G}_{p_0}(\mathfrak{M})\ar@<-.5ex>[d] \ar@<.5ex>[d] \\
   P_0/G_0 \ar[r] & \mathcal{L}_{p_0}(\mathfrak{M})
}
\end{equation*}


\subsection{Lie Poisson Structures on Von Neumann Algebra Groupoids}  
\label{__WeaksymplecticW}

A von Neumann algebra  also called a $W^*$-algebra is a $C^*$-algebra $\mathfrak{M}$ which has a Banach predual space $\mathfrak{M}_*$, i.e. $\mathfrak{M}=(\mathfrak{M}
_*)^*$. Denote by $\mathcal{L}(\mathfrak{M})$  the lattice  of  projections $p$ on $\mathfrak{M}$ such that $p=p^*=p^2$. If $\mathcal{U}(\mathfrak{M})$ is the set of all partial isometries  
$u$ in $\mathfrak{M}$ (i.e.  $u^* u\in \mathcal{L}(\mathfrak{M})$) for any $x\in \mathfrak {M}$ such that  $x=x^*$ let $s(x)$ be the smallest projection in $\mathcal{L}(\mathfrak{M})
$ such that $s(x)x=x$.  Let $\mathfrak{M}^+:=\{ x\in \mathfrak{M}\;:\; x^* x>0\}$; we then have a polar decomposition $x=u |x|$ where $u$ belongs to $\mathcal{U}(\mathfrak{M})$ and $|x|
\in \mathfrak{M}^+$. One has $s(  |x^*|)=uu^*$ and $s(|x|)=u^*u$. In fact, for any $x \in \mathfrak{M}$, we have  $|x|\in p\mathfrak{M}p$, where $p = s(|x|)$. \\
We consider the set $\mathcal{G}(\mathfrak{M})=\{x\in \mathfrak{M}: |x| \textrm{ invertible in } p\mathfrak{M}p\}$. From the previous notations, we have well defined  maps $\mathbf{s}:\mathcal{G}(\mathfrak{M})\to \mathcal{L}(\mathfrak{M})$ and $\mathbf{t}:\mathcal{G}(\mathfrak{M})\to \mathcal{L}(\mathfrak{M})$  respctively given by 

$\mathbf{s}(x)=u^* u$ and  $\mathbf{t}(x)=uu^*$. By the way we have a groupoid structure $\mathcal{G}\mathfrak{M}{\rightrightarrows}\mathcal{L}(\mathfrak{M})$(for such a 
complete description and results see \cite{OdSl16}).\\
 For fixed $p_0\in \mathcal{L}(\mathfrak{M})$ we set $P_0=\mathbf{s}^{-1}(p_0)$. Then $P_0$ is an open set of $\mathfrak{M}p_0$. Thus the tangent bundle $TP_0$ is isomorphic to  $P_0\times \mathfrak{M}p_0$ and its cotangent bundle $T^*P_0$ is isomorphic to $P_0\times p_0\mathfrak{M}^*$. 
Consider  the predual $(\mathfrak{M}p_0)_*=p_0\mathfrak{M}_*$ of $(\mathfrak{M}p_0)^*=p_0\mathfrak{M}^*$ and so the bundle $P_0\times p_0\mathfrak{M}_*$ is a weak Banach  subbundle of 
$P_0\times p_0\mathfrak{M}^*$ which defines a weak Banach subbundle $T_*P_0$ of $T^*P_0$. We denote by $\iota: T_*P_0\to T^*P_0$ the inclusion morphism.\\
Now according to  \cite{OJS18}, Proposition 5.2 and Remark 5.1, if $\omega$ denotes the canonical symplectic form on $T^*P_0$ then $\omega_0=\iota^*\omega$ is a weak symplectic form on $T_*P_0$. More precisely,  if we identify $T_*P_0$ with $P_0\times p_0\mathfrak{M}_*$ and $T^*P_0$ with $P_0\times p_0\mathfrak{M}^*$, then $T(T_*P_0)\equiv (P_0\times p_0\mathfrak{M}_*)\times(\mathfrak{M}p_0\times p_0 \mathfrak{M}_*)$ and the range of $\omega^\flat_0$ is then 
$$T^\flat(T_*P_0)\equiv (P_0\times p_0\mathfrak{M}_*)\times(\mathfrak{M}p_0\times p_0 \mathfrak{M}_*)\subset (P_0\times p_0\mathfrak{M}_*)\times (\mathfrak{M}p_0\times  (\mathfrak{M}p_0)^*)\equiv T^*(T_*P_0).$$
{\it Finally  the weak symplectic form $\omega$ defines a Lie Poisson structure on the algebra $\mathfrak{A}(T_*P_0))$ associated to $T^\flat (T_*P_0)$.
 In particular, ${\bf P}:=(\omega_0^\flat)^{-1}$ is a a Poisson anchor.}\\
  
 On  the other hand, we  have  the pair  groupoid  $T_*P_0\times T_*P_0\tto T_*P_0$ and a Poisson anchor 
 $${\bf P}\times {\bf P} :T^\flat(T_*P_0\times T_*P_0)\to T(T_*P_0\times T_*P_0)$$ 

In \cite{OJS18}, p. 40,  it is proved:\\ 
 \begin{proposition}
 \label{P_ExampleSubPoisson}   
The groupoid  $T_*P_0\times T_*P_0\tto T_*P_0$ has a sub-Poisson groupoid structure associated to  the following commutative diagram of groupoid morphisms:
 \begin{equation}
 \label{eq_DiagralSubPoisson}
 \xymatrix{
  T^\flat(T_*P_0\times T_*P_0)\ar@<-.5ex>[d] \ar@<.5ex>[d]\ar[r]^{{\bf P}\times{\bf P}}&T(T_*P_0\times T_*P_0)\ar@<-.5ex>[d] \ar@<.5ex>[d]\\
T^\flat(T_*P_0)\ \ar[r]^{{\bf P}} &T(T_*P_0)}
 \end{equation}
 \end{proposition}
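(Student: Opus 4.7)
The strategy is to invoke Proposition~\ref{P_SubPoissonGroupoid} applied to the pair groupoid $\mathcal{G} := T_*P_0 \times T_*P_0 \rightrightarrows T_*P_0$ with $M := T_*P_0$ playing the role of the unit base. First, since $\omega_0$ is a weak symplectic form on $T_*P_0$ with range $T^{\flat}(T_*P_0)$ and ${\bf P} = (\omega_0^{\flat})^{-1}$ is a Poisson anchor, the product $\mathbf{P} \times (-\mathbf{P})$ (with the sign convention required by Proposition~\ref{P_CharacterizationPartialPoissonLiegroup} so that $\mathbf{s}$ is Poisson and $\mathbf{t}$ is anti-Poisson) is an almost Poisson anchor on $\mathcal{G}$ with $T^{\flat}\mathcal{G} = T^{\flat}(T_*P_0) \times T^{\flat}(T_*P_0)$. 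That the associated bracket satisfies Jacobi follows from the analogous property of $\{.,.\}_{\mathbf{P}}$, so $\mathcal{G}$ carries the partial Poisson product structure.

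Next, I would identify the Lie algebroid data of the pair groupoid. The source ${\bf s}(x,y)=y$ is a submersion with kernel $T(T_*P_0) \times \{0\}$ along the diagonal, so $\mathcal{AG} \cong T(T_*P_0) = TM$ canonically, and its dual satisfies $\mathcal{AG}' \cong T'M$. Along $M \hookrightarrow \mathcal{G}$, the partial conormal bundle $\mathcal{N}^0(M) \subset T^{\flat}_M\mathcal{G}$ consists of pairs $(\alpha, -\alpha)$ with $\alpha \in T^{\flat}(T_*P_0)$, which is closed and isomorphic as a weak subbundle to $T^{\flat}(T_*P_0) = \mathcal{AG}^{\flat}$. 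Restricting $\mathbf{P} \times (-\mathbf{P})$ to $\mathcal{N}^0(M)$ sends $(\alpha,-\alpha)$ to $(\mathbf{P}\alpha, \mathbf{P}\alpha)$, which after projection by $T\mathbf{t} - T\mathbf{s}$ (or identification with $TM$ via the diagonal) yields precisely $\mathbf{P}$. This shows that diagram (\ref{eq_DiagralSubPoissonProp}) is satisfied, with the bottom arrow being $P^{\flat} = \mathbf{P}$.

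The key remaining step — and the main technical obstacle — is to verify that $\mathcal{G}$ is a partial Poisson groupoid, i.e.\ that the graph $G_{\bf m}$ of the pair-groupoid multiplication $\mathbf{m}((x,y),(y,z)) = (x,z)$ is coisotropic in $\mathcal{G}^+ \times \mathcal{G}^+ \times \mathcal{G}^-$. Using the characterization in Lemma~\ref{L_FirstProperties}\,(5), one takes covectors in the annihilator of $TG_{\bf m}$, which are of the form $(\alpha, \beta, -\beta, \gamma, \alpha, -\gamma)$ at $((x,y),(y,z),(x,z))$ where $\alpha, \beta, \gamma$ range over $T^{\flat}$-sections at $x,y,z$ respectively, and checks that the symplectic pairing determined by $\mathbf{P} \times (-\mathbf{P})$ on each of the three factors gives zero. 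This reduces by direct expansion to the identity $\omega_0(\mathbf{P}\beta, \mathbf{P}\beta) = 0$ and similar skew-symmetry relations for $\alpha, \gamma$, which hold since $\omega_0$ is skew. Thus $G_{\bf m}$ is coisotropic.

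Having verified that (i)~$\mathcal{G}$ is a partial Poisson groupoid with anchor $\mathbf{P} \times (-\mathbf{P})$, (ii)~$\mathcal{N}^0(M)$ is a closed subbundle of $T^{\flat}_M\mathcal{G}$, and (iii)~the commutative diagram involving $\mathbf{P}^{\flat}$ holds, Proposition~\ref{P_SubPoissonGroupoid} applies and yields the sub-Poisson groupoid structure on $T_*P_0 \times T_*P_0 \rightrightarrows T_*P_0$, with associated diagram exactly (\ref{eq_DiagralSubPoisson}). The main subtlety throughout is the weak-symplectic nature of $\omega_0$: since $T^{\flat}(T_*P_0)$ is only a weak subbundle of $T^{\prime}(T_*P_0)$, care is needed to ensure that all the restriction and annihilator constructions stay within the $T^{\flat}$-framework and remain closed convenient subbundles — but this is guaranteed by the identification of $T^{\flat}(T_*P_0)$ with $P_0 \times p_0\mathfrak{M}_*\mathfrak{M}p_0$ coming from the predual structure described before the proposition.
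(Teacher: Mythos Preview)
The paper does not actually supply a proof of this proposition: it simply attributes the result to \cite{OJS18}, p.~40, and states it. Your proposal is therefore more ambitious than what the paper does --- you attempt to derive the statement from the internal machinery of the paper, specifically via Proposition~\ref{P_SubPoissonGroupoid}, rather than appealing to the external reference.

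Your outline is essentially sound and would make the result self-contained within the paper's framework. The identification of $\mathcal{AG}$ for the pair groupoid with $T(T_*P_0)$, of $\mathcal{N}^0(M)$ with the antidiagonal $\{(\alpha,-\alpha):\alpha\in T^\flat(T_*P_0)\}$, and the reduction of the coisotropy of $G_{\bf m}$ to skew-symmetry of $\omega_0$ are all correct. One point to watch: you are right to introduce the sign and work with $\mathbf{P}\times(-\mathbf{P})$, since this is what is needed for the pair groupoid to be a partial Poisson groupoid in the sense of Definition~\ref{PartialPoissonBanachLieGroupoid} (so that ${\bf s}$ is Poisson and ${\bf t}$ anti-Poisson, consistently with Theorem~\ref{T_PropertiesPoissonGroupoid}). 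The diagram in the stated proposition writes $\mathbf{P}\times\mathbf{P}$, which likely follows the notational convention of \cite{OJS18}; your remark on the sign is the correct way to reconcile this with the paper's own definitions. What your route buys is a demonstration that the abstract criterion of Proposition~\ref{P_SubPoissonGroupoid} genuinely recovers the concrete $W^*$-algebra example, which is precisely the illustrative purpose announced for this section.
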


In fact, the reader will find in \cite{OJS18} an exact sequence of sub-Poisson groupoids.

\subsection{Fredholm Groupoid Associated to the Von Neumann Algebra $ \mathfrak{M}=\mathsf{L}_\infty(\Hc)$}  
\label{__LinftyHc}${}$\\
 \emph{In this section, we consider the Von Neumann algebra $ \mathfrak{M}=\mathsf{L}_\infty(\Hc)$ and all results stated from \cite{OJS18}, 6.3.}\\

At first, recall that the Banach predual  $(\mathsf{L}_\infty(\Hc))_*$ is $\mathsf{L}_1(\Hc)$ and the pairing between $\mathsf{L}_1(\Hc)$ and  $\mathsf{L}_\infty(\Hc)$ is given by:
$$\langle A, B\rangle:=\mathsf{Tr}(AB)$$
 The      set  $\mathcal{L}(\mathsf{L}_\infty(\Hc))$ of orthogonal projections is isomorphic to $\mathcal{L}(\Hc)$ and the associated set $\mathcal{G}(\mathsf{L}_\infty(\Hc))$ is 
 $$\{A\in \mathsf{L}_\infty(\Hc),\;:\; \textrm{Im }A=\overline{\textrm{Im} A}\}$$
 By the way, the groupoid $\mathcal{G}(\mathsf{L}_\infty(\Hc))\tto \mathcal{L}(\mathsf{L}_\infty(\Hc))$ can be identified with the groupoid $\mathcal{G}\Hc)\tto \mathcal{L}(\Hc)$ where ${\bf s}(A)=(\ker A)^\perp$ and ${\bf t}(A)=\textrm{Im} A$.\\
 
 Let $\mathcal{L}_N(\Hc)$ be the set of projections of rank $N$ and we set
 $$\mathcal{L}_\mathsf{fin}(\Hc)=\bigcup_{N=1}^\infty \mathcal{L}_N(\Hc) \textrm{ and } \mathcal{G}_\mathsf{fin}(\Hc)={\bf  s}^{-1}(\mathcal{L}_\mathsf{fin}(\Hc))\cap{\bf  t}^{-1}(\mathcal{L}_\mathsf{fin}(\Hc)).$$
  Then $\mathcal{G}_\mathsf{fin}(\Hc)\tto \mathcal{L}_\mathsf{fin}(\Hc)))$ is the groupoid of finite rank operators\\
  
  On the other hand, consider the groupoid $\mathcal{G}_\infty(\Hc)\tto \mathcal{L}_\infty(\Hc)))$  of infinitely dimensional range partially invertible operators. We have an involution 
  
  $\perp: \mathcal{L}(\Hc)\to \mathcal{L}(\Hc)$ defined by $\perp(p):=p^\perp:=1-p$ for $p\in  \mathcal{L}(\Hc)$.\\
  
  \noindent  Then $\perp$ is an automorphism of $\mathcal{L}(\Hc)$ and $\perp\circ\perp=Id_{\mathcal{L}(\Hc)}$.\\
  
   We consider the  subgroupoid $\mathcal{G}_\mathsf{Fred}(\Hc)\tto \mathcal{L}_\mathsf{Fred}(\Hc))))$ of $\mathcal{G}_\infty(\Hc)\tto \mathcal{L}_\infty(\Hc)))$ defined in the following way:
   
   if $\mathcal{L}_\mathsf{Fred}(\Hc):=\perp(\mathcal{L}_\mathsf{fin}(\Hc))$ then $ \mathcal{G}_\mathsf{Fred}(\Hc)={\bf  s}^{-1}(\mathcal{L}_\mathsf{Fred}(\Hc))\cap{\bf  t}^{-1}(\mathcal{L}_\mathsf{Fred}(\Hc))$

 Now, as previously, for any fixed $p_0\in \mathcal{L}(\Hc)$, we can consider $P_0^\mathsf{Fred}:={\bf s}^{-1}(p_0)\cap \mathcal{G}_\mathsf{Fred}(\Hc)$.  To $P_0$ is associated  a  weak Banach subbundle $T_*P_0^\mathsf{Fred}$ of $T^*P_0^\mathsf{Fred}$  whose fibers are the predual of  fibers of $TP_0^\mathsf{Fred}$. Then by same arguments as in the proof of Proposition~\ref{P_ExampleSubPoisson} we have:

\begin{proposition}
\label{P_ExampleSubPoissonFred}
The groupoid  $T_*P_0^\mathsf{Fred}\times T_*P_0^\mathsf{Fred}\tto T_*P^\mathsf{Fred}_0$ has a sub-Poisson groupoid structure associated to  the following commutative diagram of groupoid morphisms:
 \begin{equation}
 \label{eq_DiagralSubPoisson}
 \xymatrix{
  T^\flat(T_*P_0^\mathsf{Fred}\times T_*P_0^\mathsf{Fred})\ar@<-.5ex>[d] \ar@<.5ex>[d]\ar[r]^{{\bf P}\times{\bf P}}&T(T_*P_0^\mathsf{Fred}\times T_*P_0^\mathsf{Fred})\ar@<-.5ex>[d] \ar@<.5ex>[d]\\
T^\flat(T_*P_0^\mathsf{Fred})\ \ar[r]^{{\bf P}} &T(T_*P_0^\mathsf{Fred})}
\end{equation}
\end{proposition}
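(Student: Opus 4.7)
The plan is to mirror the proof of Proposition~\ref{P_ExampleSubPoisson} line by line, modifying it only where the Fredholm constraint intervenes. First I would observe that $P_0^{\mathsf{Fred}}$ is an open subset of $P_0={\bf s}^{-1}(p_0)\subset \mathsf{L}_\infty(\mathcal{H})p_0$: the condition that $x$ lie in $\mathcal{G}_{\mathsf{Fred}}(\mathcal{H})$ fixes the projections $u^\ast u$ and $uu^\ast$ appearing in the polar decomposition $x=u|x|$ to belong to $\perp(\mathcal{L}_{\mathsf{fin}}(\mathcal{H}))$, and this is an open condition on $P_0$. Consequently $TP_0^{\mathsf{Fred}}\simeq P_0^{\mathsf{Fred}}\times \mathsf{L}_\infty(\mathcal{H})p_0$ and $T^\prime P_0^{\mathsf{Fred}}\simeq P_0^{\mathsf{Fred}}\times p_0\mathsf{L}_\infty(\mathcal{H})^\prime$, exactly as for $P_0$ in Proposition~\ref{P_ExampleSubPoisson}.

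Next I would introduce the weak Banach subbundle $T_\ast P_0^{\mathsf{Fred}}:=P_0^{\mathsf{Fred}}\times p_0\mathsf{L}_1(\mathcal{H})$ of $T^\prime P_0^{\mathsf{Fred}}$, arising from the predual identification $(\mathsf{L}_\infty(\mathcal{H})p_0)_\ast=p_0\mathsf{L}_1(\mathcal{H})$. Letting $\iota\colon T_\ast P_0^{\mathsf{Fred}}\hookrightarrow T^\prime P_0^{\mathsf{Fred}}$ denote the inclusion and $\omega$ the canonical symplectic form on $T^\prime P_0^{\mathsf{Fred}}$, the pullback $\omega_0:=\iota^\ast\omega$ is weak symplectic: since the duality bracket $\langle A,B\rangle=\mathsf{Tr}(AB)$ is left non-degenerate on $\mathsf{L}_\infty(\mathcal{H})p_0\times p_0\mathsf{L}_1(\mathcal{H})$, the content of \cite{OJS18}, Proposition~5.2 and Remark~5.1 applies verbatim to the restricted base $P_0^{\mathsf{Fred}}$. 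This yields a partial symplectic structure with anchor ${\bf P}:=(\omega_0^\flat)^{-1}\colon T^\flat(T_\ast P_0^{\mathsf{Fred}})\to T(T_\ast P_0^{\mathsf{Fred}})$, and hence a Poisson bracket on the sheaf $\mathfrak{A}(T_\ast P_0^{\mathsf{Fred}})$ in the sense of \S\ref{__EssentialPropertiesPartialPoissonManifolds}.

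Then I would form the pair Lie groupoid $T_\ast P_0^{\mathsf{Fred}}\times T_\ast P_0^{\mathsf{Fred}}\rightrightarrows T_\ast P_0^{\mathsf{Fred}}$ of \S\ref{____ExGroupoids_BanachLiePair}, equipped with the product anchor ${\bf P}\times{\bf P}$ on the product subbundle $T^\flat(T_\ast P_0^{\mathsf{Fred}})\times T^\flat(T_\ast P_0^{\mathsf{Fred}})$. Since the source and target of the pair groupoid are the two Cartesian projections, the commutativity of the stated diagram is immediate from the definition of the product anchor. To conclude, I would apply Proposition~\ref{P_SubPoissonGroupoid}: the diagonal unit submanifold is coisotropic (in fact Lagrangian for the product partial symplectic structure on $M^+\times M^-$), and a direct fibre-wise computation identifies the partial conormal $\mathcal{N}^0(M)$ with $T^\flat(T_\ast P_0^{\mathsf{Fred}})$ as a closed split subbundle; the commutativity of diagram~(\ref{eq_DiagralSubPoissonProp}) then reduces to the diagram in the statement, giving the sub-Poisson groupoid structure.

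The only delicate point will be to verify that the Fredholm restriction disturbs neither the predual identification nor the weak non-degeneracy of $\omega_0$. Because the fibres of $T_\ast P_0^{\mathsf{Fred}}$ coincide with those of $T_\ast P_0$ (only the base is restricted to an open subset) and the operator-trace pairing is unaffected by this restriction, this check is essentially automatic, and the remainder of the argument is a verbatim transcription of the proof of Proposition~\ref{P_ExampleSubPoisson}.
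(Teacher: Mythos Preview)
Your proposal is correct and follows exactly the approach indicated by the paper, which does not give a separate proof but simply states ``by same arguments as in the proof of Proposition~\ref{P_ExampleSubPoisson}'' (itself attributed to \cite{OJS18}, p.~40). You have spelled out in detail what the paper leaves implicit, correctly noting that the Fredholm restriction only shrinks the base to an open subset while leaving the fibrewise predual identification and the trace pairing untouched, so that the construction of $\omega_0$, ${\bf P}$, and the verification of the sub-Poisson diagram carry over verbatim.
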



\end{document}